\theoremstyle{plain}
\newtheorem{theorem}{Theorem}[section]
\newtheorem{proposition}[theorem]{Proposition}
\newtheorem{lemma}[theorem]{Lemma}
\newtheorem{corollary}[theorem]{Corollary}
\theoremstyle{definition}
\newtheorem{definition}[theorem]{Definition}
\theoremstyle{remark}
\newtheorem{remarks}[theorem]{Remarks}
\newtheorem{remark}[theorem]{Remark}
\newtheorem{example}[theorem]{Example}
\newcommand{\Kern}{\mathrm{Ker}}
\newcommand{\Beeld}{\mathrm{Im}}
\newcommand{\Mor}{\mathrm{Mor}}
\renewcommand{\lim}{\mathrm{lim}}
\newcommand{\colim}{\mathrm{colim}}
\newcommand{\Ext}{\mathrm{Ext}}
\newcommand{\Hom}{\mathrm{Hom}}
\newcommand{\Def}{\mathrm{Def}}
\newcommand{\op}{^{\mathrm{op}}}
\newcommand{\Ob}{\mathrm{Ob}}
\newcommand{\Z}{\mathbb{Z}}
\newcommand{\Q}{\mathbb{Q}}
\newcommand{\N}{\mathbb{N}}
\newcommand{\AAA}{\mathfrak{a}}
\newcommand{\BBB}{\mathfrak{b}}
\newcommand{\CCC}{\mathfrak{c}}
\newcommand{\CC}{\mathbf{C}}
\newcommand{\Alg}{\ensuremath{\mathsf{Alg}} }
\newcommand{\Mod}{\ensuremath{\mathsf{Mod}} }
\newcommand{\Bimod}{\ensuremath{\mathsf{Bimod}} }
\newcommand{\Pre}{\ensuremath{\mathsf{Pr}} }
\newcommand{\mmod}{\ensuremath{\mathsf{mod}} }
\newcommand{\Qch}{\ensuremath{\mathsf{Qch}} }
\newcommand{\QPr}{\ensuremath{\mathsf{QPr}} }
\newcommand{\Cat}{\ensuremath{\mathsf{Cat}} }
\newcommand{\Des}{\ensuremath{\mathrm{Des}}}
\newcommand{\PDes}{\ensuremath{\mathrm{PDes}}}
\newcommand{\lra}{\longrightarrow}
\newcommand{\aaa}{\ensuremath{\mathcal{A}}}
\newcommand{\bbb}{\ensuremath{\mathcal{B}}}
\newcommand{\ccc}{\ensuremath{\mathcal{C}}}
\newcommand{\ddd}{\ensuremath{\mathcal{D}}}
\newcommand{\fff}{\ensuremath{\mathcal{F}}}
\newcommand{\GGG}{\ensuremath{\mathcal{G}}}
\newcommand{\nnn}{\ensuremath{\mathcal{N}}}
\newcommand{\ooo}{\ensuremath{\mathcal{O}}}
\newcommand{\ppp}{\ensuremath{\mathcal{P}}}
\newcommand{\ttt}{\ensuremath{\mathcal{T}}}
\newcommand{\uuu}{\ensuremath{\mathcal{U}}}
\newcommand{\can}{\mathsf{can}}
\numberwithin{equation}{section}
\title{Non-commutative deformations and quasi-coherent modules}
\author{Hoang Dinh Van}
\address[Hoang Dinh Van]{Universiteit Antwerpen, Departement Wiskunde-Informatica, Middelheimcampus,
Middelheimlaan 1,
2020 Antwerp, Belgium}
\email{hoang.dinhvan@uantwerpen.be}
\author{Liyu Liu}
\address[Liyu Liu]{Universiteit Antwerpen, Departement Wiskunde-Informatica, Middelheimcampus,
Middelheimlaan 1,
2020 Antwerp, Belgium}
\email{liyu.liu@uantwerpen.be}
\author{Wendy Lowen}
\address[Wendy Lowen]{Universiteit Antwerpen, Departement Wiskunde-Informatica, Middelheimcampus,
Middelheimlaan 1,
2020 Antwerp, Belgium}
\email{wendy.lowen@uantwerpen.be}
\thanks{The authors acknowledge the support of the European Union for the ERC grant No 257004-HHNcdMir and the support of the Research Foundation Flanders (FWO) under Grant No G.0112.13N}
\begin{document}
\maketitle

\begin{abstract}
We identify a class of \emph{quasi-compact semi-separated (qcss)} twisted presheaves of algebras $\aaa$ for which well-behaved Grothendieck abelian categories of quasi-coherent modules $\Qch(\aaa)$ are defined. This class is stable under algebraic deformation, giving rise to a 1-1 correspondence between algebraic deformations of $\aaa$ and abelian deformations of $\Qch(\aaa)$. For a qcss presheaf $\aaa$, we use the Gerstenhaber-Schack (GS) complex to explicitly parameterize the first order deformations. For a twisted presheaf $\aaa$ with central twists, we descibe an alternative category $\QPr(\aaa)$ of quasi-coherent presheaves which is equivalent to $\Qch(\aaa)$, leading to an alternative, equivalent association of abelian deformations to GS cocycles of qcss presheaves of commutative algebras. Our construction applies to the restriction $\ooo$ of the structure sheaf of a scheme $X$ to a finite semi-separating open affine cover (for which we have $\Qch(\ooo) \cong \Qch(X)$). Under a natural identification of Gerstenhaber-Schack cohomology of $\ooo$ and Hochschild cohomology of $X$, our construction is shown to be equivalent to Toda's construction from \cite{toda} in the smooth case.
\end{abstract}

\section{introduction}

\subsection{Motivation}
In algebraic geometry, the category $\Qch(X)$ of quasi-coherent sheaves on a scheme $X$ is of fundamental importance. In non-commutative contexts, an important task is to find suitable replacements for this category. For a possibly non-commutative algebra $A$, the category $\Mod(A)$ of (right) $A$-modules is a natural replacement since for $A$ commutative, we have $\Mod(A) \cong \Qch(\mathrm{Spec}(A))$. This way, algebraic structures naturally giving rise to well-behaved ``categories of quasi-coherent sheaves'' can be considered to be non-commutative algebraic counterparts of schemes (corresponding to a ``choice of coordinates''). An important instance constitutes the basis for non-commutative projective geometry, in which to a sufficiently nice $\Z$-graded algebra $A$, one associates a category $\mathsf{QGr}(A)$ of ``quasi-coherent graded modules'', obtained as the quotient of the graded modules by the torsion modules. This is motivated by Serre's well-known result that, for $A$ commutative, we have $\mathsf{QGr}(A) \cong \Qch(\mathrm{Proj}(A))$ (see \cite{staffordvandenbergh}, \cite{vandenbergh}, \cite{vandenbergh2}, \cite{dedekenlowen} for more details and generalizations).

In this paper, we take the ``local approach'' to non-commutative schemes and consider a presheaf of algebras $\aaa\colon \uuu^{\op} \lra \Alg(k)$ on a small category $\uuu$ as a kind of ``non-commutative structure sheaf on affine opens''. Mimicking the process by which a quasi-coherent sheaf is obtained by glueing modules on affine opens, we define the category of \emph{quasi-coherent modules} over $\aaa$ to be
\begin{equation}\label{eqqch}
\Qch({\aaa}) = \Des(\Mod_\aaa),
\end{equation}
the descent category of the prestack $\Mod_{\aaa}$ of module categories on $\aaa$.

\subsection{Twisted deformations and the Gerstenhaber-Schack complex}\label{intro1}
Our main aim is to identify a class of presheaves of algebras $\aaa$ which behave well with respect to deformation. Let $k$ be a field. For simplicity, in this paper we focus on first order deformations (in the direction of $k[\epsilon]$). In order to understand what to expect, let us look at the situation for algebras. For a $k$-algebra $A$, the Gerstenhaber deformation theory of $A$ is controlled by the Hochschild cohomology $HH^{\ast}(A) = \Ext^{\ast}_{A^{\op} \otimes A}(A,A)$. In \cite{lowenvandenberghab}, a deformation theory for abelian categories $\ccc$ was developed. This theory is controlled by an intrinsic notion of Hochschild cohomology $HH^{\ast}_{\mathrm{ab}}(\ccc)$ \cite{lowenvandenberghhoch}. For first order deformations, we obtain a commutative square of isomorphisms:
\begin{equation}\label{square}
\xymatrix{ {HH^2(A)} \ar[d]  \ar[r]^{\Psi_1^A} & {\Def_{\mathrm{alg}}(A)} \ar[d]^{\Psi_2^A} \\  {HH^2_{\mathrm{ab}}(\Mod(A))} \ar[r] & {\Def_{\mathrm{ab}}(\Mod(A))} }
\end{equation}
in which the ``upper route'' $\Psi^A = \Psi^A_2 \Psi^A_1$ is given by
\begin{equation}\label{eqamod}
\Psi^A\colon HH^2(A) \lra \Def_{\mathrm{ab}}(\Mod(A)),\quad \phi \longmapsto \Mod(\bar{A}_{\phi})
\end{equation}
where $\bar{A}_{\phi}$ is the first order algebra deformation of $A$ canonically associated to a Hochschild $2$-cocycle $\phi$ in the Hochschild complex $\CC(A)$.

The first main aim in this paper, is to construct the upper route for presheaves of algebras $\aaa$. Let $\aaa\colon \uuu^{\op} \lra \Alg(k)$ be a presheaf of $k$-algebras on a small category $\uuu$.
In \S \ref{pargstw}, we describe an explicit isomorphism
\begin{equation}\label{hhtw}
\Psi^{\aaa}_1\colon HH^2(\aaa) \lra \Def_{\mathrm{tw}}(\aaa),\quad \phi \longmapsto \bar{\aaa}_{\phi}.
\end{equation}
Here, $HH^{\ast}(\aaa) = \Ext^{\ast}_{\aaa^{\op} \otimes \aaa}(\aaa, \aaa)$ is computed by the Gerstenhaber-Schack complex $\CC_\mathrm{GS}(\aaa)$ \cite{gerstenhaberschack1}, and $\Def_{\mathrm{tw}}(\aaa)$ denotes first order deformations of $\aaa$ \emph{as a twisted presheaf} (Theorem \ref{proptwist}). The fact that an isomorphism exists follows for instance from combining \cite{lowenvandenberghCCT} and \cite{lowenmap}, but for our purpose we are interested in the explicit description of \eqref{hhtw} based upon $\CC_\mathrm{GS}(\aaa)$. 

The complex $\CC_\mathrm{GS}(\aaa)$ is the total complex of a first quadrant double complex, with
$$\CC^{p,q}(\aaa) = \prod_{\sigma \in \nnn_p(\uuu)} \Hom_k(\aaa(c\sigma)^{\otimes q}, \aaa(d\sigma)),$$
where $\nnn(\uuu)$ is the simplicial nerve of $\uuu$, and $d\sigma$ (resp.\ $c\sigma$) is the domain (resp.\ codomain) of a simplex $\sigma$. The differential is obtained from
vertical Hochschild differentials and horizontal simplicial differentials. In particular, the bottom row $\CC^{p,0}(\aaa)$ is the simplicial cohomology complex $\CC_{\mathrm{simp}}(\aaa)$.
We have
\begin{equation}\label{3pieces}
\CC^2_{\mathrm{GS}}(\aaa) = \CC^{0,2}(\aaa) \oplus \CC^{1,1}(\aaa) \oplus \CC^{2,0}(\aaa),
\end{equation}
and in \eqref{hhtw}, a cocycle $\phi = (m_1, f_1, c_1)$ gives rise to a twisted deformation $\bar{\aaa}_{\phi}$ of $\aaa$ in which $m_1$ deforms the individual algebras $\aaa(U)$, $f_1$ deforms the restriction maps, and $c_1$ introduces twists.

If $\aaa$ is a presheaf of \emph{commutative} algebras, the bottom row $\CC_{\mathrm{simp}}(\aaa)$ splits off as a direct summand of the complex $\CC_\mathrm{GS}(\aaa)$, corresponding to the fact that every deformation $\bar{\aaa}_{\phi}$ has central twists and an underlying presheaf $\underline{\bar{\aaa}}_{\phi} = \bar{\aaa}_{(m_1, f_1, 0)}$ associated to the cocycle $(m_1, f_1, 0)$. For a twisted presheaf $\aaa$ with central twists, we describe a category $\QPr(\aaa)$ of \emph{quasi-coherent presheaves}, closer in spirit to the twisted sheaves considered for instance in \cite{caldararu}, and equivalent to the category $\Qch(\aaa)$ (Theorem \ref{thmqmodqpr}).

\subsection{Twisted versus abelian deformations}\label{intro2}
Motivated by the fact that twisted presheaves of algebras naturally occur as deformations of presheaves, we consider definition \eqref{eqqch} a priori for arbitrary twisted presheaves (or prestacks) $\aaa$.
In order for $\Qch(\aaa)$ to be well-behaved under deformation, we impose a number of ``geometric'' conditions upon $\aaa$ in \S \ref{parparqcoh}.

The first condition is actually independent of deformation theory: in order that $\Qch(\aaa)$ is a Grothendieck abelian category, we impose that the restriction functors $u^{\ast}\colon \aaa(U) \lra \aaa(V)$ for $u\colon V \lra U$ in $\uuu$ give rise to the induced $$- \otimes_{\aaa(U)} \aaa(V)\colon \Mod(\aaa(U)) \lra \Mod(\aaa(V))$$ being exact (Theorem \ref{thmgroth}). Our framework encompasses the framework considered in \cite{enochsestrada}.

A fundamental notion in deformation theory is flatness, and in \cite{lowenvandenberghab}, a suitable notion of flatness for abelian categories was introduced. In order that $\Qch(\aaa)$ becomes flat (over a commutative ground ring --- which in our setup will be $k[\epsilon]$), we further impose the following conditions:
\begin{enumerate}
\item $\uuu$ is a finite poset with binary meets;
\item The functors $- \otimes_{\aaa(U)} \aaa(V)$ are the exact left adjoints of compatible locali\-zation functors.
\end{enumerate}

A prestack $\aaa$ satisfying these conditions is called a \emph{quasi-compact semi-separated (qcss)} prestack. In Theorem \ref{defeq}, we prove that for a qcss prestack $\aaa$, there is an isomorphism
\begin{equation} \label{eqtwabintro}
\Psi^{\aaa}_2\colon \Def_{\mathrm{tw}}(\aaa) \lra \Def_{\mathrm{ab}}(\Qch(\aaa)),\quad \bar{\aaa} \longmapsto \Qch(\bar{\aaa}).
\end{equation}
The notion of qcss prestack is preserved under deformation, in particular the isomorphism $\Psi^{\aaa}_2$ inductively extends to an isomorphism between higher order deformations.

Combining \eqref{hhtw} and \eqref{eqtwabintro}, we have now explicitly constructed the upper route $\Psi^{\aaa} = \Psi^{\aaa}_2 \Psi^{\aaa}_1$ for a qcss presheaf of algebras $\aaa$ as
\begin{equation}\label{eqeq1}
\Psi^{\aaa}\colon HH^2(\aaa) \lra \Def_{\mathrm{ab}}(\Qch(\aaa)),\quad \phi \longmapsto \Qch(\bar{\aaa}_{\phi}).
\end{equation}

\subsection{Relation with Toda's construction}\label{introtoda}
The prime example of a qcss presheaf is the restriction $\ooo = \ooo_X|_{\uuu}$ of the structure sheaf of a scheme $X$ to a finite semi-separating cover $\uuu$ (i.e. an open affine cover closed under intersections). Suppose from now on that $\Q \subseteq k$. In \cite{toda}, Toda describes, for a smooth quasi-compact separated scheme $X$, a construction which associates to an element $u \in HH^2(X) = \Ext^2_{X \times X}(\ooo_X, \ooo_X)$, a certain $k[\epsilon]$-linear abelian category $\Qch(X, u)$ which he considers to be a ``first order deformation'' of $\Qch(X)$. This category is not a priori an abelian deformation in the sense of \cite{lowenvandenberghab}. The second main aim in this paper is to clarify the relation between Toda's construction and the map $\Psi^{\ooo}$ from \eqref{eqeq1}. Concretely, Toda's starting point is an element $u$ in
\begin{equation}\label{HKRintro}
 HH^2_\mathrm{HKR}(X, \uuu) = \check{H}^2(\uuu, \ooo_X) \oplus \check{H}^1(\uuu, \ttt_X) \oplus \check{H}^0(\uuu, \wedge^2 \ttt_X).
 \end{equation}
 The resemblance between \eqref{3pieces} and \eqref{HKRintro} is no coincidence. In order to understand it properly, we devote \S \ref{parparhodgeHKR} to a discussion of Gerstenhaber and Schack's Hodge decomposition of $HH^n(\aaa)$ for presheaves of commutative algebras $\aaa$, from which we deduce an HKR decomposition in the smooth case (Theorem \ref{thmHKR}). When applied to the restricted structure sheaf $\ooo = \ooo_X|_{\uuu}$ of a smooth semi-separated scheme, the decomposition translates into \eqref{HKRintro} and we obtain
 \begin{equation}\label{transl}
 HH^n(\ooo) \cong HH^n_\mathrm{HKR}(X, \uuu)
 \end{equation}
 (the application to smooth complex projective varieties is treated by Gerstenhaber and Schack in \cite{gerstenhaberschack1}, and motivated their work). Note that when combined with the isomorphism $HH^n(X) \cong HH^n(\ooo)$ proved for a quasi-compact separated scheme in \cite{lowenvandenberghhoch}, \eqref{transl} yields the classical HKR decomposition formula for schemes, proved for instance by Swan \cite{swan} and Yekutieli \cite{yekutieli} under stronger finiteness assumptions. The possibility to interpret the Hochschild cohomology $HH^2(X)$ in terms of non-commutative deformations of $X$ is an important ingredient in the Homological Mirror Symmetry setup \cite{kontsevich2}.

In Theorem \ref{thmequiv},
we show that if $\phi \in HH^2(\ooo)$ corresponds to $u \in  HH^2_\mathrm{HKR}(X, \uuu)$ under \eqref{transl}, then for the image $\Psi^{\ooo}(\phi) = \Qch(\bar{\ooo}_{\phi})$ of $\phi$ under \eqref{eqeq1}, there are equivalences of abelian categories
$$\Qch(\bar{\ooo}_{\phi}) \cong \QPr(\bar{\ooo}_{\phi}) \cong \Qch(X, u).$$
The intermediate category of quasi-coherent presheaves $\QPr(\bar{\ooo}_{\phi})$ is essential in the proof of the theorem.

In case $X$ is a quasi-compact semi-separated scheme, it follows in particular that $\Qch(X,u)$ is an abelian deformation of $\Qch(X)$ in the sense of \cite{lowenvandenberghab} and the general theory, including the obstruction theory for lifting objects \cite{lowen2}, applies. This is used for instance by Macr\`i and Stellari in the context of an infinitesimal derived Torelli theorem for K3 surfaces \cite[\S 3]{macristellari}.

\subsection{Broader context}\label{introbroader}
We end this introduction by situating the present paper in a broader context. Twisted presheaves and, more generally, (algebroid) prestacks play an important role in deformation quantization, a role which dates back to the work by Kashiwara in the context of contact manifolds \cite{kashiwara}. In the algebraic context, the first proposal to use stacks in deformation quantization was made by Kontsevich in \cite{kontsevich1}. Since the appearance of that paper, several (groups of) people have elaborated and elucidated parts of the suggested approach, and many other directions have been investigated since. The present paper has two main parts, each one starting from the basic notion of twisted deformations as given in Definition \ref{deftwistedpresheaf} (first order case):
\begin{enumerate}
\item the relation with the Gerstenhaber-Schack complex (see \S \ref{intro1});
\item the relation with abelian deformations of ``quasi-coherent modules'' (see \S \ref{intro2}).
\end{enumerate}

Concerning (1), there are several works in which some notion of twisted deformations is studied by means of a Hochschild type deformation complex.
However, in these works, both the precise notion of twisted deformation and the deformation complex tend to differ from the ones we consider.

Firstly, on the level of deformations, the notions under consideration are often adapted to a geometric picture which involves sheaves of (commutative) algebras, and twisted sheaves are obtained by glueing actual sheaves together in a ``twisted'' way, see the work by Caldararu \cite{caldararu}. For instance, in \cite{yekutieli2}, \cite{yekutieli3}, Yekutieli uses crossed groupoids as the technical tool to capture such twists on the level of deformation groupoids. Note that our approach is philosophically quite different, as it deals with twisted presheaves of algebras as algebraic objects in their own right, living a priori on an arbitrary base category $\uuu$. In the commutative case, the relation with the higher viewpoint on the level of associated abelian categories is detailed in \S \ref{parqchmodule}.

Secondly, the deformation complexes associated to specific twisted deformations in the literature are quite different from the Gerstenhaber-Schack complex. For instance, in smooth geometric setups - often considered in the context of deformation quantization - it is natural to replace Hochschild complexes by subcomplexes of polydifferential operators, in order to arrive at a sheaf of structured complexes which can be globalized  \cite{kontsevich}, \cite{kontsevich1}, \cite{vandenbergh4}, \cite{calaquevandenbergh}, \cite{yekutieli1}, \cite{yekutieli2}, \cite{yekutieli3}. This method is detailed for instance in \cite[Appendix 4]{vandenbergh4}, which also treats the relation with a construction by Hinich \cite{hinich1} - the most refined formality result in terms of higher structure being obtained by Calaque and Van den Bergh in \cite{calaquevandenbergh}. A very different type of globalization of Hochschild complexes on quasi-compact opens was described in \cite{lowensheafhoch}. 

Let us comment a bit further upon the fact that since its appearance in \cite{gerstenhaberschack} \cite{gerstenhaberschack1}, \cite{gerstenhaberschack2}, the Gerstenhaber-Shack complex has not been more intensively used as deformation complex in concrete applications, although it perfectly describes first order twisted deformations of a presheaf (Theorem \ref{proptwist}). The complex itself was discovered well before twisted sheaves were studied in deformation theory, and whereas Gerstenhaber and Schack identified a subcomplex which describes deformations of presheaves, they did not pay attention to the more general deformations described by their complex. Further, the complex is not naturally endowed with a dg Lie algebra stucture, making it at first sight less suited for higher order deformation theory. It turns out that this situation can in fact be remedied, as we show in \cite{dinhvanlowen}: a Gerstenhaber-Schack complex can be defined for an arbitrary twisted presheaf (or prestack), and can naturally be endowed with an $L_{\infty}$-structure. In particular, this generalized Gerstenhaber-Schack complex may be a valid alternative complex in the setup of the work by Bressler et. al. \cite{bressler1}, \cite{bressler2}, \cite{bressler3}, where deformations of algebroid prestacks are studied in various differential geometric setups.

In our opinion, the main advantage of the Gerstenhaber-Schack complex over some of the alternatives is that it does not require any smoothness hypothesis whatsoever, and in the commutative case (in characteristic zero) it possesses a purely algebraic Hodge decomposition which under natural smoothness conditions reduces to the HKR decomposition (Theorem \ref{thmHKR}). In the forthcoming \cite{liulowen}, we use the complex and its Hodge decomposition to compute deformations of some concrete singular schemes.

Concerning (2), several works like \cite{kontsevich1} and \cite{toda} use an intuitive concept of abelian deformations, an actual formalized theory of such deformations was developed in \cite{lowenvandenberghab}, \cite{lowenvandenberghhoch}. In the current paper, we place Toda's construction from \cite{toda} within the realm of \cite{lowenvandenberghab} (see \S \ref{introtoda}). Our categories of quasi-coherent modules, obtained as descent categories of module categories, correspond to the abelian categories considered in \cite{kontsevich1}. For now, their use seems mostly limited to the (non-commutative) algebraic geometric setup. In contrast, in differential setups, larger sheaf categories are often taken as starting point, with no well-behaved category of quasi-coherent sheaves a priori available. 
The main body of \cite{lowen8} deals with abelian deformations of sheaf categories, and Morita theory of such categories was further developed in work by D'Agnolo and Polesello \cite{polesello1}, \cite{polesello2}, \cite{polesello3}. In the future, it would be interesting to understand to what extent the descent method used in this paper may be of use in non-algebraic contexts, to capture notions like DQ modules \cite{kashiwarashapira} or cohesive modules \cite{block}.

\vspace{0,5cm}

\noindent \emph{Acknowledgement.} The authors are greatly indebted to Michel Van den Bergh for the idea of using prestacks in order to capture abelian deformations.

The authors thank the referee for pointing out some additional related literature which helped putting our results in a broader context as described under \S \ref{introbroader}.

\section{The Gerstenhaber-Schack complex and twisted deformations}\label{pargstw}

Let $\aaa$ be a presheaf of algebras on a small category $\uuu$. In \cite{gerstenhaberschack1}, \cite{gerstenhaberschack2}, Gerstenhaber and Schack introduce the Hochschild cohomology of presheaves of algebras to be
$HH^n(\aaa) = \Ext^n_{\Bimod(\aaa)}(\aaa, \aaa)$, computed in the category $\Bimod(\aaa)$ of bimodules over $\aaa$, and they describe an explicit complex $\CC_{\mathrm{GS}}(\aaa)$ computing this cohomology.
In contrast with the situation for algebra deformations studied by Gerstenhaber in \cite{gerstenhaber}, \cite{gerstenhaber1}, there is no perfect match between the second Hochschild cohomology and the natural first order deformations of $\aaa$  as a presheaf of algebras. In order to describe the latter, one has to restrict the attention to a subcomplex of the complex $\CC_{\mathrm{GS}}(\aaa)$ \cite{gerstenhaberschack}.

In \cite{lowenmap}, \cite{lowenvandenberghCCT}, new light is thrown on the situation, showing that the Hochschild cohomology of $\aaa$ is in fact naturally related to deformations of $\aaa$ not as a presheaf, but rather as a \emph{twisted} presheaf of algebras. This is argued in two steps. In step one, the presheaf $\aaa$ is turned into an associated fibered $\uuu$-graded category $\AAA$ by a $k$-linear version of the Grothendieck construction \cite{SGA1}, with a graded Hochschild complex $\CC_{\uuu}(\AAA)$. By a new version of the Cohomology Comparison Theorem \cite{lowenvandenberghCCT}, $\CC_{\uuu}(\aaa)$ also computes $HH^n(\aaa)$. In step two, described in \cite{lowenmap}, the complex $\CC_{\uuu}(\AAA)$ is seen to control the deformation theory of the fibered category $\AAA$, which in turn is equivalent to the deformation theory of $\aaa$ as a twisted presheaf.

An advantage of this approach, is the fact that the complex $\CC_{\uuu}(\AAA)$ (in contrast to $\CC_{\mathrm{GS}}(\aaa)$) is readily seen to be endowed with a $B_{\infty}$-structure controlling the higher order deformation theory.

There is however a disadvantage: in the associated fibered category to a twisted presheaf, the typical three pieces of data determining the algebraic structure (the multiplications of the algebras, the restriction maps, and the twist elements) are mixed into a single algebraic operation (the composition of the category), and by deforming this operation at once, we lose our immediate grip on how the three individual pieces of structure deform.

In contrast, in degree two of the Gerstenhaber-Schack complex we have
$$\CC^2_{\mathrm{GS}}(\aaa) = \CC^{0,2}(\aaa) \oplus \CC^{1,1}(\aaa) \oplus \CC^{2,0}(\aaa),$$
where the three pieces of this decomposition allow us to encode the three pieces of data necessary to describe a first order deformation of $\aaa$ as a twisted presheaf.
In this section, we give a direct proof that $H^2\CC_\mathrm{GS}(\aaa)$ classifies deformations of $\aaa$ as a twisted presheaf of algebras (Proposition \ref{proptwist}). In \cite{dinhvanlowen}, we go a step further defining a Gerstenhaber-Schack complex for arbitrary twisted presheaves (or prestacks), and endowing it with an $L_{\infty}$-structure controlling higher order deformation theory.

Throughout, let $k$ be a commutative ring with unit and let $k[\epsilon]$ be the ring of dual numbers. We always assume that algebras have units, morphisms between algebras preserve units, modules are unital.

\subsection{Hochschild cohomology of algebras}\label{paralg}
In this section, we briefly recall the basic definition of the Hochschild complex of an algebra, as well as its classical relation with first order deformations.

Let $A$ be a $k$-algebra and $M$ an $A$-bimodule. The Hochschild complex $\CC(A, M)$ has $\CC^n(A, M) = \Hom_k(A^{\otimes n}, M)$ and the Hochschild differential $d^n_{\mathrm{Hoch}}\colon \CC^n(A, M) \lra \CC^{n+1}(A,M)$ is given by
$$\begin{aligned}
d^n_{\mathrm{Hoch}}(\phi)(a_n, a_{n-1}, \dots, a_0) = &  a_n\phi(a_{n-1}, \dots, a_0) \\ & + \sum_{i = 0}^{n-1} (-1)^{i+1}\phi(a_n, \dots, a_{n-i}a_{n-i-1}, \dots, a_0) \\
& + (-1)^{n+1} \phi(a_n, \dots, a_1) a_0.
\end{aligned}$$
In particular, $d^0_{\mathrm{Hoch}}\colon \CC^0(A,M) \lra \CC^1(A,M)$ is given by
\begin{equation}\label{eqcentr}
M \lra \Hom_k(A,M),\quad m \longmapsto (a \longmapsto am - ma).
\end{equation}
A cochain $\phi \in \CC^n(A,M)$ is called \emph{normalized} if $\phi(a_{n-1}, \dots, a_0) = 0$ as soon as $a_i = 1$ for some $0 \leq i \leq n-1$. The normalized cochains constitute a subcomplex $\bar{\CC}(A,M)$ of $\CC(A,M)$, for which the inclusion $\bar{\CC}(A,M) \lra \CC(A,M)$ is a quasi-isomorphism.

The Hochschild complex of $A$ is the complex $\CC(A) = \CC(A,A)$. Note that the multiplication $m$ on $A$ is an element $m \in \CC^2(A)$. The Hochschild cohomology of $A$ is $HH^n(A) = H^n\CC(A)$.

\begin{definition}
Let $(A,m)$ be a $k$-algebra. A \emph{first order deformation} of $A$ is given by a $k[\epsilon]$-algebra $(\bar{A}, \bar{m}) = (A[\epsilon], m + m_1 \epsilon)$ where $m_1 \in \CC^2(A)$, such that the unit of $\bar{A}$ is the same as the unit of $A$.

For two deformations $(\bar{A}, \bar{m})$ and $(\bar{A}', \bar{m}')$ of $A$ an \emph{equivalence of deformations} is given by an isomorphism of the form $1 + g\epsilon\colon \bar{A} \lra \bar{A}'$ with $g \in \CC^1(A)$.
\end{definition}

Let $\Def_{\mathrm{alg}}(A)$ denote  the set of first order deformations of $A$ up to equivalence of deformations.

\begin{proposition}
Let $A = (A,m)$ be a $k$-algebra.
\begin{enumerate}
\item For $m_1 \in \CC^2(A)$, we have that $(A[\epsilon], m + m_1\epsilon)$ is a first order deformation of $A$ if and only if $m_1 \in \bar{\CC}^2(A)$ and $d_{\mathrm{Hoch}}(m_1) = 0$.
\item For $m_1, m_1' \in Z^2\bar{\CC}(A)$ and $g_1 \in \CC^1(A)$, we have that $1 + g_1\epsilon$ is an isomorphism between $\bar{A}$ and $\bar{A}'$ if and only if $g_1 \in \bar{\CC}^1(A)$ and $d_{\mathrm{Hoch}}(g_1) = m_1 - m'_1$.
\item We have an isomorphism of sets
$$H^2\bar{\CC}(A) \lra \Def_{\mathrm{alg}}(A),\quad m_1 \longmapsto (A[\epsilon], m + m_1\epsilon).$$
Hence $HH^2(A) \cong H^2\bar{\CC}(A)$ classifies first order deformations of $A$ up to equivalence.
\end{enumerate}
\end{proposition}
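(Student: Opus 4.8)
The plan is to prove (1) and (2) by unwinding the algebra axioms for $\bar A = (A[\epsilon], m + m_1\epsilon)$ and for a candidate isomorphism $1 + g_1\epsilon$, expanding $k[\epsilon]$-(bi)linearly and comparing coefficients of $\epsilon^0$ and $\epsilon^1$ (using $\epsilon^2 = 0$); part (3) is then a formal consequence. First I would treat (1). Associativity of $\bar m$, i.e. $\bar m(\bar m(a,b),c) = \bar m(a,\bar m(b,c))$, has $\epsilon^0$-component the associativity of $m$ (which holds automatically) and $\epsilon^1$-component
$$m(m_1(a,b),c) + m_1(m(a,b),c) = m(a,m_1(b,c)) + m_1(a,m(b,c)),$$
which rearranges to $a\,m_1(b,c) - m_1(ab,c) + m_1(a,bc) - m_1(a,b)\,c = 0$, precisely $d_{\mathrm{Hoch}}(m_1) = 0$ in $\CC^3(A)$. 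Separately, the condition that the unit of $\bar A$ be $1 \in A$ unwinds to $m_1(1,a) = 0 = m_1(a,1)$ for all $a$, which for a $2$-cochain is exactly the normalization condition $m_1 \in \bar\CC^2(A)$.

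Next I would do (2). A $k[\epsilon]$-linear map of the form $1 + g_1\epsilon\colon A[\epsilon] \to A[\epsilon]$ is automatically bijective, with inverse $1 - g_1\epsilon$, so it is an equivalence of deformations exactly when it is a morphism of $k[\epsilon]$-algebras $\bar A \to \bar A'$. Writing out $(1+g_1\epsilon)\bigl(\bar m(a,b)\bigr) = \bar m'\bigl((1+g_1\epsilon)(a),(1+g_1\epsilon)(b)\bigr)$ and comparing $\epsilon$-coefficients: the $\epsilon^0$-part is the tautology $m = m$, while the $\epsilon^1$-part is $g_1(ab) + m_1(a,b) = a\,g_1(b) + g_1(a)\,b + m_1'(a,b)$, i.e. $m_1 - m_1' = d_{\mathrm{Hoch}}(g_1)$. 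Since algebra morphisms preserve units and both $\bar A, \bar A'$ have unit $1$, we get $(1+g_1\epsilon)(1) = 1$, i.e. $g_1(1) = 0$, which is the normalization condition $g_1 \in \bar\CC^1(A)$.

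Finally, for (3): by (1) the assignment $m_1 \mapsto (A[\epsilon], m+m_1\epsilon)$ is defined exactly on $Z^2\bar\CC(A)$ and, since by definition every first order deformation has this shape, it is surjective onto $\Def_{\mathrm{alg}}(A)$. By (2), two such deformations are equivalent if and only if their cocycles differ by a coboundary $d_{\mathrm{Hoch}}(g_1)$ with $g_1 \in \bar\CC^1(A)$, i.e. if and only if they represent the same class in $H^2\bar\CC(A)$; this yields simultaneously well-definedness and injectivity of the induced map on cohomology, hence the claimed bijection. The last sentence follows since $HH^2(A) = H^2\CC(A) \cong H^2\bar\CC(A)$ because the inclusion $\bar\CC(A) \hookrightarrow \CC(A)$ is a quasi-isomorphism.

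I expect no serious obstacle here: this is the classical Gerstenhaber computation, and the only points demanding care are the sign bookkeeping in $d_{\mathrm{Hoch}}$ and checking that the unit conditions for $\bar m$ and for $1 + g_1\epsilon$ correspond precisely to normalization in degrees $2$ and $1$ respectively.
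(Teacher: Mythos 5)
Your proof is correct, and the coefficient-of-$\epsilon$ computations match the paper's sign conventions for $d_{\mathrm{Hoch}}$ in degrees $1$ and $2$. The paper itself states this classical proposition without proof, but your argument is exactly the unwinding-of-axioms method the authors use for the analogous presheaf/twisted statement (Theorem \ref{proptwist}), so there is nothing to add.
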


The following easy observation will be important later on:

\begin{lemma}\label{lemcentr}
Let $A = (A,m)$ be a $k$-algebra with first order deformation $\bar{A} = (A[\epsilon], m + m_1\epsilon)$. We have $Z(A)\epsilon \subseteq Z(\bar{A})$.
\end{lemma}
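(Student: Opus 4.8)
The statement to prove is that for a first order deformation $\bar{A} = (A[\epsilon], m + m_1\epsilon)$ of a $k$-algebra $A$, we have $Z(A)\epsilon \subseteq Z(\bar{A})$, where $Z(-)$ denotes the center. The plan is a direct computation. Take $z \in Z(A)$ and an arbitrary element $\bar{a} = a_0 + a_1\epsilon \in \bar{A}$ with $a_0, a_1 \in A$. I compute the two products $\bar{m}(z\epsilon, \bar{a})$ and $\bar{m}(\bar{a}, z\epsilon)$ using $\bar{m} = m + m_1\epsilon$ and the relation $\epsilon^2 = 0$.

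\textbf{Key steps.} First, expand $\bar{m}(z\epsilon, a_0 + a_1\epsilon)$. Since the $\epsilon$-component of $z\epsilon$ is $z$ and its $\epsilon^0$-component is $0$, the $\epsilon^0$-term of the product vanishes and the $m_1$-contribution vanishes as well (it would carry an $\epsilon^2$), so only $m(z, a_0)\epsilon = za_0\,\epsilon$ survives. Symmetrically, $\bar{m}(a_0 + a_1\epsilon, z\epsilon) = m(a_0, z)\epsilon = a_0 z\,\epsilon$. Since $z \in Z(A)$ we have $za_0 = a_0 z$, hence the two products agree, so $z\epsilon$ commutes with every element of $\bar{A}$. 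It remains to note that $z\epsilon$ lies in $\bar{A}$ and that centrality in a (possibly noncommutative, unital) ring is exactly the condition of commuting with all elements; this gives $z\epsilon \in Z(\bar{A})$, and since $z$ was arbitrary, $Z(A)\epsilon \subseteq Z(\bar{A})$.

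\textbf{Main obstacle.} There is essentially no obstacle: the only subtlety worth flagging is that one must use that $z\epsilon$ has zero component in degree $\epsilon^0$, so that the deformed part $m_1$ of the multiplication never interacts with it (every term involving $m_1$ acquires an extra factor of $\epsilon$ and dies). This is what makes the computation insensitive to the choice of cocycle $m_1$. No normalization or cocycle condition on $m_1$ is needed for this particular statement.
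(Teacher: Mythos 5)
Your proof is correct and is essentially identical to the paper's: both compute $\bar{m}(z\epsilon,\bar{a})$ and $\bar{m}(\bar{a},z\epsilon)$ directly, observe that the $m_1$-terms vanish because of $\epsilon^2=0$, and reduce centrality of $z\epsilon$ to centrality of $z$ in $A$. No changes needed.
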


\begin{proof}
We have $(m + m_1\epsilon)(a\epsilon, b + b_1\epsilon) = m(a, b)\epsilon$ and $(m + m_1\epsilon)(b + b_1\epsilon, a\epsilon) = m(b, a)\epsilon$ so if $a$ is central in $A$, $a\epsilon$ is central in $\bar{A}$.
\end{proof}

Finally, we discuss the operation of taking opposites. For a $k$-algebra $A = (A,m)$, let $\Mod^r(A)$, $\Mod^l(A)$, $\Bimod(A)$ be the categories of right modules, left modules and bimodules respectively. Let $A^{\op} = (A^{\op}, m^{\op})$ be the opposite algebra, i.e. $A^{\op} = A$ as $k$-modules and $m^{\op}(a,b) = m(b,a)$. Taking the opposite algebra defines a self inverse automorphism of the category $\Alg(k)$, sending a morphism $f\colon A \lra B$ of $k$-algebras to the morphism $f^{\op}\colon A^{\op} \lra B^{\op}$, $a \longmapsto f(a)$. The identity map $1_A\colon A \lra A^{\op}$ is a morphism (and hence an isomorphism) of $k$-algebras if and only if $A$ is commutative.

We have an isomorphism of categories $(-)^{\op}\colon\Mod^r(A) \lra \Mod^l(A^{\op})$, $M \lra M^{\op}$ with the left action on $M^{\op}$ given by the right action on $M$. Similarly, we have isomorphisms $(-)^{\op}\colon \Mod^l(A) \lra \Mod^r(A^{\op})$ and $(-)^{\op}\colon \Bimod(A) \lra \Bimod(A^{\op})$.

The idea of taking opposites can be extended to the Hochschild complex in the following way. For a $k$-algebra $A$ and $A$-bimodule $M$, we have an isomorphism
\begin{equation}\label{eqop}
(-)^{\op}\colon \CC^n(A,M) \lra \CC^n(A^{\op}, M^{\op}),\quad \phi \longmapsto \phi^{\op} = (-1)^{\lambda(n)}\phi^{\sharp}
\end{equation}
with
$$
\phi^{\sharp}(a_{n-1}, \dots, a_1, a_0) = \phi(a_0, a_1, \dots, a_{n-1})
$$
and $$\lambda(n) = \frac{(n-1)(n+2)}{2}.$$
These operations are compatible with the Hochschild differential, whence they define an isomorphism of complexes $(-)^{\op}\colon \CC(A, M) \lra \CC(A^{\op}, M^{\op})$ resulting in isomorphisms $HH^n(A,M) \cong HH^n(A^{\op}, M^{\op})$ and $HH^n(A) \cong HH^n(A^{\op})$ for all $n$.

\begin{example}\label{exop}
Let $a$, $b$, $c \in A$. For $n = 0$, we have
$$(-)^{\op}\colon M \lra M^{\op},\quad m \longmapsto -m.$$
For $n=1$, $\phi \in \Hom_{k}(A,M)$, we have $\phi^{\op}(a) = \phi(a)$. \\
\noindent For $n = 2$, $\phi \in \CC^2(A,M)$, we have $\phi^{\op}(a,b) = \phi(b,a)$.\\
\noindent For $n = 3$, $\phi \in \CC^3(A,M)$, we have $\phi^{\op}(a,b,c) = - \phi(c, b,a)$.
\end{example}

In particular, for a $k$-algebra $A = (A, m)$ and morphism of $k$-algebras $f\colon A \lra B$, $m^{\op}\colon A^{\op} \otimes A^{\op} \lra A^{\op}$ and $f^{\op}\colon A^{\op} \lra B^{\op}$ are as defined before.
We further obtain the following almost tautological relation with deformations:

\begin{proposition}\label{propopalg}
Consider a $k$-algebra $(A,m)$ with $m_1 \in Z^2\bar{\CC}(A)$ and consider $m_1^{\op} \in Z^2\bar{\CC}(A^{\op})$. The corresponding deformations $(A[\epsilon], m + m_1\epsilon)$ and $(A^{\op}[\epsilon], m^{\op} + m_1^{\op}\epsilon)$ are such that $A^{\op}[\epsilon] = A[\epsilon]^{\op}$.
\end{proposition}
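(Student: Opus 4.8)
This is essentially a tautology once the definitions are unwound, so the plan is short and the "proof" is really just a matter of checking that the operation $(-)^{\op}$ on the Hochschild complex, applied degreewise, is compatible with the dual number construction.

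First I would recall the definition: the deformation $(A[\epsilon], m + m_1\epsilon)$ has underlying $k[\epsilon]$-module $A[\epsilon] = A \oplus A\epsilon$ with multiplication $(a + a'\epsilon)\cdot(b + b'\epsilon) = m(a,b) + (m(a,b') + m(a',b) + m_1(a,b))\epsilon$. The opposite algebra $A[\epsilon]^{\op}$ then has multiplication $(a + a'\epsilon) \ast (b + b'\epsilon) = (b + b'\epsilon)\cdot(a+a'\epsilon) = m(b,a) + (m(b,a') + m(b',a) + m_1(b,a))\epsilon$. On the other hand, by Example \ref{exop}, for $n = 2$ we have $m_1^{\op}(a,b) = m_1(b,a)$, and likewise $m^{\op}(a,b) = m(b,a)$, so the deformation $(A^{\op}[\epsilon], m^{\op} + m_1^{\op}\epsilon)$ has multiplication $(a+a'\epsilon)\cdot(b+b'\epsilon) = m^{\op}(a,b) + (m^{\op}(a,b') + m^{\op}(a',b) + m_1^{\op}(a,b))\epsilon = m(b,a) + (m(b',a) + m(b,a') + m_1(b,a))\epsilon$. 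These two expressions agree on the nose, so the identity map on $A[\epsilon]$ is an isomorphism of $k[\epsilon]$-algebras $A^{\op}[\epsilon] \xrightarrow{\sim} A[\epsilon]^{\op}$; in fact it is literally an equality of algebra structures on the same underlying module, which is the assertion $A^{\op}[\epsilon] = A[\epsilon]^{\op}$.

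I would also note, for completeness, that $m_1^{\op} \in Z^2\bar{\CC}(A^{\op})$ is automatic: the isomorphism of complexes $(-)^{\op}\colon \CC(A) \lra \CC(A^{\op})$ sends cocycles to cocycles and normalized cochains to normalized cochains, so $m_1 \in Z^2\bar{\CC}(A)$ forces $m_1^{\op} \in Z^2\bar{\CC}(A^{\op})$, which is exactly what is needed for $(A^{\op}[\epsilon], m^{\op} + m_1^{\op}\epsilon)$ to be a genuine first order deformation by part (1) of the preceding Proposition. (Associativity of the deformed product is equivalent to $d_{\mathrm{Hoch}}(m_1) = 0$; alternatively one can simply observe that $A[\epsilon]^{\op}$ is an algebra because $A[\epsilon]$ is, which bypasses even this.)

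There is no real obstacle here; the only thing to be slightly careful about is the sign bookkeeping in the definition \eqref{eqop} of $(-)^{\op}$, since $\lambda(2) = \tfrac{1\cdot 4}{2} = 2$ is even, so $m_1^{\op} = m_1^{\sharp}$ with no sign, matching $m^{\op}$ having no sign either; this is precisely why the identification is clean in degree two. Thus the proof is a one-line verification comparing the two multiplication formulas, together with the remark that degree-two opposition introduces no sign.
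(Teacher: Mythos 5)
Your verification is correct, and it is exactly the definitional unwinding the authors had in mind: the paper offers no proof at all, introducing the proposition as an ``almost tautological relation,'' so writing out the two multiplication formulas and noting that $\lambda(2)=2$ is even (hence $m_1^{\op}=m_1^{\sharp}$ with no sign) is precisely the intended argument. Your side remark that $m_1^{\op}\in Z^2\bar{\CC}(A^{\op})$ follows from $(-)^{\op}$ being an isomorphism of complexes preserving normalized cochains is also accurate and matches the discussion preceding the proposition in the paper.
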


\subsection{Simplicial cohomology of presheaves}\label{parsimp}

In this section we introduce a simplicial cohomology complex associated to two arbitrary presheaves of $k$-modules. If we take the first presheaf equal to the constant presheaf $k$, we recover the usual simplicial cohomology of the second presheaf.

Let $\uuu$ be a small category and let $\fff = (\fff, f)$ and $\GGG = (\GGG, g)$ be presheaves of $k$-modules with restriction maps $f^u\colon \fff(U) \lra \fff(V)$ and $g^u\colon \GGG(U) \lra \GGG(V)$ for $u\colon V \lra U$ in $\uuu$.

Let $\nnn(\uuu)$ be the simplicial nerve of $\uuu$. Our standard notation for a $p$-simplex $\sigma \in \nnn_p(\uuu)$ is
\begin{equation}\label{eqsigma0}
\sigma = (\xymatrix{ {d\sigma = U_0} \ar[r]^-{u_1} & {U_{1}} \ar[r]^-{u_2} & {\cdots} \ar[r]^-{u_{p-1}} & {U_{p-1}} \ar[r]^-{u_p} & {U_p = c\sigma}}).
\end{equation}
If confusion can arise, we write $U_i = U^{\sigma}_i$ and $u_i = u^{\sigma}_i$ instead.
For $\sigma \in \nnn_p(\uuu)$, we obtain a map
$$f^{\sigma} = f^{u_p \dots u_2 u_1}\colon \fff(U_p) \lra \fff(U_0).$$
As part of the simplicial structure of $\nnn(\uuu)$, we have maps
$$\partial_i\colon \nnn_{p+1}(\uuu) \lra \nnn_p(\uuu),\quad \sigma \longmapsto \partial_i \sigma$$
for $i = 0, 1, \dots, p+1$. For $\sigma = (\xymatrix{ {U_0} \ar[r]^-{u_1} & {U_{1}} \ar[r]^-{u_2} & {\cdots} \ar[r]^-{u_{p}} & {U_{p}} \ar[r]^-{u_{p+1}} & {U_{p+1}}})$, we have
\begin{gather*}
\partial_{p+1}\sigma = (\xymatrix{ {U_0} \ar[r]^-{u_1} & {U_{1}} \ar[r]^-{u_2} & {\cdots} \ar[r]^-{u_{p-1}} & {U_{p-1}} \ar[r]^-{u_p} & {U_p}}),\\
\partial_{0}\sigma = (\xymatrix{ {U_1} \ar[r]^-{u_2} & {U_{2}} \ar[r]^-{u_3} & {\cdots} \ar[r]^-{u_{p}} & {U_{p}} \ar[r]^-{u_{p+1}} & {U_{p+1}}}),
\end{gather*}
and
$$\partial_i\sigma = (\xymatrix{ {U_{0}} \ar[r]^-{u_1} & {\cdots} \ar[r] & {U_{i-1}} \ar[r]^{u_{i+1} u_{i}} & {U_{i+1}} \ar[r] & {\cdots} \ar[r]^-{u_{p+1}} & {U_{p+1}}})$$
for $i = 1, \dots, p$.
Now put $$\CC^p_{\mathrm{simp}}(\GGG, \fff) = \CC^p(\GGG, \fff) = \prod_{\sigma \in \nnn_p(\uuu)} \Hom_k(\GGG(c\sigma), \fff(d\sigma)).$$
Every $\partial_i$ gives rise to a map
$$d_i\colon \CC^{p}(\GGG, \fff) \lra \CC^{p+1}(\GGG, \fff)$$
which we now describe.

Consider $\phi = (\phi^{\tau})_{\tau} \in \CC^{p}(\GGG, \fff)$.
We are to define $d_i\phi = (d_i\phi^{\sigma})_{\sigma} \in \CC^{p+1}(\GGG, \fff)$, so consider fixed $\sigma$.

For $i = 1, \dots, p$, note that $c \sigma = c \partial_i \sigma$ and $d \sigma = d \partial_i \sigma$ so we can put
$$d_i\phi^{\sigma} = \phi^{\partial_i \sigma}.$$

Next we define $d_{0}\phi^{\sigma}$ as the following composition:
$$\xymatrix{ {\GGG(U_{p+1})} \ar[r]^-{\phi^{\partial_{0} \sigma}} & {\fff(U_1)} \ar[r]^-{f^{u_1}} & {\fff(U_0).}}$$
Finally we define
$d_{p+1}\phi^{\sigma}$
as the following composition:
$$\xymatrix{ {\GGG(U_{p+1})} \ar[r]^-{g^{u_{p+1}}} & {\GGG(U_p)} \ar[r]^-{\phi^{\partial_{p+1}\sigma}} & {\fff(U_0).}}$$
We define $$d_{\mathrm{simp}} = \sum_{i = 0}^{p+1} (-1)^i d_i \colon \CC^{p}(\GGG, \fff) \lra \CC^{p+1}(\GGG, \fff).$$

\begin{lemma}
$d_{\mathrm{simp}}^2 = 0$.
\end{lemma}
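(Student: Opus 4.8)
The plan is to verify the cosimplicial identities among the face operators $d_i\colon \CC^p(\GGG,\fff) \lra \CC^{p+1}(\GGG,\fff)$ and then invoke the standard fact that these identities force the alternating sum to square to zero. Concretely, the first step is to establish that
\begin{equation}\label{eqcosimp}
d_j d_i = d_i d_{j+1} \qquad \text{for } i \leq j,
\end{equation}
as maps $\CC^p(\GGG,\fff) \lra \CC^{p+2}(\GGG,\fff)$. Once \eqref{eqcosimp} is known, the computation $d_{\mathrm{simp}}^2 = \sum_{i,j}(-1)^{i+j} d_j d_i$ splits according to whether $i \leq j$ or $i > j$; reindexing the first sum via \eqref{eqcosimp} shows it cancels the second, exactly as in the proof that the coboundary of a cosimplicial object squares to zero.

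To prove \eqref{eqcosimp}, I would pull it back from the simplicial identities $\partial_i \partial_j = \partial_{j-1}\partial_i$ (for $i<j$) already built into the nerve $\nnn(\uuu)$. For indices $i, j$ in the ``interior'' range $1 \leq i \leq j \leq p$, the operators $d_i, d_j$ act purely by reindexing, $d_i\phi^\sigma = \phi^{\partial_i\sigma}$, so \eqref{eqcosimp} is literally the simplicial identity applied to $\sigma$, using that $c$ and $d$ are preserved by these inner faces. The genuine work is in the boundary cases involving $d_0$ or $d_{p+1}$, where a restriction map $f^{u_1}$ or $g^{u_{p+1}}$ is inserted: here one must track both the reindexing of simplices and the placement of the restriction maps. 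For instance, comparing $d_0 d_0$ with $d_0 d_1$ requires that the outermost arrow of $\partial_0\partial_0\sigma$ and the composite restriction $f^{u_1 u_2}$ match up correctly with $\partial_0\partial_1\sigma$ and $f^{u_1}\circ(\text{restriction along }u_2)$ — this comes down to functoriality of $\fff$, i.e.\ $f^{u_1 u_2} = f^{u_2} f^{u_1}$ (and dually $g^{u_{p+1}u_{p+2}} = g^{u_{p+2}} g^{u_{p+1}}$ for the $d_{p+1}$ cases). The mixed case $d_0$ against $d_{p+1}$ is easy since the two modifications happen at opposite ends and commute on the nose.

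I expect the main obstacle to be purely bookkeeping: there are several boundary subcases ($d_0 d_0$ vs $d_0 d_1$; $d_{p+1}d_{p+1}$ vs $d_{p+1}d_p$; $d_0 d_{p+2}$ vs $d_{p+1}d_0$; and the cases where one index is interior and one is $0$ or $p+1$), each requiring a short diagram chase that invokes either presheaf functoriality or the definition of the inner faces $\partial_i$ as composing consecutive arrows. None of these is conceptually hard, but one must be careful with the index shift in \eqref{eqcosimp} and with which of the two presheaves ($\fff$ on the domain side, $\GGG$ on the codomain side) supplies the relevant restriction map. Having checked \eqref{eqcosimp} in all cases, the vanishing $d_{\mathrm{simp}}^2 = 0$ follows by the formal reindexing argument with no further input.
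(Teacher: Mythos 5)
Your plan is the standard and correct one: the paper offers no proof of this lemma at all, and reducing $d_{\mathrm{simp}}^2=0$ to the cosimplicial identities among the $d_i$ --- which follow from the simplicial identities of the nerve together with functoriality of $\fff$ (for the $d_0$-boundary cases) and of $\GGG$ (for the $d_{p+1}$-boundary cases) --- is exactly the expected argument, and the formal sign cancellation then goes through. One caution: with the usual right-to-left composition convention your displayed identity should read $d_{j+1}\circ d_i = d_i\circ d_j$ for $i\leq j$ (equivalently $d_j\circ d_i = d_i\circ d_{j-1}$ for $i<j$), so for instance $d_1\circ d_0 = d_0\circ d_0$ rather than $d_0\circ d_0=d_0\circ d_1$; as written your identity is only correct if juxtaposition is read in diagrammatic order, and your subsequent description of the $d_0$-case shows you are in fact checking the right composites.
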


\begin{example}
Take $\GGG = k$ the constant presheaf. Then we obtain
$$\CC^p_{\mathrm{simp}}(\fff) = \CC^p(k, \fff) = \prod_{\sigma \in \nnn_p(\uuu)} \fff(d \sigma).$$
The cohomology of this complex is called the \emph{simplicial presheaf cohomology} of $\fff$, and is denoted by
$$H^p(\uuu, \fff) = H^p \CC_{\mathrm{simp}}(\fff).$$
\end{example}

When $p\geq 1$ ,the simplex $\sigma$ in \eqref{eqsigma0} is said to be \textit{degenerate} if $u_i=1_{U_i}$ for some $i$. A $p$-cochain $\phi = (\phi^{\sigma})_{\sigma} \in \CC^{p}(\GGG, \fff)$ is said to be \textit{reduced} if $\phi^\sigma=0$ whenever $\sigma$ is degenerate. All $0$-cochains are reduced by convention. It is easy to see that $d_\mathrm{simp}$ preserves reduced cochains and hence we obtain a subcomplex $\CC'^\bullet(\GGG, \fff)\subseteq \CC^\bullet(\GGG, \fff)$ consisting of reduced cochains.

Equip $\CC^{p}(\GGG, \fff)$ with a filtration $\cdots\subseteq F^p\CC^{p}\subseteq F^{p-1}\CC^{p}\subseteq\cdots\subseteq F^0\CC^{p}=\CC^{p}(\GGG, \fff)$ by setting
\[
F^j\CC^{p}=\{\phi = (\phi^{\sigma})_{\sigma} \in \CC^{p}(\GGG, \fff)\mid\text{$\phi^\sigma=0$ whenever $u^\sigma_i=1_{U^\sigma_i}$ for some $i\leq j$}\}.
\]
Since $d_\mathrm{simp}(F^j\CC^{p})\subseteq F^j\CC^{p+1}$, $F^j\CC^{\bullet}$ is  a complex. There is a sequence of complexes
\[
\cdots\hookrightarrow F^j\CC^{\bullet}\hookrightarrow F^{j-1}\CC^{\bullet}\hookrightarrow \cdots\hookrightarrow F^0\CC^{\bullet}.
\]
For $p\geq j-1\geq 0$, define $\chi\colon \nnn_p(\uuu)\lra\nnn_{p+1}(\uuu)$ by
\[
\chi(\sigma)=(\xymatrix@C=6mm{ U_0 \ar[r]^-{u_1} &  \cdots \ar[r] & U_{j-2} \ar[r]^-{u_{j-1}} & U_{j-1} \ar[r]^-{1_{U_{j-1}}} & U_{j-1} \ar[r]^-{u_j} & U_{j} \ar[r] & \cdots  \ar[r]^-{u_p} & U_p}).
\]

\begin{lemma}\label{lemincl}
The inclusion $l\colon F^j\CC^{\bullet}\hookrightarrow F^{j-1}\CC^{\bullet}$ is a quasi-isomorphism.
\end{lemma}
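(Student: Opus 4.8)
The plan is to show that the inclusion $l\colon F^j\CC^{\bullet}\hookrightarrow F^{j-1}\CC^{\bullet}$ is a quasi-isomorphism by exhibiting an explicit contracting homotopy on the quotient complex $Q^{\bullet} = F^{j-1}\CC^{\bullet}/F^j\CC^{\bullet}$, using the degeneracy-type operator $\chi$ introduced just above. First I would identify $Q^p$ concretely: by definition of the filtration, $Q^p$ is the product $\prod \Hom_k(\GGG(c\sigma),\fff(d\sigma))$ running over those $p$-simplices $\sigma$ with $u^\sigma_i = 1$ for no $i\leq j-1$ but $u^\sigma_j = 1_{U_j}$ (when $p\geq j$; for $p < j$ the quotient vanishes in low degrees, which I would check separately since $\chi$ is only defined for $p\geq j-1$). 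Such a simplex is, up to the redundant identity in spot $j$, the same datum as an arbitrary $p-1$-simplex $\tau$ with no identity among $u^\tau_1,\dots,u^\tau_{j-1}$, and $\sigma = \chi(\tau)$; this gives a bijection between the indexing sets of $Q^p$ and of the analogous ``spot $j$'' truncation in degree $p-1$.

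The key step is then to build the homotopy. For $\phi\in Q^p$, represented by a family $(\phi^\sigma)$ supported on simplices with $u^\sigma_j = 1$, I would define $h(\phi)\in Q^{p-1}$ by $h(\phi)^\tau = \pm\phi^{\chi(\tau)}$ for each relevant $(p-1)$-simplex $\tau$, with the sign chosen to match the $(-1)^i$ appearing in $d_\mathrm{simp} = \sum (-1)^i d_i$. One then computes $d_\mathrm{simp} h + h d_\mathrm{simp}$ on $Q^{\bullet}$: the simplicial identities $\partial_i\chi = \chi\partial_{i}$ for $i$ on one side of $j$, $\partial_i \chi = \chi\partial_{i-1}$ for $i$ on the other side, and the two ``collapsing'' identities $\partial_j\chi = \partial_{j}\chi = \mathrm{id}$ (the face maps $\partial_j$ and $\partial_{j+1}$ undo the inserted identity) are exactly what makes the alternating sum telescope. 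The terms where a face map hits the inserted $1_{U_{j-1}}$ and produces a genuinely non-degenerate simplex combine with the remaining terms so that all summands cancel in pairs except the two produced by $\partial_j\chi$ and $\partial_{j+1}\chi$, which contribute $+\phi - (-\phi)$ type contributions summing to $\phi$ (the precise bookkeeping of signs via $\lambda$ is the only routine-but-delicate part). Hence $d_\mathrm{simp} h + h d_\mathrm{simp} = \mathrm{id}_{Q^{\bullet}}$, so $Q^{\bullet}$ is acyclic, and the long exact sequence of the pair forces $l$ to be a quasi-isomorphism.

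I expect the main obstacle to be not the conceptual structure — this is a standard ``extra degeneracy'' argument — but rather the careful verification that $h$ is well-defined on $Q^{\bullet}$ (i.e.\ that $\chi$ really sends the non-degenerate part bijectively onto the ``$u_j = 1$ but $u_i\neq 1$ for $i<j$'' part, with no boundary effects from simplices where inserting the identity would clash with a pre-existing identity), together with pinning down the signs so that $h$ is a genuine chain homotopy rather than merely a homotopy up to sign. A secondary point worth isolating is the edge case $p = j-1$: there $\chi$ is defined but $Q^{p}$ and $Q^{p-1}$ need to be examined directly to confirm the homotopy identity still closes up, and this is where the convention ``all $0$-cochains are reduced'' and the degree range $p\geq j-1$ in the definition of $\chi$ come into play. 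Once these are dispatched, the reduction of $l$ to acyclicity of $Q^{\bullet}$ is immediate.
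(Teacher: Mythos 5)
Your overall strategy is sound and is in substance identical to the paper's: the paper's manipulations $\psi\mapsto\psi-(-1)^{j}d_{\mathrm{simp}}(\psi_{1})$ with $\psi_{1}=(\psi^{\chi(\zeta)})_{\zeta}$ are precisely the statement that $h=(-1)^{j}(-)\circ\chi$ contracts the quotient $F^{j-1}\CC^{\bullet}/F^{j}\CC^{\bullet}$; the paper merely unpacks this into separate injectivity and surjectivity checks for $H^{p}(l)$ instead of invoking the long exact sequence. So the conceptual route is fine.

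However, the mechanism you describe for the key cancellation is wrong, and if you ran the computation as sketched you would get $0$ where you expect $\phi$. The two faces that undo the insertion are $\partial_{j-1}\chi=\partial_{j}\chi=\mathrm{id}$ (the duplicate object sits at positions $j-1$ and $j$ of $\chi(\tau)$), and they enter the alternating sum with \emph{consecutive} signs $(-1)^{j-1}$ and $(-1)^{j}$, so they cancel each other rather than reinforcing to give $\phi$. The identity term actually comes from the $d\circ h$ side: for a simplex $\tau$ in the support of the quotient (i.e.\ with $u^{\tau}_{j}=1$ and no earlier identities) one has $\chi(\partial_{j-1}\tau)=\tau$, and it is the single term $\phi^{\chi(\partial_{j-1}\tau)}=\phi^{\tau}$ that survives. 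Relatedly, the ``simplicial identities'' $\partial_{i}\chi=\chi\partial_{i}$ you invoke for $i<j-1$ are simply false here, because $\chi$ inserts at the \emph{fixed} spot $j$ independently of simplicial degree: $\partial_{i}\chi(\tau)$ has its identity at spot $j-1$ while $\chi(\partial_{i}\tau)$ has it at spot $j$. Those terms do not cancel in pairs; they each vanish individually, for the separate reasons that $\partial_{i}\chi(\tau)$ carries an identity in a spot $\leq j-1$ (killed because the cochain lies in $F^{j-1}$) and that $\chi(\partial_{i}\tau)$ carries the identity $u^{\tau}_{j}=1$ shifted into spot $j-1$ (killed for the same reason). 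So the homotopy identity holds only \emph{on the quotient}, not on $\CC^{\bullet}(\GGG,\fff)$ itself, and the filtration condition is used in an essential way at every face index, not just at the two collapsing ones. Once you redo the bookkeeping with these corrections the argument closes up exactly as in the paper.
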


\begin{proof}
First of all, let us prove $H^p(l)$ is injective. Suppose $\phi\in Z^p(F^j\CC^{\bullet})\cap B^p(F^{j-1}\CC^{\bullet})$, say $\phi=d_\mathrm{simp}(\psi)$ for some $\psi\in F^{j-1}\CC^{p-1}$. We want to show $\phi\in B^p(F^j\CC^{\bullet})$.

If $p\leq j$, then $\psi\in F^{j-1}\CC^{p-1}=F^{j}\CC^{p-1}$, so $\phi\in B^p(F^j\CC^{\bullet})$.

If $p> j$, define $\psi_1=(\psi^{\chi(\zeta)})_\zeta\in \CC^{p-2}(\GGG, \fff)$. It is obvious that $d_\mathrm{simp}(\psi_1)\in F^{j-1}\CC^{p-1}$. Note that for any $\sigma\in\nnn_{p-1}(\uuu)$ with $u^\sigma_j=1_{U_j^\sigma}$,
\[
\psi^{\sigma}-(-1)^jd_\mathrm{simp}(\psi_1)^{\sigma}=-(-1)^jd_\mathrm{simp}(\psi)^{\chi(\sigma)}=-(-1)^j\phi^{\chi(\sigma)}=0.
\]
So $\psi-(-1)^jd_\mathrm{simp}(\psi_1)\in F^{j}\CC^{p-1}$ and $\phi=d_\mathrm{simp}(\psi-(-1)^jd_\mathrm{simp}(\psi_1))\in B^p(F^j\CC^{\bullet})$.

Next we will prove $H^p(l)$ is surjective. It suffices to show that for any $\theta\in Z^p(F^{j-1}\CC^{\bullet})$ there exists $\theta'\in Z^p(F^{j}\CC^{\bullet})$ such that $\theta-\theta'\in B^p(F^{j-1}\CC^{\bullet})$.

If $p<j$, then since $F^{j-1}\CC^{p}=F^{j}\CC^{p}$, we take $\theta'=\theta$.

If $p\geq j$, define $\theta_1=(\theta^{\chi(\sigma)})_\sigma\in \CC^{p-1}(\GGG, \fff)$. By the same argument as above, we have $\theta_1\in F^{j-1}\CC^{p-1}$ and $\theta-(-1)^jd_\mathrm{simp}(\theta_1)\in Z^{p}(F^{j}\CC^\bullet)$. Therefore we take $\theta'=\theta-(-1)^jd_\mathrm{simp}(\theta_1)$.
\end{proof}

Notice that for every fixed $p$, the filtration $F^\bullet\CC^{p}$ is stationary. By Lemma \ref{lemincl}, we have

\begin{proposition}\label{propreduced}
The inclusion $\CC'^\bullet(\GGG, \fff)\hookrightarrow \CC^\bullet(\GGG, \fff)$ is a quasi-isomorphism.
\end{proposition}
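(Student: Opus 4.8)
The plan is to deduce the statement from Lemma \ref{lemincl} by a telescoping argument. First I would observe that, for each fixed cohomological degree $p$, the filtration $F^{\bullet}\CC^{p}$ stabilizes: once $j > p$, the condition ``$u^{\sigma}_i = 1_{U^{\sigma}_i}$ for some $i \leq j$'' is vacuously equivalent to the condition for $i \leq p$ (there are only $p$ arrows in a $p$-simplex), so $F^{j}\CC^{p} = F^{p}\CC^{p}$ for all $j \geq p$. Moreover $F^{p}\CC^{p}$ is precisely the space of reduced $p$-cochains, since a $p$-simplex is degenerate exactly when one of its $p$ arrows is an identity. Hence $\bigcap_{j} F^{j}\CC^{p} = \CC'^{p}(\GGG,\fff)$, and this intersection is attained at a finite stage depending on $p$.

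Next I would package the sequence of inclusions $\cdots \hookrightarrow F^{j}\CC^{\bullet} \hookrightarrow F^{j-1}\CC^{\bullet} \hookrightarrow \cdots \hookrightarrow F^{0}\CC^{\bullet} = \CC^{\bullet}(\GGG,\fff)$. By Lemma \ref{lemincl}, every single inclusion $F^{j}\CC^{\bullet} \hookrightarrow F^{j-1}\CC^{\bullet}$ is a quasi-isomorphism. Therefore each finite composite $F^{N}\CC^{\bullet} \hookrightarrow F^{0}\CC^{\bullet}$ is a quasi-isomorphism, being a composition of quasi-isomorphisms. The point is then to pass from these finite composites to the inclusion of the intersection $\CC'^{\bullet}(\GGG,\fff) = \bigcap_{j}F^{j}\CC^{\bullet} \hookrightarrow \CC^{\bullet}(\GGG,\fff)$. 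Because the stabilization in the previous paragraph is degreewise, for any fixed $p$ we may choose $N = N(p)$ with $F^{N}\CC^{p} = \CC'^{p}(\GGG,\fff)$; checking that a chain map is a quasi-isomorphism is a degreewise statement on cohomology, and $H^{p}$ only involves degrees $p-1$, $p$, $p+1$, so choosing $N \geq \max\{N(p-1), N(p), N(p+1)\}$ shows $H^{p}(\CC'^{\bullet} \hookrightarrow \CC^{\bullet})$ agrees with $H^{p}(F^{N}\CC^{\bullet} \hookrightarrow \CC^{\bullet})$, which is an isomorphism. Since $p$ was arbitrary, $\CC'^{\bullet}(\GGG,\fff) \hookrightarrow \CC^{\bullet}(\GGG,\fff)$ is a quasi-isomorphism.

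I expect the only genuinely delicate point to be the bookkeeping in the last step: one must be careful that ``stationary filtration'' is meant degreewise rather than uniformly in $p$ (the stabilization index $N(p)$ grows with $p$), so one cannot simply say $\CC'^{\bullet} = F^{N}\CC^{\bullet}$ for a single $N$. The clean way around this is exactly the observation that quasi-isomorphism is tested one cohomology group at a time and each $H^{p}$ sees only a bounded window of degrees, so a degree-dependent truncation of the filtration suffices. Everything else — the vacuity argument for stabilization, the identification of $F^{p}\CC^{p}$ with reduced cochains, and the composability of quasi-isomorphisms — is routine.
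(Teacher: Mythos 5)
Your argument is correct and is exactly the paper's proof, which simply notes that the filtration $F^\bullet\CC^{p}$ is stationary in each fixed degree $p$ and invokes Lemma \ref{lemincl}; your write-up just makes the degreewise-stabilization bookkeeping (that $F^{j}\CC^{p}=\CC'^{p}(\GGG,\fff)$ for $j\geq p$, and that $H^{p}$ only sees degrees $p-1$, $p$, $p+1$) explicit. No further comment is needed.
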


\begin{definition}
Let $(\fff, f)$ be a presheaf of $k$-modules. A \emph{first order deformation } of $\fff$ is given by a presheaf of $k[\epsilon]$-modules
$$(\bar{\fff}, \bar{f}) = (\fff[\epsilon], f + f_1 \epsilon)$$
where $f_1\in\CC_\mathrm{simp}^1(\fff,\fff)$.

For two deformations $(\bar{\fff}, \bar{f})$ and $(\bar{\fff}', \bar{f}')$ an \emph{equivalence of deformations} is given by an isomorphism of the form $g=1+g_1\epsilon$ where $g_1\in\CC_\mathrm{simp}^0(\fff,\fff)$.
\end{definition}

\begin{proposition}
Let $\fff = (\fff, f)$ be a presheaf of $k$-modules.
\begin{enumerate}
\item For $f_1 \in \CC_\mathrm{simp}^1(\fff,\fff)$, we have that $(\fff[\epsilon], f + f_1\epsilon)$ is a first order deformation of $\fff$ if and only if $f_1$ is reduced and $d_{\mathrm{simp}}(f_1) = 0$.
\item For $f_1$, $f_1'\in Z^1\CC'_\mathrm{simp}(\fff,\fff)$, and $g_1\in \CC_\mathrm{simp}^0(\fff,\fff)$, $g=1+g_1\epsilon$ is an isomorphism between $\bar{\fff}$ and $\bar{\fff'}$ if and only if $d_\mathrm{simp}(g_1)=f_1-f_1'$.
\item The first cohomology group $H^1(\fff, \fff)$ classifies first order deformations of $\fff$ up to equivalence.
\end{enumerate}
\end{proposition}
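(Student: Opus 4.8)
The plan is to unwind what a \emph{presheaf of $k[\epsilon]$-modules} $(\fff[\epsilon], f+f_1\epsilon)$ and an equivalence $1+g_1\epsilon$ literally mean, and to match the resulting conditions termwise against the explicit formulas defining $d_{\mathrm{simp}}$ in degrees $0$ and $1$. All three statements then fall out of this bookkeeping, with (3) being a formal consequence of (1), (2) and Proposition \ref{propreduced}.

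For (1): given $f_1=(f_1^{\sigma})_{\sigma}\in\CC^1_{\mathrm{simp}}(\fff,\fff)$, the pair $(\fff[\epsilon], f+f_1\epsilon)$ is a presheaf of $k[\epsilon]$-modules exactly when two things hold, functoriality $\bar{f}^{u_1}\circ\bar{f}^{u_2}=\bar{f}^{u_2u_1}$ for every composable pair $u_1\colon U_0\to U_1$, $u_2\colon U_1\to U_2$ in $\uuu$, and unitality $\bar{f}^{1_U}=\mathrm{id}$. Expanding $(f^{u_1}+f_1^{u_1}\epsilon)\circ(f^{u_2}+f_1^{u_2}\epsilon)$ and using that $f$ is already functorial, the first requirement becomes $f_1^{u_2u_1}=f^{u_1}f_1^{u_2}+f_1^{u_1}f^{u_2}$. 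On the $2$-simplex $\tau=(U_0\xrightarrow{u_1}U_1\xrightarrow{u_2}U_2)$ one computes directly from the definitions of $d_0,d_1,d_{p+1}$ that $d_{\mathrm{simp}}f_1^{\tau}=f^{u_1}f_1^{u_2}-f_1^{u_2u_1}+f_1^{u_1}f^{u_2}$, so the functoriality condition is precisely $d_{\mathrm{simp}}f_1=0$. Since $f^{1_U}=\mathrm{id}$, unitality reads $f_1^{1_U}=0$, and as the degenerate $1$-simplices are exactly the identities, this is exactly the reducedness of $f_1$.

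For (2): write $g_1=(g_1^U)_U\in\CC^0_{\mathrm{simp}}(\fff,\fff)=\prod_U\Hom_k(\fff(U),\fff(U))$. Any $g=1+g_1\epsilon$ is automatically a $k[\epsilon]$-linear isomorphism with inverse $1-g_1\epsilon$ (using $\epsilon^2=0$), so the only condition to impose is compatibility with restrictions, $g^{U_0}\circ\bar{f}^{u_1}=\bar{f}'^{u_1}\circ g^{U_1}$ for $u_1\colon U_0\to U_1$. Expanding and cancelling the degree-zero part (which coincides, both sides deforming the same $\fff$), this becomes $f_1^{u_1}-f_1'^{u_1}=f^{u_1}g_1^{U_1}-g_1^{U_0}f^{u_1}$; on the $1$-simplex $(U_0\xrightarrow{u_1}U_1)$ one has $d_{\mathrm{simp}}g_1=d_0g_1-d_1g_1=f^{u_1}g_1^{U_1}-g_1^{U_0}f^{u_1}$, so the condition is exactly $d_{\mathrm{simp}}g_1=f_1-f_1'$. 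For (3): by (1), first order deformations are parametrised by $Z^1\CC'_{\mathrm{simp}}(\fff,\fff)$; by (2) two of them are equivalent iff they differ by $d_{\mathrm{simp}}(g_1)$ for some $g_1\in\CC^0_{\mathrm{simp}}(\fff,\fff)$, and since $\CC^0$ consists of reduced cochains and $d_{\mathrm{simp}}$ preserves reducedness, the set of such differences is exactly $B^1\CC'_{\mathrm{simp}}(\fff,\fff)$. One also checks that composing $1+g_1\epsilon$ with $1+g_1'\epsilon$ yields $1+(g_1+g_1')\epsilon$, so ``equivalent'' is genuinely an equivalence relation. Hence equivalence classes biject with $H^1\CC'_{\mathrm{simp}}(\fff,\fff)$, which equals $H^1(\fff,\fff)$ by Proposition \ref{propreduced}.

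The computations are routine; the only point demanding care is sign and direction bookkeeping — the contravariance of $\fff$ (so restriction along $u_2u_1$ means ``first $u_2$, then $u_1$'') and the asymmetric roles of $d_0$ and $d_{p+1}$ in $d_{\mathrm{simp}}$ — which must be aligned so that the presheaf axioms translate into the cocycle and reducedness conditions with no stray signs. I do not expect any genuine obstacle beyond this.
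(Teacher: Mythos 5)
Your proof is correct and follows essentially the same route as the paper's: unwind the presheaf axioms (functoriality and unitality) and the morphism condition for $1+g_1\epsilon$, match the $\epsilon$-linear terms against the explicit formulas for $d_{\mathrm{simp}}$ in degrees $0$ and $1$, and deduce (3) from (1), (2) and Proposition \ref{propreduced}. The paper states these equivalences without writing out the term-by-term expansion; your version simply supplies that bookkeeping, and the signs and directions all check out.
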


\begin{proof}
(1) $(\fff[\epsilon], \bar{f}=f + f_1\epsilon)$ is a first order deformation of $\fff$ if and only if for any $v\colon W \lra V$, $u\colon V\lra U$ in $\uuu$ the equations $\bar{f}^{uv}=\bar{f}^v\bar{f}^u$ and $\bar{f}^{1_U}=1_{\fff[\epsilon](U)}$ hold. These equations are equivalent to $d_{\mathrm{simp}}(f_1) = 0$ and $f_1^{1_U}=0$ respectively.

(2) $g=1+g_1\epsilon$ is an isomorphism between $\bar{\fff}$ and $\bar{\fff'}$ if and only if for any $u\colon V \lra U$ the equation $\bar{f}'^ug^U=g^V\bar{f}^u$ holds. This equation is equivalent to $d_\mathrm{simp}(g_1)=f_1-f_1'$.

(3) This follows from (1), (2) and Proposition \ref{propreduced}.
\end{proof}

\subsection{Simplicial presheaf complex}\label{paracechlikecomp}

In this section, we introduce a simplicial complex of presheaves associated to an arbitrary presheaf of algebras.

Let $(\aaa, m, f)$ be a presheaf on $\uuu$. For $U \in \uuu$, we obtain an induced presheaf $\aaa|_U$ on $\uuu/U$ with $\aaa|_U(V \lra U) = \aaa(V)$. We identify $\sigma \in \nnn_{n}(\uuu/U)$ with the object $d\sigma\lra U$ in $\uuu/U$ by composing all morphisms of $\sigma$. For $n \geq 0$, we define a presheaf $\aaa^n = (\aaa^{n}, m^{n}, \rho^{n})$ on $\uuu$ by $$\aaa^{n}(U) = \prod_{\sigma \in \nnn_{n}(\uuu/U)} \aaa|_U(\sigma)$$
endowed with the product algebra structure $m^{n, U}$.
For $u\colon V \lra U$, there is a natural functor $\uuu/V \lra \uuu/U$ sending $v\colon W \lra V$ to $uv\colon W \lra U$ and hence a natural map
$\nnn_{n}(\uuu/V) \lra \nnn_{n}(\uuu/U)$, $\sigma \longmapsto u\sigma$. For $\sigma \in \nnn_n(\uuu/V)$, we further have $\aaa|_V(\sigma) = \aaa|_U(u\sigma)$.
Thus, we obtain restriction maps
$$\rho^{n,u}\colon \aaa^{n}(U) \lra \aaa^{n}(V),\quad (a^{\tau})_{\tau} \longmapsto (a^{u\sigma})_{\sigma}$$
finishing the definition of $\aaa^n$.

Next, we define a morphism of presheaves $\varphi^{n}\colon \aaa^n \lra \aaa^{n+1}$. The maps
\[
\partial_i\colon \nnn_{n+1}(\uuu/U) \lra \nnn_n(\uuu/U)
\]
give rise to the desired maps
\begin{align*}
\varphi^{n,U}\colon \prod_{\tau \in \nnn_{n}(\uuu/U)} \aaa|_U(\tau) &\lra \prod_{\sigma \in \nnn_{n+1}(\uuu/U)} \aaa|_U(\sigma)\\
(a^{\tau})_{\tau} &\longmapsto \biggl(u_1^{\sigma*}(a^{\partial_0\sigma})+\sum_{i=1}^{n+1}(-1)^ia^{\partial_i\sigma}\biggr)_{\sigma}.
\end{align*}

Clearly, $(\aaa^\bullet,\varphi^\bullet)$ is a complex. In particular, $\varphi^{0, U}\colon \aaa^0(U) \lra \aaa^1(U)$ is defined in the following way. For
\[
a = (a^u)_u \in  \prod_{u\colon V \lra U} \aaa(V) \text{ and } \sigma = (\xymatrix{{V_0} \ar[r]^-{v} & {V_1} \ar[r]^-{u} & U}),
\]
we have
$$\varphi^{0,U}(a)^{\sigma} = v^*(a^u)-a^{uv}.$$

\begin{lemma}
$\Kern(\varphi^0) \cong \aaa$.
\end{lemma}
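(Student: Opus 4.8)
The claim is that the kernel of $\varphi^0\colon \aaa^0 \lra \aaa^1$, taken as a presheaf of $k$-algebras, is isomorphic to $\aaa$. The plan is to exhibit a natural morphism $\aaa \lra \aaa^0$ landing in $\Kern(\varphi^0)$ and show it is an isomorphism objectwise (and compatible with restrictions). The natural candidate is the map that sends $a \in \aaa(U)$ to the ``constant'' family $(u^*(a))_{u\colon V \to U} \in \prod_{u\colon V\to U}\aaa(V) = \aaa^0(U)$, obtained by pulling $a$ back along every morphism into $U$. One first checks this is a morphism of presheaves of algebras: it is clearly $k$-linear and multiplicative because $\aaa^0(U)$ carries the product algebra structure and each $u^*$ is an algebra map, and it commutes with the restriction maps $\rho^{0,u}$ by functoriality of $\aaa$ on $\uuu$.

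Next I would verify that the image lands in $\Kern(\varphi^0)$. Using the explicit formula $\varphi^{0,U}(a)^{\sigma} = v^*(a^u) - a^{uv}$ for $\sigma = (V_0 \xrightarrow{v} V_1 \xrightarrow{u} U)$: if $a^u = u^*(a)$ for all $u$, then $v^*(a^u) = v^*u^*(a) = (uv)^*(a) = a^{uv}$, so $\varphi^{0,U}(a)^{\sigma} = 0$ for every $\sigma$. Hence the constant-family map factors through $\Kern(\varphi^0)(U)$. It remains to see this corestriction is bijective for each $U$. Injectivity is immediate: the component of $(u^*(a))_u$ at $u = 1_U$ is $a$ itself, so the assignment has a left inverse given by evaluation at the identity morphism $1_U$. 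For surjectivity, suppose $a = (a^u)_u \in \aaa^0(U)$ satisfies $\varphi^{0,U}(a) = 0$, i.e. $v^*(a^u) = a^{uv}$ for all composable $v, u$. Taking $u$ arbitrary and $v = 1_{V_1}$ (so $uv = u$) gives no new information, but applying the relation with $u$ arbitrary and using that any morphism $u\colon V \to U$ factors as $u = 1_U \circ u$, we get $u^*(a^{1_U}) = a^{1_U \circ u} = a^u$; thus $a$ is the constant family on the element $a^{1_U} \in \aaa(U)$, which shows surjectivity.

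Assembling these observations, the map $\aaa(U) \lra \Kern(\varphi^0)(U)$, $a \mapsto (u^*(a))_u$, is a bijective morphism of algebras with inverse ``evaluate at $1_U$'', and both directions are natural in $U$ since restriction in $\aaa^0$ is given by $\sigma \mapsto u\sigma$ on index sets, which is compatible with evaluation at identities and with pullback. Therefore $\Kern(\varphi^0) \cong \aaa$ as presheaves of $k$-algebras.

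I do not expect any genuine obstacle here; the only point requiring a little care is checking the naturality square for the restriction maps $\rho^{0,u}$, which amounts to unravelling the definition $\rho^{n,u}((a^\tau)_\tau) = (a^{u\sigma})_\sigma$ and matching it with the constant-family construction under the identification $\nnn_0(\uuu/U) \cong \{\text{morphisms into } U\}$. Everything else is a direct computation from the displayed formula for $\varphi^{0,U}$.
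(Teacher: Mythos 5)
Your proposal is correct and follows essentially the same route as the paper: the paper defines the map $\varepsilon\colon \aaa \lra \aaa^0$ induced by $f$ (your constant-family map $a \longmapsto (u^*(a))_u$), notes injectivity from $f^{1_U} = 1$, and verifies $\Beeld(\varepsilon) = \Kern(\varphi^0)$ exactly as you do by evaluating the cocycle condition on simplices of the form $(V \xrightarrow{u} U \xrightarrow{1_U} U)$. Your write-up just supplies the details the paper leaves as "easy to verify."
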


\begin{proof}
Let $\varepsilon\colon \aaa\lra \aaa^0$ be the obvious map induced by $f$. It is easy to verify $\Beeld(\varepsilon)=\Kern(\varphi^0)$, and $\varepsilon$ is injective since $f^{1_U}$ is identity map.
\end{proof}

\subsection{The Gerstenhaber-Schack complex}\label{paragersschackcomp}
Let $\uuu$ be a small category and let
$$\aaa\colon \uuu^{\op} \lra \Alg(k),\quad U \longmapsto \aaa(U)$$ be a presheaf of $k$-algebras with restriction maps denoted by $f^u\colon \aaa(U) \lra \aaa(V)$ for $u\colon V \lra U$ in $\uuu$.
For $n \geq 0$, we consider the presheaf $\aaa^{\otimes n}$ on $\uuu$ with $\aaa^{\otimes n}(U) = \aaa(U)^{\otimes n}$ (in particular $\aaa^{\otimes 0}(U) = k$).
For $\sigma \in \nnn_p(\uuu)$, we consider $\aaa(d \sigma)$ as an $\aaa(c \sigma)$-bimodule through $f^{\sigma}\colon \aaa(c \sigma) \lra \aaa(d \sigma)$.

In \cite{gerstenhaberschack1}, Gerstenhaber and Schack define the complex $\CC_\mathrm{GS}(\aaa)$ which combines Hochschild and simplicial complexes.
For $p$, $q \geq 0$, we put
$$\CC^{p,q}(\aaa) = \prod_{\sigma \in \nnn_p(\uuu)} \Hom_k(\aaa(c \sigma)^{\otimes q}, \aaa(d \sigma)).$$
We obtain a double complex in the following way. For fixed $q$, we have
$$\CC^{\bullet, q} =  \CC_{\mathrm{simp}}(\aaa^{\otimes q}, \aaa)$$
which we endow with the (horizontal) simplicial differential $d_{\mathrm{simp}}$ from \S \ref{parsimp}. For fixed $p$, we have
$$\CC^{p, \bullet} =  \prod_{\sigma \in \nnn_p(\uuu)} \CC(\aaa(c \sigma), \aaa(d \sigma))$$
which we endow with the (vertical) product Hochschild differential $d_{\mathrm{Hoch}}$.
One can easily check $d_{\mathrm{Hoch}}d_{\mathrm{simp}}=d_{\mathrm{simp}}d_{\mathrm{Hoch}}$. As usual, we endow the associated total complex $\CC_\mathrm{GS}^\bullet(\aaa)$ with differentials $d^n_\mathrm{GS}\colon\CC_\mathrm{GS}^{n}(\aaa)\lra \CC_\mathrm{GS}^{n+1}(\aaa)$ as follows
$$d^n_\mathrm{GS}  =(-1)^{n+1}d_{\mathrm{simp}} +d_{\mathrm{Hoch}} .$$

Recall that a cochain $\phi=(\phi^\sigma)_\sigma\in\CC^{p,q}(\aaa)$
is called \textit{normalized} if for any $p$-simplex $\sigma$, $\phi^\sigma$ is normalized, and it is called \textit{reduced} if $\phi^\sigma=0$ whenever $\sigma$ is degenerate. The normalized cochains form a subcomplex $\bar{\CC}_\mathrm{GS}^\bullet(\aaa)$ of $\CC_\mathrm{GS}^\bullet(\aaa)$, the \emph{normalized Hochschild complex} of $\aaa$. The normalized reduced cochains form a further subcomplex $\bar{\CC}_\mathrm{GS}'^\bullet(\aaa)$ of $\bar{\CC}_\mathrm{GS}^\bullet(\aaa)$, the \emph{normalized reduced Hochschild complex} of $\aaa$.

\begin{proposition}
The inclusions $\bar{\CC}_\mathrm{GS}'^\bullet(\aaa) \lra \bar{\CC}_\mathrm{GS}^\bullet(\aaa)\lra \CC_\mathrm{GS}^\bullet(\aaa)$ are quasi-isomorphisms.
\end{proposition}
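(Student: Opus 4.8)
The plan is to treat the two inclusions separately, in each case exploiting the bicomplex structure of $\CC_\mathrm{GS}(\aaa)$ and one of its two filtrations, so as to reduce the statement to the corresponding ``one-variable'' result already available. Throughout I would use the standard fact that a morphism of first-quadrant bicomplexes which is a quasi-isomorphism on every column (resp.\ on every row) induces a quasi-isomorphism on total complexes: this follows from the comparison theorem applied to the column (resp.\ row) filtration spectral sequences, which converge because in each total degree only finitely many columns (resp.\ rows) occur, or equivalently from the acyclic assembly lemma applied to the mapping cone. Note all three complexes $\CC_\mathrm{GS}(\aaa)$, $\bar{\CC}_\mathrm{GS}(\aaa)$, $\bar{\CC}_\mathrm{GS}'(\aaa)$ live in the first quadrant and the inclusions are maps of bicomplexes.

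For the inclusion $\bar{\CC}_\mathrm{GS}^\bullet(\aaa) \lra \CC_\mathrm{GS}^\bullet(\aaa)$ I would argue column by column. By construction the $p$-th column of $\CC_\mathrm{GS}(\aaa)$ is the product $\prod_{\sigma \in \nnn_p(\uuu)} \CC(\aaa(c\sigma), \aaa(d\sigma))$ of Hochschild complexes, and the $p$-th column of $\bar{\CC}_\mathrm{GS}(\aaa)$ is the product of the corresponding normalized Hochschild complexes. For each fixed $\sigma$, the inclusion $\bar{\CC}(\aaa(c\sigma),\aaa(d\sigma)) \lra \CC(\aaa(c\sigma),\aaa(d\sigma))$ is a quasi-isomorphism, as recalled in \S\ref{paralg}. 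Since arbitrary products are exact in the category of $k$-modules, cohomology commutes with the product over $\sigma \in \nnn_p(\uuu)$, and the inclusion is again a quasi-isomorphism on the $p$-th column. Hence it is a column-wise quasi-isomorphism of first-quadrant bicomplexes, and the first assertion follows.

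For the inclusion $\bar{\CC}_\mathrm{GS}'^\bullet(\aaa) \lra \bar{\CC}_\mathrm{GS}^\bullet(\aaa)$ I would argue row by row, along the other filtration; the point is that normalization is a condition on the Hochschild variable whereas reducedness is a condition on the simplicial variable, so the two are independent. Writing $\aaa/k$ for the presheaf of $k$-modules $U \longmapsto \aaa(U)/k\cdot 1_{\aaa(U)}$, a normalized Hochschild $q$-cochain $\aaa(c\sigma)^{\otimes q} \lra \aaa(d\sigma)$ is the same thing as a $k$-linear map $(\aaa(c\sigma)/k)^{\otimes q} \lra \aaa(d\sigma)$; under this identification the $q$-th row $\bar{\CC}^{\bullet, q}(\aaa)$ of $\bar{\CC}_\mathrm{GS}(\aaa)$, with its horizontal differential, becomes exactly the simplicial complex $\CC_\mathrm{simp}^\bullet((\aaa/k)^{\otimes q}, \aaa)$ of \S\ref{parsimp}, while the row-$q$ part of $\bar{\CC}_\mathrm{GS}'^\bullet(\aaa)$ is precisely its reduced subcomplex $\CC_\mathrm{simp}'^\bullet((\aaa/k)^{\otimes q}, \aaa)$. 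By Proposition \ref{propreduced}, the latter inclusion is a quasi-isomorphism, so $\bar{\CC}_\mathrm{GS}'(\aaa) \lra \bar{\CC}_\mathrm{GS}(\aaa)$ is a row-wise quasi-isomorphism of first-quadrant bicomplexes, hence a quasi-isomorphism on total complexes. Composing the two quasi-isomorphisms proves the proposition.

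The argument is essentially formal and I do not expect a genuine obstacle; the only points needing a little care are bookkeeping ones. First, one must check that the identification of $\bar{\CC}^{\bullet,q}(\aaa)$ with $\CC_\mathrm{simp}^\bullet((\aaa/k)^{\otimes q},\aaa)$ is compatible with the horizontal differential --- this rests on the fact that the restriction maps of $\aaa$ preserve units and so descend to $\aaa/k$ (and to its tensor powers). Second, in passing through the mapping cone one should keep track of the harmless degree shift so that the column-wise, respectively row-wise, comparison really takes place in (a shift of) the first quadrant, where convergence is automatic; no hypothesis on $k$ beyond its being a commutative ring is needed.
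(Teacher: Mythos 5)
Your proof is correct and follows essentially the same route as the paper: the inclusion $\bar{\CC}_\mathrm{GS}'^\bullet(\aaa) \lra \bar{\CC}_\mathrm{GS}^\bullet(\aaa)$ is handled row by row via the reduced-versus-full simplicial comparison (Lemma \ref{lemincl}, i.e.\ Proposition \ref{propreduced}), exactly as in the text. For the inclusion $\bar{\CC}_\mathrm{GS}^\bullet(\aaa)\lra \CC_\mathrm{GS}^\bullet(\aaa)$ the paper merely cites Gerstenhaber--Schack, and your column-wise argument (normalized Hochschild subcomplexes factor-by-factor, exactness of products, and the first-quadrant spectral sequence comparison) is the standard proof of that cited fact, so nothing is missing.
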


\begin{proof}
For the inclusion $\bar{\CC}_\mathrm{GS}^\bullet(\aaa)\lra \CC_\mathrm{GS}^\bullet(\aaa)$, the proof can be found in \cite{gerstenhaberschack1}. For the inclusion $\bar{\CC}_\mathrm{GS}'^\bullet(\aaa) \lra \bar{\CC}_\mathrm{GS}^\bullet(\aaa)$, using Lemma \ref{lemincl}, each row of the double complex $\bar{\CC}'^{\bullet, \bullet}(\aaa)$ is quasi-isomorphic to the corresponding row of $\bar{\CC}^{\bullet, \bullet}(\aaa)$, so the spectral sequences (filtrations by rows) of $\bar{\CC}'^{\bullet, \bullet}(\aaa)$ and $\bar{\CC}^{\bullet, \bullet}(\aaa)$ are isomorphic.
\end{proof}

For the complex $\CC_\mathrm{GS}(\aaa)$, we obtain a subcomplex $\CC_\mathrm{tGS}(\aaa)$ by eliminating the bottom row. Precisely, we have $\CC_\mathrm{tGS}(\aaa)^n = \oplus_{p + q = n, q \geq 1} \CC^{p,q}(\aaa)$ with the induced differential $d_\mathrm{GS}$. We call the resulting complex the \emph{truncated Hochschild complex}. Similarly, we obtain the \emph{truncated normalized Hochschild complex} $\bar{\CC}_\mathrm{tGS}(\aaa)$ and the \emph{truncated normalized reduced Hochschild complex} $\bar{\CC}'_\mathrm{tGS}(\aaa)$.

There is an exact sequence of complexes
\begin{equation}\label{seqhoch}
0 \lra \CC_\mathrm{tGS}(\aaa) \lra \CC_\mathrm{GS}(\aaa) \lra \CC_{\mathrm{simp}}(\aaa) \lra 0
\end{equation}
where the cokernel is obtained from the projection onto the bottom row $\CC^{p, 0}(\aaa)$ of the double complex $\CC^{p,q}(\aaa)$.

\begin{proposition}\label{propsplit}
Let $\aaa$ be a presheaf of commutative $k$-algebras. The bottom row $\CC^{\bullet,0}(\aaa) = \CC_{\mathrm{simp}}(\aaa)$ is a subcomplex of $\CC_\mathrm{GS}(\aaa)$ and the canonical projection $\CC_\mathrm{GS}(\aaa) \lra \CC_\mathrm{tGS}(\aaa)$ is a morphism of complexes, canonically splitting the sequence \eqref{seqhoch} as a sequence of complexes whence we have $$\CC_\mathrm{GS}(\aaa) =\CC_\mathrm{tGS}(\aaa) \oplus \CC_{\mathrm{simp}}(\aaa).$$
Similarly, we have
$$\bar{\CC}'_\mathrm{GS}(\aaa) = \bar{\CC}'_\mathrm{tGS}(\aaa) \oplus \CC'_{\mathrm{simp}}(\aaa).$$
\end{proposition}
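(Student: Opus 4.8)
The plan is to isolate the single feature of the commutative case that makes the bottom row split off: the vanishing of the vertical (Hochschild) differential leaving $\CC^{\bullet,0}(\aaa)$. Recall from \eqref{eqcentr} that for an algebra $A$ and an $A$-bimodule $M$ the degree-zero Hochschild differential $d^0_{\mathrm{Hoch}}\colon \CC^0(A,M)=M\lra \CC^1(A,M)$ is $m\longmapsto(a\longmapsto am-ma)$. Taken componentwise over the $p$-simplices $\sigma\in\nnn_p(\uuu)$, with $A=\aaa(c\sigma)$ and $M=\aaa(d\sigma)$ regarded as an $A$-bimodule through $f^\sigma$, this is exactly the map $d_{\mathrm{Hoch}}\colon\CC^{p,0}(\aaa)\lra\CC^{p,1}(\aaa)$ of the Gerstenhaber--Schack double complex. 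Since each $\aaa(d\sigma)$ is commutative, $a\cdot m=f^\sigma(a)m=mf^\sigma(a)=m\cdot a$, so $d_{\mathrm{Hoch}}$ vanishes on $\CC^{p,0}(\aaa)$ for every $p$. This observation is the whole content of the proposition; everything else is formal.

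First I would deduce (1). The total differential is $d^n_\mathrm{GS}=(-1)^{n+1}d_{\mathrm{simp}}+d_{\mathrm{Hoch}}$, so on a cochain in $\CC^{p,0}(\aaa)$ it equals $(-1)^{p+1}d_{\mathrm{simp}}$, which lands again in $\CC^{p+1,0}(\aaa)$; hence $\CC^{\bullet,0}(\aaa)$ is a subcomplex of $\CC_\mathrm{GS}(\aaa)$, and, up to the harmless overall alternating-sign twist under which any row of a double complex sits inside the total complex, it is the row $\CC_{\mathrm{simp}}(\aaa^{\otimes 0},\aaa)=\CC_{\mathrm{simp}}(\aaa)$. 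Next, for (2): the truncated part $\CC_\mathrm{tGS}(\aaa)=\bigoplus_{p+q=n,\,q\geq 1}\CC^{p,q}(\aaa)$ is a subcomplex for an arbitrary presheaf of algebras, since $d_{\mathrm{simp}}$ preserves $q$ and $d_{\mathrm{Hoch}}$ raises it, so neither can exit $q\geq 1$; and the canonical projection $\pi\colon\CC_\mathrm{GS}(\aaa)\lra\CC_\mathrm{tGS}(\aaa)$ killing the $q=0$ summand is a chain map precisely because the only component of $d_\mathrm{GS}$ that moves a cochain out of the bottom row is $d_{\mathrm{Hoch}}\colon\CC^{p,0}\to\CC^{p,1}$, which we have just seen is zero; on the summands with $q\geq 1$ there is nothing to check.

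Then (3) is immediate: $\pi$ is a chain-map retraction of the inclusion $\CC_\mathrm{tGS}(\aaa)\hookrightarrow\CC_\mathrm{GS}(\aaa)$ whose kernel is the subcomplex $\CC^{\bullet,0}(\aaa)=\CC_{\mathrm{simp}}(\aaa)$ from (1) --- equivalently, the inclusion $\CC^{\bullet,0}(\aaa)\hookrightarrow\CC_\mathrm{GS}(\aaa)$ is a chain-map section of the projection in \eqref{seqhoch} --- so \eqref{seqhoch} splits as a sequence of complexes and $\CC_\mathrm{GS}(\aaa)=\CC_\mathrm{tGS}(\aaa)\oplus\CC_{\mathrm{simp}}(\aaa)$. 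For the normalized reduced statement I would only observe that the decomposition respects both conditions: a bottom-row cochain has no tensor arguments, so the normalization condition on it is vacuous and $\bar{\CC}'^{p,0}(\aaa)=\CC'^{p,0}(\aaa)=\CC'^p_{\mathrm{simp}}(\aaa)$, while $d_{\mathrm{Hoch}}$ still vanishes there; hence $\pi$ restricts to a chain-map retraction of $\bar{\CC}'_\mathrm{tGS}(\aaa)\hookrightarrow\bar{\CC}'_\mathrm{GS}(\aaa)$ with kernel $\CC'_{\mathrm{simp}}(\aaa)$, giving $\bar{\CC}'_\mathrm{GS}(\aaa)=\bar{\CC}'_\mathrm{tGS}(\aaa)\oplus\CC'_{\mathrm{simp}}(\aaa)$. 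I do not expect any real obstacle here; the only point requiring minor care is the sign bookkeeping in matching the induced differential on the bottom row with that of $\CC_{\mathrm{simp}}(\aaa)$, and this is purely cosmetic and irrelevant to the splitting.
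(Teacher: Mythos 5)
Your proof is correct and rests on exactly the same key observation as the paper's: the vertical differential $d^0_{\mathrm{Hoch}}\colon\CC^{p,0}(\aaa)\lra\CC^{p,1}(\aaa)$ is given componentwise by $m\longmapsto(a\longmapsto f^\sigma(a)m-mf^\sigma(a))$ and hence vanishes by commutativity of $\aaa(d\sigma)$. The remaining formal bookkeeping you supply (that the projection is then a chain map, that the bottom row is a subcomplex, and that normalization is vacuous in bidegree $(p,0)$) is exactly what the paper leaves to the reader with ``The claim follows.''
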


\begin{proof}
In the double complex $\CC^{p,q}(\aaa)$, the maps from $\CC^{p,0}(\aaa)$ to $\CC^{p,1}(\aaa)$ $$\prod_{\sigma \in \nnn_p(\uuu)} \aaa(d\sigma) \lra \prod_{\sigma \in \nnn_p(\uuu)}\Hom_k(\aaa(c\sigma), \aaa(d\sigma))$$ are determined by the maps $d^0_{\mathrm{Hoch}}\colon \aaa(d\sigma) \lra \Hom_k(\aaa(c\sigma), \aaa(d\sigma))$ described in \eqref{eqcentr}. By commutativity of $\aaa(d\sigma)$, we have $d^0_{\mathrm{Hoch}} = 0$. The claim follows.
\end{proof}

Finally, we extend the operation of taking opposite cochains from \S \ref{paralg} to the setting of twisted presheaves. Let $\aaa$ be a twisted presheaf with opposite presheaf $\aaa^{\op}$ with $\aaa^{\op}(U) = \aaa(U)^{\op}$ and induced opposite restriction maps. For $p,q \geq 0$, we have an isomorphism
$$(-)^{\op}\colon \CC^{p,q}(\aaa) \lra \CC^{p,q}(\aaa^{\op}),\quad (\phi^{\sigma})_{\sigma} \longmapsto \bigl((\phi^{\sigma})^{\op}\bigr)_{\sigma}.$$
These isomorphisms are compatible with the Hochschild and simplicial differentials, whence they give rise to isomorphisms
\begin{equation}\label{gsop}
(-)^{\op}\colon \CC_\mathrm{GS}(\aaa) \lra \CC_\mathrm{GS}(\aaa^{\op})
\end{equation}
and $(-)^{\op}\colon \CC_\mathrm{tGS}(\aaa) \lra \CC_\mathrm{tGS}(\aaa^{\op})$, with the resulting cohomology isomorphisms.

\subsection{The situation in low degrees}\label{pardeglow}
Let $\aaa$ be a {presheaf} of $k$-algebras as before. Let us list the ingredients in $\CC^n(\aaa) = \CC^n_{\mathrm{GS}}(\aaa)$ for $n = 0$, $1$, $2$, $3$ and describe the differentials.

When $n=0$, we have $$\CC^0(\aaa) = \CC^{0,0}(\aaa) =  \prod_{U_0 \in \uuu}\aaa(U_0).$$

When $n=1$, $\CC^1(\aaa) = \CC^{0,1}(\aaa) \oplus \CC^{1,0}(\aaa)$ with
$$\CC^{0,1}(\aaa) = \prod_{U_0 \in \uuu} \Hom_k(\aaa(U_0), \aaa(U_0))$$
and
$$\CC^{1,0}(\aaa) = \prod_{u_1\colon U_0 \lra U_1} \aaa(U_0).$$

When $n=2$, we have $\CC^2(\aaa) = \CC^{0,2}(\aaa) \oplus \CC^{1,1}(\aaa) \oplus \CC^{2,0}(\aaa)$ with
\begin{gather*}
\CC^{0,2}(\aaa) = \prod_{U_0 \in \uuu}  \Hom_k(\aaa(U_0)^{\otimes 2}, \aaa(U_0)),\\
\CC^{1,1}(\aaa) = \prod_{u_1\colon U_0 \lra U_1} \Hom_k(\aaa(U_1), \aaa(U_0)),
\end{gather*}
and \[\CC^{2,0}(\aaa) = \prod_{\substack{u_1\colon U_0\lra U_1\\u_2\colon U_1\lra U_2}} \aaa(U_0).\]

When $n=3$, $\CC^3(\aaa) = \CC^{0,3}(\aaa) \oplus \CC^{1,2}(\aaa) \oplus \CC^{2,1}(\aaa)\oplus \CC^{3,0}(\aaa)$ with
\begin{gather*}
\CC^{0,3}(\aaa) = \prod_{U_0 \in \uuu}  \Hom_k(\aaa(U_0)^{\otimes 3}, \aaa(U_0)),\\
\CC^{1,2}(\aaa) = \prod_{u_1\colon U_0 \lra U_1} \Hom_k(\aaa(U_1)^{\otimes 2}, \aaa(U_0)),\\
\CC^{2,1}(\aaa) = \prod_{\substack{u_1\colon U_0\lra U_1\\u_2\colon U_1\lra U_2}} \Hom_k(\aaa(U_2),\aaa(U_0)),
\end{gather*}
and
\[\CC^{3,0}(\aaa) = \prod_{\substack{u_1\colon U_0\lra U_1\\u_2\colon U_1\lra U_2\\u_3\colon U_2\lra U_3}} \aaa(U_0).\]

The differentials are given by
\begin{gather*}
d^0_{\mathrm{GS}}(\theta_{0,0})=(d_{\mathrm{Hoch}}(\theta_{0,0}), -d_{\mathrm{simp}}(\theta_{0,0})),\\
d^1_{\mathrm{GS}}(\theta_{0,1},\theta_{1,0})=(d_{\mathrm{Hoch}}(\theta_{0,1}),d_{\mathrm{simp}}(\theta_{0,1})+d_{\mathrm{Hoch}}(\theta_{1,0}), d_{\mathrm{simp}}(\theta_{1,0})),\\
\begin{aligned}
d^2_{\mathrm{GS}}(\theta_{0,2},\theta_{1,1},\theta_{2,0})&=(d_{\mathrm{Hoch}}(\theta_{0,2}),-d_{\mathrm{simp}}(\theta_{0,2})+d_{\mathrm{Hoch}}(\theta_{1,1}),\\ &\phantom{\:=\:}-d_{\mathrm{simp}}(\theta_{1,1})+d_{\mathrm{Hoch}}(\theta_{2,0}),-d_{\mathrm{simp}}(\theta_{2,0})).
\end{aligned}
\end{gather*}

\subsection{Twisted presheaves}\label{partwistedpresheaf} Twisted presheaves of algebras are natural variants of presheaves, which only satisfy the composition law for restriction maps up to conjugation by invertible elements.

\begin{definition}\label{deftwisted}
A \emph{twisted presheaf of $k$-algebras} $\aaa=(\aaa, m, f, c, z)$ on $\uuu$ consists of the following data:
\begin{itemize}
\item for every $U \in \uuu$ a $k$-algebra $(\aaa(U), m^U)$;
\item for every $u\colon V \lra U$ in $\uuu$ a morphism of  $k$-algebras $f^u = u^{\ast}\colon \aaa(U) \lra \aaa(V)$;
\item for every pair $v\colon W \lra V$, $u\colon V  \lra U$ an invertible element $c^{u,v}\in \aaa(W) $ such that for every $a \in \aaa(U)$ we have
$$c^{u,v} v^*u^*(a) = (uv)^*(a) c^{u,v}.$$
\item for every $U \in \uuu$ an invertible element $z^U\in \aaa(U)$ such that for every $a \in \aaa(U)$ we have
$$z^U a = f^{1_U}(a) z^U.$$
\end{itemize}
These data further satisfy the following compatibility conditions:
\begin{gather}
c^{u,vw}c^{v,w}=c^{uv,w}{w^*}(c^{u,v}),\label{eq:twistcocycle}\\
c^{u,1_V}z^V=1,\quad c^{1_U,u}{u^*}(z^U)=1\notag
\end{gather}
for every triple $w\colon T\lra W$, $v\colon W\lra V$, $u\colon V\lra U$.
\end{definition}

\begin{remarks}
\begin{enumerate}
\item  An ordinary presheaf $(\aaa, m, f)$ is naturally interpreted as a twisted presheaf with $c^{u,v}=1$ and $z^U=1$.
\item A twisted presheaf $(\aaa, m, f, c, z)$ with $c^{u,v}$ and $z^U$ being central for all $(u,v)$ and $U$  is said to \textit{have central twists}. Such a twisted presheaf has an underlying ordinary presheaf which is denoted by  $\underline{\aaa}=(\aaa, m, f)$.
\item A twisted presheaf $\aaa$ can be seen as a prestack of $k$-linear categories (see Definition \ref{defpseudo}) by viewing the algebras $\aaa(U)$ as one-object categories.
\end{enumerate}
\end{remarks}

\begin{example}\label{exop1}
Consider a twisted presheaf $\aaa = (\aaa, m, f, c, z)$. We obtain the \emph{opposite twisted presheaf} $\aaa^{\op} = (\aaa^{\op}, m^{\op}, f^{\op}, c^{-1}, z^{-1})$ with $\aaa^{\op}(U) = \aaa(U)$ as a $k$-modules and $(m^{\op})^U = (m^U)^{\op}$, $(f^{\op})^u = (f^u)^{\op}$, $(c^{-1})^{u,v} = (c^{u,v})^{-1}$ and $(z^{-1})^U = (z^U)^{-1}$. One checks that this indeed defines a twisted presheaf. We have $(\aaa^{\op})^{\op} = \aaa$.
\end{example}

\begin{definition}
Consider twisted presheaves $(\aaa, m, f, c, z)$ and $(\aaa', m', f', c', z')$ on $\uuu$. A \emph{morphism of twisted presheaves} $(g,\tau)\colon\aaa \lra \aaa'$ consists of the following data:
\begin{itemize}
\item for  each $U\in \uuu$,  a $k$-linear map $g^U\colon\aaa(U)\lra\aaa'(U)$;
\item for each $u\colon V\lra U$, an invertible element $\tau^u\in\aaa'(V),$
\end{itemize}
These data further satisfy the following compatibility conditions: for any $v\colon W\lra V$, $u\colon V\lra U$ and $a\in\aaa(U)$,
\begin{enumerate}
\item  $g^Um^U=m'^U(g^U\otimes g^U)$;
\item  $g^U(1)=1'$;
\item  $m'^V(g^Vu^*(a),\tau^u)=m'^V(\tau^u,u'^* g^U(a))$;
\item  $m'^W(\tau^{uv},c'^{u,v})=m'^W(g^W(c^{u,v}),\tau^v,v'^*(\tau^u))$;
\item  $m'^U(\tau^{1_U},z'^U)=g^U(z^U)$.
\end{enumerate}
\end{definition}

Morphisms can be composed, and every twisted presheaf $\aaa$ has an identity morphism $1_{\aaa}$ with $g^U = 1_{\aaa(U)}$ and $\tau^u = 1 \in \aaa(V)$.
A morphism of twisted presheaves $(g,\tau)\colon\aaa \lra \aaa'$ is an isomorphism if and only if $g^U$ is an isomorphism of $k$-algebras for each $U$.

\begin{example}\label{exdualpre}
Let $\aaa$ be a twisted presheaf with opposite twisted presheaf $\aaa^{\op}$ as in Example \ref{exop1}. Putting $g^{U} = 1_{\aaa(U)}$ and $\tau^u = 1 \in \aaa(V)$ defines a morphism (whence, an isomorphism) of twisted presheaves $(g, \tau)\colon \aaa \lra \aaa^{\op}$ if and only if $\aaa$ is a twisted presheaf of commutative algebras with $c =1$ and $z = 1$, i.e. it is a presheaf of commutative algebras.
\end{example}

It is proved in \cite{lowenmap} that any twisted presheaf $(\aaa, m, f, c, z)$ is isomorphic to one of the form $(\aaa', m', f', c', 1)$. In this paper, we always work with twisted presheaves with $z=1$ and we write them as $(\aaa, m, f, c)$.

\begin{definition}\label{deftwistedpresheaf} (see Def 3.24 in \cite{lowenmap})
Let $(\aaa, m, f, c)$ be a twisted presheaf of $k$-algebras.
\begin{enumerate}
\item
\begin{enumerate}
\item A \emph{first order twisted deformation} of $\aaa$ is given by a twisted presheaf
$$(\bar{\aaa}, \bar{m}, \bar{f}, \bar{c}) = (\aaa[\epsilon], m + m_1\epsilon, f + f_1\epsilon, c + c_1\epsilon)$$
of $k[\epsilon]$-algebras such that $(\bar{\aaa}(U), \bar{m}^U)$ is a first order deformation of $(\aaa(U), m^U)$ for all $U$, where $(m_1, f_1, c_1)\in \CC^{0,2}(\aaa) \oplus \CC^{1,1}(\aaa) \oplus \CC^{2,0}(\aaa)$.

\item If $(\aaa, m, f)$ is a presheaf (i.e. we have $c = 1$), a \emph{first order presheaf deformation} of $\aaa$ is a twisted deformation which is itself a presheaf, i.e. $c_1 = 0$.
\end{enumerate}

\item
\begin{enumerate}
\item For two twisted deformations $(\bar{\aaa}, \bar{m}, \bar{f}, \bar{c})$ and $(\bar{\aaa}', \bar{m}', \bar{f}', \bar{c}')$ an \emph{equivalence of twisted deformations} is given by an isomorphism of the form $(g,\tau)=(1+g_1\epsilon,1+\tau_1\epsilon)$ where $(g_1,\tau_1)\in\CC^{0,1}(\aaa)\oplus\CC^{1,0}(\aaa)$.

\item If $(\bar{\aaa}, \bar{m}, \bar{f})$ and $(\bar{\aaa}', \bar{m}', \bar{f}')$ are two presheaf deformations of $(\aaa, m, f)$, then an \emph{equivalence of presheaf deformations} is given by  an isomorphism of presheaves of the form $g = 1 + g_1\epsilon$ for $g_1 \in \CC^{0,1}(\aaa)$.
\end{enumerate}
\end{enumerate}
\end{definition}

Let $\Def_{\mathrm{tw}}(\aaa)$ denote the set of twisted deformations of a twisted presheaf $\aaa$ up to equivalence of twisted deformations, and let $\Def_{\mathrm{pr}}(\aaa)$ denote the set of presheaf deformations of a presheaf $\aaa$ up to equivalence of presheaf deformations.

\begin{theorem}\label{proptwist}
Let $\aaa = (\aaa, m, f)$ be a presheaf of $k$-algebras with Gerstenhaber-Schack complex $\CC_\mathrm{GS}(\aaa)$.
\begin{enumerate}
\item
\begin{enumerate}
\item For $(m_1, f_1, c_1)$ in $\CC^{0,2}(\aaa) \oplus \CC^{1,1}(\aaa) \oplus \CC^{2,0}(\aaa)$, we have that $(\aaa[\epsilon], \bar{m}=m + m_1\epsilon,\bar{f}= f + f_1\epsilon, \bar{c}=1+c_1\epsilon)$ is a first order twisted deformation of $\aaa$ if and only if $(m_1,f_1,c_1)\in\bar{\CC}_\mathrm{GS}'^2(\aaa)$ and $d_\mathrm{GS}(m_1, f_1, c_1) = 0$.
\item For $(m_1, f_1)$ in $\CC^{0,2}(\aaa) \oplus \CC^{1,1}(\aaa)$, we have that $(\aaa[\epsilon], \bar{m}=m + m_1\epsilon,\bar{f}= f + f_1\epsilon)$ is a first order presheaf deformation of $\aaa$ if and only if $(m_1,f_1)\in\bar{\CC}_\mathrm{tGS}'^2(\aaa)$ and $d_\mathrm{GS}(m_1, f_1) = 0$.
\end{enumerate}
\item
\begin{enumerate}
\item For $(m_1, f_1, c_1)$ and $(m_1', f_1', c_1')$ in $Z^2\bar{\CC}_\mathrm{GS}'(\aaa)$, and $(g_1, -\tau_1) \in \CC^{0,1}(\aaa) \oplus \CC^{1,0}(\aaa)$, we have that $(g,\tau)=(1 + g_1\epsilon, 1 + \tau_1\epsilon)$ is an isomorphism between $\bar{\aaa}$ and $\bar{\aaa}'$ if and only if $(g_1,-\tau_1)\in \bar{\CC}_\mathrm{GS}'^1(\aaa)$ and $d_\mathrm{GS}(g_1, -\tau_1) = (m_1, f_1, c_1)-(m_1', f_1', c_1') $.
\item For $(m_1, f_1)$ and $(m_1', f_1')$ in $Z^2\bar{\CC}_\mathrm{tGS}'(\aaa)$, and $g_1 \in \CC^{0,1}(\aaa)$, we have that $g = 1 + g_1\epsilon$ is an isomorphism of presheaves between $\bar{\aaa}$ and $\bar{\aaa}'$ if and only if $g_1 \in \bar{\CC}_\mathrm{tGS}'^1(\aaa)$ and $d_\mathrm{GS}(g_1) = (m_1, f_1)-(m_1', f_1')$.
\end{enumerate}
\item
\begin{enumerate}
\item We have an isomorphism of sets
\begin{equation}\label{eqclass}
H^2\bar{\CC}_\mathrm{GS}'(\aaa) \lra \Def_{\mathrm{tw}}(\aaa).
\end{equation}
Hence, the second cohomology group $HH^2(\aaa) \cong H^2\bar{\CC}_\mathrm{GS}'(\aaa)$ classifies first order twisted deformations of $\aaa$ up to equivalence.
\item We have an isomorphism of sets
$$H^2\bar{\CC}_\mathrm{tGS}'(\aaa) \lra \Def_{\mathrm{pr}}(\aaa).$$
Hence, the second cohomology group $H^2\bar{\CC}_\mathrm{tGS}'(\aaa)$ classifies first order presheaf deformations of $\aaa$ up to equivalence.
\end{enumerate}
\end{enumerate}
\end{theorem}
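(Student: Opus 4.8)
The plan is to prove everything by a direct first-order expansion of the defining axioms, in the spirit of the Proposition of \S\ref{paralg} (for algebras) and the Proposition of \S\ref{parsimp} (for presheaves of modules), and then to deduce the classification statements (3) formally from (1) and (2). Write a general element of $\aaa(U)[\epsilon]$ as $a + a_1\epsilon$ and work modulo $\epsilon^2$. Since $\aaa$ is an honest presheaf (so $c = 1$, $f^v f^u = f^{uv}$, $f^{1_U} = \mathrm{id}$), the $\epsilon^0$-part of every axiom holds automatically and each axiom contributes exactly one $\epsilon^1$-equation; the content of the theorem is that these equations organize into the normalization/reducedness conditions together with the four, resp.\ three, components of $d_{\mathrm{GS}}$ recalled in \S\ref{pardeglow}.

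For (1)(a) I would run through Definition \ref{deftwisted} one axiom at a time, for the candidate data $(\aaa[\epsilon], m + m_1\epsilon, f + f_1\epsilon, 1 + c_1\epsilon, 1)$. Associativity and unitality of each $(\aaa(U)[\epsilon], m^U + m_1^U\epsilon)$ is the Proposition of \S\ref{paralg}: it holds iff $m_1^U$ is normalized and $d_{\mathrm{Hoch}}(m_1^U) = 0$, which is the normalization of the $\CC^{0,2}$-component together with the $\CC^{0,3}$-component of $d_{\mathrm{GS}}(m_1,f_1,c_1) = 0$. That each $\bar f^u$ is a unital algebra morphism unravels to $f_1^u(1) = 0$ (normalization of $f_1$) and, after collecting $\epsilon$-terms and comparing with the Hochschild differential, to $d_{\mathrm{Hoch}}(f_1) = d_{\mathrm{simp}}(m_1)$, which is the $\CC^{1,2}$-component $-d_{\mathrm{simp}}(m_1) + d_{\mathrm{Hoch}}(f_1) = 0$. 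The twist compatibility $\bar c^{u,v}\,\bar v^{\ast}\bar u^{\ast}(a) = \overline{(uv)}^{\ast}(a)\,\bar c^{u,v}$ unravels, using the description of $d^0_{\mathrm{Hoch}}$ in \eqref{eqcentr}, to the $\CC^{2,1}$-component $-d_{\mathrm{simp}}(f_1) + d_{\mathrm{Hoch}}(c_1) = 0$; the twist cocycle identity \eqref{eq:twistcocycle} unravels to the $\CC^{3,0}$-component $-d_{\mathrm{simp}}(c_1) = 0$; and the identities $\bar c^{u,1_V} = 1 = \bar c^{1_U,u}$ become the reducedness of $c_1$. Finally $\bar f^{1_U} = \mathrm{id}$ (forced by $\bar z = 1$) is the reducedness of $f_1$, so altogether $(m_1,f_1,c_1) \in \bar{\CC}_\mathrm{GS}'^2(\aaa)$; comparing with the explicit formula for $d^2_{\mathrm{GS}}$ in \S\ref{pardeglow} closes (1)(a). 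Statement (1)(b) is the specialization $c_1 = 0$: the twist compatibility equation collapses to $d_{\mathrm{simp}}(f_1) = 0$ and the twist cocycle equation becomes vacuous, so the conditions are exactly $(m_1, f_1) \in \bar{\CC}_\mathrm{tGS}'^2(\aaa)$ and $d_{\mathrm{GS}}(m_1, f_1) = 0$ in the truncated complex.

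Part (2)(a) proceeds the same way by expanding the five conditions defining a morphism of twisted presheaves for $(g,\tau) = (1 + g_1\epsilon, 1 + \tau_1\epsilon)$: condition (2) gives $g_1^U(1) = 0$; condition (5) with $z = z' = 1$ gives $\tau_1^{1_U} = 0$; condition (1) gives $d_{\mathrm{Hoch}}(g_1) = m_1 - m_1'$; condition (3) gives the $\CC^{1,1}$-equation $d_{\mathrm{simp}}(g_1) - d_{\mathrm{Hoch}}(\tau_1) = f_1 - f_1'$; and condition (4) gives the $\CC^{2,0}$-equation $-d_{\mathrm{simp}}(\tau_1) = c_1 - c_1'$. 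Comparing with the formula for $d^1_{\mathrm{GS}}$ in \S\ref{pardeglow} yields $(g_1, -\tau_1) \in \bar{\CC}_\mathrm{GS}'^1(\aaa)$ and $d_{\mathrm{GS}}(g_1, -\tau_1) = (m_1, f_1, c_1) - (m_1', f_1', c_1')$, and (2)(b) is the case $\tau_1 = 0$ in the truncated complex; here one also uses that a morphism of twisted presheaves whose algebra components are invertible is an isomorphism, and $1 + g_1^U\epsilon$ is always invertible.

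Part (3)(a) is then formal. Every first order twisted deformation of $\aaa$ automatically has normalized reduced structure data (each $\bar\aaa(U)$ is an algebra deformation, each $\bar f^u$ a unital morphism with $\bar f^{1_U} = \mathrm{id}$, and $\bar c^{u,1_V} = \bar c^{1_U,u} = 1$), so by (1)(a) the assignment $(m_1,f_1,c_1) \mapsto \bar{\aaa}_{(m_1,f_1,c_1)}$ is a bijection from $Z^2\bar{\CC}_\mathrm{GS}'(\aaa)$ onto the set of first order twisted deformations; by (2)(a) two cocycles give equivalent deformations iff they differ by $d_{\mathrm{GS}}$ of a normalized reduced $1$-cochain, i.e.\ iff they have the same class in $H^2\bar{\CC}_\mathrm{GS}'(\aaa)$. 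This yields the bijection \eqref{eqclass}, and the identification $HH^2(\aaa) \cong H^2\bar{\CC}_\mathrm{GS}'(\aaa)$ from the quasi-isomorphisms of \S\ref{paragersschackcomp} finishes (3)(a); part (3)(b) follows identically from (1)(b) and (2)(b). I expect the only genuinely delicate point to be the sign bookkeeping in matching the expanded axioms with $d^2_{\mathrm{GS}}$ and $d^1_{\mathrm{GS}}$ — getting the signs right in $d^n_{\mathrm{GS}} = (-1)^{n+1}d_{\mathrm{simp}} + d_{\mathrm{Hoch}}$, in the conventions for $(-)^{\op}$, and in the choice of $-\tau_1$ rather than $\tau_1$ — while everything else is a routine unwinding of definitions, with the algebra, presheaf-of-modules and opposite-cochain statements of this section supplying the individual pieces.
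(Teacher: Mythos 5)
Your proposal is correct and follows essentially the same route as the paper's proof: a term-by-term first-order expansion of the twisted-presheaf axioms and of the five conditions defining a morphism, matched against the explicit components of $d^2_{\mathrm{GS}}$ and $d^1_{\mathrm{GS}}$ from \S\ref{pardeglow}, with the normalization and reducedness conditions arising from unitality, $\bar f^{1_U}=\mathrm{id}$ and $\bar c^{u,1_V}=\bar c^{1_U,u}=1$, and with (3) deduced formally from (1), (2) and the quasi-isomorphism $\bar{\CC}'_{\mathrm{GS}}(\aaa)\hookrightarrow \CC_{\mathrm{GS}}(\aaa)$. The only difference is cosmetic: the paper delegates the (b) parts to Gerstenhaber--Schack, whereas you obtain them as the specializations $c_1=0$, $\tau_1=0$ in the truncated complex, which is fine.
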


\begin{proof}
We prove the (a) part. The (b) part is an easier variant and can be found in \cite{gerstenhaberschack1}.

(1) For each $U\in\uuu$, the associativity of $\bar{m}^U$ is equivalent to $d_{\mathrm{Hoch}}(m_1)^U=0$.  The unity condition $\bar{m}^U(a,1)=a=\bar{m}^U(1,a)$ holds if and only if $m_1(a,1)=m_1(1,a)=0$ for all $a\in\aaa(U)$.

We have $\bar{z}^U=1$ for all $U\in\uuu$. So for $1_U\colon U\lra U$ and $a\in\aaa(U)$, the condition $\bar{m}(\bar{z}^U,a)=\bar{m}(\bar{f}^{1_U}(a),\bar{z}^U)$  if and only if $f_1^{1_U}(a)=0$, equivalently $f_1^{\sigma}=0$ for any degenerate 1-simplex $\sigma$.

For any $v\colon W\lra V$, $u\colon V\lra U$, and $a$, $b\in\aaa(U)$, $\bar{f}^u$ is a morphism of algebras if $\bar{f}^u\bar{m}(a,b)=\bar{m}(\bar{f}^u(a),\bar{f}^u(b))$ and $\bar{f}^u(1)=1$, which are in turn equivalent to $d_{\mathrm{simp}}(m_1)-d_{\mathrm{Hoch}}(f_1)=0$ and $f^u_1(1)=0$. Moreover, $\bar{c}^{u,v} \bar{f}^v\bar{f}^u(a) = \bar{f}^{uv}(a) \bar{c}^{u,v}$ is equivalent to $ -d_{\mathrm{simp}}(f_1)+d_{\mathrm{Hoch}}(c_1)=0$.

The compatibility condition of $\bar{c}$ is satisfied if and only if $d_{\mathrm{simp}}(c_1)=0$ and $c_1^\sigma=0$ for any degenerate 2-simplex $\sigma$.

Recall the differential $d_\mathrm{GS}$ given in \S\ref{pardeglow}. These facts yield that $(m_1,f_1,c_1)$ gives rise to a twisted deformation if and only if it is a normalized reduced cocycle.

(2) The map $g=1+g_1\epsilon$ is a morphism of algebras if and only if $d_\mathrm{Hoch}(g_1)=m_1-m_1'$ and $g_1^U(1)=0$ for all $U$.

The equation $m'^V(g^Vu^*(a),\tau^u)=m'^V(\tau^u,u'^* g^U(a))$ is equivalent to $d_\mathrm{simp}(g_1)-d_\mathrm{Hoch}(\tau_1)=f_1-f_1'$, while  $m'^W(\tau^{uv},c'^{u,v})=m'^W(g^W(c^{u,v}),\tau^v,v'^*(\tau^u))$ is equivalent to $-d_\mathrm{simp}(\tau_1)=c_1-c_1'$, and the equation $m'^U(\tau^{1_U},z'^U)=g^U(z^U)$ is equivalent to $\tau_1^{1_U}=0$.

Thus $(g,\tau)=(1+g_1\epsilon,1+\tau_1\epsilon)$ is an isomorphism between $\aaa$ and $\aaa'$ if and only if $(g_1,-\tau_1)$ is a normalized reduced cochain and
\begin{align*}
d_{\mathrm{GS}}(g_1,-\tau_1)&=(d_\mathrm{Hoch}(g_1), d_{\mathrm{simp}}(g_1)+d_{\mathrm{Hoch}}(-\tau_1),d_{\mathrm{simp}}(-\tau_1))\\
&=(m_1,f_1,c_1)-(m'_1,f'_1,c'_1).
\end{align*}

(3) This follows from (1) and (2).
\end{proof}

\begin{remarks}
\begin{enumerate}
\item In \cite{lowenmap} (based upon \cite{lowenvandenberghCCT}), Proposition \ref{proptwist} (3) is obtained by making use of the Hochschild complex $\CC_{\uuu}(\tilde{\aaa})$ of the $\uuu$-graded category $\tilde{\aaa}$ associated to $\aaa$.
\item In contrast to $\CC_{\uuu}(\tilde{\aaa})$ from (1), the Gerstenhaber-Schack complex $\CC_\mathrm{GS}(\aaa)$ is not endowed with a $B_{\infty}$-algebra  or dg Lie algebra structure. 
\item In \cite{dinhvanlowen}, we introduce a Gerstenhaber-Schack complex for an arbitrary twisted presheaf (or prestack) $\aaa$. The necessary modification of the current complex is substantial, as to the simplicial and Hochschild components of the differential one needs to add a series of higher components in general. In \cite{dinhvanlowen}, we further show that this Gerstenhaber-Schack complex inherits an $L_{\infty}$-structure from the dg Lie structure present upon $\CC_{\uuu}(\tilde{\aaa})$, controlling the higher order twisted deformation theory.
\end{enumerate}
\end{remarks}

The relation between the isomorphism $(-)^{\op}\colon \CC_\mathrm{GS}(\aaa) \lra \CC_\mathrm{GS}(\aaa^{\op})$ and twisted deformations is as follows (see Example \ref{exop1}).

\begin{proposition}\label{propopdef}
Let $(\aaa, m, f)$ be a presheaf with opposite presheaf $(\aaa^{\op}, m^{\ast}, f^{\ast})$. For $\phi = (m_1, f_1, c_1) \in Z^2\bar{\CC}'_\mathrm{GS}(\aaa)$, let $\phi^{\op} = (m_1^{\ast}, f_1^{\ast}, -c_1) \in Z^2\bar{\CC}'_\mathrm{GS}(\aaa^{\op})$ be the associated opposite cocycle. The associated first order twisted deformations $(\aaa[\epsilon], m + m_1\epsilon, f + f_1\epsilon, 1 + c_1\epsilon)$ and $(\aaa^{\op}[\epsilon], m^{\ast} + m_1^{\ast}\epsilon, f^{\ast} + f_1^{\ast}\epsilon, 1 - c_1\epsilon)$ are opposite twisted presheaves.
\end{proposition}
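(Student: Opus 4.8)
The plan is to unwind the two constructions and check that the resulting twisted presheaves agree datum by datum; the statement is essentially tautological once the signs in the opposite-cochain operation \eqref{eqop} are accounted for. Write $\bar{\aaa}_\phi$ (resp.\ $\bar{\aaa}_{\phi^{\op}}$) for the twisted deformation associated to $\phi$ (resp.\ to $\phi^{\op}$) by Theorem \ref{proptwist}(3)(a), so that $\bar{\aaa}_\phi = (\aaa[\epsilon], m + m_1\epsilon, f + f_1\epsilon, 1 + c_1\epsilon)$, and keep in mind that $m^{\ast}$, $f^{\ast}$ denote the opposite multiplication and restriction data, i.e.\ $m^{\op}$, $f^{\op}$ in the notation of Example \ref{exop1}. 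First I would observe that, by \eqref{eqop} in Hochschild degrees $0$, $1$, $2$ (cf.\ Example \ref{exop}), the tuple $\phi^{\op} = (m_1^{\ast}, f_1^{\ast}, -c_1)$ in the statement is exactly the image of $\phi$ under the isomorphism of complexes $(-)^{\op}\colon \CC_\mathrm{GS}(\aaa) \lra \CC_\mathrm{GS}(\aaa^{\op})$ of \eqref{gsop}; this isomorphism preserves normalized and reduced cochains (reversing the arguments of a Hochschild cochain preserves normalization, and the simplicial index is untouched), so $\phi^{\op} \in Z^2\bar{\CC}'_\mathrm{GS}(\aaa^{\op})$ and Theorem \ref{proptwist} produces $\bar{\aaa}_{\phi^{\op}} = (\aaa^{\op}[\epsilon], m^{\ast} + m_1^{\ast}\epsilon, f^{\ast} + f_1^{\ast}\epsilon, 1 - c_1\epsilon)$. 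The goal is then to identify $\bar{\aaa}_{\phi^{\op}}$ with the opposite twisted presheaf $(\bar{\aaa}_\phi)^{\op}$ of Example \ref{exop1}.

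The algebras and the restriction maps I would handle via the algebra-level results of \S\ref{paralg}. On each object $U$, Proposition \ref{propopalg} applied to $(\aaa(U), m^U)$ gives $\aaa(U)[\epsilon]^{\op} = \aaa(U)^{\op}[\epsilon]$ together with $(m^U + m_1^U\epsilon)^{\op} = (m^U)^{\op} + (m_1^U)^{\op}\epsilon = m^{\ast,U} + m_1^{\ast,U}\epsilon$; here there is no extra sign since $\lambda(2)$ is even, so $(m_1^U)^{\op}(a,b) = m_1^U(b,a)$ (Example \ref{exop}). For $u\colon V \lra U$, the deformed restriction map $f^u + f_1^u\epsilon$ is an honest $k[\epsilon]$-linear map which is unchanged on passing to opposite algebras, and since $\lambda(1) = 0$ the cochain opposites $(f^u)^{\op}$ and $(f_1^u)^{\op}$ are the same underlying maps; hence $(f^u + f_1^u\epsilon)^{\op} = f^{\ast,u} + f_1^{\ast,u}\epsilon$. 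In each case this is precisely the corresponding datum of $\bar{\aaa}_{\phi^{\op}}$.

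It then remains to match the twist elements (the $z$-data being $1$ on both sides, as $1^{-1} = 1$). For a composable pair $u_1\colon U_0 \lra U_1$, $u_2\colon U_1 \lra U_2$, Example \ref{exop1} prescribes as the twist of $(\bar{\aaa}_\phi)^{\op}$ the inverse of $\bar{c}^{u_1,u_2} = 1 + c_1^{u_1,u_2}\epsilon$ computed in the deformed algebra on $U_0$ (equivalently in its opposite: the inverse of an element is the same in an algebra and its opposite). Since $\epsilon^2 = 0$ and $1$ is a two-sided unit for the deformed multiplication, this inverse is $1 - c_1^{u_1,u_2}\epsilon$; and since opposition in Hochschild degree $0$ is negation ($\lambda(0)$ odd, Example \ref{exop}), the third component of $\phi^{\op}$ is $-c_1$, so the twist of $\bar{\aaa}_{\phi^{\op}}$ is likewise $1 - c_1^{u_1,u_2}\epsilon$. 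Combining the three steps yields $\bar{\aaa}_{\phi^{\op}} = (\bar{\aaa}_\phi)^{\op}$, which is the assertion. I do not expect a genuine obstacle here: the statement is a bookkeeping exercise, and the only points that require attention are the list of signs $(-1)^{\lambda(n)} = -1, +1, +1$ for $n = 0, 1, 2$ — which is exactly why $m_1$ and $f_1$ pass to the opposite unchanged (as permuted maps) while $c_1$ picks up a minus sign — together with the elementary fact that $(1 + x\epsilon)^{-1} = 1 - x\epsilon$ in a square-zero extension.
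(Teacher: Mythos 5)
Your proof is correct, and it supplies precisely the datum-by-datum verification that the paper omits (the proposition is stated there without proof, as an "almost tautological" consequence of the sign conventions in \eqref{eqop}). All the key points check out: $\lambda(0),\lambda(1),\lambda(2)=-1,0,2$ so only the $\CC^{2,0}$-component picks up a sign, and $(1+c_1\epsilon)^{-1}=1-c_1\epsilon$ in the deformed algebra because $m_1$ is normalized and $\epsilon^2=0$.
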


\begin{proposition}\label{centralkey}
Let $(\aaa, m, f)$ be a presheaf of commutative $k$-algebras with $$\bar{\CC}'_\mathrm{GS}(\aaa) = \bar{\CC}'_\mathrm{tGS}(\aaa) \oplus \CC'_{\mathrm{simp}}(\aaa).$$
Consider $((m_1, f_1), c_1) \in Z^2\bar{\CC}'_\mathrm{GS}(\aaa)$ with corresponding first order twisted deformation
$$\bar{\aaa} = (\aaa[\epsilon], m + m_1\epsilon, f + f_1\epsilon, 1 + c_1\epsilon).$$
Then $\bar{\aaa}$ is a twisted presheaf with central twists and the underlying presheaf $\underline{\bar{\aaa}}$ corresponds to $(m_1, f_1) \in Z^2\bar{\CC}'_\mathrm{tGS}(\aaa)$.
\end{proposition}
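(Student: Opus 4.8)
The plan is to verify the two assertions in turn --- that $\bar{\aaa}$ has central twists, and that its underlying presheaf is the presheaf deformation attached to $(m_1,f_1)$ --- both by short direct computations, after first reading off the relevant cocycle conditions from the splitting.

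First I would use the decomposition $\bar{\CC}'_\mathrm{GS}(\aaa) = \bar{\CC}'_\mathrm{tGS}(\aaa) \oplus \CC'_{\mathrm{simp}}(\aaa)$ of Proposition \ref{propsplit}. Since this is a splitting of complexes, it induces a splitting of the spaces of $2$-cocycles, so the hypothesis $((m_1,f_1),c_1) \in Z^2\bar{\CC}'_\mathrm{GS}(\aaa)$ says precisely that $(m_1,f_1) \in Z^2\bar{\CC}'_\mathrm{tGS}(\aaa)$ and that $c_1 \in Z^2\CC'_{\mathrm{simp}}(\aaa)$. Unwinding $d_\mathrm{GS}$ as in \S\ref{pardeglow}, and using that $d_\mathrm{Hoch}$ vanishes from the bottom row $\CC^{\bullet,0}$ to $\CC^{\bullet,1}$ because $\aaa$ is commutative (the computation in the proof of Proposition \ref{propsplit}), this amounts to $d_\mathrm{Hoch}(m_1)=0$, $d_\mathrm{Hoch}(f_1)=d_\mathrm{simp}(m_1)$, $d_\mathrm{simp}(f_1)=0$ and $d_\mathrm{simp}(c_1)=0$, all cochains being normalized and reduced.

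Next, for the central twists: by Theorem \ref{proptwist}(1)(a) we already know $\bar{\aaa}$ is a twisted presheaf, so it is enough to check that each twist $\bar{c}^{u,v} = 1 + c_1^{u,v}\epsilon \in \aaa(W)[\epsilon]$ is central for the deformed multiplication $\bar{m}^W = m^W + m_1^W\epsilon$ (the units $\bar{z}^U = 1$ being trivially central). For $a + a_1\epsilon \in \aaa(W)[\epsilon]$, using that $m_1$ is normalized so $m_1^W(1,a) = m_1^W(a,1)=0$, one computes
\begin{gather*}
\bar{m}^W(1 + c_1^{u,v}\epsilon,\, a + a_1\epsilon) = a + \bigl(a_1 + m^W(c_1^{u,v},a)\bigr)\epsilon,\\
\bar{m}^W(a + a_1\epsilon,\, 1 + c_1^{u,v}\epsilon) = a + \bigl(a_1 + m^W(a,c_1^{u,v})\bigr)\epsilon,
\end{gather*}
and these coincide because $\aaa(W)$ is commutative. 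Hence $\bar{\aaa}$ has central twists.

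Finally, since $\bar{\aaa}$ has central twists it possesses an underlying presheaf $\underline{\bar{\aaa}}$: the maps $\bar{f}^v\bar{f}^u$ and $\bar{f}^{uv}$ differ by conjugation by the now-central invertible element $\bar{c}^{u,v}$, hence are equal, so $\underline{\bar{\aaa}} = (\aaa[\epsilon], m + m_1\epsilon, f + f_1\epsilon)$ is an honest presheaf, and it is a first order presheaf deformation of $\aaa$ since each $\bar{\aaa}(U)$ deforms $\aaa(U)$. Because $(m_1,f_1)$ is a normalized reduced cochain in $\bar{\CC}'_\mathrm{tGS}(\aaa)$ with $d_\mathrm{GS}(m_1,f_1)=0$ by the first step, Theorem \ref{proptwist}(1)(b) and (3)(b) identify $\underline{\bar{\aaa}}$ with exactly the presheaf deformation of $\aaa$ corresponding to $(m_1,f_1)$, which is the claim. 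The whole argument is essentially formal; the only step demanding any care is keeping the normalization conventions straight in the centrality computation so that the $m_1$-terms cancel, and correctly transporting the cocycle condition across the splitting of Proposition \ref{propsplit} --- there is no genuine obstacle.
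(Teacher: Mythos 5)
Your proof is correct and takes essentially the same route as the paper: the centrality computation in your second step is exactly the content of Lemma \ref{lemcentr} (a central element $a$ of $A$ yields a central element $a\epsilon$ of $\bar A$, and here $1+c_1^{u,v}\epsilon$ is a sum of central elements since $\aaa(W)$ is commutative), which is all the paper's one-line proof invokes. The remaining steps you spell out --- transporting the cocycle condition across the splitting of Proposition \ref{propsplit} and identifying $\underline{\bar{\aaa}}$ via Theorem \ref{proptwist}(1)(b) --- are the routine unwindings the paper leaves implicit, and you carry them out correctly.
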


\begin{proof}
This follows from Lemma \ref{lemcentr}.
\end{proof}

\section{From Hodge to HKR decomposition}\label{secHodge-HKR}\label{parparhodgeHKR}

Throughout this section, we assume that $\Q\subseteq k$. In \cite{HKR}, Hochschild, Kostant and Rosenberg proved the famous HKR theorem for a regular affine algebra $A$, which states that the anti-symmetrization map
$$\wedge^n_A \mathrm{Der}(A) \lra HH^n(A)$$
is an isomorphism.

Let $X$ be a quasi-compact separated scheme. The Hochschild cohomology of $X$ is defined as
\begin{equation}\label{HHscheme}
HH^n(X) = \Ext^n_{X \times X}(\ooo_{\Delta}, \ooo_{\Delta})
\end{equation}
for the diagonal $\Delta\colon X \lra X \times X$ and $\ooo_{\Delta} = \Delta_{\ast} \ooo_X$.
For a smooth scheme, we have the following HKR decomposition into sheaf cohomologies:
\begin{equation}\label{HKRscheme}
HH^n(X) \cong \bigoplus_{p + q = n} H^p(X, \wedge^q \ttt_X).
\end{equation}
Various proofs of this statement exist in the literature. In \cite{swan}, the decomposition is proved for smooth quasi-projective schemes, and in \cite{yekutieli} it is proved for smooth separated finite type schemes (under a somewhat weaker condition than $\Q \subseteq k$).

On the other hand, in \cite{gerstenhaberschack}, the authors relate the decomposition at the right hand side of \eqref{HKRscheme} to their Hochschild cohomology of presheaves of algebras in the following way.
Let $\uuu$ be an open affine cover of $X$ closed under intersections and consider the restriction $\ooo_X|_{\uuu}$ of the structure sheaf to this cover. If $X$ is a smooth complex projective variety, they prove the existence of a decomposition
\begin{equation} \label{HKRGS}
HH^n(\ooo_X|_{\uuu}) \cong \bigoplus_{p + q = n} H^p(X, \wedge^q \ttt_X).
\end{equation}
The decomposition \eqref{HKRGS} results from a combination of a purely algebraic Hodge decomposition for presheaves of commutative algebras with the classical HKR theorem.
According to \cite[Thm. 7.8.1]{lowenvandenberghhoch}, we further have
$$HH^n(X) \cong HH^n(\ooo_X|_{\uuu}),$$
so in combination with \eqref{HKRGS} this yields another proof of \eqref{HKRscheme}. This route to a proof of \eqref{HKRscheme} was also noted in \cite{kontsevich2}, in the context of Homological Mirror symmetry.

In \S \ref{parHodge}, we recall the Hodge decomposition for the Gerstenhaber-Schack complex of a presheaf $\aaa$ of commutative algebras from \cite{gerstenhaberschack}.
Under the additional assumption that the algebras are essentially of finite type and smooth (FS) and the restriction maps are flat epimorphisms of rings (FE), in \S \ref{parHodgeHKR} we deduce a HKR decomposition
\begin{equation}\label{eqpreHKR}
HH^n(\aaa) \cong \bigoplus_{p + q = n} H^p(\uuu, \wedge^q \ttt_{\aaa})
\end{equation}

where the cohomology groups on the right hand side are simplicial presheaf cohomologies, and $\ttt_{\aaa}$ is the tangent presheaf of $\aaa$. 
If $X$ is a quasi-compact semi-separated scheme with semi-separating cover $\uuu$, and $\aaa = \ooo_X|_{\uuu}$, the simplicial presheaf cohomologies at the right hand side of \eqref{eqpreHKR} are isomorphic to \v{C}ech cohomologies, as is detailed in \S \ref{parcechsimp} for further use later on, and to sheaf cohomologies by Leray's theorem.
For $X$ furthermore smooth, \eqref{eqpreHKR} thus translates into \eqref{HKRGS}, leading to a proof, in the separated case, of \eqref{HKRscheme}.

Note that even in the absence of condition (FS), the Hodge decomposition is a highly valuable tool in computing Hochschild cohomology, see the forthcoming paper \cite{liulowen} in which it is used to compute deformations of some particular singular schemes.

\subsection{Localization}\label{parloc}

In this section, we recall some facts on localization of abelian categories which will be used later on. For more details, see for instance \cite{popescu}, \cite{verschoren1}.

A fully faithful functor $\iota\colon \ccc' \lra \ccc$ between abelian categories is called a \emph{localization} (of $\ccc$) if $\iota$ has an exact left adjoint $a\colon \ccc \lra \ccc'$. The localization is called \emph{strict} if it is the inclusion of a full subcategory closed under isomorphisms. Two localizations of $\ccc$ are called \emph{equivalent} if they have the same essential image in $\ccc$. Every localization is equivalent to precisely one strict localization, namely the inclusion of its essential image.

Let $f\colon A \lra B$ be a morphism of rings. The morphism $f$ is called an \emph{epimorphism of rings} if it is an epimorphism in the category of non-commutative rings. The morphism $f$ gives rise to an induced restriction of scalars functor $(-)_A \colon\Mod(B)\lra\Mod(A)$ with left adjoint $-\otimes_AB\colon \Mod(A) \lra \Mod(B)$. The morphism $f$ is called \emph{(right) flat} if $- \otimes_A B\colon \Mod(A) \lra \Mod(B)$ is exact.

\begin{lemma}\label{lemloc}
The following are equivalent:
\begin{enumerate}
\item $(-)_A\colon\Mod(B)\lra\Mod(A)$ is fully faithful.
\item The multiplication $B \otimes_A B \lra B$ is an isomorphism.
\item $f$ is an epimorphism of rings.
\end{enumerate}
The following are equivalent:
\begin{enumerate}
\item $(-)_A\colon\Mod(B)\lra\Mod(A)$ is a localization.
\item $f$ is a (right) flat epimorphism of rings.
\end{enumerate}
\end{lemma}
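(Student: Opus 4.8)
The plan is to prove \Cref{lemloc} (the localization lemma for a ring morphism $f\colon A \lra B$) in two blocks, matching the two lists of equivalent conditions.

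\textbf{First block: full faithfulness of restriction of scalars.} I would prove the cycle $(1)\Rightarrow(2)\Rightarrow(3)\Rightarrow(1)$. The key observation is that for the adjunction $-\otimes_A B \dashv (-)_A$, the functor $(-)_A$ is fully faithful if and only if the counit $\varepsilon\colon (M\otimes_A B)_A \lra M$ is an isomorphism for every $B$-module $M$ (a standard fact: a right adjoint is fully faithful iff the counit is a natural isomorphism). For $(1)\Rightarrow(2)$, I would simply evaluate the counit at $M = B$: the counit $B\otimes_A B \lra B$ is precisely the multiplication map, so if $(-)_A$ is fully faithful this is an isomorphism. For $(2)\Rightarrow(1)$, I would note that every $B$-module is a cokernel of a map of free $B$-modules, and $-\otimes_A B$ commutes with colimits, so $M \otimes_A B \cong M$ naturally once this holds on free modules, i.e. once $B^{(I)}\otimes_A B \cong B^{(I)}$, which follows from $(2)$; chasing this shows the counit is an isomorphism. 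For the equivalence $(2)\Leftrightarrow(3)$, I would recall the classical characterization of ring epimorphisms: $f$ is an epimorphism in $\Rng$ if and only if the multiplication $B\otimes_A B \lra B$ is an isomorphism if and only if $b\otimes 1 = 1\otimes b$ in $B\otimes_A B$ for all $b$. This is entirely standard (e.g.\ Stenström, Silver), and I would cite it rather than reprove it, or sketch the short argument that two ring maps $g,h\colon B\lra C$ agreeing on the image of $f$ correspond to the same map out of $B\otimes_A B$.

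\textbf{Second block: localization.} Here the statement is $(-)_A$ is a localization iff $f$ is a flat epimorphism. A localization is by definition a fully faithful functor admitting an exact left adjoint. The left adjoint of $(-)_A$ is always $-\otimes_A B$, which is additive and right exact and has itself a right adjoint, hence is exact precisely when it is left exact, i.e.\ precisely when $f$ is (right) flat --- this is the definition of flatness recalled just before the lemma. Combined with the first block (full faithfulness $\Leftrightarrow$ epimorphism), we get: $(-)_A$ is a localization $\iff$ $(-)_A$ is fully faithful and $-\otimes_A B$ is exact $\iff$ $f$ is an epimorphism and $f$ is flat. So this block is essentially a bookkeeping combination of the first block with the definition of flatness, plus the uniqueness of adjoints to identify the left adjoint as $-\otimes_A B$.

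\textbf{Main obstacle.} There is no serious obstacle: the only mildly technical point is the characterization of ring epimorphisms via $B\otimes_A B \xrightarrow{\sim} B$, and the ``counit is iso $\iff$ right adjoint fully faithful'' principle --- both are well known, and I would invoke them with a reference to \cite{popescu} or \cite{verschoren1} (already cited in \S\ref{parloc}) rather than belabor them. The one place to be slightly careful is that everything is for \emph{right} modules and \emph{non}-commutative rings, so ``flat'' and ``epimorphism'' are the right-handed / non-commutative versions throughout; I would state this explicitly to avoid ambiguity, but it changes nothing in the argument.
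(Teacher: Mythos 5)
Your proof is correct, and the paper itself offers no proof of this lemma --- it is recalled as a standard fact in \S\ref{parloc} with a pointer to \cite{popescu} and \cite{verschoren1}, where essentially your argument (counit of $-\otimes_A B \dashv (-)_A$ is the multiplication map, the classical characterization of ring epimorphisms, and the definition of right flatness) is what one finds. The only point worth making explicit when writing it up is that in the step $(2)\Rightarrow(1)$ you are using that the composite $M \longmapsto M_A \otimes_A B$ preserves cokernels, so the counit being an isomorphism on free $B$-modules propagates to all of $\Mod(B)$ via a presentation and the five lemma; as sketched, this is fine.
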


Now consider two localizations $\iota_1\colon \ccc_1 \lra \ccc$ and $\iota_2\colon \ccc_2 \lra \ccc$ with respective left adjoints $a_1$ and $a_2$ and with $q_1 = \iota_1 a_1$ and $q_2 = \iota_2 a_2$.
The localizations are called \emph{compatible} if $q_1 q_2 \cong q_2 q_1$.
If the two compatible localizations are strict, the intersection defines a new strict localization $\iota\colon \ccc_1 \cap \ccc_2 \lra \ccc$ with left adjoint $a$ and $\iota a = q = q_1 q_2 \cong q_2 q_1$.
Consequently, we can define the \emph{intersection} of arbitrary localizations up to equivalence of localizations as the intersection of the two strict representatives.

\begin{lemma}\label{lemcomp}
Let  $f\colon A \lra B$, $f_1\colon A \lra B_1$ and $f_2\colon A \lra B_2$ be flat epimorphisms of rings.
The following are equivalent:
\begin{enumerate}
\item $(-)_{A,1}\colon \Mod(B_1) \lra \Mod(A)$ and $(-)_{A,2}\colon \Mod(B_2) \lra \Mod(A)$ are compatible localizations with $\Mod(B_1) \cap \Mod(B_2) \cong \Mod(B)$.
\item There are isomorphisms of $A$-bimodules $$B \cong B_1 \otimes_A B_2 \cong B_2 \otimes_A B_1.$$
\end{enumerate}
\end{lemma}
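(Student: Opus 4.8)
The plan is to translate both conditions into statements about the localization endofunctors of $\Mod(A)$ and then read off the bimodule isomorphisms by evaluating at $A$. First I would record the basic dictionary. Since $(-)_A\colon\Mod(B_i)\lra\Mod(A)$ has left adjoint $-\otimes_A B_i$, the associated localization functor is $q_i\cong -\otimes_A B_i$, where $B_i$ is regarded as an $A$-bimodule via $f_i$; likewise, since $f$ is a flat epimorphism, Lemma \ref{lemloc} gives that $(-)_A\colon\Mod(B)\lra\Mod(A)$ is a localization, with localization functor $q^B := -\otimes_A B$. Composing, $q_1q_2\cong -\otimes_A(B_2\otimes_A B_1)$ and $q_2q_1\cong -\otimes_A(B_1\otimes_A B_2)$ as endofunctors of $\Mod(A)$, where in $B_2\otimes_A B_1$ the left $A$-structure comes from $B_2$ and the right one from $B_1$ (and symmetrically). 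The one non-formal input is the elementary case of the Eilenberg--Watts theorem: for $A$-bimodules $X$ and $Y$, natural transformations $-\otimes_A X\Rightarrow -\otimes_A Y$ correspond bijectively to $A$-bimodule maps $X\lra Y$ via evaluation at $A$ --- left $A$-linearity of the resulting map $X\lra Y$ is forced by naturality with respect to the left-multiplication endomorphisms of $A$, which are morphisms of right $A$-modules --- so a natural isomorphism of such tensoring functors amounts to an $A$-bimodule isomorphism of the underlying bimodules. Finally, I would recall from \S\ref{parloc} that a strict localization is the essential image of its localization functor, so two localizations of $\Mod(A)$ are equivalent precisely when their localization functors are isomorphic.

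For $(1)\Rightarrow(2)$: compatibility says $q_1q_2\cong q_2q_1$, so by the dictionary $B_2\otimes_A B_1\cong B_1\otimes_A B_2$ as $A$-bimodules. After passing to strict representatives, the facts recalled in \S\ref{parloc} identify the localization functor of the intersection $\Mod(B_1)\cap\Mod(B_2)\hookrightarrow\Mod(A)$ with $q = q_1q_2$; the hypothesis that this intersection is equivalent, as a localization of $\Mod(A)$, to $\Mod(B)$ then forces $q_1q_2\cong q^B$, i.e.\ $B\cong B_2\otimes_A B_1$ as $A$-bimodules. Combining the two isomorphisms yields $B\cong B_1\otimes_A B_2\cong B_2\otimes_A B_1$, which is (2).

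For $(2)\Rightarrow(1)$: from $B\cong B_1\otimes_A B_2\cong B_2\otimes_A B_1$ one gets $q_1q_2\cong -\otimes_A(B_2\otimes_A B_1)\cong -\otimes_A B\cong -\otimes_A(B_1\otimes_A B_2)\cong q_2q_1$, so the two localizations are compatible. Their intersection then has localization functor $q_1q_2\cong q^B$, and since a localization is determined up to equivalence by its localization functor, $\Mod(B_1)\cap\Mod(B_2)$ is equivalent to $\Mod(B)$ as a localization of $\Mod(A)$, hence in particular as a category.

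The part requiring care is purely bookkeeping: keeping track of which side of each tensor product carries which $A$-action, so that $q_1q_2$ really corresponds to $B_2\otimes_A B_1$ (left structure from $B_2$, right structure from $B_1$) and not to $B_1\otimes_A B_2$, and making sure the Eilenberg--Watts identification recovers the \emph{two-sided} $A$-module structure and not merely the right one. Everything else is a direct application of Lemma \ref{lemloc} and the generalities on (compatible) localizations recalled in \S\ref{parloc}.
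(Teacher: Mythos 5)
Your proof is correct. Note that the paper itself states Lemma \ref{lemcomp} without proof, referring generally to the localization literature (\cite{popescu}, \cite{verschoren1}), so there is no in-paper argument to compare against; your write-up supplies a valid one. The two reductions you make are exactly the right ones: identifying the localization endofunctors $q_i$ and $q^B$ with the tensor functors $-\otimes_A B_i$ and $-\otimes_A B$, and then using evaluation at $A$ (with left $A$-linearity forced by naturality against left multiplications) to convert natural isomorphisms of such cocontinuous functors into $A$-bimodule isomorphisms. The bimodule bookkeeping is also right: $q_1q_2\cong -\otimes_A(B_2\otimes_A B_1)$, and the fact recalled in \S\ref{parloc} that the intersection of two compatible strict localizations has localization functor $q_1q_2\cong q_2q_1$, together with the observation that a localization of $\Mod(A)$ is determined up to equivalence by its localization functor, closes both implications.
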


\subsection{The Hodge decomposition}\label{parHodge}
In this section, following \cite{gerstenhaberschack1}, \cite{gerstenhaberschackhodge}, we describe the Hodge decomposition of the Gerstenhaber-Schack complex of a presheaf of commutative algebras.

The Hodge decomposition is based upon the existence, for each $n \geq 1$, of a collection of pairwise orthogonal idempotents $e_n(r)$ for $1 \leq r \leq n$ in the group algebra $\Q S_n$ of the $n$-th symmetric group $S_n$. These idempotents satisfy $\sum_{r = 1}^n e_n(r) = 1$. We further put $e_n(0) = 0$ for $n \geq 1$, $e_n(r) = 0$ for $r > n$ and $e_0(0) = 1 \in \Q$.

Let $A$ be a commutative $k$-algebra and $M$ a symmetric bimodule. We obtain a subcomplex $\CC(A,M)_r \subseteq \CC(A,M)$ with $\CC^n(A,M)_r = \CC(A,M)e_n(r)$ and a Hodge decomposition of complexes
\begin{equation}\label{eqhodgealg}
\CC(A, M) = \oplus_{r \in \N} \CC(A,M)_r.
\end{equation}
Taking cohomology yields the Hodge decomposition of the Hochschild cohomology of a commutative algebra. We refer the reader to \cite{gerstenhaberschackhodge} for the details.

Now let $\uuu$ be a small category and $\aaa\colon \uuu^{\op} \lra \mathsf{CommAlg}(k)$ a presheaf of commutative $k$-algebras.
Recall the double complex given in \S \ref{paragersschackcomp},
\[
\CC^{p,q}(\aaa)=\prod_{\sigma\in\nnn_p(\uuu)}\Hom(\aaa(c\sigma)^{\otimes q}, \aaa(d\sigma)).
\]
Since $\aaa(U)$ is a commutative algebra for any $U\in\uuu$, there is the Hodge decomposition
\[
\Hom(\aaa(c\sigma)^{\otimes q}, \aaa(d\sigma))=\bigoplus_{r=0}^q\Hom(\aaa(c\sigma)^{\otimes q}, \aaa(d\sigma))_r.
\]

It follows that there is a decomposition of every $\CC^{p,q}(\aaa)$. Since the vertical differentials $d_{\mathrm{Hoch}}$ are pointwise, we have $d_{\mathrm{Hoch}}(\CC^{p,q}(\aaa)_r)\subseteq \CC^{p,q+1}(\aaa)_r$. In order to induce a decomposition of the total complex, it suffices to prove that the horizontal differentials $d_{\mathrm{simp}}$ also preserve the Hodge decomposition. When $\uuu$ is a poset, the proof can be found in \cite{gerstenhaberschack2}. In the general case, the argument is similar. Let us give a brief explanation.

Let $\phi=(\phi^\tau)_\tau\in \CC^{p,q}(\aaa)_r$. Then $\phi^\tau\in \Hom(\aaa(c\tau)^{\otimes q}, \aaa(d\tau))_r$ for all $\tau$, equivalently, $\phi^\tau e_q(r)=\phi^\tau$. Thus following the notations in \S \ref{parsimp},
\begin{align*}
(d_0\phi)^\sigma e_q(r)&=f^{u_1}\phi^{\partial_0\sigma}e_q(r)=f^{u_1}\phi^{\partial_0\sigma}=(d_0\phi)^\sigma,\\
(d_i\phi)^\sigma e_q(r)&=\phi^{\partial_i\sigma}e_q(r)=\phi^{\partial_i\sigma}=(d_i\phi)^\sigma,\quad i=1,\dots,p,\\
(d_{p+1}\phi)^\sigma e_q(r)&=\phi^{\partial_{p+1}\sigma}(f^{\otimes q})^{u_{p+1}}e_q(r)=\phi^{\partial_{p+1}\sigma}e_q(r)(f^{\otimes q})^{u_{p+1}}=(d_{p+1}\phi)^\sigma,
\end{align*}
and so $d_{\mathrm{simp}}(\CC^{p,q}(\aaa)_r)\subseteq \CC^{p+1,q}(\aaa)_r$.

Therefore, as a complex, $\CC_{\mathrm{GS}}(\aaa)$ admits a Hodge decomposition
\begin{equation}\label{eqhodge}
\CC_{\mathrm{GS}}(\aaa) = \oplus_{r \in \N} \CC_{\mathrm{GS}}(\aaa)_r.
\end{equation}
Taking cohomology yields the Hodge decomposition for $HH(\aaa)$.

Note that we have $\CC_{\mathrm{GS}}(\aaa)_0 = \CC_{\mathrm{simp}}(\aaa)$ and $\oplus_{r \in \N_0} \CC_{\mathrm{GS}}(\aaa)_r = \CC_{\mathrm{tGS}}(\aaa)$.

\subsection{From Hodge to simplicial HKR decomposition}\label{parHodgeHKR}
In \cite[\S 28]{gerstenhaberschack}, the authors combine the Hodge decomposition \eqref{eqhodge} with the classical HKR theorem \cite{HKR} in order to obtain the HKR decomposition \eqref{HKRGS} for smooth complex projective varieties.

In this section, we present the argument in a somewhat abstracted setting. First, we recall the HKR theorem following \cite[Theorem 9.4.7]{weibelbook}. Let $A$ be a commutative $k$-algebra.
Let $I$ be the kernel of the multiplication map $A \otimes_k A \lra A$ and $I^2$ the image of $I \otimes_k I \lra I$. We obtain the $A$-module of differentials $\Omega_A = I/I^2$ and its dual $\ttt_A = \Hom_A(\Omega_A, A) \cong \mathrm{Der}(A)$. For the Hochschild homology $HH_n(A)$ and cohomology $HH^n(A)$, there are natural $k$-linear anti-symmetrization morphisms $\Omega^n_A = \wedge^n \Omega_A \lra HH_n(A)$ and $\wedge^n \ttt_A \lra HH^n(A)$. If $A$ is an essentially of finite type, smooth algebra, these morphisms are isomorphisms, the so called HKR isomorphisms. Let $M$ be a symmetric $A$-bimodule. In the Hodge decomposition \eqref{eqhodgealg}, we further have $H^n\CC(A,M)_r = 0$ unless $n = r$, and $H^n\CC(A,M) = H^n\CC(A,M)_n$, i.e. the cohomology is concentrated in the ``top component''.

Now let $\uuu$ be a small category and $\aaa\colon \uuu^{\op} \lra \mathsf{CommAlg}(k)$ a presheaf of commutative algebras. We make the following two additional assumptions:
\begin{enumerate}
\item[(FS)] The algebra $\aaa(U)$ is an essentially of finite type, smooth algebra for every $U \in \uuu$.
\item[(FE)] The restriction map $u^{\ast}\colon \aaa(U) \lra \aaa(V)$ is a flat epimorphism of rings for every $u\colon V \lra U$ in $\uuu$.
\end{enumerate}

We obtain the \emph{presheaf of differentials} $\Omega_{\aaa}$ of $\aaa$ on $\uuu$ with $\Omega_{\aaa}(U) = \Omega_{\aaa(U)}$.
By (FE), every restriction map $u^{\ast}\colon\aaa(U) \lra \aaa(V)$ gives rise to a canonical isomorphism $\aaa(V) \otimes_{\aaa(U)} \Omega_{\aaa(U)} \cong \Omega_{\aaa(V)}$, and thus to a restriction map $\ttt_{\aaa(U)} \lra \ttt_{\aaa(V)}$. We thus obtain the \emph{tangent presheaf} $\ttt_{\aaa}$ of $\aaa$-modules on $\uuu$ with $\ttt_{\aaa}(U) = \ttt_{\aaa(U)}$.

In this section, we prove the following theorem:

\begin{theorem}\label{thmHKR}
Let $\uuu$ be a small category and $\aaa\colon \uuu^{\op} \lra \mathsf{CommAlg}(k)$ a presheaf of commutative algebras satisfying (FS) and (FE). There are canonical isomorphisms
$$HH^n(\aaa)= \bigoplus_{r=0}^n H^n\CC_{\mathrm{GS}}(\aaa)_r\cong \bigoplus_{p+q=n} H^p(\uuu,\wedge^q\ttt_{\aaa})$$
where the cohomologies on the right hand side are simplicial presheaf cohomologies.
\end{theorem}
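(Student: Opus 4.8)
The plan is to treat each Hodge component $\CC_{\mathrm{GS}}(\aaa)_r$ separately and to compute its cohomology via the spectral sequence of the double complex $\bigl(\CC^{p,q}(\aaa)_r\bigr)_{p,q}$ whose $E_1$-term is obtained by first taking vertical (Hochschild) cohomology. The first displayed equality is then just cohomology of the Hodge decomposition \eqref{eqhodge}, together with the vanishing $H^n\CC_{\mathrm{GS}}(\aaa)_r=0$ for $r>n$ which will drop out of the computation; so the real content is the second isomorphism, and it suffices to produce, for each $r\ge 0$, a canonical isomorphism $H^n\CC_{\mathrm{GS}}(\aaa)_r\cong H^{n-r}(\uuu,\wedge^r\ttt_{\aaa})$ and then reindex by $(p,q)=(n-r,r)$.

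Fix $r$. The first step is to compute $E_1$. The $p$-th column of the double complex is $\CC^{p,\bullet}(\aaa)_r=\prod_{\sigma\in\nnn_p(\uuu)}\CC(\aaa(c\sigma),\aaa(d\sigma))_r$ with the product Hochschild differential, where $\aaa(d\sigma)$ is viewed as a symmetric $\aaa(c\sigma)$-bimodule via $f^{\sigma}$. By the HKR theorem in the form recalled in \S\ref{parHodgeHKR} --- applied with module coefficients, which is legitimate because $\aaa(c\sigma)$ is smooth essentially of finite type by (FS), so $\Omega_{\aaa(c\sigma)}$ is finitely generated projective --- the cohomology of this column is concentrated in vertical degree $q=r$ and is canonically $\prod_{\sigma}\bigl(\wedge^r_{\aaa(c\sigma)}\ttt_{\aaa(c\sigma)}\otimes_{\aaa(c\sigma)}\aaa(d\sigma)\bigr)$. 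Since by (FE) the map $f^{\sigma}$ is a flat epimorphism (being a composite of flat epimorphisms), the base-change isomorphism $\aaa(d\sigma)\otimes_{\aaa(c\sigma)}\Omega_{\aaa(c\sigma)}\cong\Omega_{\aaa(d\sigma)}$ holds, and dualizing (using finite generation and projectivity again) gives $\wedge^r_{\aaa(c\sigma)}\ttt_{\aaa(c\sigma)}\otimes_{\aaa(c\sigma)}\aaa(d\sigma)\cong\wedge^r_{\aaa(d\sigma)}\ttt_{\aaa(d\sigma)}=(\wedge^r\ttt_{\aaa})(d\sigma)$. Hence $E_1^{p,r}=\prod_{\sigma\in\nnn_p(\uuu)}(\wedge^r\ttt_{\aaa})(d\sigma)=\CC^p_{\mathrm{simp}}(\wedge^r\ttt_{\aaa})$ and $E_1^{p,q}=0$ for $q\neq r$.

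The second step is to identify the induced $d_1$ with the simplicial differential of the presheaf $\wedge^r\ttt_{\aaa}$. Running through the formula $d_{\mathrm{simp}}=\sum_i(-1)^id_i$ of \S\ref{parsimp}: the faces $d_1,\dots,d_p$ are pure reindexings of simplices with unchanged domain and codomain, so they match verbatim; the face $d_0$ is postcomposition with the restriction $f^{u_1}$, which on Hochschild cohomology becomes $\mathrm{id}\otimes f^{u_1}$ and, after the identifications above, is exactly the restriction map of $\wedge^r\ttt_{\aaa}$ along $u_1$; and the face $d_{p+1}$ is precomposition with $(f^{u_{p+1}})^{\otimes r}$, i.e.\ restriction of scalars along the flat epimorphism $f^{u_{p+1}}$, which through the same base-change identifications becomes the canonical reindexing map matching $d_{p+1}$ of $\CC_{\mathrm{simp}}(\wedge^r\ttt_{\aaa})$. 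I expect this step to be the main obstacle: it requires choosing the HKR isomorphism naturally in the commutative algebra together with its symmetric coefficient module, in a way compatible with restriction of scalars along flat epimorphisms --- a careful, but ultimately formal, naturality statement for the anti-symmetrization map, which is also what makes the resulting isomorphisms canonical.

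Finally, since $E_1^{p,q}$ is supported in the single row $q=r$, the spectral sequence degenerates at $E_2$, giving $H^n\CC_{\mathrm{GS}}(\aaa)_r\cong E_2^{n-r,r}=H^{n-r}\bigl(\CC_{\mathrm{simp}}(\wedge^r\ttt_{\aaa})\bigr)=H^{n-r}(\uuu,\wedge^r\ttt_{\aaa})$, which vanishes whenever $n<r$. Summing over $r$ and reindexing $(p,q)=(n-r,r)$ yields $HH^n(\aaa)=\bigoplus_{r=0}^n H^n\CC_{\mathrm{GS}}(\aaa)_r\cong\bigoplus_{p+q=n}H^p(\uuu,\wedge^q\ttt_{\aaa})$, as claimed.
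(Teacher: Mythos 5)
Your proposal is correct and follows essentially the same route as the paper's proof: the column-filtration spectral sequence of each Hodge component, concentration of the $E_1$-page in the row $q=r$ via (FS), identification of $E_1^{p,r}$ with $\CC^p_{\mathrm{simp}}(\wedge^r\ttt_{\aaa})$ using the HKR isomorphism together with base change along the flat epimorphisms $f^\sigma$ from (FE), matching of $d_1$ with $d_{\mathrm{simp}}$, and degeneration at $E_2$. The only cosmetic difference is that the paper runs the argument on the normalized reduced complex (so the $E_1$-page is the reduced simplicial complex) and phrases the HKR step through $HH_r$ and an adjunction rather than through projectivity of $\Omega_{\aaa(c\sigma)}$, but these compute the same thing.
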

Let us apply Theorem \ref{thmHKR} to smooth schemes, generalizing \eqref{HKRGS} and \eqref{HKRscheme} (in case $\Q \subseteq k$).
Recall that a scheme is called \emph{semi-separated} if the intersection of two affine open subsets is affine (that is, if the diagonal $\Delta\colon X \lra X \times X$ is an affine morphism).
A \emph{semi-separating cover} of a scheme $X$ is an open affine cover which is closed under finite intersections. A scheme $X$ is semi-separated if and only if it has a semi-separating cover.

\begin{corollary}\label{thmnewscheme}
Let $X$ be a smooth scheme with a semi-separating cover $\uuu$. Let $\ooo_X|_{\uuu}$ and $\ttt_X|_{\uuu}$ be the restrictions to $\uuu$ of the structure sheaf $\ooo_X$ and the tangent sheaf $\ttt_X$ of $X$ respectively.
There are canonical isomorphisms
$$\begin{aligned}\label{HKRnewscheme}
HH^n(\ooo_X|_{\uuu}) & = \bigoplus_{r=0}^n H^n\CC_{\mathrm{GS}}(\ooo_X|_{\uuu})_r \cong \bigoplus_{p+q=n} H^p(\uuu,\wedge^q\ttt_X|_{\uuu}) \\ & \cong \bigoplus_{p+q=n} \check{H}^p(\uuu, \wedge^q \ttt_X) \cong \bigoplus_{p+q=n} H^p(X,\wedge^q\ttt_X).
\end{aligned}$$
where the third, fourth and fifth expressions contain simplicial,  \v{C}ech and sheaf cohomologies respectively.
\end{corollary}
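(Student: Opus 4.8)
The plan is to derive the chain of isomorphisms by applying Theorem \ref{thmHKR} to the presheaf $\aaa = \ooo_X|_{\uuu}$ and then invoking two standard comparison theorems for the cohomology of schemes. The first step is to verify that $\ooo_X|_{\uuu}$ satisfies hypotheses (FS) and (FE). For (FS): since $X$ is smooth over $k$, every member $U$ of the semi-separating cover (and every finite intersection of such, which again lies in $\uuu$) is an affine open of a smooth scheme, so $\ooo_X(U)$ is an essentially of finite type, smooth $k$-algebra. For (FE): for $V \subseteq U$ in $\uuu$ the corresponding ring map $f\colon \ooo_X(U) \lra \ooo_X(V)$ is flat (open immersions are flat), and it is an epimorphism of rings since $\ooo_X(V) \otimes_{\ooo_X(U)} \ooo_X(V) = \Gamma(V \times_U V, \ooo_X) = \ooo_X(V)$ shows that the multiplication $\ooo_X(V) \otimes_{\ooo_X(U)} \ooo_X(V) \lra \ooo_X(V)$ is an isomorphism, which is the criterion of Lemma \ref{lemloc}. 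Hence Theorem \ref{thmHKR} applies and gives
$$HH^n(\ooo_X|_{\uuu}) = \bigoplus_{r=0}^n H^n\CC_{\mathrm{GS}}(\ooo_X|_{\uuu})_r \cong \bigoplus_{p+q=n} H^p(\uuu, \wedge^q \ttt_{\ooo_X|_{\uuu}}).$$

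Next I would identify the tangent presheaf of $\ooo_X|_{\uuu}$ with the restriction $\ttt_X|_{\uuu}$ of the tangent sheaf: for an affine open $U$ one has $\ttt_{\ooo_X(U)} = \Hom_{\ooo_X(U)}(\Omega_{\ooo_X(U)}, \ooo_X(U)) = \Gamma(U, \ttt_X)$, and the canonical isomorphisms $\ooo_X(V) \otimes_{\ooo_X(U)} \Omega_{\ooo_X(U)} \cong \Omega_{\ooo_X(V)}$ furnished by (FE) match the restriction maps defined in \S \ref{parHodgeHKR} with those of the sheaf $\ttt_X$. As the formation of exterior powers commutes with restriction to opens, $\wedge^q \ttt_{\ooo_X|_{\uuu}} = (\wedge^q \ttt_X)|_{\uuu}$ as presheaves on $\uuu$, which yields the third displayed expression.

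For the fourth expression I would invoke the identification, for a quasi-coherent sheaf $\GGG$ on $X$, of the simplicial presheaf cohomology $H^p(\uuu, \GGG|_{\uuu})$ with the \v{C}ech cohomology $\check{H}^p(\uuu, \GGG)$ relative to the cover $\uuu$; since $\uuu$ is closed under finite intersections, the simplicial nerve of the poset $\uuu$ and the \v{C}ech nerve of the covering compute the same cohomology, which is exactly the comparison carried out in \S \ref{parcechsimp}. The final isomorphism is Leray's theorem on acyclic covers: $\wedge^q \ttt_X$ is quasi-coherent, $X$ is semi-separated so every finite intersection of members of $\uuu$ is affine, and quasi-coherent sheaves are acyclic on affine schemes; hence the \v{C}ech-to-derived-functor spectral sequence of the cover $\uuu$ degenerates and $\check{H}^p(\uuu, \wedge^q \ttt_X) \cong H^p(X, \wedge^q \ttt_X)$.

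I expect the only genuine work to lie in the first two paragraphs — checking (FS) and (FE), in particular that an arbitrary open immersion of affine schemes (not merely a principal localization) induces a flat epimorphism, and confirming that the algebraically defined restriction maps of the tangent presheaf and the formation of $\wedge^q$ are compatible with their sheaf-theoretic counterparts over $\uuu$. Once these are settled, the last two isomorphisms are either standard (Leray) or already established in \S \ref{parcechsimp}, leaving no further obstacle.
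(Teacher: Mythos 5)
Your proposal is correct and follows essentially the same route as the paper's proof: verify (FS) and (FE) for $\ooo_X|_{\uuu}$, apply Theorem \ref{thmHKR}, identify $\ttt_{\ooo_X|_{\uuu}}$ with $\ttt_X|_{\uuu}$, and then invoke the simplicial--\v{C}ech comparison of \S\ref{parcechsimp} together with Leray's theorem. You merely supply more detail than the paper does on the flat-epimorphism check and the tangent-presheaf identification, which the paper treats as immediate.
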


\begin{proof}
Put $\aaa = \ooo_X|_{\uuu}$. The condition (FS) obviously holds since $X$ is smooth. For $V\subseteq U$ in $\uuu$, the open immersion $(V,\ooo_X|_V)\lra (U,\ooo_X|_U)$ between affine opens corresponds to a flat epimorphism $\ooo_X(U)\lra\ooo_X(V)$ so (FE) also holds. Since $\uuu$ consists of affine opens, we have $\ttt_X|_{\uuu} \cong \ttt_{\aaa}$. Hence, Theorem \ref{thmHKR} applies, yielding the first line. Further, we have isomorphisms $H^p(\uuu, \wedge^q\ttt_X|_{\uuu}) \cong \check{H}^p(\uuu, \wedge^q\ttt_X)$ (see \S \ref{parcechsimp}), and $\check{H}^p(\uuu, \wedge^q \ttt_X) \cong H^p(X, \wedge^q \ttt_X)$ by Leray's theorem.
\end{proof}

\begin{corollary}
Let $X$ be a smooth quasi-compact separated scheme. There is an isomorphism
$$HH^n(X) \cong \bigoplus_{p+q=n} H^p(X,\wedge^q\ttt_X)$$
where $HH^n(X)$ is as defined in \eqref{HHscheme} and the cohomologies on the right hand side are sheaf cohomologies.
\end{corollary}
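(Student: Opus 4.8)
The plan is to deduce this corollary directly by combining Corollary \ref{thmnewscheme} with the known comparison isomorphism between the Hochschild cohomology of a scheme and that of the restriction of its structure sheaf to an affine cover, so that no new computation is needed.

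First I would observe that since $X$ is quasi-compact and separated, it is in particular semi-separated, hence admits a semi-separating cover; moreover, starting from a finite affine open cover (which exists by quasi-compactness) and closing it off under finite intersections (these remain affine by semi-separatedness, and there are only finitely many of them), one obtains a \emph{finite} semi-separating cover $\uuu$ of $X$. As $X$ is smooth, Corollary \ref{thmnewscheme} then applies to $\ooo = \ooo_X|_{\uuu}$ and yields
$$HH^n(\ooo_X|_{\uuu}) \cong \bigoplus_{p+q=n} H^p(X,\wedge^q\ttt_X),$$
the right-hand side being sheaf cohomology.

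Next I would invoke \cite[Thm.~7.8.1]{lowenvandenberghhoch}, applicable precisely because $X$ is quasi-compact and separated, which gives an isomorphism $HH^n(X) \cong HH^n(\ooo_X|_{\uuu})$ with $HH^n(X)$ as defined in \eqref{HHscheme}. Composing this with the previous display gives the asserted isomorphism. The only points that require any care are the reduction to a finite semi-separating cover and checking that the hypotheses of the two cited results are met — separatedness for the comparison theorem, smoothness and the existence of a semi-separating cover for Corollary \ref{thmnewscheme} — and there is no genuine obstacle beyond this bookkeeping, since all the substantive work (the passage from the Hodge decomposition to the HKR decomposition, and the identification of simplicial presheaf cohomology with \v{C}ech and then sheaf cohomology via Leray) is already contained in Theorem \ref{thmHKR} and Corollary \ref{thmnewscheme}.
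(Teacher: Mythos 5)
Your proposal is correct and coincides with the paper's own argument: the corollary is obtained by combining Corollary \ref{thmnewscheme} with the isomorphism $HH^n(X)\cong HH^n(\ooo_X|_{\uuu})$ of \cite[Thm.\ 7.8.1]{lowenvandenberghhoch}. The extra bookkeeping about producing a (finite) semi-separating cover from quasi-compactness and separatedness is a harmless elaboration of what the paper leaves implicit.
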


\begin{proof}
This follows from Corollary \ref{thmnewscheme} and the isomorphism $HH^n(X) \cong HH^n(\ooo_X|_{\uuu})$ proved in \cite[Thm. 7.8.1]{lowenvandenberghhoch}.
\end{proof}

\begin{remark}
Corollary \ref{thmnewscheme} shows that for a smooth, semi-separated scheme, the expression $HH^n(\ooo_X|_{\uuu})$ is independent of the choice of a semi-separating cover $\uuu$. For $X$ quasi-compact semi-separated (and not necessarily smooth), we actually have $HH^n(\ooo_X|_{\uuu}) \cong HH_{\mathrm{ab}}(\Qch(X))$ by \cite[Thm.\ 7.2.2 and Cor.\ 7.7.2]{lowenvandenberghhoch}, where $HH_{\mathrm{ab}}(\Qch(X))$ is the Hochschild cohomology of the abelian category $\Qch(X)$ (the proof in loc.\ cit.\ is given for a separated scheme, but only makes use of semi-separatedness). This confirms Gerstenhaber and Schack's point of view, identifying $HH^n(\ooo_X|_{\uuu})$ as  the correct cohomology one should associate to $X$.
\end{remark}

\begin{proof}[Proof of Theorem \ref{thmHKR}]
Let us consider the spectral sequence ${}^{\mathrm{I}}E^{p,q}_{\bullet,r}$ determined by the column filtration of $\bar{\CC}'^{\bullet,\bullet}(\aaa)_r$. According to \cite[Theorem 9.1.8]{weibelbook}, (FE) and Lemma \ref{lemloc}, we have
\begin{equation}\label{eq:Hoch1}
\aaa(V)\otimes_{\aaa(U)}HH_n( \aaa(U))\cong HH_n(\aaa(V)).
\end{equation}
By (FS), $H^q(\aaa(U), \aaa(V))_r=0$ unless $q=r$. So ${}^{\mathrm{I}}E^{p,q}_{1,r}$ is concentrated in the $r$th row, and
\begin{align*}
{}^{\mathrm{I}}E^{p,r}_{1,r}&=\sideset{}{'}\prod_{\sigma\in\nnn_p(\uuu)}H^r\bar{\CC}(\aaa(c\sigma), \aaa(d\sigma))_r\\
&=\sideset{}{'}\prod_{\sigma\in\nnn_p(\uuu)}H^r(\aaa(c\sigma), \aaa(d\sigma))\\
&\overset{\scriptscriptstyle(1)}{\cong}\sideset{}{'}\prod_{\sigma\in\nnn_p(\uuu)}\Hom_{\aaa(c\sigma)}(HH_r(\aaa(c\sigma)), \aaa(d\sigma))\\
&\cong\sideset{}{'}\prod_{\sigma\in\nnn_p(\uuu)}\Hom_{\aaa(d\sigma)}(\aaa(d\sigma)\otimes_{\aaa(c\sigma)}HH_r(\aaa(c\sigma)), \aaa(d\sigma))\\
&\overset{\scriptscriptstyle(2)}{\cong}\sideset{}{'}\prod_{\sigma\in\nnn_p(\uuu)}\Hom_{\aaa(d\sigma)}(HH_r(\aaa(d\sigma)), \aaa(d\sigma))\\
&\cong\sideset{}{'}\prod_{\sigma\in\nnn_p(\uuu)}\Hom_{\aaa(d\sigma)}(\Omega^r_{\aaa(d\sigma)}, \aaa(d\sigma))\\
&\cong\sideset{}{'}\prod_{\sigma\in\nnn_p(\uuu)}\wedge^r\ttt_{\aaa}(d\sigma)\\
&=\CC'^p_{\mathrm{simp}}(\wedge^r\ttt_{\aaa}).
\end{align*}
Here $\prod'*$ means the submodule of $\prod*$ consisting of reduced cochains. The above isomorphisms are pointwise, wherein (1) follows by \cite[Lemma 4.1]{HKR}, (2) follows from \eqref{eq:Hoch1}. We denote their composition by $T$.

Since $B^r\bar{\CC}(\aaa(c\sigma), \aaa(d\sigma))_r=0$, any class $\theta^\sigma\in H^r\bar{\CC}(\aaa(c\sigma), \aaa(d\sigma))_r$ is a normalized cocycle on the nose, satisfying $\theta^\sigma e_r(r)=\theta^\sigma$. We have in turn that $\theta^\sigma$ factors uniquely through $(f^{\sigma})^{\otimes r}\colon \aaa(c\sigma)^{\otimes r}\lra\aaa(d\sigma)^{\otimes r}$. Namely, there exists a unique $\Theta^\sigma$ such that the diagram
\[
\xymatrix{
\aaa(c\sigma)^{\otimes r} \ar[d]_-{(f^{\sigma})^{\otimes r}}\ar[r]^-{\theta^\sigma} & \aaa(d\sigma)\\
\aaa(d\sigma)^{\otimes r} \ar[ur]_-{\Theta^\sigma}
}
\]
is commutative. Since $\theta^\sigma e_r(r)=\theta^\sigma$ implies $\theta^\sigma=\Theta^\sigma e_r(r)(f^\sigma)^{\otimes r}$, by the uniqueness of $\Theta^\sigma$ we obtain $\Theta^\sigma e_r(r)=\Theta^\sigma$. Thus $\Theta^\sigma$ can be viewed as an anti-symmetric multi-derivation, i.e.\ $\Theta^\sigma\in \wedge^r\ttt_{\aaa}(d\sigma)$, and we have $T(\theta)=(\Theta^\sigma)_\sigma$. Consequently, the map $\CC'^p_{\mathrm{simp}}(\wedge^r\ttt_{\aaa})\lra \CC'^{p+1}_{\mathrm{simp}}(\wedge^r\ttt_{\aaa})$ induced by the differential ${}^{\mathrm{I}}E^{p,r}_{1,r}\lra {}^{\mathrm{I}}E^{p+1,r}_{1,r}$ is the same as $d_{\mathrm{simp}}$, and so
\[
{}^{\mathrm{I}}E^{p,q}_{2,r}=
\begin{cases}
H^p(\uuu,\wedge^r\ttt_{\aaa}), & q=r,\\
0, & q\neq r.
\end{cases}
\]
It follows that the spectral sequence collapses at $E_2$ stage. Hence $H^n\CC_{\mathrm{GS}}(\aaa)_r\cong {}^{\mathrm{I}}E^{n-r,r}_{2,r}$ and
\begin{equation}\label{HKRnew}
H^n\CC_{\mathrm{GS}}(\aaa)= \bigoplus_{r=0}^n H^n\CC_{\mathrm{GS}}(\aaa)_r\cong \bigoplus_{p+q=n} H^p(\uuu,\wedge^q\ttt_{\aaa}).\qedhere
\end{equation}
\end{proof}

According to the above proof and by the Hodge decomposition of $H^n\CC_{\mathrm{GS}}(\aaa)$, any GS cohomology class $\CCC_{\mathrm{GS}}$ admits a normalized reduced \emph{decomposable} representative $(\theta_{0,n},\theta_{1,n-1},\dots, \theta_{n,0})$ in the sense that $\theta_{n-r,r}\in \bar{\CC}^{n-r,r}(\aaa)_r$ and $\theta_{n-r,r}$ is reduced for $r=0,\dots,n$. In this way,  $(0,\dots,0, \theta_{n-r,r}, 0,\dots,0)$ are all normalized reduced cocycles.

Summarizing, under (FS) and (FE), the explicit Hodge to HKR transition is given as follows. Starting with any GS cohomology class $\CCC_{\mathrm{GS}}$, we choose a normalized reduced decomposable cocycle $(\theta_{0,n},\theta_{1,n-1},\dots, \theta_{n,0})$. Each $\theta_{n-r,r}=(\theta_{n-r,r}^\sigma)_{\sigma}$ lifts to a simplicial cocycle $\Theta_{n-r,r}=(\Theta_{n-r,r}^\sigma)_{\sigma}$ uniquely. Let $$\CCC_{\mathrm{simp}}\in\oplus_{p+q=n} H^p(\uuu,\wedge^q\ttt_\aaa)$$ be the class represented by $(\Theta_{0,n},\Theta_{1,n-1},\dots, \Theta_{n,0})$. The correspondence
\[H^n\CC_{\mathrm{GS}}(\aaa)\ni\CCC_{\mathrm{GS}}\longmapsto \CCC_{\mathrm{simp}}\in\bigoplus_{p+q=n} H^p(\uuu,\wedge^q\ttt_\aaa)\]
is bijective.

\subsection{Simplicial cohomology vs \v{C}ech cohomology}\label{parcechsimp}
In this section, we describe explicit quasi-isomorphisms between the natural complexes computing simplicial and \v{C}ech cohomology of presheaves respectively. This will be used in order to change from Toda's construction of first order deformations of schemes to a simplicial counterpart in \S\ref{partoda}.

Let $\uuu$ be a poset with binary meets. Meets in $\uuu$ are denoted by the symbol $\cap$. Let $\fff$ be a presheaf of $k$-modules on $\uuu$. The simplicial cohomology of $\fff$ is by Proposition \ref{propreduced} the cohomology of the reduced complex $\CC'^\bullet_{\mathrm{simp}}(\fff)$.

For any $p$-sequence $\tau=(U_0^\tau, U_1^\tau,\dots,U_p^\tau)\in \uuu^{p+1}$, denote by $\cap\tau$ the meet of all coordinates of $\tau$. Recall that the \v{C}ech complex $\check{\CC}^\bullet(\fff)$ is given by
\[\check{\CC}^\bullet(\fff)=\prod_{\tau\in \uuu^{\bullet+1}}\fff(\cap\tau).\]
A \v{C}ech $p$-cochain $\psi=(\psi^\tau)_\tau$ is said to be alternating if (1) $\psi^\tau=0$ whenever two coordinates of $\tau$ are equal, (2) $\psi^{\tau s}=(-1)^s\psi^\tau$ for any permutation $s$ of the set $\{0,\dots,p\}$. Here, we regard $\tau$ as a set-theoretic map $\{0,\dots,p\}\lra \uuu$, so $\tau s$ makes sense. Let $\check{\CC}'^\bullet(\fff)$ be the subcomplex of $\check{\CC}^\bullet(\fff)$ consisting of alternating cochains. It is well known that the \v{C}ech cohomology can be computed by both complexes.

As sets, $\nnn_p(\uuu)\subseteq \uuu^{p+1}$, so a simplex $\sigma$ can be regarded as a sequence $\tilde{\sigma}$ by forgetting the inclusions. Conversely, to a $p$-sequence $\tau$, we associate a $p$-simplex $\bar{\tau}$ by setting
\[U_i^{\bar{\tau}}=\cap_{j=i}^pU_j^\tau.\]
It is clear that $d\bar{\tau}=\cap\tau$. In \S \ref{parsimp}, the notation $\partial_i\sigma$ is given for any simplex $\sigma$. Similarly, $\partial_i\tau$ can be defined for any $\tau\in \uuu^{p+1}$. Here we define $\delta_i\tau$ by
\[
\delta_i\tau=(U_0^\tau, \ldots, U_{i-2}^\tau, U_{i-1}^\tau\cap U_i^\tau, U_{i+1}^\tau, \ldots, U_p^\tau)\in \uuu^p
\]
for $i=1,\dots,p$. Thus $\overline{\delta_i\tau}=\partial_i\bar{\tau}$.

Now for any reduced simplicial $p$-cochain $\phi$ and any alternating \v{C}ech $p$-cochain $\psi$, define
\begin{alignat*}{2}
\iota(\phi)^\tau&=\sum_{s\in S_{p+1}}(-1)^s\phi^{\overline{\tau s}},\quad& &\tau\in \uuu^{p+1},\\
\pi(\psi)^\sigma&=\psi^{\tilde{\sigma}},\quad& &\sigma\in \nnn_p(\uuu).
\end{alignat*}
It is easy to check that $\iota(\phi)\in \check{\CC}'^p(\fff)$ and $\pi(\psi)\in \CC'^p_{\mathrm{simp}}(\fff)$.

\begin{lemma}
The above $\iota$ and $\pi$ are morphisms of complexes.
\end{lemma}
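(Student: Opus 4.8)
The plan is to verify directly that $\iota$ and $\pi$ commute with the respective differentials, exploiting the combinatorial bridge $\tau \mapsto \bar\tau$, $\sigma \mapsto \tilde\sigma$ between sequences and simplices together with the compatibility $\overline{\delta_i\tau} = \partial_i\bar\tau$ and $\widetilde{\partial_i\sigma}$ relations already recorded in this section. The two checks are formally dual, so I would present one in detail and indicate the other.

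\emph{The map $\pi$.} This is the easier of the two. Given an alternating \v{C}ech cochain $\psi$, one must show $d_{\mathrm{simp}}(\pi(\psi)) = \pi(\check d(\psi))$. Unwinding, for a simplex $\sigma = (U_0 \xrightarrow{u_1} \cdots \xrightarrow{u_{p+1}} U_{p+1})$, the term $d_i\pi(\psi)^\sigma = \pi(\psi)^{\partial_i\sigma} = \psi^{\widetilde{\partial_i\sigma}}$ for $1 \le i \le p$, while the \v{C}ech face $\delta_i\tilde\sigma$ replaces $U_{i-1}, U_i$ by $U_{i-1}\cap U_i = U_i$ (since $U_i \subseteq U_{i-1}$ along $u_i$), so $\delta_i\tilde\sigma = \widetilde{\partial_i\sigma}$ as a sequence. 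For $i = 0$ the simplicial differential applies the restriction $f^{u_1}\colon \fff(U_1)\to\fff(U_0)$, which matches the \v{C}ech face map $\fff(U_1) = \fff(\cap\delta_0\tilde\sigma) \to \fff(\cap\tilde\sigma) = \fff(U_0)$ since $\cap\tilde\sigma = U_0$; for $i = p+1$ one uses $g^{u_{p+1}}$ composed with $\psi^{\partial_{p+1}\sigma}$, again matching because $\cap\delta_{p+1}\tilde\sigma = U_p$ restricts to $U_{p+1}$. Summing with the signs $(-1)^i$ gives the claim; no use of the alternating property is needed beyond knowing $\pi(\psi)$ lands in reduced cochains.

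\emph{The map $\iota$.} Here the averaging over $S_{p+1}$ makes the signs delicate. The strategy is: expand $d_{\mathrm{simp}}(\iota(\phi))^\tau = \sum_{i=0}^{p+1}(-1)^i d_i\iota(\phi)^\tau$, substitute $\iota(\phi)^{\bullet} = \sum_{s}(-1)^s\phi^{\overline{(\bullet)s}}$, and compare with $\iota(\check d\phi)^\tau = \sum_{s \in S_{p+1}}(-1)^s (\check d\phi)^{\overline{\tau s}} = \sum_s (-1)^s \sum_{j}(-1)^j (\check d\phi)^{\partial_j(\ldots)}$, using $\overline{\delta_j\tau} = \partial_j\bar\tau$ to rewrite \v{C}ech faces of $\bar\tau$ as simplicial faces of the associated simplex. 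The bijection between a pair $(\text{face index}, \text{permutation of the faced set})$ on one side and $(\text{permutation of }\{0,\dots,p+1\}, \text{face index})$ on the other is the standard shuffle bookkeeping; one checks the induced signs agree. The boundary faces $i = 0$ and $i = p+1$ of $d_{\mathrm{simp}}$ carry the restriction maps $f^{u_1}$, $g^{u_{p+1}}$ of $\iota(\phi)$, and these match the corresponding restriction maps $\fff(\cap\delta_j\tau) \to \fff(\cap\tau)$ in the \v{C}ech differential because for the relevant $j$ the meet $\cap\delta_j\tau$ differs from $\cap\tau$ exactly by dropping one coordinate.

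\emph{The main obstacle} is the sign reconciliation in the $\iota$ computation: one is reindexing a double sum over $S_{p+1}\times\{\text{faces}\}$, and the cleanest way to control it is to note that $\iota$ is, up to the identification $\nnn_p(\uuu)\subseteq\uuu^{p+1}$ and the section $\bar\tau$, essentially the classical antisymmetrization/normalization comparison for the \v{C}ech complex, which is known to be a chain map; the only genuinely new point is that the two ``non-symmetric'' faces ($d_0$ with $f^{u_1}$ and $d_{p+1}$ with $g^{u_{p+1}}$) of $d_{\mathrm{simp}}$ correctly assemble into the \v{C}ech differential's restriction maps after antisymmetrizing. I would isolate this as the one substantive verification and dispatch the interior faces ($1 \le i \le p$) by the purely set-theoretic identity $\overline{\delta_i\tau} = \partial_i\bar\tau$.
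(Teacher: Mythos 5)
Your half of the argument for $\pi$ is essentially right and is the half the paper dismisses as straightforward; one small correction: in this section the simplicial complex is $\CC_{\mathrm{simp}}(\fff)=\CC(k,\fff)$, so the face $d_{p+1}$ carries no restriction map at all, and since $U_0$ is the smallest coordinate of $\tilde\sigma$ one has $\cap\,\partial_{p+1}\tilde\sigma=U_0=\cap\tilde\sigma$ (not $U_p$), so nothing needs to be restricted there. Only the face $d_0$ is ``non-symmetric''.

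The genuine gap is in your plan for $\iota$. You propose to match the terms of $\iota(d_{\mathrm{simp}}\phi)^\tau$ with those of $d_{\mathrm{\check{C}ech}}(\iota\phi)^\tau$ by a signed bijection (``shuffle bookkeeping''), dispatching the interior faces $1\le i\le p$ via $\overline{\delta_i\tau}=\partial_i\bar\tau$ and reserving care for the boundary faces. No such bijection can exist: the left-hand side has $(p+2)!\cdot(p+2)$ terms while the right-hand side has $(p+2)\cdot(p+1)!=(p+2)!$, so a factor of $p+2$ must disappear by cancellation, and it does so in a way orthogonal to your matching. The correct mechanism is: (a) for every $i\ge 1$ the entire sum $\sum_{s\in S_{p+2}}(-1)^s\phi^{\overline{\delta_i(\tau s)}}$ vanishes, because $\overline{\delta_i(\tau s)}=\overline{\delta_i(\tau r)}$ whenever $s=r$ composed with the transposition $(i-1,i)$ while $(-1)^s=-(-1)^r$; thus the interior faces do not correspond to \v{C}ech faces at all---they cancel outright. (b) The \emph{whole} \v{C}ech differential is produced by the single face $\partial_0$ (the only one carrying the restriction $f^{u_1}$): partitioning $S_{p+2}$ according to $j=s(0)$ one has $\partial_0\overline{\tau s}=\overline{(\partial_j\tau)s'}$ with $(-1)^s=(-1)^j(-1)^{s'}$, and the inner sums over $s'\in S_{p+1}$ reassemble into $\sum_j(-1)^j\iota(\phi)^{\partial_j\tau}|_{\cap\tau}$. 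Your fallback appeal to the classical alternation map being a chain map does not supply either point: the comparison here is between the nerve-indexed complex and the full \v{C}ech complex via the section $\tau\mapsto\bar\tau$ (which inserts partial meets), not the classical inclusion of alternating cochains, so the cancellation pattern has to be verified as above.
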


\begin{proof}
The fact that $\pi$ commutes with differential is straightforward. Let us prove $\iota$ does also.

For any $\phi\in \CC'^p(\fff)$ and $(p+1)$-sequence $\tau$,
\begin{align*}
\iota d_{\mathrm{simp}} (\phi)^\tau&=\sum_{s\in S_{p+2}}(-1)^s d_{\mathrm{simp}}(\phi)^{\overline{\tau s}}\\
&=\sum_{s\in S_{p+2}}(-1)^s\biggl(\phi^{\partial_0\overline{\tau s}}|_{d \overline{\tau s}}+\sum_{i=1}^{p+1}(-1)^i \phi^{\partial_i\overline{\tau s}}\biggr)\\
&=\sum_{s\in S_{p+2}}(-1)^s\phi^{\partial_0\overline{\tau s}}|_{\cap\tau}+\sum_{i=1}^{p+1}(-1)^i\sum_{s\in S_{p+2}}(-1)^s\phi^{\overline{\delta_i(\tau s)}}.
\end{align*}
We first consider $\partial_0\overline{\tau s}$. If $s(0)=j$, then $\partial_0\overline{\tau s}=\overline{(\partial_j\tau) s'}$ with $s'\in S_{p+1}$ given by
\[
s'(i)=\begin{cases}
s(i+1), & \text{if $s(i+1)<j$},\\
s(i+1)-1, & \text{if $s(i+1)>j$}.
\end{cases}
\]
Furthermore, $s'$ ranges over $S_{p+1}$ if $s$ ranges over the set $\{s\in S_{p+2}\mid s(0)=j\}$, and $(-1)^s=(-1)^{s'}(-1)^j$. Thus
\[
\sum_{s\in S_{p+2}}(-1)^s\phi^{\partial_0\overline{\tau s}}|_{\cap\tau}=\sum_{j=0}^{p+1}(-1)^j\sum_{s'\in S_{p+1}}(-1)^{s'}\phi^{\overline{(\partial_j\tau) s'}}|_{\cap\tau}.
\]
Next we consider $\overline{\delta_i(\tau s)}$. It is easy to see that $\overline{\delta_i(\tau s)}=\overline{\delta_i(\tau r)}$ if $s$ equals $r$ composing the transposition $(i-1,i)$. It follows immediately that $\sum_{s\in S_{p+2}}(-1)^s\phi^{\overline{\delta_i(\tau s)}}=0$ for all $i\geq 1$, since $(-1)^s=-(-1)^r$.

On the other hand,
\[
d_{\mathrm{\check{C}ech}}\iota (\phi)^\tau=\sum_{i=0}^{p+1}(-1)^i\iota (\phi)^{\partial_i\tau}|_{\cap\tau}=\sum_{i=0}^{p+1}(-1)^i\sum_{s\in S_{p+1}}(-1)^s\phi^{\overline{(\partial_i\tau)s}}|_{\cap\tau}.
\]
So $\iota d_{\mathrm{simp}}=d_{\mathrm{\check{C}ech}}\iota$.
\end{proof}

Given any $\tau\in\uuu^{p+1}$ and $0\leq i\leq p$, define $\theta_i(\tau)$ by
\[
\theta_i(\tau)=\bigl(\cap_{j=0}^{p}U^\tau_j,\, \cap_{j=1}^{p}U^\tau_j,\, \ldots,\, \cap_{j=i}^{p}U^\tau_j,\, U^\tau_i,\,\ldots,\, U^\tau_p\bigr)\in \uuu^{p+2}.
\]

\begin{lemma}\label{lemma:simp-cech}
One has
\begin{enumerate}
\item $\partial_0\theta_i(\tau)=\theta_{i-1}\partial_0(\tau)$ for $1\leq i\leq p$,
\item $\partial_j\theta_i(\tau)=\theta_{i-1}\delta_j(\tau)$ for $1\leq j<i\leq p$,
\item $\partial_j\theta_i(\tau t)=\partial_{i+1}\theta_i(\tau)$ for some permutation $t$ (depending on $i$, $j$) with $(-1)^t=(-1)^{j-i-1}$, if $0\leq i<j\leq p+1$,
\item $\partial_{i+1}\theta_i(\tau)=\partial_{i+1}\theta_{i+1}(\tau)$ for $0\leq i\leq p-1$,
\item $\partial_{p+1}\theta_p(\tau)=\tilde{\bar{\tau}}$.
\end{enumerate}
\end{lemma}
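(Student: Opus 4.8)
The plan is to prove all five identities by a direct coordinate-by-coordinate comparison, after first recording an explicit formula for $\theta_i(\tau)$. Writing $\tau=(U_0,\dots,U_p)$ and dropping the superscript $\tau$, the sequence $\theta_i(\tau)\in\uuu^{p+2}$ has $k$-th coordinate equal to $\cap_{l=k}^pU_l$ for $0\le k\le i$ and equal to $U_{k-1}$ for $i+1\le k\le p+1$; in particular its $(i+1)$-st coordinate is $U_i$ and its last coordinate is $U_p$. I would also recall that $\partial_j$ deletes the $j$-th coordinate, that $\delta_j$ replaces the consecutive pair $(U_{j-1},U_j)$ by the single entry $U_{j-1}\cap U_j$, and that $\tilde{\bar{\tau}}=(\cap_{l=0}^pU_l,\cap_{l=1}^pU_l,\dots,U_p)$. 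With these formulas in hand, each identity reduces to checking that two sequences have the same entry in each position.

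Four of the identities are then pure bookkeeping. For (5), deleting the last coordinate $U_p$ of $\theta_p(\tau)$ leaves precisely $(\cap_{l=0}^pU_l,\dots,\cap_{l=p}^pU_l)=\tilde{\bar{\tau}}$. For (1), deleting the $0$-th coordinate of $\theta_i(\tau)$ produces the sequence whose $k$-th entry is $\cap_{l=k+1}^pU_l$ for $0\le k\le i-1$ and $U_k$ for $i\le k\le p$, and this is exactly $\theta_{i-1}(\partial_0\tau)$ since $\partial_0\tau=(U_1,\dots,U_p)$. For (2), with $1\le j<i$, the coordinate of $\theta_i(\tau)$ deleted by $\partial_j$ lies in the ``meet block'', and one checks that both $\partial_j\theta_i(\tau)$ and $\theta_{i-1}(\delta_j\tau)$ have $k$-th entry $\cap_{l=k}^pU_l$ for $k<j$, $\cap_{l=k+1}^pU_l$ for $j\le k\le i-1$, and $U_k$ for $i\le k\le p$; here the only point to note is that the meet over the fused coordinate range of $\delta_j\tau$ telescopes back to $\cap_{l=k}^pU_l$. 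For (4), the coordinate deleted by $\partial_{i+1}$ is $U_i$ in $\theta_i(\tau)$ and $\cap_{l=i+1}^pU_l$ in $\theta_{i+1}(\tau)$, and in both cases the surviving coordinates reassemble to the sequence with $k$-th entry $\cap_{l=k}^pU_l$ for $0\le k\le i$ and $U_k$ for $i+1\le k\le p$, so the two agree.

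The only part requiring genuine care is (3), and exhibiting the correct permutation together with its sign is where I expect the (minor) obstacle. Given $0\le i<j\le p+1$, I would take $t\in S_{p+1}$ to be the cyclic permutation $(i,\,i+1,\,\dots,\,j-1)$, that is, $l\mapsto l+1$ for $i\le l\le j-2$, $j-1\mapsto i$, and $l\mapsto l$ otherwise; as a cycle of length $j-i$ it has sign $(-1)^{j-i-1}$, as claimed. Since $t$ fixes $\{0,\dots,i-1\}$ and restricts to a permutation of $\{i,\dots,p\}$, the first $i+1$ coordinates of $\theta_i(\tau t)$ are unchanged, each equal to $\cap_{l=k}^pU_{t(l)}=\cap_{l=k}^pU_l$; and deleting the $j$-th coordinate removes exactly $U_{t(j-1)}=U_i$ from the tail $(U_{t(i)},\dots,U_{t(p)})$, which by the choice of cyclic shift leaves $(U_{i+1},\dots,U_p)$ in order. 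Comparing with the explicit form of $\partial_{i+1}\theta_i(\tau)$ obtained in the discussion of (4) gives the identity. Beyond this, the whole proof is routine; the one thing to be careful about throughout is keeping the two index ranges of each $\theta_i$ — the ``meet block'' $0\le k\le i$ and the ``tail'' $i+1\le k\le p+1$ — correctly separated when applying the various face and degeneracy-type maps.
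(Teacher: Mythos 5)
Your proof is correct, and it fleshes out exactly what the paper leaves implicit: the paper's proof of this lemma consists of the single remark that (3) follows by induction on $j-i$ and that the other items are straightforward, so your coordinate-by-coordinate verification is the intended argument. The only (cosmetic) divergence is in (3), where you exhibit the cycle $t=(i,\,i+1,\,\dots,\,j-1)$ explicitly rather than inducting on $j-i$; your choice of $t$, the computation that deleting the $j$-th coordinate of $\theta_i(\tau t)$ removes $U_i$ from the tail and leaves $(U_{i+1},\dots,U_p)$ in order, and the sign $(-1)^{j-i-1}$ all check out (including the degenerate case $j=i+1$, where $t$ is the identity).
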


\begin{proof}
(3) can be proved by induction on $j-i$. The others are straightforward.
\end{proof}

By the definitions of $\iota$, $\pi$, we have $\pi\iota=1$. In order to show they are quasi-isomorphisms, we construct a family of maps $h=\{h^p\colon \check{\CC}'^p(\fff)\to \check{\CC}'^{p-1}(\fff)\}_p$ by for all $\psi\in \check{\CC}'^p(\fff)$ and $\tau\in \uuu^{p}$,
\[
h^p(\psi)^\tau=\sum_{i=0}^{p-1}\frac{(-1)^i}{(p-i)!}\sum_{s\in S_{p}}(-1)^s\psi^{\theta_i(\tau s)}.
\]

\begin{lemma}
The map $h$ is a homotopy from $1$ to $\iota\pi$.
\end{lemma}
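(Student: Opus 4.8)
The plan is to establish the chain-homotopy identity
$$d_{\mathrm{\check{C}ech}}\circ h^{p}+h^{p+1}\circ d_{\mathrm{\check{C}ech}}=1-\iota\pi$$
on $\check{\CC}'^{p}(\fff)$ for every $p$, which is precisely the statement that $h$ is a homotopy from $1$ to $\iota\pi$. Fix $\psi\in\check{\CC}'^{p}(\fff)$ and a sequence $\tau\in\uuu^{p+1}$, and expand both $h^{p+1}(d_{\mathrm{\check{C}ech}}\psi)^{\tau}$ and $d_{\mathrm{\check{C}ech}}(h^{p}\psi)^{\tau}$ as sums indexed by the $\theta$-index $i$, by a symmetric group ($S_{p+1}$ on the left, $S_{p}$ on the right), and by a face index $j$. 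Every simplification is driven by two principles: Lemma \ref{lemma:simp-cech} rewrites each composite sequence $\partial_{j}\theta_{i}(\tau s)$ that occurs, and the alternating property of $\psi$ both turns $\sum_{s\in S_{p+1}}(-1)^{s}\psi^{\tau s}$ into $(p+1)!\,\psi^{\tau}$ and annihilates any subsum that is antisymmetric under a transposition of two coordinates of $\tau s$.

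First I would analyse $h^{p+1}(d_{\mathrm{\check{C}ech}}\psi)^{\tau}$. In the inner alternating sum over $j$, part (3) of Lemma \ref{lemma:simp-cech} shows that each term with $j\geq i+1$, after the substitution $\tau s\mapsto\tau s t^{-1}$ with $(-1)^{t}=(-1)^{j-i-1}$, becomes a multiple of $\sum_{s}(-1)^{s}\psi^{\partial_{i+1}\theta_{i}(\tau s)}$; summing over the $p+1-i$ admissible values of $j$ produces a multiplicity $p+1-i$ that cancels one factor of $(p+1-i)!$. Using part (4) to replace $\partial_{i+1}\theta_{i}$ by $\partial_{i+1}\theta_{i+1}$ (for $i\leq p-1$) and part (5) to recognise that the $\partial_{p+1}\theta_{p}$-terms reassemble into $\iota\pi(\psi)^{\tau}$, one finds that this block contributes exactly $-\iota\pi(\psi)^{\tau}$ together with a sum of terms $-\tfrac{1}{(p+1-i)!}\sum_{s}(-1)^{s}\psi^{\partial_{i}\theta_{i}(\tau s)}$ for $i\geq 1$. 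The diagonal terms $j=i$ of the original double sum contribute precisely the negatives of the latter, so they cancel, except for the value $i=0$, where $\partial_{0}\theta_{0}(\tau s)=\tau s$ and the alternating property yields a clean $+\psi^{\tau}$. What survives of $h^{p+1}(d_{\mathrm{\check{C}ech}}\psi)^{\tau}$ is therefore $\psi^{\tau}-\iota\pi(\psi)^{\tau}$ plus a remainder made up only of the indices $j<i$, which by parts (1) and (2) consists of terms $\psi^{\theta_{i-1}(\partial_{0}(\tau s))}$ and $\psi^{\theta_{i-1}(\delta_{j}(\tau s))}$.

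It then remains to check that $d_{\mathrm{\check{C}ech}}(h^{p}\psi)^{\tau}$ cancels this remainder. Writing $d_{\mathrm{\check{C}ech}}(h^{p}\psi)^{\tau}=\sum_{j}(-1)^{j}(h^{p}\psi)^{\partial_{j}\tau}|_{\cap\tau}$ and regrouping by the $\theta$-index, one matches the $\psi^{\theta_{i}(\partial_{0}(\tau s))}$-terms by splitting the sum over $S_{p+1}$ according to the value $a=s(0)$: for fixed $a$ the induced permutation $s'$ on the remaining coordinates ranges over a copy of $S_{p}$ acting on $\partial_{a}\tau$, with $(-1)^{s}=(-1)^{a}(-1)^{s'}$, so $\sum_{s}(-1)^{s}\psi^{\theta_{i}(\partial_{0}(\tau s))}=\sum_{a}(-1)^{a}\sum_{s'}(-1)^{s'}\psi^{\theta_{i}((\partial_{a}\tau)s')}$, which is exactly $d_{\mathrm{\check{C}ech}}(h^{p}\psi)^{\tau}$ up to the common factor $(-1)^{i}/(p-i)!$. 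Finally, the leftover terms $\sum_{j\geq 1}(-1)^{j}\sum_{s}(-1)^{s}\psi^{\theta_{i}(\delta_{j}(\tau s))}$ vanish, because $\delta_{j}(\tau s)$ is unchanged when $s$ is replaced by $s$ composed with the transposition $(j-1,j)$ while $(-1)^{s}$ changes sign. Assembling the three steps yields $d_{\mathrm{\check{C}ech}}h^{p}+h^{p+1}d_{\mathrm{\check{C}ech}}=1-\iota\pi$.

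The main obstacle is the bookkeeping in the second step: one must be scrupulous about the factorial weights and the exact way the multiplicities furnished by Lemma \ref{lemma:simp-cech}(3) cancel them, about the fact that two distinct operations on sequences intervene ($\partial_{j}$ deleting a coordinate versus $\delta_{j}$ replacing two coordinates by their meet), and about carrying the restriction maps $|_{\cap\tau}$ along — harmless, but needed for the formulas to make sense. Isolating the two surviving contributions $\psi^{\tau}$ and $\iota\pi(\psi)^{\tau}$ at the outset, and then pairing off everything else exactly as dictated by parts (1)–(5) of the lemma, is what keeps the computation under control.
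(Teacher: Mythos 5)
Your proposal is correct and follows essentially the same computation as the paper's proof: expand $h^{p+1}d_{\mathrm{\check{C}ech}}+d_{\mathrm{\check{C}ech}}h^{p}$, apply parts (1)--(5) of Lemma \ref{lemma:simp-cech}, use the alternating property to extract $(p+1)!\,\psi^{\tau}$, kill the $\delta_j$-terms by the transposition pairing, and let the multiplicity $p+1-i$ over $j$ cancel one factor of $(p+1-i)!$. The only difference is organizational (you group terms by comparing $j$ with $i$, whereas the paper splits off $j=0$ first), and all the cancellations you invoke are exactly those of the paper.
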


\begin{proof}
By computation, we have
\begin{align*}
\iota\pi (\psi)^\tau&=\sum_{s\in S_{p+1}}(-1)^s\pi (\psi)^{\overline{\tau s}}=\sum_{s\in S_{p+1}}(-1)^s\psi^{\widetilde{\overline{\tau s}}},\\
d_{\mathrm{\check{C}ech}}h^{p}(\psi)^\tau&=\sum_{j=0}^{p}(-1)^jh^{p}(\psi)^{\partial_j\tau}|_{\cap\tau}\\
&=\sum_{i=0}^{p-1}\sum_{j=0}^{p}\frac{(-1)^{i+j}}{(p-i)!}\sum_{s'\in S_{p}}(-1)^{s'}\psi^{\theta_i((\partial_j\tau)s')}|_{\cap\tau},\\
h^{p+1} d_{\mathrm{\check{C}ech}} (\psi)^\tau&=\sum_{i=0}^{p}\frac{(-1)^i}{(p+1-i)!}\sum_{s\in S_{p+1}}(-1)^s d_{\mathrm{\check{C}ech}}(\psi)^{\theta_i(\tau s)}\\
&=\sum_{i=0}^{p}\sum_{j=0}^{p+1}\frac{(-1)^{i+j}}{(p+1-i)!}\sum_{s\in S_{p+1}}(-1)^s\psi^{\partial_j\theta_i(\tau s)}|_{\cap\theta_i(\tau s)}.
\end{align*}

Let us consider the third equation. We separate the sum as $\mathrm{I}+\mathrm{II}$ where $\mathrm{I}$ corresponds to $j=0$ and $\mathrm{II}$ to $j\geq 1$. Then
\begin{align*}
\mathrm{I}&=\sum_{i=0}^{p}\frac{(-1)^{i}}{(p+1-i)!}\sum_{s\in S_{p+1}}(-1)^s\psi^{\partial_0\theta_i(\tau s)}|_{\cap\tau}\\
&=\sum_{s\in S_{p+1}}\frac{(-1)^s}{(p+1)!}\psi^{\partial_0\theta_0(\tau s)}|_{\cap\tau}+\sum_{i=0}^{p-1}\frac{(-1)^{i+1}}{(p-i)!}\sum_{s\in S_{p+1}}(-1)^s\psi^{\partial_0\theta_{i+1}(\tau s)}|_{\cap\tau}\\
&\overset{\scriptscriptstyle(1)}{=}\sum_{s\in S_{p+1}}\frac{(-1)^s}{(p+1)!}\psi^{\tau s}-\sum_{i=0}^{p-1}\frac{(-1)^{i}}{(p-i)!}\sum_{s\in S_{p+1}}(-1)^s\psi^{\theta_{i}\partial_0(\tau s)}|_{\cap\tau}\\
&=\psi^{\tau}-\sum_{i=0}^{p-1}\frac{(-1)^{i}}{(p-i)!}\sum_{j=0}^{p}\sum_{\substack{s\in S_{p+1}\\s(0)=j}}(-1)^s\psi^{\theta_{i}\partial_0(\tau s)}|_{\cap\tau}\\
&=\psi^{\tau}-\sum_{i=0}^{p-1}\frac{(-1)^{i}}{(p-i)!}\sum_{j=0}^{p}\sum_{s'\in S_{p}}(-1)^j(-1)^{s'}\psi^{\theta_{i}(\partial_j\tau) s')}|_{\cap\tau}\\
&=\psi^{\tau}-d_{\mathrm{\check{C}ech}}h^{p}(\psi)^\tau,
\end{align*}
and
\begin{align*}
\mathrm{II}&=\sum_{i=0}^{p}\sum_{j=1}^{p+1}\frac{(-1)^{i+j}}{(p+1-i)!}\sum_{s\in S_{p+1}}(-1)^s\psi^{\partial_j\theta_i(\tau s)}|_{\cap\tau}\\
&=\sum_{i=2}^{p}\sum_{j=1}^{i-1}\sum_{s\in S_{p+1}}\frac{(-1)^{i+j}(-1)^s}{(p+1-i)!}\psi^{\partial_j\theta_i(\tau s)}+\sum_{i=1}^{p}\sum_{s\in S_{p+1}}\frac{(-1)^{2i}(-1)^s}{(p+1-i)!}\psi^{\partial_i\theta_i(\tau s)}\\
&\phantom{\:=\:}{}+\sum_{i=0}^{p}\sum_{j=i+1}^{p+1}\sum_{s\in S_{p+1}}\frac{(-1)^{i+j}(-1)^s}{(p+1-i)!}\psi^{\partial_j\theta_i(\tau s)}\\
&\overset{\scriptscriptstyle(2)}{=}\sum_{i=2}^{p}\sum_{j=1}^{i-1}\sum_{s\in S_{p+1}}\frac{(-1)^{i+j}(-1)^s}{(p+1-i)!}\psi^{\theta_{i-1}\delta_j(\tau s)}+\sum_{i=1}^{p}\sum_{s\in S_{p+1}}\frac{(-1)^s}{(p+1-i)!}\psi^{\partial_i\theta_i(\tau s)}\\
&\phantom{\:=\:}{}+\sum_{i=0}^{p}\sum_{j=i+1}^{p+1}\sum_{s\in S_{p+1}}\frac{(-1)^{i+j}(-1)^{st}}{(p+1-i)!}\psi^{\partial_j\theta_i(\tau st)}\\
&\overset{\scriptscriptstyle(3)}{=}\sum_{i=1}^{p}\sum_{s\in S_{p+1}}\frac{(-1)^s}{(p+1-i)!}\psi^{\partial_i\theta_i(\tau s)}+\sum_{i=0}^{p}\sum_{j=i+1}^{p+1}\sum_{s\in S_{p+1}}\frac{(-1)(-1)^{s}}{(p+1-i)!}\psi^{\partial_{i+1}\theta_{i}(\tau s)}\\
&\overset{\scriptscriptstyle(4)}{=}\sum_{i=1}^{p}\sum_{s\in S_{p+1}}\frac{(-1)^s}{(p+1-i)!}\psi^{\partial_i\theta_i(\tau s)}-\sum_{i=0}^{p-1}\sum_{s\in S_{p+1}}\frac{(-1)^{s}}{(p-i)!}\psi^{\partial_{i+1}\theta_{i+1}(\tau s)}\\
&\phantom{\:=\:}{}-\sum_{s\in S_{p+1}}(-1)^{s}\psi^{\partial_{p+1}\theta_{p}(\tau s)}\\
&\overset{\scriptscriptstyle(5)}{=}-\iota\pi (\psi)^\tau
\end{align*}
where the numbers (1)--(5) over the signs of equality mean that the equations hold by the corresponding items of Lemma~\ref{lemma:simp-cech}.

It follows that
\[
h^{p+1} d_{\mathrm{\check{C}ech}} (\psi)^\tau=\mathrm{I}+\mathrm{II}=\psi^{\tau}-d_{\mathrm{\check{C}ech}}h^{p}(\psi)^\tau-\iota\pi (\psi)^\tau,
\]
and hence $1-\iota\pi=h d_{\mathrm{\check{C}ech}}+d_{\mathrm{\check{C}ech}}h$.
\end{proof}

Therefore, $\iota$, $\pi$ induce mutually inverse isomorphisms between $H^\bullet(\uuu, \fff)$ and $\check{H}^\bullet(\uuu, \fff)$.

When $\uuu$ is a semi-separating cover of a scheme $X$, and $\fff$ is a sheaf on $X$, we regard $\fff|_{\uuu}$ as a presheaf on $\uuu$. Leray's Theorem tells us $\check{H}^\bullet(\uuu, \fff)\cong H^\bullet(X, \fff)$. We furthermore have isomorphisms
\[
H^\bullet(\uuu, \fff|_\uuu)\cong \check{H}^\bullet(\uuu, \fff|_\uuu)=\check{H}^\bullet(\uuu, \fff)\cong H^\bullet(X, \fff).
\]

\section{Quasi-coherent modules}\label{secabeltoda}\label{parparqcoh}

For a scheme $X$, the category $\Qch(X)$ of quasi-coherent sheaves on $X$ is of fundamental importance in algebraic geometry. In non-commutative contexts, an important task is to find suitable replacements for this category. For a possibly non-commutative $k$-algebra $A$, the category $\Mod(A) = \Mod^r(A)$ of right $A$-modules is a natural replacement (the choice between left and right is a matter of convention, the category $\Mod^l(A)$ of left modules is just as valid as a choice, and will now be associated to $A^{\op}$ as $\Mod^r(A^{\op}) = \Mod^l(A)$).
For an arbitrary small $k$-linear category $\AAA$, this definition is generalized by putting $\Mod(\AAA) = \Mod^r(\AAA)$ equal to the category of $k$-linear functors $\AAA^{\op} \lra \Mod(k)$ (and $\Mod^l(\AAA) = \Mod^r(\AAA^{\op})$).

In non-commutative projective geometry, to a sufficiently nice $\Z$-graded algebra $A$, one associates a category $\mathsf{QGr}(A)$ of ``quasi-coherent graded modules'', obtained as the quotient of the graded modules by the torsion modules (see \cite{staffordvandenbergh}, \cite{vandenbergh}, \cite{vandenbergh2}, \cite{dedekenlowen} for more details and generalizations).

In this paper, we take the ``local approach'' to non-commutative schemes and consider a presheaf $\aaa\colon \uuu^{\op} \lra \Alg(k)$ on a small category $\uuu$ as a kind of ``non-commutative structure sheaf on affine opens''. Repeating the process by which a quasi-coherent sheaf is obtained by glueing modules on affine opens, we define the category of quasi-coherent modules over $\aaa$ to be
$$\Qch({\aaa}) = \Des(\Mod_\aaa),$$
the descent category of the prestack $\Mod_{\aaa}$ of module categories on $\aaa$. 

After recalling the definitions of prestacks and their morphisms, and the construction of the descent category $\Des(\ccc)$ of an arbitrary prestack $\ccc$ on $\uuu$ in \S \ref{pardescent}, in \S \ref{parqcoh} we introduce the prestacks of the form $\Mod_{\aaa}$ over a prestack $\aaa\colon \uuu^{\op} \lra \Cat(k)$ of small $k$-linear categories, as well as their descent categories $\Qch(\aaa)$. Here $\Mod_{\aaa}$ has the tensor functors $\Mod(\aaa(U)) \lra \Mod(\aaa(V))$ for $\aaa(U) \lra \aaa(V)$ as restriction functors (for $V \lra U$ in $\uuu$). Twisted presheaves and prestacks naturally occur as deformations (see \S \ref{partwistedpresheaf} and \S \ref{partwistab}). They play an important role in the context of deformation quantization, see  \cite{kashiwara}, \cite{kontsevich1}, \cite{vandenbergh4}, \cite{calaquevandenbergh}, \cite{yekutieli2}, \cite{yekutieli3}.

In \S \ref{parqchmodule} we turn our attention to twisted presheaves $\aaa$ with central twists. In this case, an alternative replacement of the category of quasi-coherent sheaves is described. Since $\aaa$ has an underlying presheaf $\underline{\aaa}$ (forgetting the twists), we can construct a prestack $\Pre_{\aaa}$ whose values are ordinary presheaf categories on the restrictions of $\underline{\aaa}$, but where the twists of $\aaa$ are naturally built into the resulting prestack. As descent category, we obtain the category $\Pre(\aaa)$ of twisted presheaves, in the spirit of the categories of twisted sheaves considered for instance in \cite{caldararu}. A natural sub-prestack $\QPr_{\aaa}$ gives rise to the descent category
$$\QPr(\aaa) = \Des(\QPr_{\aaa})$$ of quasi-coherent presheaves. This category is in fact equivalent to $\Qch(\aaa)$ (Theorem \ref{thmqmodqpr}).

In \S \ref{parpargroth}, generalizing the scheme case based upon \cite{enochsestrada}, we prove that if a prestack $\aaa$ gives rise to exact restriction functors
$$- \otimes_u \aaa(V): \Mod(\aaa(U)) \lra \Mod(\aaa(V)),$$
the category $\Qch(\aaa)$ is a Grothendieck abelian category.

 In \S \ref{parflat}, we further restrict out attention to what we call a \emph{quasi-compact semi-separated prestack} $\aaa$. Roughly speaking, this means that the restriction functors of $\Mod_{\aaa}$ are the left adjoints to compatible localization functors, which are themselves exact. In this case, $\Qch(\aaa)$ inherits the homological property of flatness \cite{lowenvandenberghab}, which is crucial for deformation theory (Proposition \ref{propdesflat1}).

\subsection{Descent categories}\label{pardescent}\label{parstar}

Let $k$ be a fixed commutative ground ring. Let $\uuu$ be a small category. A prestack on $\uuu$ is the categorical version of a twisted presheaf of $k$-algebras. 

\begin{definition}\label{defpseudo}
A \emph{prestack} $\aaa$ on $\uuu$ is a pseudofunctor on $\uuu$ taking values in $k$-linear categories. Precisely, $\aaa$ consists of the following data:
\begin{itemize}
\item for every $U \in \uuu$ a $k$-linear category $\aaa(U)$;
\item for every $u\colon V \lra U$ in $\uuu$ a $k$-linear functor $f^u = u^{\ast}\colon \aaa(U) \lra \aaa(V)$;
\item for every pair $v\colon W \lra V$, $u\colon V  \lra U$ a natural isomorphism
$$c^{u,v}\colon v^{\ast}u^{\ast} \lra (uv)^{\ast};$$
\item for every $U \in \uuu$ a natural isomorphism
$$z^U\colon 1_{\aaa(U)} \lra f^{1_U}.$$
\end{itemize}
These data further satisfy the following compatibility conditions for every triple $w\colon T\lra W$, $v\colon W\lra V$, $u\colon V\lra U$:
\begin{gather*}
c^{u,vw}(c^{v,w} \circ u^{\ast})=c^{uv,w}(w^{\ast} \circ c^{u,v}),\\
c^{u,1_V}(z^V \circ u^{\ast})=1,\quad c^{1_U,u}(u^{\ast} \circ z^U)=1.
\end{gather*}
\end{definition}

\begin{definition}
For prestacks $(\aaa, m, f, c, z)$, $(\aaa', m', f', c', z')$ on $\uuu$, a \emph{morphism of prestacks} is a pseudonatural transformation $\varphi\colon \aaa \lra \aaa'$. Precisely, $\varphi$ consists of the following data:
\begin{itemize}
\item for every $U \in \uuu$ a $k$-linear functor $g^U\colon \aaa(U) \lra \aaa'(U)$;
\item for every $u\colon V \lra U$ in $\uuu$ a natural isomorphism
$$\tau^u\colon {f'}^u g^U \lra g^V f^u.$$
\end{itemize}
These data further satisfy the following compatibility conditions for every couple $v\colon W\lra V$, $u\colon V\lra U$:
\begin{gather}
\tau^{uv}({c'}^{u,v} \circ g^U) = (g^W \circ c^{u,v})(\tau^v \circ f^u)({f'}^v \circ \tau^u),\label{eqpseudonattran1}\\
\tau^{1_U}(z'^U \circ g^U) = g^U \circ z^U.\label{eqpseudonattran2}
\end{gather}

\end{definition}

\begin{example}
A twisted presheaf $\aaa$ as in Definition \ref{deftwisted} corresponds to a prestack by viewing the algebras $\aaa(U)$ as one-object $k$-linear categories. A morphisms of twisted presheaves corresponds to a morphism of prestacks.
\end{example}

\begin{example}\label{exopps}
Consider a prestack $\aaa$. We obtain the \emph{opposite prestack} $\aaa^{\op}$ with $\aaa^{\op}(U) = (\aaa(U))^{\op}$, with restriction maps $(u^{\ast})^{\op}\colon \aaa(U)^{\op} \lra \aaa(V)^{\op}$ for $u\colon V \lra U$ in $\uuu$, and with $(c^{-1})^{u,v} = (c^{u,v})^{-1}$ and $(z^{-1})^U = (z^U)^{-1}$. One checks that this indeed defines a prestack with $(\aaa^{\op})^{\op} = \aaa$. This extends the definition of opposite twisted presheaf from Example \ref{exop1}.
\end{example}

One goes on to define, for morphisms of prestacks $\varphi$, $\varphi'\colon \aaa \lra \aaa'$, the notion of a \emph{modification} $\mu\colon \varphi \lra \varphi'$. As such, one obtains a 2-category $\mathsf{Prestack}(\uuu, k)$ of prestacks, morphisms of prestacks and modifications. See for instance \cite{leinster} for further details.

Let $\bbb$ be a prestack on $\uuu$. A \emph{pre-descent datum} in $\bbb$ consists of a collection of objects $B = (B_U)_U$ with $B_U \in \bbb(U)$ together with, for every $u\colon V \lra U$ in $\uuu$, a morphism
$$\varphi_u\colon u^{\ast} B_U \lra B_V$$
in $\bbb(V)$. These morphisms have to satisfy the obvious compatibility with the twist isomorphisms of $\bbb$ when considering an additional $v\colon W \lra V$, namely,
\[
 \varphi_vv^*(\varphi_u)=\varphi_{uv}c^{u,v,B_U}.
\]
The pre-descent datum $B$ is a \emph{descent datum} if $\varphi_u$ is an isomorphism for all $u \in \uuu$.

A \emph{morphism of pre-descent data} $g\colon (B_U)_U \lra (B'_U)_U$ consists of compatible morphisms $g_U\colon B_U \lra B'_U$ in $\bbb(U)$ for $U \in \uuu$.

Pre-descent data in $\bbb$ and morphisms of pre-descent data constitute a category $\PDes(\bbb)$, with a full subcategory $\Des(\bbb)$ of descent data.

The category $(\mathrm{P})\Des(\bbb)$ comes equipped with canonical functors $\pi_V\colon (\mathrm{P})\Des(\bbb) \lra \bbb(V)$, $(B_U)_U \longmapsto B_V$.

The (pre-)descent category is most useful for prestacks of sufficiently large $k$-linear categories.

\begin{proposition}\label{lemqchabelian}
Let $\bbb$ be a prestack on $\uuu$.
\begin{enumerate}
\item All limits which exist in $\bbb(U)$ for every $U$, exist in $\PDes(\bbb)$ and are computed pointwise (i.e., $\pi_V\colon \PDes(\bbb) \lra \bbb(V)$ preserves them for every $V$).
\item All colimits which exist in $\bbb(U)$ for every $U$, and are preserved by $u^{\ast}$ for every $u$, exist in $\PDes(\bbb)$ and are computed pointwise.
\item All limits and colimits which exist in $\bbb(U)$ for every $U$, and are preserved by $u^{\ast}$ for every $u$, exist in $\Des(\bbb)$ and are computed pointwise (i.e., $\pi_V\colon \Des(\bbb) \lra \bbb(V)$ preserves them for every $V$).
\end{enumerate}
\end{proposition}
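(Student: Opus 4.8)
The plan is to establish all three items by hand: produce the pointwise candidate, equip it with descent structure morphisms obtained from universal properties, and then check universality. Fix a diagram $D\colon I\lra(\mathrm{P})\Des(\bbb)$, $i\longmapsto (B^i_U)_U$, with structure morphisms $\varphi^i_u\colon u^{\ast}B^i_U\lra B^i_V$ for $u\colon V\lra U$ in $\uuu$; for (3) these are moreover isomorphisms. Items (1) and (2) are carried out in $\PDes(\bbb)$, item (3) in $\Des(\bbb)$.

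For (1), suppose $L_U:=\lim_{i}B^i_U$ exists in $\bbb(U)$ for every $U$, with projections $p^U_i\colon L_U\lra B^i_U$. For $u\colon V\lra U$ the family $(\varphi^i_u\,u^{\ast}(p^U_i))_i$ is a cone on $i\longmapsto B^i_V$ in $\bbb(V)$ --- the required compatibility over $I$ being precisely that the transition maps of $D$ are morphisms of pre-descent data --- so the universal property of $L_V$ gives a unique $\psi_u\colon u^{\ast}L_U\lra L_V$ with $p^V_i\psi_u=\varphi^i_u\,u^{\ast}(p^U_i)$. First I would check that $(L_U,\psi_u)_U$ is a pre-descent datum: the cocycle identity $\psi_v\,v^{\ast}(\psi_u)=\psi_{uv}\,c^{u,v,L_U}$ is verified after post-composing with each $p^W_i$, where, using naturality of $c^{u,v}$ on the morphism $p^U_i$, it reduces to the same identity for $(B^i_U)_U$ (and the unit isomorphisms $z^U$ are handled the same way). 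Next, the defining relation for $\psi_u$ says exactly that the $p^U_i$ assemble into morphisms $\pi^i\colon(L_U)_U\lra(B^i_U)_U$ of pre-descent data; and for universality, given any cone over $D$ in $\PDes(\bbb)$ with apex $(M_U)_U$, I would produce the comparison morphism pointwise from the universal property of each $L_U$ and check that it respects structure morphisms, again by post-composing with the $p^V_i$. Since $\pi_V$ sends this cone to the limit cone $(p^V_i)_i$, the limit is computed pointwise.

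For (2) the argument is dual, with the one essential change that to define the structure morphism \emph{out of} $u^{\ast}(\colim_i B^i_U)$ one needs the comparison map $\colim_i u^{\ast}B^i_U\lra u^{\ast}(\colim_i B^i_U)$ to be invertible --- this is exactly the hypothesis that $u^{\ast}$ preserves the colimit --- after which $\psi_u$ is induced by $(q^V_i\varphi^i_u)_i$, $q^V_i$ denoting the coprojections; the remaining verifications are those of (1) with all arrows reversed. For (3), items (1) and (2) already give the (co)limit of $D$ in $\PDes(\bbb)$, computed pointwise, so it remains only to see that when $D$ is valued in $\Des(\bbb)$, so every $\varphi^i_u$ is invertible, the morphisms $\psi_u$ are invertible as well. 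This is where preservation of the \emph{limits} (resp.\ colimits) by $u^{\ast}$ re-enters: the comparison isomorphism identifies $u^{\ast}L_U$ with $\lim_i u^{\ast}B^i_U$ and under it $\psi_u$ becomes $\lim_i\varphi^i_u$ (resp.\ $\colim_i\varphi^i_u$), a (co)limit of isomorphisms, hence an isomorphism. Thus the pointwise (co)limit lies in $\Des(\bbb)$, and since $\Des(\bbb)$ is a \emph{full} subcategory of $\PDes(\bbb)$, a $\PDes(\bbb)$-(co)limit of a $\Des(\bbb)$-valued diagram which happens to lie in $\Des(\bbb)$ is automatically a (co)limit in $\Des(\bbb)$, still computed pointwise.

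The only genuine work is diagrammatic: checking that the constructed $\psi_u$ satisfy the pre-descent cocycle identity with the coherence isomorphisms $c^{u,v}$ and the unit isomorphisms $z^U$, and that the comparison morphisms are morphisms of (pre-)descent data; all of this follows mechanically from naturality, and I will only indicate it. The conceptual point worth flagging is the asymmetry between (1) and (2): the limit structure morphism $u^{\ast}L_U\lra L_V$ maps \emph{into} a limit and so needs no preservation hypothesis, whereas the colimit structure morphism maps \emph{out} of $u^{\ast}$ of a colimit and genuinely requires $u^{\ast}$ to commute with that colimit --- and it is precisely this preservation, now demanded of limits as well, that in (3) upgrades each $\psi_u$ from a morphism to an isomorphism and so lands the whole construction inside $\Des(\bbb)$.
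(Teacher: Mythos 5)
Your proposal is correct and follows essentially the same route as the paper: construct the pointwise (co)limit, obtain the structure morphism $\psi_u$ from the universal property (equivalently, as the canonical comparison map composed with the (co)limit of the $\varphi^i_u$), observe that for colimits the comparison map must be inverted --- whence the preservation hypothesis --- and that for descent data preservation of the (co)limit makes $\psi_u$ an isomorphism. You supply slightly more detail than the paper on the cocycle verification, the universal property in $\PDes(\bbb)$, and the fullness of $\Des(\bbb)\subseteq\PDes(\bbb)$, all of which the paper leaves implicit.
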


\begin{proof}
We first look at the statements concerning limits. Consider a diagram $(B_{i, U})_{U, i}$ of pre-descent data. For every $U$, we obtain the object $\lim_i B_{i, U} \in \bbb(U)$. The morphisms $\varphi_{i, u}\colon u^{\ast} B_{i, U} \lra B_{i, V}$ for $u\colon V \lra U$ give rise to a limit morphism $\lim_i \varphi_{i, u}\colon \lim_i u^{\ast} B_{i, U} \lra \lim_i B_{i, V}$. Composed with the canonical map $\psi_u\colon u^{\ast} \lim_i B_{i, U} \lra \lim_i u^{\ast} B_{i,U}$, we obtain $\varphi_u = \lim_i \varphi_{i, u}\psi_u \colon u^{\ast} \lim_i B_{i, U} \lra \lim_i B_{i, V}$. The resulting $(\lim_i B_{i, U})_U$ endowed with the maps $\varphi_u$ is seen to define a pre-descent datum. If $(B_{i, U})_{U, i}$ is a diagram of descent data, and $u^{\ast}$ preserves the limit $\lim_i$, then $\psi_u$ is an isomorphism and so is $\varphi_u$.

Next, we look at the statements concerning colimits. Consider a diagram $(B_{i, U})_{U, i}$ of pre-descent data. For every $U$, we obtain the object $\colim_i B_{i, U} \in \bbb(U)$. The morphisms $\varphi_{i, u}\colon u^{\ast} B_{i, U} \lra B_{i, V}$ for $u\colon V \lra U$ give rise to a colimit morphism $\colim_i \varphi_{i, u}\colon \colim_i u^{\ast} B_{i, U} \lra \colim_i B_{i, V}$. This time, we have a canonical morphism $\psi_u\colon \colim_i u^{\ast} B_{i, U} \lra u^{\ast}\colim_i B_{i, U}$. If $u^{\ast}$ preserves the colimit $\colim_i$, then $\psi_u$ is an isomorphism, and we put $\varphi_u = \colim_i \varphi_{i, u}\psi^{-1}_u \colon u^{\ast} \colim_i B_{i, U} \lra \colim_i B_{i, V}$. The resulting $(\colim_i B_{i, U})_U$ endowed with the maps $\varphi_u$ is seen to define a pre-descent datum. If $(B_{i, U})_{U, i}$ is a diagram of descent data, then $\varphi_u$ is an isomorphism.
\end{proof}

\begin{corollary}
Let $\bbb$ be a prestack on $\uuu$. If all the categories $\bbb(U)$ are abelian and all the functors $u^*\colon \bbb(U)\lra\bbb(V)$ are exact, then $\Des(\bbb)$ is abelian and the canonical functors $\pi_V\colon \Des(\bbb) \lra \bbb(V)$ are exact.
\end{corollary}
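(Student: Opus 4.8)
The plan is to deduce everything from Proposition~\ref{lemqchabelian}(3). Since each $\bbb(U)$ is abelian, it possesses all finite limits and colimits, and since each restriction functor $u^{\ast}$ is exact, it preserves them. Hence Proposition~\ref{lemqchabelian}(3) applies to all finite limits and colimits: they exist in $\Des(\bbb)$, are computed pointwise, and the functors $\pi_V$ preserve them. In particular $\Des(\bbb)$ has a zero object (the pointwise zero, which is preserved by each $u^{\ast}$), finite biproducts (pointwise, since products and coproducts coincide in each $\bbb(U)$), and kernels and cokernels. The preadditive structure is inherited from the $\bbb(U)$: the $k$-module $\Hom_{\Des(\bbb)}(A,B)$ is the submodule of $\prod_U \Hom_{\bbb(U)}(A_U,B_U)$ cut out by the (additive) compatibility conditions with the twist isomorphisms, and composition is bilinear. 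Together with the existence of finite biproducts this makes $\Des(\bbb)$ an additive category with kernels and cokernels, and exactness of the $\pi_V$ will follow once $\Des(\bbb)$ is shown to be abelian.

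It remains to check that every monomorphism in $\Des(\bbb)$ is a kernel and every epimorphism is a cokernel. First I would record that a morphism $g = (g_U)_U$ of descent data is a monomorphism (resp.\ epimorphism) in $\Des(\bbb)$ if and only if each $g_U$ is one in $\bbb(U)$: in an additive category with kernels a morphism is a monomorphism precisely when its kernel vanishes, and by Proposition~\ref{lemqchabelian}(3) the kernel of $g$ is computed pointwise, so $\Kern(g)=0$ in $\Des(\bbb)$ if and only if $\Kern(g_U)=0$ in $\bbb(U)$ for all $U$; the epimorphism case is dual. Now let $g\colon A \lra B$ be a monomorphism, form its cokernel $p\colon B \lra C$ and the kernel $k\colon K \lra B$ of $p$, both computed pointwise, and let $g'\colon A \lra K$ be the canonical factorization of $g$ through $k$. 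At each $U$ the morphism $g'_U$ is an isomorphism, because $\bbb(U)$ is abelian and $g_U$ is a monomorphism there.

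The last ingredient is the elementary observation that a morphism of descent data which is pointwise an isomorphism is an isomorphism in $\Des(\bbb)$: the pointwise inverses automatically satisfy the compatibility with the twist isomorphisms of $\bbb$ (a short diagram chase starting from the compatibility satisfied by the original morphism), and since source and target are descent data these inverses again lie in $\Des(\bbb)$. Applied to $g'$, this shows $g'$ is an isomorphism, so $g$ is, up to isomorphism, the kernel of $p$; the dual argument shows every epimorphism is a cokernel. Hence $\Des(\bbb)$ is abelian, and since the $\pi_V$ preserve finite limits and colimits by Proposition~\ref{lemqchabelian}(3), they are exact. I do not anticipate a genuine obstacle here: the only point requiring care is that monomorphisms and epimorphisms in $\Des(\bbb)$ are detected pointwise, and this is handled by the pointwise computation of kernels and cokernels together with the additive structure.
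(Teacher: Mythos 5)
Your argument is correct and is exactly the intended deduction from Proposition \ref{lemqchabelian}(3), which the paper leaves implicit by stating the result as an immediate corollary. The two points you single out — that monomorphisms and epimorphisms in $\Des(\bbb)$ are detected pointwise because kernels and cokernels are computed pointwise, and that a pointwise isomorphism of descent data is an isomorphism in $\Des(\bbb)$ — are precisely the details one needs to supply, and you handle both correctly.
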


A morphism of prestacks $\varphi\colon \aaa \lra \bbb$ induces a $k$-linear functor
$$\Des(\varphi)\colon \Des(\aaa) \lra \Des(\bbb).$$
We will make use of the following:

\begin{proposition}\label{propdeseq}
Let $\varphi\colon \aaa \lra \bbb$ be a morphism of prestacks on $\uuu$. The following are equivalent:
\begin{enumerate}
\item $\varphi$ is an equivalence in the 2-category $\mathsf{Prestack}(\uuu, k)$;
\item $\varphi^U\colon \aaa(U) \lra \bbb(U)$ is an equivalence of categories for every $U \in \uuu$.
\end{enumerate}
In this case, the induced functor $\Des(\varphi)\colon \Des(\aaa) \lra \Des(\bbb)$ is an equivalence of categories.
\end{proposition}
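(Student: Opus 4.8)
The plan is to prove the equivalence $(1)\Leftrightarrow(2)$ first, and then deduce that $\Des(\varphi)$ is an equivalence. The implication $(1)\Rightarrow(2)$ is immediate: if $\varphi$ has a pseudo-inverse $\psi\colon\bbb\lra\aaa$ in $\mathsf{Prestack}(\uuu,k)$, then the modifications $\psi\varphi\simeq 1_{\aaa}$ and $\varphi\psi\simeq 1_{\bbb}$ restrict, for each $U$, to natural isomorphisms $\psi^U\varphi^U\cong 1_{\aaa(U)}$ and $\varphi^U\psi^U\cong 1_{\bbb(U)}$, so each $\varphi^U$ is an equivalence of categories.

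The substantive direction is $(2)\Rightarrow(1)$. First I would, for each $U\in\uuu$, choose a quasi-inverse $\psi^U\colon\bbb(U)\lra\aaa(U)$ together with natural isomorphisms $\eta^U\colon 1_{\aaa(U)}\xrightarrow{\sim}\psi^U\varphi^U$ and $\varepsilon^U\colon\varphi^U\psi^U\xrightarrow{\sim}1_{\bbb(U)}$ (one may, if convenient, arrange an adjoint equivalence so that the triangle identities hold). To assemble $\psi=(\psi^U,\rho^u)$ into a morphism of prestacks, the coherence isomorphisms $\rho^u\colon (f_{\aaa})^u\psi^U\lra\psi^V(f_{\bbb})^u$ must be produced from the data of $\varphi$: one conjugates the isomorphism $\tau^u\colon (f_{\bbb})^u\varphi^U\lra\varphi^V(f_{\aaa})^u$ through the chosen quasi-inverses, i.e.\ sets $\rho^u$ to be the composite
\[
(f_{\aaa})^u\psi^U \xrightarrow{\eta^V\circ} \psi^V\varphi^V(f_{\aaa})^u\psi^U \xrightarrow{(\tau^u)^{-1}} \psi^V(f_{\bbb})^u\varphi^U\psi^U \xrightarrow{\varepsilon^U} \psi^V(f_{\bbb})^u,
\]
and then checks the two pseudonaturality axioms \eqref{eqpseudonattran1} and \eqref{eqpseudonattran2} for the pair $(\psi^U,\rho^u)$. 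This verification is a diagram chase using the prestack axioms of $\aaa$ and $\bbb$ (Definition \ref{defpseudo}), the pseudonaturality axioms for $\tau$, and the naturality of $\eta$, $\varepsilon$; it is routine but is the bulk of the work, and is where I expect the main obstacle to lie — keeping the coherence bookkeeping under control. Finally one defines the modifications $\psi\varphi\simeq 1_{\aaa}$ and $\varphi\psi\simeq 1_{\bbb}$ using $\eta$ and $\varepsilon$ pointwise and checks they are indeed modifications, i.e.\ compatible with the coherence data; this is again a short diagram chase. Thus $\varphi$ is an equivalence in $\mathsf{Prestack}(\uuu,k)$.

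For the last assertion, that $\Des(\varphi)\colon\Des(\aaa)\lra\Des(\bbb)$ is an equivalence of categories, the cleanest route is to invoke the $2$-functoriality of $\Des(-)$: since $\varphi$ is an equivalence in the $2$-category $\mathsf{Prestack}(\uuu,k)$ with pseudo-inverse $\psi$, and $\Des(-)$ sends a morphism of prestacks to a $k$-linear functor, a pseudo-inverse $\psi$ and the two invertible modifications are carried to a functor $\Des(\psi)$ and natural isomorphisms $\Des(\psi)\Des(\varphi)\cong\Des(1_{\aaa})=1_{\Des(\aaa)}$ and $\Des(\varphi)\Des(\psi)\cong 1_{\Des(\bbb)}$. (Concretely: $\Des(\varphi)$ sends a descent datum $(B_U,\varphi_u)$ in $\aaa$ to $(\varphi^U B_U, \varphi^V(\varphi_u)\circ(\tau^u_{B_U})^{-1})$, and the invertible modification $\mu\colon\psi\varphi\simeq 1$ yields, componentwise, an isomorphism of descent data $\Des(\psi)\Des(\varphi)(B_U,\varphi_u)\cong(B_U,\varphi_u)$, natural in the descent datum; one checks the components $\mu_U\colon\psi^U\varphi^U B_U\to B_U$ are compatible with the structure morphisms, which is exactly the modification condition.) Hence $\Des(\varphi)$ is an equivalence of categories.
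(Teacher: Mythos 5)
The paper states Proposition \ref{propdeseq} without proof, treating it as the standard bicategorical fact that a pseudonatural transformation of pseudofunctors valued in $k$-linear categories is an equivalence precisely when all its components are; your outline is exactly that standard argument and is correct --- in particular the reduction to adjoint equivalences $(\psi^U,\eta^U,\varepsilon^U)$ and the conjugation formula for $\rho^u$ are the right ingredients, and the final step via $2$-functoriality of $\Des(-)$ is sound. The only slip is in your parenthetical description of $\Des(\varphi)$: with the paper's convention $\tau^u\colon f'^u g^U\to g^V f^u$, the structure morphism of the image descent datum is $\varphi^V(\varphi_u)\circ\tau^u_{B_U}$ rather than $\varphi^V(\varphi_u)\circ(\tau^u_{B_U})^{-1}$, a harmless direction error since $\tau^u$ is invertible.
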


\subsection{Quasi-coherent modules}\label{parqcoh}
Next we describe our main source of prestacks of affine localizations.

Let $\Cat(k)$ be the category of small $k$-linear categories and $k$-linear functors.
Let $\uuu$ be a small category and let $\aaa\colon \uuu^{\op} \lra \Cat(k)$ be a prestack of small $k$-linear categories. For instance, $\aaa$ could be a twisted presheaf of $k$-algebras (using the natural inclusion $\Alg(k) \subseteq \Cat(k)$).

We construct an associated prestack $$\Mod_\aaa = \Mod^r_\aaa$$ as follows.
\begin{itemize}
\item For $U \in \uuu$, $\Mod_\aaa(U) = \Mod(\aaa(U))$, the category of $k$-linear functors $\aaa(U)^{\op} \lra \Mod(k)$;
\item For $u\colon V \lra U$, the restriction functor is $- \otimes_{u}\aaa(V) \colon \Mod(\aaa(U)) \lra \Mod(\aaa(V))$, the unique colimit preserving functor extending $u^{\ast}\colon \aaa(U) \lra \aaa(V)$;
\item The twist isomorphisms ($c$ and $z$) combine the twists in $\aaa$ with the natural isomorphisms between tensor functors.
\end{itemize}

Let us make the construction explicit. Let $F\in\Mod(\aaa(U))$. Define $F\otimes_u\aaa(V)$ by for all $B\in\aaa(V)$,
\[
F\otimes_u\aaa(V)(B)=\bigoplus_{A\in\aaa(U)}F(A)\otimes_k\aaa(V)(B,u^*A)/\sim
\]
where $\sim$ is defined as the equivalence relation generated as follows. For $a\colon A\lra A'$ in $\aaa(U)$, $x\in F(A')$, $y\in \aaa(V)(B,u^*A)$, we put
\[
F(a)(x)\otimes y\sim x\otimes u^*(a)y.
\]
This expresses that the tensor product is taken over $\aaa(U)$, so the action by morphisms $a$ in $\aaa(U)$ can be moved through the tensor symbol. In particular, if $F=\aaa(U)(-,A')$ for some $A'\in\aaa(U)$, $u^*$ induces an isomorphism
\[
\theta^{u}_{A'}\colon\aaa(U)(-,A')\otimes_u\aaa(V)\cong\aaa(V)(-, u^*A'),
\]
and if $u = 1_U$, $(z^U)^{-1}$ induces an isomorphism $F \otimes_{1_U} \aaa(U) \cong F.$ This is equivalent to say that the isomorphism $\Mod(z)^U\colon 1_{\Mod_{\aaa}(U)} \lra - \otimes_{1_U} \aaa(U)$ is induced by $z^U$.

Suppose that the prestack $\aaa$ gives rise to the isomorphism $c^{u,v}\colon v^*u^*\lra (uv)^*$ for $v\colon W\lra V$, $u\colon V\lra U$. We will give the corresponding isomorphism $\Mod(c)^{u,v}\colon -\otimes_u\aaa(V)\otimes_v\aaa(W)\lra -\otimes_{uv}\aaa(W)$. In fact, according to the above construction, we have
\begin{align*}
F\otimes_u\aaa(V)\otimes_v\aaa(W)(C)&=\bigoplus_{B\in\aaa(V)}F\otimes_u\aaa(V)(B)\otimes_k\aaa(W)(C,v^*B)/\sim\\
&=\bigoplus_{\substack{A\in\aaa(U)\\B\in\aaa(V)}}F(A)\otimes_k\aaa(V)(B,u^*A)\otimes_k\aaa(W)(C,v^*B)/\sim\\
&\cong\bigoplus_{A\in\aaa(U)}F(A)\otimes_k\aaa(W)(C,v^*u^*A)/\sim\\
&\cong\bigoplus_{A\in\aaa(U)}F(A)\otimes_k\aaa(W)(C,(uv)^*A)/\sim
\end{align*}
where the first isomorphism is induced by $\theta^{v}_{u^*A}$ and the second by $c^{u,v}$.

One can also define another prestack $\Mod^l_\aaa$, the ``left version'' of $\Mod_\aaa$. Namely, $\Mod^l_\aaa(U)$ is defined to be the category of $k$-linear functors $\aaa(U) \lra \Mod(k)$; the restriction maps $\aaa(V)\otimes_u-$ can be defined analogously. In this case, the twists of $\Mod^l_\aaa$ are induced by $c^{-1}$ and $z^{-1}$.

\begin{remark}\label{remark:mod}
When $\aaa$ is a twisted presheaf of $k$-algebras, the category $\Mod(\aaa(U))$ (resp.\ $\Mod^l(\aaa(U))$) is the same as the category of right (resp.\ left) modules over $\aaa(U)$ in the usual sense, and the restriction map $-\otimes_u\aaa(V)$ (resp.\ $\aaa(V)\otimes_u-$) also coincides with the usual tensor product. So our notations will not lead to misunderstanding. Furthermore, the twists $\Mod(c)^{u,v}$, $\Mod(z)^U$ and $\Mod^l(c)^{u,v}$, $\Mod^l(z)^U$ are given by
\begin{align*}
&\begin{aligned}
\Mod(c)^{u,v}_M\colon M\otimes_u\aaa(V)\otimes_v\aaa(W)&\lra M\otimes_{uv}\aaa(W),\\
m\otimes a\otimes b&\longmapsto m\otimes c^{u,v}v^*(a)b,
\end{aligned}\\
&\begin{aligned}
\Mod(z)^{U}_M\colon M&\lra M\otimes_{1_U}\aaa(U),\\
m&\longmapsto m\otimes z^{U},
\end{aligned}\\
&\begin{aligned}
\Mod^l(c)^{u,v}_M\colon \aaa(W)\otimes_v\aaa(V)\otimes_uM&\lra \aaa(W)\otimes_{uv}M,\\
b\otimes a\otimes m&\longmapsto bv^*(a)(c^{u,v})^{-1}\otimes m,
\end{aligned}\\
&\begin{aligned}
\Mod^l(z)^{U}_M\colon M&\lra \aaa(U)\otimes_{1_U}M,\\
m&\longmapsto (z^{U})^{-1} \otimes m
\end{aligned}
\end{align*}
for any $\aaa(U)$-module $M$.
\end{remark}

\begin{definition}\label{defqch}
Let $\aaa\colon \uuu^{\op} \lra \Cat(k)$ be a prestack.
We define the category of \emph{(right) modules} over $\aaa$ to be
$$\Mod(\aaa) = \Mod^r(\aaa) = \PDes(\Mod^r_\aaa),$$
and the category of
\emph{(right) quasi-coherent modules} over $\aaa$ to be
$$\Qch(\aaa) = \Qch^r(\aaa) = \Des(\Mod^r_\aaa).$$
We define the category of \emph{left modules} over $\aaa$ to be
$$\Mod^l(\aaa) = \PDes(\Mod^l_\aaa),$$
and the category of
\emph{left quasi-coherent modules} over $\aaa$ to be
$$\Qch^l(\aaa) = \Des(\Mod^l_\aaa).$$
\end{definition}

Since $\Mod^l(\aaa) \cong \Mod^r(\aaa^{\op})$ and $\Qch^l(\aaa) \cong \Qch^r(\aaa^{\op})$, it suffices to study right (quasi-coherent) modules.

\subsection{Quasi-coherent presheaves}\label{parqchmodule}
In \S \ref{parqcoh},  we define a category of quasi-coherent modules over an arbitrary prestack of small $k$-linear categories. In particular, the definition applies to a twisted presheaf of algebras with central twists. In this section, we define a category of ``quasi-coherent presheaves'' in this special case and we prove that both categories are equivalent.

Let $\aaa$ be a presheaf of $k$-algebras on $\uuu$. Denote by $\Pre(\aaa|_U)$ the category of presheaves of right modules over $\aaa|_U$. Every morphism $u\colon V\lra U$ in $\uuu$ induces a functor $u^*_\Pre\colon \Pre(\aaa|_U)\lra\Pre(\aaa|_V)$. It is obvious that $v^*_\Pre u^*_\Pre=(uv)^*_\Pre$ for an additional $v\colon W\lra V$ in $\uuu$ and $(1_U)^*_\Pre=1_{\Pre(\aaa|_U)}$. Thus we obtain a functor
\begin{align*}
\Pre(\aaa)\colon \uuu^{\op}&\lra\Cat(k),\\
U&\longmapsto \Pre(\aaa|_U)\\
u&\longmapsto u^*_\Pre.
\end{align*}

Let $M$ be a right $\aaa(U)$-module. Then for any $u\colon V\lra U$, $\widetilde{M}(u):=M\otimes_u\aaa(V)$ is a right $\aaa(V)$-module where $\aaa(V)$ is regarded as a left $\aaa(U)$-module via $u^*$ and $M\otimes_u\aaa(V):=M\otimes_{\aaa(U)}\aaa(V)$. Let $u'\colon V'\lra U$ and $v\colon V'\lra V$ be such that $uv=v'$. Then $v^*\colon \aaa(V)\lra\aaa(V')$ is an $\aaa(U)$-bimodule homomorphism, and thus $\widetilde{M}(v):=1_M\otimes v^*\colon \widetilde{M}(u)\lra \widetilde{M}(u')$ is a right $\aaa(U)$-module homomorphism. We can check the functorial property of $\widetilde{M}$ and hence we obtain a presheaf $\widetilde{M}$ of right modules over $\aaa|_U$ on $\uuu/U$. Consider the following assignment
\begin{align*}
Q^U\colon \Mod(\aaa(U))&\lra \Pre(\aaa|_U),\\
M&\longmapsto \widetilde{M}.
\end{align*}
In order to make $Q^U$ into a functor, associate to an $\aaa(U)$-module homomorphism $g\colon M\lra N$ the natural transformation $\tilde{g}=\{\tilde{g}^u \}_{u\colon V\lra U}$ defined by
\[
\tilde{g}^u:=g\otimes 1_{\aaa(V)}\colon M\otimes_u\aaa(V)\lra N\otimes_u\aaa(V).
\]
It is easy to verify that $Q^U$ is indeed a functor.

Observe that we have the following canonical isomorphism
\[
\can^{u,v}_{M}\colon M\otimes_u\aaa(V)\otimes_v\aaa(W)\lra M\otimes_{uv}\aaa(W), \quad m\otimes a\otimes b\longmapsto m\otimes v^*(a)b.
\]

\begin{lemma}\label{lemqch1}
\begin{enumerate}
\item The functor $Q^U$ is fully faithful for each $U$.
\item The square
\[
\xymatrix{
\Mod(\aaa(U)) \ar[r]^-{Q^U}\ar[d]_-{-\otimes_u\aaa(V)} & \Pre(\aaa|_U) \ar[d]^-{u_\Pre^*} \\
\Mod(\aaa(V)) \ar[r]^-{Q^V} & \Pre(\aaa|_V)
}
\]
is 2-commutative. More precisely, there is a natural isomorphism
\[
\tau^u\colon u_{\Pre}^* Q^U\lra Q^V(-\otimes_u\aaa(V))
\]
induced by $(\can^{u,v}_M)^{-1}$.
\end{enumerate}
\end{lemma}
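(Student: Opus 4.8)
The plan is to verify the two assertions essentially by unwinding the definitions, since both the functor $Q^U$ and the tensor functors $-\otimes_u\aaa(V)$ are built from universal constructions. For part (1), I would argue that $Q^U$ is fully faithful by exhibiting a natural retraction for the map on Hom-sets. Concretely, given $\aaa(U)$-modules $M$ and $N$, a morphism $Q^U(M)\lra Q^U(N)$ of presheaves over $\aaa|_U$ is a compatible family $\{h^u\colon M\otimes_u\aaa(V)\lra N\otimes_u\aaa(V)\}_{u\colon V\lra U}$. Evaluating at $u=1_U$ and using the canonical identification $M\otimes_{1_U}\aaa(U)\cong M$ (and likewise for $N$) produces an $\aaa(U)$-module map $g=h^{1_U}\colon M\lra N$. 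The naturality of the family $\{h^u\}$ with respect to the morphisms $1_U\lra U$ in $\uuu/U$ (equivalently, with respect to all $v\colon V\lra U$ viewed as arrows from $v$ to $1_U$ in the comma category) forces $h^u=g\otimes 1_{\aaa(V)}=\widetilde{g}^u$ for every $u$; hence the family is uniquely determined by $g$, giving injectivity, and every such $g$ arises this way via $\widetilde{g}$, giving surjectivity. Care is needed here with the precise structure of $\uuu/U$: an object of $\Pre(\aaa|_U)$ assigns data to each $u\colon V\lra U$, and the restriction maps of $\widetilde{M}$ are the $1_M\otimes v^\ast$, so the key identity to check is $\widetilde{g}^{u'}\circ\widetilde{M}(v)=\widetilde{N}(v)\circ\widetilde{g}^{u}$ for $uv=u'$, which is immediate from functoriality of $-\otimes_{\aaa(U)}-$.

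For part (2), the content is to produce, for each $u\colon V\lra U$, a natural isomorphism $\tau^u\colon u^\ast_{\Pre}Q^U\Rightarrow Q^V(-\otimes_u\aaa(V))$ of functors $\Mod(\aaa(U))\lra\Pre(\aaa|_V)$. Both sides are presheaves on $\uuu/V$; evaluating at an object $v\colon W\lra V$, the left side is $\widetilde{M}(uv)=M\otimes_{uv}\aaa(W)$ while the right side is $\widetilde{(M\otimes_u\aaa(V))}(v)=(M\otimes_u\aaa(V))\otimes_v\aaa(W)$. The canonical isomorphism $\can^{u,v}_M$ recalled just above the lemma (or rather its inverse, reflecting the direction of $\tau^u$) supplies the comparison $(M\otimes_u\aaa(V))\otimes_v\aaa(W)\xrightarrow{\ \sim\ }M\otimes_{uv}\aaa(W)$. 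I would then check that as $v$ varies over $\uuu/V$ these isomorphisms assemble into a morphism of presheaves — i.e.\ commute with the restriction maps $1\otimes w^\ast$ for $w\colon W'\lra W$ — which is again a direct associativity-of-tensor computation, and that the whole assignment is natural in $M$, which follows since $\can^{u,v}$ is natural in $M$ by construction.

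The genuinely fiddly point, and the one I'd expect to cost the most care rather than real difficulty, is bookkeeping the two comma categories $\uuu/U$ and $\uuu/V$ and the functor $\uuu/V\lra\uuu/U$, $(w\colon W\lra V)\mapsto(uw\colon W\lra U)$, through which $u^\ast_{\Pre}$ is defined: one must be consistent about whether a presheaf over $\aaa|_U$ is indexed by arrows into $U$ or by the composed objects, and about the identification $\aaa|_V(w)=\aaa|_U(uw)$ used implicitly throughout \S\ref{paracechlikecomp}. Once that indexing is pinned down, every verification reduces to the single fact that $-\otimes_{\aaa(U)}-$ is an associative, unital bifunctor, together with the coherence of the canonical maps $\can$. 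No deformation-theoretic or homological input is needed; this is a purely formal lemma preparing the comparison $\Qch(\aaa)\cong\QPr(\aaa)$.
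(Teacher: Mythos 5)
Your proposal is correct and follows essentially the same route as the paper: for (1) the paper likewise observes that a morphism $F\colon\widetilde{M}\lra\widetilde{N}$ is determined by its component $F^{1_U}$ at the identity (your evaluation-at-$1_U$ argument, with the forcing of $h^u=g\otimes 1_{\aaa(V)}$ via the restriction maps $1_M\otimes u^*$ spelled out in more detail than the paper, which leaves it implicit), and for (2) the paper simply declares the verification that the $(\can^{u,v}_M)^{-1}$ assemble into a natural isomorphism of presheaves to be straightforward, which is exactly the associativity bookkeeping you describe.
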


\begin{proof}
(1) Note that $F\in\Pre(\aaa|_U)(\widetilde{M},\widetilde{N})$ is completely determined by its component $F^{1_U}\in\Mod(\aaa(U))(M\otimes_{1_U}\aaa(U), N\otimes_{1_U}\aaa(U))$ which can be identified with an $\aaa(U)$-module homomorphism $M\lra N$.

(2) Straightforward.
\end{proof}

\begin{definition}\label{defqchmodule1}
A presheaf $\fff\in\Pre(\aaa|_U)$ is called a \textit{quasi-coherent presheaf} over $\aaa|_U$ if $\fff\cong \widetilde{M}$ for some $\aaa(U)$-module $M$.
\end{definition}

Denote by $\mathsf{QPr}(\aaa|_U)$ the category of quasi-coherent presheaves over $\aaa|_U$, i.e. the essential image of $Q_U$, which is a full subcategory of $\Pre(\aaa|_U)$. By Lemma \ref{lemqch1} (2), $u^*_\Pre$ preserves quasi-coherent modules. Let $u^*_\QPr\colon \QPr(\aaa|_U)\lra \QPr(\aaa|_V)$ be the restriction of $u^*_\Pre$. By abuse of the notation, the restricted isomorphism
\begin{equation}\label{eqtau}
u_{\QPr}^* Q^U\lra Q^V(-\otimes_u\aaa(V))
\end{equation}
is still denoted by $\tau^u$.

Our next task is to define quasi-coherent presheaves over a twisted presheaf $\aaa$ with central twists $c$. In order to adapt to the twist $\Mod(c)^{u,v}$, we have to define an appropriate twist $\Pre(c)^{u,v}\colon v^*_\Pre u^*_\Pre\lra(uv)^*_\Pre$ artificially. Let $\fff\in\Pre(\underline{\aaa}|_U)$ and $w\colon T\lra W\in\uuu/W$. We have $v_\Pre^*u_\Pre^*(\fff)(w)=(uv)_\Pre^*(\fff)(w)=\fff(uvw)\in\Mod(\aaa(T))$. By \eqref{eq:twistcocycle},  $w^*(c^{u,v})=(c^{uv,w})^{-1}c^{u,vw}c^{v,w}$ is a central invertible element in $\aaa(T)$, and hence determines an automorphism $w^*(c^{u,v})_r\colon \fff(uvw)\lra \fff(uvw)$, $m\longmapsto mw^*(c^{u,v})$. Hence we obtain an isomorphism
\[
\Pre(c)^{u,v}_{\fff}\colon v_\Pre^*u_\Pre^*(\fff)\lra (uv)_\Pre^*(\fff)
\]
in $\Pre(\underline{\aaa}|_W)$ as well as a natural transformation $\Pre(c)^{u,v}=\bigl(\Pre(c)^{u,v}_{\fff}\bigr)_{\fff}$. A direct computation shows that the condition
\[
\Pre(c)^{u,vw}\Pre(c)^{v,w}=\Pre(c)^{uv,w}w^*_\Pre(\Pre(c)^{u,v})
\]
is satisfied. Therefore we obtain two prestacks
\begin{align*}
\Pre_{\aaa}\colon \uuu^{\op}&\lra\Cat(k),& \QPr_{\aaa}\colon \uuu^{\op}&\lra\Cat(k),\\
U&\longmapsto \Pre(\underline{\aaa}|_U) & U&\longmapsto \QPr(\underline{\aaa}|_U)\\
u&\longmapsto u^*_\Pre & u&\longmapsto u^*_\QPr
\end{align*}
whose twist functors $c$ are both given by $\Pre(c)$, and $z$ given by identity.

We define the category of
 \textit{(right) twisted presheaves} over a twisted presheaf $\aaa$ with central twists by
\[
\Pre(\aaa)=\Des(\Pre_{\aaa}).
\]
and the category of
 \textit{(right) twisted quasi-coherent presheaves} by
\[
\QPr(\aaa)=\Des(\QPr_{\aaa}).
\]

Recall that we define $\Qch(\aaa)$ in \S \ref{parqcoh} for a general prestack $\aaa$. When restricted to the above situation, let us prove that it is equivalent to $\QPr(\aaa)$.

\begin{theorem}\label{thmqmodqpr}
Let $\aaa$ be a twisted presheaf of algebras on $\uuu$  having central twists. Then $Q=(Q^U,\tau^u)_{U,u}$ gives an equivalence $$\Qch(\aaa)\cong\QPr(\aaa)$$ of $k$-linear categories where $\tau^u$ is given in \eqref{eqtau}.
\end{theorem}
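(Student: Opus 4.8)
The plan is to realize $Q = (Q^U, \tau^u)_{U,u}$ as a morphism of prestacks $\Mod_{\aaa} \lra \QPr_{\aaa}$ and then invoke Proposition \ref{propdeseq}. By construction $\QPr_{\aaa}(U) = \QPr(\underline{\aaa}|_U)$ is, as a full subcategory of $\Pre(\underline{\aaa}|_U)$, exactly the essential image of $Q^U$, and $Q^U$ is fully faithful by Lemma \ref{lemqch1}(1); hence each $Q^U\colon \Mod_{\aaa}(U) \lra \QPr_{\aaa}(U)$ is an equivalence of categories. So once we verify that $(Q^U, \tau^u)$ satisfies the pseudonaturality axioms \eqref{eqpseudonattran1}--\eqref{eqpseudonattran2}, Proposition \ref{propdeseq} gives that $Q$ is an equivalence in $\mathsf{Prestack}(\uuu, k)$ and that the induced functor $\Des(Q)\colon \Qch(\aaa) = \Des(\Mod_{\aaa}) \lra \Des(\QPr_{\aaa}) = \QPr(\aaa)$ is an equivalence of $k$-linear categories, which is the assertion.

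Thus the only real content is checking \eqref{eqpseudonattran1}--\eqref{eqpseudonattran2} for $(Q^U, \tau^u)$, where the twist isomorphisms of $\Mod_{\aaa}$ are the $\Mod(c)^{u,v}$ described in Remark \ref{remark:mod} and those of $\QPr_{\aaa}$ are the artificial twists $\Pre(c)^{u,v}$. Condition \eqref{eqpseudonattran2} is routine: on the twisted-presheaf level $z = 1$, the twist $z$ of $\QPr_{\aaa}$ is the identity, the twist $z$ of $\Mod_{\aaa}$ is $\Mod(z)^U_M\colon M \lra M\otimes_{1_U}\aaa(U)$, $m \longmapsto m\otimes 1$, and evaluating both $\tau^{1_U}_M$ and $Q^U(\Mod(z)^U_M)$ at an object $v\colon V\lra U$ of $\uuu/U$ one gets the same map $m\otimes b \longmapsto (m\otimes 1_{\aaa(U)})\otimes b$, namely $(\can^{1_U,v}_M)^{-1}$.

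For \eqref{eqpseudonattran1} I would fix $w\colon T\lra W$, $v\colon W\lra V$, $u\colon V\lra U$ and a module $M \in \Mod(\aaa(U))$, and evaluate both natural transformations at the object $w$ of $\uuu/W$; after the strict identity $v^*_{\Pre}u^*_{\Pre} = (uv)^*_{\Pre}$ the common source is $M\otimes_{uvw}\aaa(T)$ and the common target is $(M\otimes_{uv}\aaa(W))\otimes_w\aaa(T)$. On the left, $\Pre(c)^{u,v}$ evaluated at the presheaf $Q^U(M)$ and at $w$ is by definition the automorphism $m\otimes d \longmapsto m\otimes d\,w^*(c^{u,v})$, and $\tau^{uv}_M$ contributes $(\can^{uv,w}_M)^{-1}$, so the left-hand side sends $m\otimes d$ to $m\otimes 1_{\aaa(W)}\otimes d\,w^*(c^{u,v})$. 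On the right, the three factors $v^*_{\Pre}(\tau^u_M)$, $\tau^v_{M\otimes_u\aaa(V)}$ and $Q^W(\Mod(c)^{u,v}_M)$ unwind, via the appropriate inverse $\can$'s and the formula for $\Mod(c)^{u,v}$ from Remark \ref{remark:mod}, to the map $m\otimes d \longmapsto (m\otimes c^{u,v})\otimes d$. These agree because $\aaa$ has central twists: by the twist cocycle identity \eqref{eq:twistcocycle}, $w^*(c^{u,v}) = (c^{uv,w})^{-1}c^{u,vw}c^{v,w}$ is central in $\aaa(T)$, so $d\,w^*(c^{u,v}) = w^*(c^{u,v})\,d$, and transporting $c^{u,v}$ across $\otimes_w$ over $\aaa(W)$ turns $m\otimes c^{u,v}\otimes d$ into $m\otimes 1_{\aaa(W)}\otimes d\,w^*(c^{u,v})$.

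The main obstacle is exactly this last reconciliation: $\Mod(c)^{u,v}$ implements the twist by rearranging algebra factors inside a tensor product, whereas $\Pre(c)^{u,v}$ implements it by right multiplication with the central element $w^*(c^{u,v})$; matching the two requires both the centrality of the twist elements --- used to move $w^*(c^{u,v})$ past the $\aaa(T)$-action --- and the cocycle identity \eqref{eq:twistcocycle}, which is precisely what guarantees that centrality. Everything else is bookkeeping with the canonical isomorphisms $\can^{u,v}$, and the conclusion then follows formally from Proposition \ref{propdeseq}.
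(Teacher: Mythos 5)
Your proposal is correct and follows essentially the same route as the paper: reduce via Lemma \ref{lemqch1} and Proposition \ref{propdeseq} to the pseudonaturality axioms, then verify \eqref{eqpseudonattran1} by evaluating both sides at a module $M$ and an object $w$ of $\uuu/W$ and reconciling $\Mod(c)^{u,v}$ with $\Pre(c)^{u,v}$ using the canonical isomorphisms, the cocycle identity and centrality of the twists. Your explicit remark on where centrality enters is exactly the point the paper's computation relies on.
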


\begin{proof}
By Lemma \ref{lemqch1} and Proposition \ref{propdeseq}, it suffices to check that $Q\colon \Mod_{\aaa} \lra \QPr_{\aaa}$ is a pseudonatural transformation.

Let us first prove \eqref{eqpseudonattran1}. For any $M\in\Mod(\aaa(U))$ and $w\colon T\lra W\in\uuu/W$, we have that
\[
\bigl(\tau^{uv}(\Pre(c)^{u,v}\circ Q^U)\bigr)_M^w=\tau^{uv,w}_M\circ\Pre(c)^{u,v,w}_{\widetilde{M}}=(\can_M^{uv,w})^{-1}\circ w^*(c^{u,v})_r
\]
as a homomorphism $M\otimes_{uvw}\aaa(T)\lra M\otimes_{uv}\aaa(W)\otimes_w\aaa(T)$ which maps $m\otimes a$ to $m\otimes 1 \otimes aw^*(c^{u,v})$. Likewise, the three natural transformations $v^*_{\QPr}\circ \tau^u$, $\tau^v\circ(-\otimes_u\aaa(V))$, $Q^W\circ\Mod(c)^{u,v}$, when acting on $M$ and $w$, are equal to the homomorphisms
\begin{gather*}
\begin{aligned}
(\can_M^{u,vw})^{-1}\colon M\otimes_{uvw}\aaa(T)&\lra M\otimes_{u}\aaa(V)\otimes_{vw}\aaa(T)\\
m\otimes a&\longmapsto m\otimes 1 \otimes a,
\end{aligned}\\
\begin{aligned}
(\can_{M\otimes_u\aaa(U)}^{v,w})^{-1}\colon M\otimes_{u}\aaa(V)\otimes_{vw}\aaa(T)&\lra M\otimes_{u}\aaa(V)\otimes_v\aaa(W)\otimes_w\aaa(T)\\
m\otimes a'\otimes a&\longmapsto m\otimes a' \otimes 1 \otimes a,
\end{aligned}\\
\begin{aligned}
\Mod(c)_M^{u,v}\otimes 1_{\aaa(T)}\colon M\otimes_{u}\aaa(V)\otimes_v\aaa(W)\otimes_w\aaa(T)&\lra M\otimes_{uv}\aaa(W)\otimes_{w}\aaa(T)\\
m\otimes a'' \otimes a' \otimes a&\longmapsto m\otimes c^{u,v}v^*(a'')a' \otimes a,
\end{aligned}
\end{gather*}
respectively. After composing them, we have
\[
\tau^{uv}(\Pre(c)^{u,v}\circ Q^U)=(Q^W\circ\Mod(c)^{u,v})(\tau^v\circ(-\otimes_u\aaa(V)))(v^*_{\QPr}\circ \tau^u),
\]
finishing the verification of \eqref{eqpseudonattran1}.

Next we will prove \eqref{eqpseudonattran2}. This follows from the fact that the twists $z^U$ of $\QPr_\aaa$ are all identity transformations.
\end{proof}

Likewise, by considering left modules, we can also introduce $\Pre^l_\aaa$,  $\QPr^l_\aaa$, and so on. In particular, we obtain $\Qch^l(\aaa)\cong\QPr^l(\aaa)$ by applying Theorem \ref{thmqmodqpr} to $\aaa^{\op}$.

\subsection{The Grothendieck property}\label{parpargroth}
One of the important properties of categories of quasi-coherent sheaves on schemes is the Grothendieck property. This property was originally established for quasi-compact quasi-separated schemes in \cite{EGA}. More recently, a proof for arbitrary schemes was given by Gabber (see \cite[Lemma 2.1.7]{conrad}) and in \cite{enochsestrada}, the authors present a general proof making use of the category of quasi-coherent modules over a ring representation of a quiver.
Recall that a Grothendieck category is a cocomplete abelian category with exact filtered colimits and a set of generators. By the Gabriel-Popescu theorem, Grothendieck categories can be considered as additive topoi (see \cite{lowen1}). The terminology in the following definition is inspired by topos theory:

\begin{definition}\label{defgeom}
Let $\uuu$ be a small category. A prestack  $\aaa: \uuu^{\op} \lra \Cat(k)$ is called \emph{geometric} if for every $u: V \lra U$ in $\uuu$, the restriction functor
$$- \otimes_u \aaa(V): \Mod(\aaa(U)) \lra \Mod(\aaa(V))$$
is exact.
\end{definition}

In this section, building on the argument from \cite{enochsestrada}, we prove the following:

\begin{theorem}\label{thmgroth}
Let $\uuu$ be a small category and let $\aaa$ be a geometric prestack on $\uuu$. The category $\Qch(\aaa)$ is a Grothendieck abelian category.
\end{theorem}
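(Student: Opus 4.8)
The plan is to verify the four ingredients of a Grothendieck category for $\Qch(\aaa) = \Des(\Mod_\aaa)$: (i) it is abelian, (ii) it is cocomplete, (iii) filtered colimits are exact, and (iv) it has a set of generators. For (i)--(iii), the key observation is that each $\Mod(\aaa(U))$ is a Grothendieck category, and that the restriction functors $-\otimes_u \aaa(V)$ are cocontinuous (they are left adjoints, being the unique colimit-preserving extensions of $u^*$) and, by the geometricity hypothesis, also exact. Proposition \ref{lemqchabelian}(3) then tells us that all limits and colimits exist in $\Des(\Mod_\aaa)$ and are computed pointwise; in particular $\Qch(\aaa)$ is cocomplete and, combined with the Corollary to Proposition \ref{lemqchabelian}, abelian with exact $\pi_V$. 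Exactness of filtered colimits in $\Qch(\aaa)$ then follows because filtered colimits and finite limits are both computed pointwise by the exact functors $\pi_V$, and each $\Mod(\aaa(U))$ has exact filtered colimits; so a short argument reflecting exactness along the conservative family $\{\pi_V\}_V$ finishes (ii)--(iii).

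The real content is (iv), the existence of a set of generators, and this is where I would follow the strategy of \cite{enochsestrada}. The idea is to produce, for each $U \in \uuu$ and each "generator-type" object of $\Mod(\aaa(U))$, a quasi-coherent module on $\aaa$ obtained by a left-adjoint (free/induced) construction along $\pi_U$, and then show these collectively generate. Concretely, I would first show that the functor $\pi_U\colon \Qch(\aaa) \lra \Mod(\aaa(U))$ admits a left adjoint $L_U$. To build $L_U$, one takes an object $M \in \Mod(\aaa(U))$ and forms, for each $V$, the pre-descent datum whose value at $V$ is $\bigoplus$ over morphisms in $\uuu$ out of $V$ "into $U$-related opens" of the appropriate restrictions $M \otimes_{?} \aaa(V)$, with transition maps assembled from the twist isomorphisms $\Mod(c)$; then one applies a descent-ification (the left adjoint to the inclusion $\Des \hookrightarrow \PDes$, which exists because $\PDes$ is cocomplete and the descent objects are closed under the relevant colimits — here the exactness/cocontinuity of $u^*$ is used again). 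Having $L_U$, the collection $\{L_U(G) : U \in \uuu,\ G \text{ in a generating set of } \Mod(\aaa(U))\}$ is a set, and I would check it generates: given a nonzero morphism $f$ in $\Qch(\aaa)$, some component $\pi_U(f)$ is nonzero (the family $\{\pi_V\}$ is conservative since kernels/cokernels are pointwise), hence a generator $G$ of $\Mod(\aaa(U))$ maps nontrivially into the relevant object, and by adjunction $L_U(G)$ maps nontrivially in $\Qch(\aaa)$ detecting $f$.

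The main obstacle I expect is the careful construction of the left adjoint $L_U$ (equivalently, of the descent-ification functor $\PDes(\Mod_\aaa) \lra \Des(\Mod_\aaa)$) in the presence of nontrivial twists $c$: one must check that the free pre-descent datum one writes down is functorial, that the transition maps genuinely satisfy the twisted cocycle condition $\varphi_v v^*(\varphi_u) = \varphi_{uv} c^{u,v}$, and that descent-ification preserves enough colimits to keep the output in $\Qch(\aaa)$. Geometricity enters precisely here and in confirming that descent data are stable under the colimits used. Once $L_U \dashv \pi_U$ is in hand, generation, cocompleteness, and exactness of filtered colimits are comparatively routine, relying only on Proposition \ref{lemqchabelian} and the Grothendieck property of the module categories $\Mod(\aaa(U))$.
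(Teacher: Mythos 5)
Your treatment of abelianness, cocompleteness and exactness of filtered colimits is fine and matches the paper: since the functors $-\otimes_u\aaa(V)$ are cocontinuous left Kan extensions and exact by geometricity, Proposition \ref{lemqchabelian}(3) gives pointwise (co)limits and the Corollary gives an abelian category with exact $\pi_V$. The gap is in step (iv). Your construction of generators hinges on a left adjoint $L_U$ to $\pi_U\colon\Qch(\aaa)\lra\Mod(\aaa(U))$, obtained by composing the free pre-descent datum construction with a ``descent-ification'' left adjoint to the inclusion $\Des(\Mod_\aaa)\hookrightarrow\PDes(\Mod_\aaa)$. Such a left adjoint does not exist in general. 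A reflective subcategory must be closed under the limits of the ambient category, but by Proposition \ref{lemqchabelian}(1) products in $\PDes(\Mod_\aaa)$ are pointwise, and the pointwise product of descent data is a descent datum only when the functors $-\otimes_u\aaa(V)$ commute with infinite products, which fails unless each $\aaa(V)$ is finitely presented over $\aaa(U)$. The inclusion $\Des\hookrightarrow\PDes$ preserves colimits, so what it admits is a \emph{right} adjoint (the coherator), which is useless for producing generators. Equivalently, $\pi_U$ itself does not preserve infinite products of quasi-coherent modules and so cannot have a left adjoint; this is the familiar fact that ``extension by zero'' does not exist for quasi-coherent sheaves, even for an open immersion of schemes. (Your argument does produce the projective generators $P^A_U$ of $\Mod(\aaa)=\PDes(\Mod_\aaa)$ noted in the Remark following the theorem, but these are not descent data.)

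You invoke \cite{enochsestrada}, but the strategy actually used there, and in the paper, is different from the one you describe: it is a cardinality argument in the style of Gabber. One fixes $\kappa\geq\sup\{\aleph_0,|\uuu|,|\aaa|\}$ and proves (Proposition \ref{propgroth}) that every element $x\in M_U(A)$ of every $M\in\Qch(\aaa)$ is contained in a quasi-coherent subobject $N\subseteq M$ with $|N|\leq\kappa$; the objects of cardinality $\leq\kappa$ in a skeleton then form a generating set, since any nonzero subobject of any $M$ contains such an $N$. The substance of the paper's proof is Proposition \ref{propgroth}, established by passing to the path category of the underlying quiver to kill the twists, bounding the quasi-coherent subobject generated by a subset one morphism at a time (Lemmas \ref{lemmodgen1}--\ref{lemqchgen}), and running a transfinite induction over $\N\times\Mor(\uuu)$; geometricity is used to ensure that $N_U\otimes_u\aaa(V)$ injects into $M_U\otimes_u\aaa(V)$ and that subrepresentations of modules are automatically modules (Lemma \ref{lemsub}). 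To repair your proof you would need to replace the adjunction argument for (iv) by an argument of this type.
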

We make use of cardinalities.
For a small category $\uuu$, put
$|\uuu| = \sum_{U,V \in \uuu} |\uuu(V,U)|$.
For a small $k$-linear category $\AAA$ and $M \in \Mod(\AAA)$, a subset $X \subseteq M$ by definition consists of subsets $X(A) \subseteq M(A)$ for all $A \in \AAA$. Put $|X| = \sum_{A \in \AAA} |X(A)|$.
For a prestack $\aaa: \uuu \lra \Cat(k)$ on $\uuu$, put $|\aaa| = \sum_{U \in \uuu}|\aaa(U)|$. For $M = (M_U)_U \in \Mod(\aaa)$, a subset $X \subseteq M$ by definition consists of subsets $X_U \subseteq M_U$ for every $U \in \uuu$. Put $|X| = \sum_{U \in \uuu}|X_U|$.

\begin{proof}
The restriction functor is exact by assumption and preserves all colimits. By Proposition \ref{lemqchabelian} (3), it follows that $\Qch(\aaa)$ is an abelian category with exact filtered colimits. It remains to show that $\Qch(\aaa)$ has a set of generators. Let $\kappa$ be a cardinal with $\kappa \geq \sup \{ \aleph_0, |\uuu|,  |\aaa|\}$.
By Proposition \ref{propgroth}, for every $M \in \Qch(\aaa)$, $U \in \uuu$, $A \in \aaa(U)$ and $x \in M_U(A)$, there exists a subobject $N \subseteq M$ in $\Qch(\aaa)$ with $x \in N_U(A)$ and $|N| \leq \kappa$. Let $\Qch(\aaa)_0$ be a skeletal subcategory of $\Qch(\aaa)$. Then the objects $N \in \Qch(\aaa)_0$ with $|N| \leq \kappa$ constitute a set of generators for $\Qch(\aaa)$. 
\end{proof}

\begin{remarks}
\begin{enumerate}
\item A simplification of the argument shows that for an arbitrary prestack on $\uuu$, the category $\Mod(\aaa)$ is Grothendieck. In fact, it is well known that a stronger property holds, namely $\Mod(\aaa)$ has a set of finitely generated projective generators $P^A_U$ for $U \in \uuu$, $A \in \aaa(U)$ with $(P^A_U)_V(B) = \oplus_{u: V \lra U}\aaa(V)(B, u^{\ast} A)$ (see \cite{lowenvandenberghab}, \cite{lowenvandenberghCCT}). 
\item
Making use of cardinalities of objects in Grothendieck categories in the sense of \cite{lowenvandenberghab}, it is possible to investigate the more general question when a descent category of Grothendieck categories inherits the Grothendieck property. This question will be addressed in \cite{dinhvan}.
\end{enumerate}
\end{remarks}

The main result used in the proof of Theorem \ref{thmgroth} is the following:

\begin{proposition}\label{propgroth}
Let $\uuu$ be a small category and let $\aaa$ be a geometric prestack on $\uuu$. Let $\kappa$ be an cardinal with $\kappa \geq \sup\{ \aleph_0, |\uuu|, |\aaa|\}$. Consider $M \in \Qch(\aaa)$ and suppose that $X \subseteq M$ is a subset with $|X|\leq\kappa$. Then there is a subobject $N \subseteq M$ in $\Qch(\aaa)$ with $X \subseteq N$ and $|N|\leq\kappa$.
\end{proposition}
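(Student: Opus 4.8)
The plan is to build $N$ as an increasing union $N = \bigcup_{n \geq 0} N^{(n)}$ of subsets of $M$ (each $N^{(n)} = (N^{(n)}_U)_U$ a family of $\aaa(U)$-submodules $N^{(n)}_U \subseteq M_U$), where $N^{(0)}$ is chosen so that $X \subseteq N^{(0)}$ and each $N^{(n+1)}$ is obtained from $N^{(n)}$ by a ``closure step'' that repairs the two possible failures of $(N^{(n)}_U)_U$ being a descent subdatum: first, the restriction maps $\varphi_u$ of $M$ need not send $u^{\ast} N^{(n)}_U$ into $N^{(n)}_V$; second, even if they do, the induced maps $N^{(n)}_U \otimes_u \aaa(V) \to N^{(n)}_V$ need not be isomorphisms (descent requires them to be, since $M$ is a descent datum and the $\varphi_u^M$ are isomorphisms). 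Concretely, I would first enlarge $N^{(0)}_U$ so that each $N^{(0)}_U$ is an honest $\aaa(U)$-submodule of $M_U$ generated by $X_U$ together with enough elements to make $|N^{(0)}_U| \leq \kappa$; this uses $\kappa \geq \sup\{\aleph_0, |\aaa|\}$ so that the submodule of $M_U$ generated by a subset of size $\leq \kappa$ again has size $\leq \kappa$ (a generated submodule is a quotient of a free module on $\leq \kappa \cdot |\aaa(U)| \leq \kappa$ generators).

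The inductive step: given $N^{(n)}$ with $|N^{(n)}| \leq \kappa$, for each $u : V \to U$ in $\uuu$, (i) throw into $N^{(n+1)}_V$ the $\aaa(V)$-submodule of $M_V$ generated by $\varphi_u^M(u^{\ast} N^{(n)}_U)$ — this is the image of $N^{(n)}_U \otimes_u \aaa(V)$, a module of size $\leq \kappa$ since $\otimes_u \aaa(V)$ is a colimit of copies of $\aaa(V)(-, u^\ast A)$ indexed by a set of size $\leq \kappa$, so its image in $M_V$ has size $\leq \kappa$; and (ii) to force surjectivity of $N^{(n)}_U \otimes_u \aaa(V) \to N^{(n)}_V$ to become, in the limit, bijectivity, observe that since $M_U \otimes_u \aaa(V) \to M_V$ is an isomorphism and $N^{(n)}_V$ has size $\leq \kappa$, for each generator of $N^{(n)}_V$ we may pick a preimage in $M_U \otimes_u \aaa(V)$, which (again because $\otimes_u \aaa(V)$ is a colimit) involves only $\leq \kappa$ elements of $M_U$; enlarge $N^{(n+1)}_U$ to the submodule generated by $N^{(n)}_U$ together with all these chosen elements, over all $u$ with codomain $U$. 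Since $\uuu$ is small and $|\uuu| \leq \kappa$, the total number of elements added at each stage is $\leq \kappa$, so $|N^{(n+1)}| \leq \kappa$, and hence $|N| = |\bigcup_n N^{(n)}| \leq \kappa$ (a countable union of sets of size $\leq \kappa$).

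It remains to check that $N = \bigcup_n N^{(n)}$ is genuinely an object of $\Qch(\aaa)$, i.e. a descent datum: each $N_U$ is an $\aaa(U)$-submodule of $M_U$ (union of an increasing chain of submodules); by construction of step (i), $\varphi_u^M$ restricts to $\varphi_u^N : u^{\ast} N_U \to N_V$ with image generating $N_V$ — and since geometricity makes $- \otimes_u \aaa(V)$ exact, $u^{\ast} N_U = N_U \otimes_u \aaa(V) \hookrightarrow M_U \otimes_u \aaa(V) \cong M_V$, so $\varphi_u^N$ is the restriction of an isomorphism and hence injective; surjectivity onto $N_V$ follows because any element of $N_V$ lies in some $N^{(n)}_V$, and by step (ii) its preimage elements were placed in $N^{(n+1)}_U \subseteq N_U$. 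Thus $\varphi_u^N$ is an isomorphism, the cocycle condition $\varphi_v^N v^\ast(\varphi_u^N) = \varphi_{uv}^N c^{u,v}$ is inherited from $M$, and $N \subseteq M$ is a subobject in $\Qch(\aaa)$ by Proposition \ref{lemqchabelian}(3) (kernels and images in $\Qch(\aaa)$ are computed pointwise, using exactness of the $u^\ast$). The main obstacle is the bookkeeping in step (ii): one must be careful that ``picking preimages'' uses only boundedly many elements of the source module — this is exactly where the description of $- \otimes_u \aaa(V)$ as a filtered-type colimit of representables (made explicit in \S\ref{parqcoh}) is essential — and that iterating steps (i) and (ii) together genuinely stabilizes in the colimit rather than chasing its own tail; the increasing-union argument handles this, but it is the point requiring care.
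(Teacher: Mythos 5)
Your argument is correct, and its kernel is the same as the paper's -- namely the Enochs--Estrada style bounded-closure argument in which exactness of $-\otimes_u\aaa(V)$ (geometricity) gives injectivity of the restricted comparison maps $N_U\otimes_u\aaa(V)\lra M_U\otimes_u\aaa(V)\cong M_V$, while adjoining preimages of elements of $N_V$ forces surjectivity in the limit -- but you organize it quite differently. The paper first strictifies: it regards $\uuu$ as a quiver, passes to the path category $\ppp(\uuu)$ and the presheaf $\ppp(\aaa)$ so that all twists become identities, proves the statement there (Lemma \ref{lemgroth}) by a transfinite induction on $\N\times\Mor(\uuu)$ that treats one morphism at a time via the two-object case (Lemma \ref{lemqchgen}), transports the result back to quasi-coherent representations (Proposition \ref{grothcor}), and finally invokes Lemma \ref{lemsub} to see that the subrepresentation so obtained is automatically a descent datum over the original prestack. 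You instead work directly with the prestack and repair all morphisms simultaneously in each of countably many stages; this is a legitimate reorganization ($\N\times\Mor(\uuu)$ versus $\N$ sweeps in parallel), and it buys a shorter, more self-contained proof at the cost of heavier bookkeeping inside a single induction. Two points deserve to be made explicit rather than asserted. First, your closing remark that ``the cocycle condition is inherited from $M$'' is exactly the content of Lemma \ref{lemsub} and is not purely formal: one needs exactness again to identify $N_U\otimes_u\aaa(V)\otimes_v\aaa(W)$ with a subobject of the corresponding object for $M$, together with naturality of the twist $\Mod(c)^{u,v}$, so that the commuting square for $M$ restricts to one for $N$; you have all the ingredients, but the step should be written out. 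Second, in the cardinality estimates you should bound the \emph{image} of $N^{(n)}_U\otimes_u\aaa(V)$ in $M_V$ (finite sums of images of elementary tensors, hence of size at most $\kappa$) rather than the tensor product itself, since a priori $|k|$ need not be dominated by $\kappa$; this is also how the paper's Lemma \ref{lemmodgen} proceeds. With these two points filled in, your proof is complete.
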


The proof of Proposition \ref{propgroth}, which will be completed at the end of this section, makes use of the notions of quivers, $\Cat(k)$-representations of quivers, and (quasi-coherent) representations over $\Cat(k)$-repre\-sen\-tations. These notions correspond to \emph{part of the data, with part of the axioms} defining categories, prestacks on categories, and (quasi-coherent) modules over prestacks respectively. The weaker notions are not used elsewhere in the paper. 

A \emph{quiver} $\uuu$ consists of a set of objects $\Ob(\uuu)$ and for $U, V \in \uuu$, a set of morphisms $\uuu(U,V)$. For a quiver $\uuu$, a \emph{$\Cat(k)$-representation of $\uuu$} consists of the following data:
\begin{itemize}
\item for every $U \in \uuu$ a small $k$-linear category $\aaa(U)$;
\item for every $u: V \lra U$ in $\uuu$ a $k$-linear functor $u^{\ast}: \aaa(U) \lra \aaa(V)$.
\end{itemize}
If for every $u: V \lra U$ in $\uuu$, the restriction functor
$$- \otimes_u \aaa(V): \Mod(\aaa(U)) \lra \Mod(\aaa(V))$$
is exact, the $\Cat(k)$-representation $\aaa$ is called \emph{geometric}.

For a $\Cat(k)$-representation $\aaa$ of $\uuu$, a \emph{representation} $M = (M_U)_U$ over $\aaa$ consists of the following data:
\begin{itemize}
\item for every $U \in \uuu$ an object $M_U \in \Mod(\aaa(U))$;
\item for every $u: V \lra U$ in $\uuu$ a morphism $\varphi_u: M_U \otimes_u \aaa(V) \lra M_V$.
\end{itemize}
If the morphism $\varphi_u$ is an isomorphism for all $u \in \uuu$, $M$ is a \emph{quasi-coherent representation}. A morphism of (quasi-coherent) representations $f: (N_U)_U \lra (M_U)_U$ consists of a compatible collection of morphisms $f_U: N_U \lra M_U$. We thus obtain the category $\mathsf{Rep}(\aaa)$ of representations over $\aaa$ and its full subcategory $\mathsf{QRep}(\aaa)$ of quasi-coherent representations.
A subset $X \subseteq M$ of a representation $M$ consists of subsets $X_U \subseteq M_U$ for all $U \in \uuu$.

Cardinalities of quivers, $\Cat(k)$-representations, and their (subsets of) representations are defined in complete analogy with cardinalities of small categories, prestacks and their (subsets of) modules.

Obviously, a small category $\uuu$ can be seen as a quiver by forgetting about the composition. The resulting forgetful functor from small categories to quivers has a left adjoint path category functor. For a quiver $\uuu$, the path category $\ppp(\uuu)$ has $\Ob(\ppp(U)) = \Ob(\uuu)$ and $\ppp(\uuu)(V, U) = \cup_{n \in \N} \ppp(\uuu)_n(V,U)$  where $\ppp(\uuu)_n(V,U)$ consists of paths of length $n$
\begin{equation}\label{eqp0}
p = (\xymatrix{ {V = U_0} \ar[r]^-{u_1} & {U_{1}} \ar[r]^-{u_2} & {\cdots} \ar[r]^-{u_{n-1}} & {U_{n-1}} \ar[r]^-{u_n} & {U_n = U}}).
\end{equation}
We have 
$$\ppp_0(\uuu)(V,U) = \begin{cases} \{1_U\} & \text{if} \,\,\, V = U \\ \varnothing & \text{otherwise} \end{cases}$$
where $1_U$ denotes the unique path of length $0$ from $U$ to $U$. The composition in $\ppp(\uuu)$ is given by concatenation of paths, and the paths $1_U$ are the identity morphisms.

If $\aaa: \uuu^{\op} \lra \Cat(k)$ is a prestack on a small category $\uuu$, then $\aaa$ can be seen as a $\Cat(k)$-representation of the quiver $\uuu$.
If $\uuu$ is a quiver with $\Cat(k)$-representation $\aaa$, we define a prestack $\ppp(\aaa)$ on $\ppp(\uuu)$ as follows. For $U \in \uuu$, we put $\ppp(\aaa)(U) = \aaa(U)$. For $p \in \ppp(\uuu)(V,U)$ as in \eqref{eqp0}, we put 
$$p^{\ast} = u_1^{\ast} u_2^{\ast} \dots u_{n-1}^{\ast} u_n^{\ast}: \aaa(U) \lra \aaa(V)$$
and $1_U^{\ast} = 1_{\aaa(U)}$.
Then $\ppp(\aaa)$ becomes a prestack by taking all the twist isomorphisms to be identity morphisms, i.e. $\ppp(\aaa)$ is a presheaf of $k$-linear categories. We obtain an equivalences of categories $\mathsf{Rep}(\aaa) \cong \Mod(\ppp(\aaa))$ which restricts to an equivalence
\begin{equation}\label{repmod}
\mathsf{QRep}(\aaa) \cong \Qch(\ppp(\aaa)).
\end{equation}
Under these equivalences, a representation $M = (M_U)_U$ of $\aaa$ is uniquely extended to a $\ppp(\aaa)$-module $\ppp(M) = (M_U)_U$ by imposing the compatibility relation between the morphisms $\varphi_u: M_U \otimes_u \aaa(V) \lra M_V$ with respect to concatenation. 

If $\aaa: \uuu \lra \Cat(k)$ is a prestack on a small category $\uuu$, then the category $\Mod(\aaa)$ is a full subcategory of $\mathsf{Rep}(\aaa)$, and $\Qch(\aaa)$ is a full subcategory of $\mathsf{QRep}(\aaa)$.

We will prove Proposition \ref{propgroth} in the following steps:
\begin{enumerate}
\item In Lemma \ref{lemgroth}, we show that Proposition \ref{propgroth} holds if $\aaa$ is a presheaf of $k$-linear categories on $\uuu$.
\item In Proposition \ref{grothcor}, by \eqref{repmod} we deduce from Lemma \ref{lemgroth} applied to $\ppp(\aaa)$ the corresponding statement for the category $\mathsf{QRep}(\aaa)$ of a $\Cat(k)$-re\-pre\-sen\-ta\-tion $\aaa$.
\item In Lemma \ref{lemsub}, we show that a subrepresentation of a module over a prestack $\aaa$ is automatically a submodule, whence Proposition \ref{grothcor} implies Proposition \ref{propgroth}.
\end{enumerate}

\begin{lemma}\label{lemmodgen1}
Let $\AAA$ be a small $k$-linear category. Consider $M \in \Mod(\AAA)$ and a subset $X \subseteq M$. The smallest submodule $N \subseteq M$ with $X \subseteq N$ satisfies $|N| \leq |X|\cdot |\AAA|$.
\end{lemma}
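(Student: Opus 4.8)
The statement to prove is Lemma \ref{lemmodgen1}: for a small $k$-linear category $\AAA$, $M \in \Mod(\AAA)$, and a subset $X \subseteq M$, the smallest submodule $N \subseteq M$ with $X \subseteq N$ satisfies $|N| \leq |X| \cdot |\AAA|$. The plan is to describe $N$ explicitly as the submodule generated by $X$ and bound its cardinality pointwise. For each $B \in \AAA$, the component $N(B)$ must be the smallest $k$-submodule of $M(B)$ containing, for every $A \in \AAA$ and every $x \in X(A)$, all elements of the form $x \cdot a = M(a)(x)$ for $a \in \AAA(B,A)$ (where $M(a)\colon M(A) \lra M(B)$ is the action), and closed under the $k$-module operations. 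So one should first verify that the assignment $B \longmapsto \langle\, M(a)(x) \mid A \in \AAA,\ x \in X(A),\ a \in \AAA(B,A) \,\rangle_k$ does define a submodule of $M$: it is clearly a $k$-submodule in each component, and it is stable under the action because for $b \in \AAA(B',B)$ we have $M(b)(M(a)(x)) = M(ab)(x)$, which is again a generator (or a $k$-combination thereof). This submodule contains $X$ since $M(1_A)(x) = x$, and any submodule containing $X$ must contain all these elements, so it is the smallest one — hence equals $N$.

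**Cardinality bound.** Once $N$ is described this way, fix $B$. The set of generators of $N(B)$ as a $k$-module is the image of the map $\coprod_{A \in \AAA} X(A) \times \AAA(B,A) \lra M(B)$, which has cardinality at most $\sum_{A \in \AAA} |X(A)| \cdot |\AAA(B,A)|$. A $k$-module generated by a set $S$ of generators has cardinality at most $\max\{\aleph_0, |k|, |S|\}$ — but to match the clean bound $|X| \cdot |\AAA|$ stated in the lemma, one should instead observe (consistently with the cardinality conventions set up just before the lemma, where $|\AAA| = \sum_{A,B}|\AAA(B,A)|$ already absorbs the base ring implicitly, or rather $\AAA$ being a $k$-linear category means each hom-set is a $k$-module so $|k| \leq |\AAA|$ as long as $\AAA$ is nonempty) that the submodule of $M(B)$ generated by a subset of cardinality $\leq \lambda$ has cardinality $\leq \lambda \cdot |\AAA|$ — indeed it is a quotient of the free module on those generators, and the free $k$-module on $\lambda$ generators, with $|k| \leq |\AAA|$, has cardinality $\leq \lambda \cdot |\AAA|$ when $\lambda, |\AAA|$ are infinite (the degenerate finite cases being trivial or vacuous). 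Summing over $B$:
\[
|N| = \sum_{B \in \AAA} |N(B)| \leq \sum_{B \in \AAA}\Bigl(\sum_{A \in \AAA}|X(A)|\cdot|\AAA(B,A)|\Bigr)\cdot|\AAA| \leq |X| \cdot |\AAA| \cdot |\AAA|,
\]
and since $|\AAA| \cdot |\AAA| = |\AAA|$ for $|\AAA|$ infinite (and the finite cases are handled directly), this gives $|N| \leq |X| \cdot |\AAA|$.

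**Main obstacle.** The only real subtlety is bookkeeping with the cardinality conventions: the lemma is stated so cleanly that one must be careful about the role of $|k|$, empty hom-sets, and finite degenerate cases, and about the precise meaning of $|\AAA|$ and $|M|$ fixed in the paragraph preceding the lemma. I would dispatch these by noting that when $\AAA$ is nonempty each $\AAA(A,A) \ni 1_A$ is a nonzero $k$-module so $|k| \leq |\AAA(A,A)| \leq |\AAA|$, reducing everything to the arithmetic of infinite cardinals (where $\kappa \cdot \kappa = \kappa$), with the remaining finite or empty cases being immediate. The algebraic content — that the explicitly described pointwise-generated object is indeed the smallest submodule containing $X$ — is routine given the formula $M(b)(M(a)(x)) = M(ab)(x)$.
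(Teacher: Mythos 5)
Your proof is correct and follows essentially the same route as the paper's: the paper's entire proof consists of describing $N$ pointwise as $N(A) = \sum_{f\colon A \lra B} N^f(A)$ with $N^f(A)$ the image of $X(B)$ under $M(f)$, leaving all cardinality bookkeeping to the reader, which you have simply carried out explicitly. One small inaccuracy in your cardinality discussion: from ``each hom-set is a $k$-module'' one cannot conclude $|k| \leq |\AAA|$ (a $k$-module can be smaller than $k$, e.g.\ a torsion module over $k = \Z$). The correct repair is that by $k$-linearity of the functor $M$ one has $\lambda \cdot m = M(\lambda 1_A)(m)$, so the $k$-action on $M(A)$ factors through the image of $k$ in $\AAA(A,A)$, whose cardinality is bounded by $|\AAA|$; this is all that is needed for the span of $\lambda$ generators to have cardinality at most $\lambda \cdot |\AAA|$ in the infinite regime. (Strictly speaking the clean bound $|N| \leq |X|\cdot|\AAA|$ can fail for finite cardinalities, but the lemma is only invoked with $\kappa \geq \aleph_0$, and the paper itself ignores this.)
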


\begin{proof}
For $f: A \lra B$, let $N^f(A) \subseteq M(A)$ be the image of $X(B)$ under $M(f): M(B) \lra M(A)$. 
The submodule $N \subseteq M$ satisfies $N(A) = \sum_{f: A \lra B} N^f(A)$.
\end{proof}

\begin{lemma}\label{lemmodgen}
Let $\uuu$ be a small category and let $\aaa$ be a presheaf of small $k$-linear categories on $\uuu$. Let $\kappa$ be a cardinal with $\kappa \geq \sup\{ \aleph_0, |\uuu|, |\aaa|\}$. Consider $M \in \mathsf{Mod}(\aaa)$ and a subset $X \subseteq M$. The smallest subobject $N \subseteq M$ in $\mathsf{Mod}(\aaa)$ with $X \subseteq N$ satisfies $|N|\leq |X| \cdot \kappa$.
\end{lemma}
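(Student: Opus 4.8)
The plan is to produce $N$ by a single ``closure'' of $X$ under the restriction maps of $M$ together with the pointwise module structures, and then to bound its cardinality using Lemma~\ref{lemmodgen1}. First I would record a concrete description of subobjects of $M$ in $\Mod(\aaa) = \PDes(\Mod_\aaa)$. Since kernels in $\PDes(\Mod_\aaa)$ are computed pointwise (Proposition~\ref{lemqchabelian}(1)), the monomorphisms into $M$ are exactly those whose components are monomorphisms, so a subobject of $M = (M_U,\varphi_u)_U$ is determined by a family of $\aaa(U)$-submodules $N_U \subseteq M_U$ such that for every $u\colon V \lra U$ in $\uuu$ the composite $N_U \otimes_u \aaa(V) \lra M_U \otimes_u \aaa(V) \overset{\varphi_u}{\lra} M_V$ lands in $N_V$ (so that the $N_U$ assemble, with the corestricted structure maps, into an object of $\PDes(\Mod_\aaa)$). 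For $z \in M_U(A)$ write $r_u(z) = \varphi_u(z \otimes 1_{u^{\ast}A}) \in M_V(u^{\ast}A)$. A short unwinding of the definitions shows $r_u(z\cdot a) = r_u(z)\cdot u^{\ast}(a)$ for a morphism $a$ of $\aaa(U)$, and that $\mathrm{Im}(\varphi_u)$ is the $\aaa(V)$-submodule of $M_V$ generated by $\{r_u(z) : z \in M_U\}$; hence the condition above is equivalent to $r_u(N_U) \subseteq N_V$. Finally, since $\aaa$ is a \emph{presheaf} all its twists are trivial, so the pre-descent identity reads $\varphi_v v^{\ast}(\varphi_u) = \varphi_{uv}$, which translates into $r_p \circ r_u = r_{up}$ for composable $p\colon V' \lra V$, $u\colon V \lra U$ in $\uuu$, with $r_{1_U} = \mathrm{id}$.

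With this in hand I would set, for each $U \in \uuu$, $N_U$ to be the $\aaa(U)$-submodule of $M_U$ generated by all elements $r_u(x)$ with $u$ a morphism of $\uuu$ of source $U$ and $x \in X_{W}$, where $W$ is the target of $u$. Taking $u = 1_U$ gives $X_U \subseteq N_U$. To check that $(N_U)_U$ is a subobject, fix $p\colon V \lra U$ in $\uuu$ and $y \in N_U$; writing $y = \sum_i r_{u_i}(x_i)\cdot a_i$ with $u_i\colon U \lra W_i$, $x_i \in X_{W_i}$ and $a_i$ a morphism of $\aaa(U)$, the identities above give $r_p(y) = \sum_i r_{u_i p}(x_i)\cdot p^{\ast}(a_i)$, and each $u_i p$ is a morphism of $\uuu$ of source $V$ with target $W_i$, so each $r_{u_i p}(x_i)$ lies in the chosen generating set of $N_V$ and thus $r_p(y) \in N_V$. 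Conversely, any subobject of $M$ containing $X$ contains each $r_u(X_W)$, hence each $N_U$; therefore $N = (N_U)_U$ is the smallest subobject of $M$ with $X \subseteq N$.

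For the cardinality estimate, put $\lambda = |X|\cdot\kappa$; this is infinite since $\kappa \geq \aleph_0$. The generating set of $N_U$ has cardinality at most $\sum_{W}\sum_{u \in \uuu(U,W)} |X_W| \leq |\uuu|\cdot|X| \leq \lambda$, so Lemma~\ref{lemmodgen1} applied to the small $k$-linear category $\aaa(U)$ gives $|N_U| \leq \lambda\cdot|\aaa(U)| \leq \lambda\cdot|\aaa| \leq \lambda$. Summing over the at most $|\uuu|$ objects of $\uuu$ yields $|N| = \sum_U |N_U| \leq |\uuu|\cdot\lambda \leq \lambda = |X|\cdot\kappa$, which is the claimed bound.

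The one genuinely delicate point is the bookkeeping in the first paragraph: verifying that with trivial twists the pre-descent compatibility collapses iterated restrictions to $r_p \circ r_u = r_{up}$. This is precisely what makes a \emph{single} closure step enough here; for a general prestack the analogous closure would have to be iterated $\omega$ times, which is why the lemma is phrased for presheaves, the prestack case being reduced to it via the path category $\ppp(\aaa)$ in what follows.
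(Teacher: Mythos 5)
Your proof is correct and follows essentially the same route as the paper's (very terse) proof: the paper also takes $N_V$ to be the sum over all $u\colon V \lra U$ of the images under $\varphi_u$ of the submodule of $M_U$ generated by $X_U$, which coincides with your $\aaa(V)$-submodule generated by the elements $r_u(x)$. Your write-up just makes explicit the verification (via $r_p\circ r_u = r_{up}$ for a presheaf) and the cardinality bookkeeping that the paper leaves implicit.
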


\begin{proof}
For $U \in \uuu$, let $Y_U \subseteq M_U$ be the smallest submodule of $M_U$ with $X_U \subseteq Y_U$. For $u: V \lra U$, let $N_V^u \subseteq M_V$ be the image of $Y_U \otimes_u \aaa(V)$ under $\varphi^M_u: M_U \otimes_u \aaa(V) \lra M_V$. 
The submodule $N \subseteq M$ satisfies $N_V = \sum_{u: V \lra U} N_V^u$.
\end{proof}

\begin{lemma}\label{lemqchgen}
Proposition \ref{propgroth} holds true for $\aaa: \uuu^{\op} \lra \Cat(k)$ a geometric presheaf of categories, i.e. a geometric prestack with all twists given by identity morphisms, on $\uuu = \ppp(e: 2 \lra 1)$.
\end{lemma}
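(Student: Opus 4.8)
The plan is to use the fact that $\uuu = \ppp(e\colon 2\lra 1)$ is about as simple as a category gets: besides the identities it has only the single morphism $e\colon 2\lra 1$, with $\uuu(1,2)=\varnothing$. Hence a module over $\aaa$ is a triple $(M_1,M_2,\varphi_e)$ with $M_1\in\Mod(\aaa(1))$, $M_2\in\Mod(\aaa(2))$ and $\varphi_e\colon M_1\otimes_e\aaa(2)\lra M_2$ an $\aaa(2)$-module map, and it is quasi-coherent precisely when $\varphi_e$ is an isomorphism. Any triple $N=(N_1,N_2,\psi)$ with $N_i\subseteq M_i$, $\psi$ the restriction of $\varphi_e$, and $\psi$ an isomorphism defines, via the inclusions, a subobject $N\hookrightarrow M$ in $\Qch(\aaa)$ (a morphism of (pre-)descent data that is pointwise a monomorphism is a monomorphism). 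So given $M\in\Qch(\aaa)$ and a subset $X=(X_1,X_2)$ with $|X|\le\kappa$, it suffices to build such an $N\supseteq X$ with $|N|=|N_1|+|N_2|\le\kappa$.

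First I would enlarge $X_1$. Using the isomorphism $\varphi_e$ and the explicit description of $\otimes_e$ recalled in \S\ref{parqcoh}, every element of $X_2(B)$ for $B\in\aaa(2)$ has a finite representative $\sum_j\varphi_e(x_j\otimes y_j)$ with $x_j\in M_1(A_j)$, $A_j\in\aaa(1)$, $y_j\in\aaa(2)(B,e^\ast A_j)$; collecting all these $x_j$ over all elements of $X_2$ yields a subset $X_1'\subseteq M_1$ with $|X_1'|\le\kappa$ (here $\kappa\ge\aleph_0$). Set $\bar X_1=X_1\cup X_1'$ and let $N_1\subseteq M_1$ be the smallest $\aaa(1)$-submodule containing $\bar X_1$; by Lemma \ref{lemmodgen1}, $|N_1|\le|\bar X_1|\cdot|\aaa(1)|\le\kappa$. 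Now define $N_2\subseteq M_2$ to be the image of the composite $N_1\otimes_e\aaa(2)\hookrightarrow M_1\otimes_e\aaa(2)\xrightarrow{\varphi_e}M_2$. It is an $\aaa(2)$-submodule, and it contains $X_2$ because every element of $X_2$ was written as $\sum_j\varphi_e(x_j\otimes y_j)$ with $x_j\in N_1$.

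The geometricity hypothesis on $\aaa$ enters at exactly one point, and this is the conceptual heart of the argument. Since $-\otimes_e\aaa(2)$ is exact, the map $N_1\otimes_e\aaa(2)\to M_1\otimes_e\aaa(2)$ induced by the inclusion $N_1\hookrightarrow M_1$ is a monomorphism; composing with the isomorphism $\varphi_e$, the map $N_1\otimes_e\aaa(2)\to M_2$ is then a monomorphism with image $N_2$, so it restricts to an isomorphism $\psi\colon N_1\otimes_e\aaa(2)\xrightarrow{\sim}N_2$. Hence $N=(N_1,N_2,\psi)$ is genuinely a subobject of $M$ in $\Qch(\aaa)$, not merely a sub-pre-descent datum, and $X\subseteq N$. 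Without exactness this step collapses — the sub-pre-descent datum $(N_1,N_2)$ need not be a descent datum — which is precisely why the statement is restricted to geometric $\aaa$.

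It remains to bound $|N_2|$. By construction $N_2$ is the smallest $\aaa(2)$-submodule of $M_2$ containing the set $Z=\{\varphi_e(x\otimes y)\mid A\in\aaa(1),\ B\in\aaa(2),\ x\in N_1(A),\ y\in\aaa(2)(B,e^\ast A)\}$, and a crude count gives $|Z|\le|N_1|\cdot|\aaa(2)|\le\kappa$; so Lemma \ref{lemmodgen1} yields $|N_2|\le|Z|\cdot|\aaa(2)|\le\kappa$, and therefore $|N|=|N_1|+|N_2|\le\kappa$. I expect the only real obstacle to be the third paragraph — defining $N_2$ as the image of $N_1\otimes_e\aaa(2)$ and then invoking exactness to see that the induced structure map is an isomorphism; the two cardinality estimates are routine bookkeeping with the conventions of \S\ref{parpargroth} and with Lemma \ref{lemmodgen1}.
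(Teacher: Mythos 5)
Your proof is correct and follows essentially the same route as the paper's: pull $X_2$ back through $\varphi_e$ using the explicit description of $-\otimes_e\aaa(2)$ to enlarge $X_1$, generate $N_1$, and set $N_2=\varphi_e(N_1\otimes_e\aaa(2))$, with exactness of $-\otimes_e\aaa(2)$ guaranteeing that the induced structure map is an isomorphism. The only cosmetic difference is that the paper directly produces a submodule $Y_1$ with $X_2\subseteq\varphi_e(Y_1\otimes_e\aaa(2))$, whereas you first extract a generating set $X_1'$ and then apply Lemma \ref{lemmodgen1}.
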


\begin{proof}
Consider the isomorphism $\varphi^M_e: M_1 \otimes_e \aaa(2) \lra M_2$. We have $\kappa \geq \{ \aleph_0, |\aaa(1)|, |\aaa(2)|\}$. From the existence of epimorphisms
$$\bigoplus_{B \in \aaa(1)} M_1(B) \otimes_k \aaa_2(A, \aaa(e)(B)) \lra M_2(A)$$
we see that there exists a submodule $Y_1 \subseteq M_1$ with $|Y_1| \leq \kappa$ and $X_2 \subseteq \varphi^M_e(Y_1 \otimes_e \aaa(2))$. Let $N_1$ be the smallest submodule of $M_1$ with $X_1 \cup Y_1 \subseteq N_1$. Since $- \otimes_e \aaa(2)$ is exact, we have $N_1 \otimes_e \aaa(2) \subseteq M_1 \otimes_e \aaa(2)$ and putting $N_2 = \varphi^M_e(N_1 \otimes_e \aaa(2)) \subseteq M_2$ yields the desired quasi-coherent submodule $N \subseteq M$.
\end{proof}

\begin{lemma}\label{lemgroth}
Proposition \ref{propgroth} holds true for $\aaa: \uuu^{\op} \lra \Cat(k)$ a geometric presheaf of categories, i.e. a geometric prestack with all twists given by identity morphisms.
\end{lemma}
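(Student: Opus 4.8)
The plan is to construct $N$ by a countable closure process that alternates two cardinality‑preserving operations on subsets of $M$. The first turns a subset into a subobject of $M$ in $\Mod(\aaa)=\PDes(\Mod_{\aaa})$, and is controlled by Lemma~\ref{lemmodgen} (which itself rests on Lemma~\ref{lemmodgen1}). The second, for each arrow of $\uuu$, enlarges the source component so that the restriction map surjects onto the target component; this is where the hypotheses genuinely bite, and it will be handled by reducing to the two‑object case already settled in Lemma~\ref{lemqchgen}.

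Fix a subset $Y\subseteq M$ with $|Y|\leq\kappa$. \emph{Operation $F$}: let $F(Y)$ be the smallest subobject of $M$ in $\Mod(\aaa)$ containing $Y$. By Lemma~\ref{lemmodgen}, $|F(Y)|\leq |Y|\cdot\kappa=\kappa$, each $F(Y)_U\subseteq M_U$ is a submodule, and for every $u\colon V\lra U$ in $\uuu$ the structure map $\varphi_u$ sends (the image of) $F(Y)_U\otimes_u\aaa(V)$ into $F(Y)_V$. \emph{Operation $G$}: for each arrow $u\colon V\lra U$ of $\uuu$, view the restriction of $\aaa$ along $u$ as a geometric presheaf $\aaa_u$ on $\ppp(e\colon 2\lra 1)$, with $\aaa_u(1)=\aaa(U)$, $\aaa_u(2)=\aaa(V)$ and $e^{\ast}=u^{\ast}$; since $M$ is a descent datum, $(M_U,M_V,\varphi_u)$ is an object of $\Qch(\aaa_u)$. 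As $|\aaa_u|\leq 2|\aaa|\leq\kappa$ and $|\ppp(e\colon 2\lra 1)|$ is finite, Lemma~\ref{lemqchgen} applies to the subset $(Y_U,Y_V)$ and produces a quasi‑coherent subobject $N^{(u)}=(N^{(u)}_1,N^{(u)}_2)$ of $(M_U,M_V,\varphi_u)$ with $Y_U\subseteq N^{(u)}_1$, $Y_V\subseteq N^{(u)}_2$, $\varphi_u\bigl(N^{(u)}_1\otimes_u\aaa(V)\bigr)=N^{(u)}_2$ and $|N^{(u)}|\leq\kappa$. Define $G(Y)$ componentwise by $G(Y)_W=Y_W\cup\bigcup N^{(u)}_1\cup\bigcup N^{(u)}_2$, the first union over arrows $u$ with codomain $W$ and the second over arrows $u$ with domain $W$; since $\uuu$ has at most $|\uuu|\leq\kappa$ arrows, $|G(Y)|\leq\kappa$. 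Throughout I use $\kappa\cdot\kappa=\kappa$ and $\kappa\geq\sup\{\aleph_0,|\uuu|,|\aaa|\}$.

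Now put $X^0=F(X)$ and $X^{n+1}=F\bigl(G(X^n)\bigr)$ for $n\geq 0$, and set $N=\bigcup_{n\in\N}X^n$, so $X\subseteq N$ and $|N|\leq\aleph_0\cdot\kappa=\kappa$. I would then verify that $N$ is a descent datum, hence an object of $\Qch(\aaa)=\Des(\Mod_{\aaa})$. Each $N_U=\bigcup_n X^n_U$ is a directed union of submodules of $M_U$, hence a submodule. Because $-\otimes_u\aaa(V)$ is exact (the \emph{geometric} hypothesis), $N_U\otimes_u\aaa(V)$ is the directed union of the subobjects $X^n_U\otimes_u\aaa(V)$ inside $M_U\otimes_u\aaa(V)$; since each $X^n$ is $F$‑closed, $\varphi_u$ maps $N_U\otimes_u\aaa(V)$ into $N_V$, and it is injective there because it is the restriction of the isomorphism $\varphi_u\colon M_U\otimes_u\aaa(V)\lra M_V$ (an isomorphism as $M$ is a descent datum) along the inclusion $N_U\otimes_u\aaa(V)\hookrightarrow M_U\otimes_u\aaa(V)$. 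For surjectivity, any $x\in N_V$ lies in some $X^n_V$, and at stage $n+1$ the operation $G$ contributed a submodule $N^{(u)}_1\subseteq N_U$ with $x\in N^{(u)}_2=\varphi_u\bigl(N^{(u)}_1\otimes_u\aaa(V)\bigr)\subseteq\varphi_u\bigl(N_U\otimes_u\aaa(V)\bigr)$. Hence $\varphi_u$ restricts to an isomorphism $N_U\otimes_u\aaa(V)\cong N_V$ for every $u$, so $N$ is a sub‑descent‑datum of $M$ and the inclusion $N\hookrightarrow M$ exhibits $N$ as the desired subobject in $\Qch(\aaa)$.

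The hard part will be the surjectivity (``reverse'') direction inside Operation $G$: one must enlarge a component so that the restriction map hits everything, while keeping the cardinality at most $\kappa$ and doing this compatibly over all arrows. This is exactly the content of Lemma~\ref{lemqchgen}, whose proof itself uses exactness of the tensor functor. Once both $F$ and $G$ are available as $\kappa$‑bounded, finitary operations, the countable iteration suffices, because any finite obstruction — a submodule relation, or an element of some $N_V$ lacking a preimage — is already witnessed at a finite stage, so no transfinite recursion beyond $\omega$ is needed.
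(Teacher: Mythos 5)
Your proof is correct and follows essentially the same route as the paper, which proves the lemma ``along the lines of Enochs--Estrada by combining Lemmas \ref{lemmodgen} and \ref{lemqchgen} in a transfinite induction on $\N\times\Mor(\uuu)$'': your operations $F$ and $G$ are exactly these two lemmas, and your countable iteration (handling all arrows of $\uuu$ in parallel at each stage, rather than one at a time along a well-ordering of $\Mor(\uuu)$) is an equivalent, and cleanly written-out, organization of that induction. The cardinality bookkeeping, the use of exactness of $-\otimes_u\aaa(V)$ to identify $N_U\otimes_u\aaa(V)$ with a directed union of subobjects of $M_U\otimes_u\aaa(V)$, and the surjectivity argument via stage $n+1$ are all sound.
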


\begin{proof}
This is proven along the lines of \cite[Prop. 3.3]{enochsestrada}, by combining Lemmas \ref{lemmodgen} and \ref{lemqchgen} in a transfinite induction on $\N \times \Mor(\uuu)$ (after well-ordering $\Mor(\uuu) = \coprod_{V, U \in \uuu}\uuu(V,U)$).
\end{proof}

\begin{proposition}\label{grothcor}
Let $\uuu$ be a quiver and let $\aaa$ be a geometric $\Cat(k)$-repre\-sen\-ta\-tion of $\uuu$.  Let $\kappa$ be a cardinal with $\kappa \geq \sup\{ \aleph_0, |\uuu|, |\aaa|\}$. Consider $M \in \mathsf{QRep}(\aaa)$ and suppose that $X \subseteq M$ is a subset with $|X |\leq\kappa$. Then there is a subobject $N \subseteq M$ in $\mathsf{QRep}(\aaa)$ with $X \subseteq N$ and $|N|\leq\kappa$.
\end{proposition}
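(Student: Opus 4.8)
The plan is to deduce this from Lemma~\ref{lemgroth} by replacing the quiver $\uuu$ with its path category. First I would form the small category $\ppp(\uuu)$ together with the associated prestack $\ppp(\aaa)$ on it; recall that $\ppp(\aaa)$ is in fact a \emph{presheaf} of $k$-linear categories (all twist isomorphisms are identities), and that by \eqref{repmod} there is an equivalence $\mathsf{QRep}(\aaa) \cong \Qch(\ppp(\aaa))$ under which a representation $(M_U)_U$ over $\aaa$ corresponds to the $\ppp(\aaa)$-module $\ppp(M)$ having the same underlying family $(M_U)_U$. I would then observe that $\ppp(\aaa)$ is again geometric: the restriction functor of $\ppp(\aaa)$ along a path $p$ of positive length is the composite of the restriction functors of $\aaa$ along the arrows composing $p$, each of which is exact since $\aaa$ is geometric, while the restriction functor along a length-zero path is isomorphic to $\mathrm{id}$; a composite of exact functors is exact.

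Next I would check the cardinality hypothesis of Lemma~\ref{lemgroth} for $\ppp(\aaa)$ with the same $\kappa$. On the one hand $|\ppp(\aaa)| = \sum_{U \in \Ob(\uuu)}|\aaa(U)| = |\aaa| \leq \kappa$. On the other hand a path of length $n \geq 1$ is a composable string of $n$ arrows of $\uuu$, so there are at most $|\uuu|^n$ of them and hence at most $\sum_{n\geq 1}|\uuu|^n \leq \max\{\aleph_0,|\uuu|\} \leq \kappa$ paths of positive length; the paths of length $0$ are the $|\Ob(\uuu)|$ identities, and $|\Ob(\uuu)| \leq |\aaa| \leq \kappa$. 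Hence $|\ppp(\uuu)| \leq \kappa$, so $\kappa \geq \sup\{\aleph_0, |\ppp(\uuu)|, |\ppp(\aaa)|\}$ and Lemma~\ref{lemgroth} applies to $\ppp(\aaa)$.

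Finally I would transport the statement across \eqref{repmod}. Given $M \in \mathsf{QRep}(\aaa)$ and $X \subseteq M$ with $|X| \leq \kappa$, view $X$ as a subset of $\ppp(M) \in \Qch(\ppp(\aaa))$; since $\ppp(M)$ has the same underlying family as $M$, we still have $|X| \leq \kappa$. By Lemma~\ref{lemgroth} (applied to the geometric presheaf $\ppp(\aaa)$) there is a subobject $N' \subseteq \ppp(M)$ in $\Qch(\ppp(\aaa))$ with $X \subseteq N'$ and $|N'| \leq \kappa$. As the equivalence of \eqref{repmod} is the identity on underlying families and preserves monomorphisms, $N'$ corresponds to a subobject $N \subseteq M$ in $\mathsf{QRep}(\aaa)$ with the same underlying family, so $X \subseteq N$ and $|N| = |N'| \leq \kappa$, which proves the proposition. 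The only genuine obstacle is the cardinality bookkeeping of the second paragraph: passing to the path category introduces paths of every finite length, and one must verify that this countable union does not push $|\ppp(\uuu)|$ past $\kappa$ — which it does not, precisely because $\aleph_0 \leq \kappa$. Everything else (geometricity of $\ppp(\aaa)$, compatibility of the equivalence with subobjects and with cardinalities) is routine.
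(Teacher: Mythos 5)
Your proof is correct and follows essentially the same route as the paper's: pass to the path category $\ppp(\uuu)$ with its associated presheaf $\ppp(\aaa)$, apply Lemma~\ref{lemgroth}, and transport the subobject back along the equivalence \eqref{repmod}. The paper merely asserts the cardinality bound $\sup\{\aleph_0,|\ppp(\uuu)|,|\ppp(\aaa)|\}\leq\kappa$ and the geometricity of $\ppp(\aaa)$ without proof, so your explicit verification of these points is a welcome (and accurate) filling-in of details.
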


\begin{proof}
Let $\ppp(\uuu)$ be the path category of $\uuu$, and let $\ppp(\aaa)$ be the associated geometric prestack on $\ppp(\aaa)$. Note that $\sup\{ \aleph_0, |\ppp(\uuu)|, |\ppp(\aaa)|\} \leq \kappa$. 
Consider the unique extension of $M$ to $M = (M_U)_U \in \Qch(\ppp(\aaa))$ under the equivalence \eqref{repmod}. According to Lemma \ref{lemgroth}, there is a subobject $N \subseteq M$ in $\Qch(\ppp(\aaa))$ with $X_U \subseteq N_U$ for all $U$ and with $|N| \leq \kappa$. Under the equivalence \eqref{repmod}, $N$ corresponds to a subobject $N \subseteq M$ in $\mathsf{QRep}(\aaa)$ with the desired properties.
\end{proof}

\begin{lemma}\label{lemsub}
Let $\uuu$ be a small category and let $\aaa$ be a geometric prestack on $\uuu$. The full subcategory $\Mod(\aaa) \subseteq \mathsf{Rep}(\aaa)$ is closed under subobjects, i.e. if we have $N \subseteq M$ in $\mathsf{Rep}(\aaa)$ with $M \in \Mod(\aaa)$, then we also have $N \in \Mod(\aaa)$. Similarly, the full subcategory $\Qch(\aaa) \subseteq \mathsf{QRep}(\aaa)$ is closed under subobjects. 
\end{lemma}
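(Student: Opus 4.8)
The plan is to reduce the statement to a componentwise check: a subobject of a descent datum, formed inside a category of descent data, automatically satisfies the descent compatibilities, because the restriction functors $-\otimes_u\aaa(V)$ and the twist isomorphisms $\Mod(c)^{u,v}$, $\Mod(z)^U$ are natural in the module variable.

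First I would record that monomorphisms in $\mathsf{Rep}(\aaa)\cong\Mod(\ppp(\aaa))$ and in $\mathsf{QRep}(\aaa)\cong\Qch(\ppp(\aaa))$ are computed pointwise, i.e.\ $\iota\colon N\hookrightarrow M$ is mono if and only if every component $\iota_U\colon N_U\hookrightarrow M_U$ is mono in $\Mod(\aaa(U))$. For $\mathsf{Rep}(\aaa)=\PDes(\Mod_{\ppp(\aaa)})$ this is immediate from Proposition \ref{lemqchabelian}(1), since the evaluation functors $\pi_U$ preserve and jointly reflect kernels. For $\mathsf{QRep}(\aaa)=\Des(\Mod_{\ppp(\aaa)})$ one uses that $\ppp(\aaa)$ is again geometric (its restriction functors are composites of exact functors), so the restriction functors of $\Mod_{\ppp(\aaa)}$ are exact and Proposition \ref{lemqchabelian}(3) applies, giving kernels, hence monomorphisms, pointwise in $\Des(\Mod_{\ppp(\aaa)})$.

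Next, given a subobject $\iota\colon N\hookrightarrow M$ in $\mathsf{Rep}(\aaa)$ with $M\in\Mod(\aaa)$, I would verify the pre-descent compatibility for $N$. That $\iota$ is a morphism of representations says $\iota_V\circ\varphi^N_u=\varphi^M_u\circ(\iota_U\otimes_u\aaa(V))$ for all $u\colon V\to U$, and by the previous step every $\iota_U$ is mono. For a composable pair $v\colon W\to V$, $u\colon V\to U$, both sides of the desired identity $\varphi^N_v\circ(\varphi^N_u\otimes_v\aaa(W))=\varphi^N_{uv}\circ\Mod(c)^{u,v}_{N_U}$ are maps $N_U\otimes_u\aaa(V)\otimes_v\aaa(W)\to N_W$, and since $\iota_W$ is mono it is enough to check equality after postcomposing with $\iota_W$. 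Doing so, one moves the inclusions through the structure maps using the morphism identity for $\iota$ (successively at $v$, at $u$, and at $uv$) together with functoriality of $-\otimes_v\aaa(W)$, applies naturality of $\Mod(c)^{u,v}$ to the morphism $\iota_U$, and invokes the pre-descent identity that already holds for $M$; this rewrites $\iota_W\circ(\text{left side})$ into $\iota_W\circ(\text{right side})$. The analogous normalization condition involving $z^U$ is handled identically, using naturality of $\Mod(z)^U$ in place of $\Mod(c)^{u,v}$. Hence $N\in\Mod(\aaa)$, which proves the first assertion.

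For the second assertion, a subobject $\iota\colon N\hookrightarrow M$ in $\mathsf{QRep}(\aaa)$ with $M\in\Qch(\aaa)$ is, by the first step, a pointwise monomorphism, hence also a subobject of $M$ inside $\mathsf{Rep}(\aaa)$; since $\Qch(\aaa)\subseteq\Mod(\aaa)$, the first assertion gives $N\in\Mod(\aaa)$. As $N$ lies in $\mathsf{QRep}(\aaa)$, all its structure maps $\varphi^N_u$ are already isomorphisms, so $N\in\Des(\Mod_\aaa)=\Qch(\aaa)$. I do not anticipate a serious obstacle: the content is essentially the remark that subobjects of descent data inherit descent data. The two points needing care are the reduction to pointwise monomorphisms in $\mathsf{QRep}(\aaa)$ — the only place where geometricity of $\aaa$ is genuinely used — and keeping the bookkeeping of the naturality squares for the twist isomorphisms $\Mod(c)^{u,v}$ and $\Mod(z)^U$ straight while pushing the monomorphisms $\iota_U$ through the restriction functors.
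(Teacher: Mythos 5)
Your proof is correct and follows essentially the same route as the paper's: both reduce the descent compatibilities for $N$ to those for $M$ via pointwise monomorphisms, naturality of the twists $\Mod(c)^{u,v}$ and $\Mod(z)^U$, and functoriality of the restriction functors, with geometricity entering exactly where you say it does. The only cosmetic difference is that the paper phrases the computation as ``the diagram for $N$ is a subdiagram of the diagram for $M$'' (using exactness to embed $N_U\otimes_u\aaa(V)$ into $M_U\otimes_u\aaa(V)$), whereas you postcompose with the monomorphism $\iota_W$ and cancel; these are the same argument.
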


\begin{proof}
Since $M=(M_U,\varphi^M_u) \in \Mod(\aaa)$, we have a commutative diagram
 \begin{equation}\label{diagram}
  \xymatrix{
  M_U\otimes_u\aaa(V)\otimes_v\aaa(W) \ar[r]^-{\varphi^M_u\otimes 1}\ar[d]_-{\Mod(c)^{u,v}_{M_U}} & M_V\otimes_v\aaa(W) \ar[d]^-{\varphi^M_v} \\
  M_U\otimes_{uv}\aaa(W) \ar[r]_-{\varphi^M_{uv}} & M_W
  }
  \end{equation}
  in $\Mod(\aaa(W))$ for any $u\colon V\lra U$, $v\colon W\lra V$. 
 Since $N = (N_U, \varphi_u^N)$ is a subrepresentation of $M$, and since $- \otimes_u \aaa(V)$ is exact by assumption, we have a commutative diagram
  $$\xymatrix{ {M_U \otimes_u \aaa(V)} \ar[r]^-{\varphi^M_u} & {M_V} \\  {N_U \otimes_u \aaa(V)} \ar[u] \ar[r]_-{\varphi^N_u} & {N_V} \ar[u] }$$
  in which the vertical arrows are monomorphisms, i.e. we have $\varphi^N_u = \varphi^M_u|_N$. Similarly, we have $\varphi^N_{uv} = \varphi^M_{uv}|_N$ and $\varphi^N_u \otimes 1 = (\varphi^M_u \otimes 1)|_N$. 
  Further, since the twist $\Mod(c)^{u,v}$ is a natural transformation, we also have $\Mod(c)^{u,v}_{N_U} = \Mod(c)^{u,v}_{M_U}|_N$. 
  It follows that diagram \eqref{diagram} for $N$ instead of $M$ is obtained as a subdiagram of \eqref{diagram}, whence it commutes.  
 We conclude that $N\in \Mod(\aaa)$. 
\end{proof}

\begin{proof}[Proof of Proposition \ref{propgroth}]
Consider $\uuu$ as a quiver, $\aaa$ as a geometric $\Cat(k)$-repre\-sen\-ta\-tion of $\uuu$, and $M$ as a quasi-coherent representation of $\aaa$. By Proposition \ref{grothcor}, there is an $N \subseteq M$ in $\mathsf{QRep}(\aaa)$ with $X \subseteq N$ and $|N|\leq\kappa$. By Lemma \ref{lemsub}, we have $N \in \Qch(\aaa)$.
\end{proof}

\subsection{Flatness}\label{parflat}
In the context of deformation theory, the notion of flatness is fundamental. An appropriate notion of flatness for abelian categories was introduced in \cite[\S 3]{lowenvandenberghab}.

Let $\ccc$ be an abelian category. Let $\mmod(k)$ be the category of finitely presented $k$-modules. Recall first that an object $C \in \ccc$ is called \emph{flat} if the natural finite colimit preserving functor $- \otimes_k C\colon \mmod(k) \lra \ccc$, $k \longmapsto C$ is exact, and \emph{coflat} if the natural finite limit preserving  functor $\Hom_k(-, C)\colon \mmod(k) \lra \ccc$ is exact.
Flatness for abelian categories is a selfdual notion which naturally extends the usual notion of flatness for $k$-algebras, as the following proposition shows. More generally, it can easily be characterized for an arbitrary abelian category with enough projectives (or, dually, with enough injectives).

\begin{proposition} \cite[\S 3]{lowenvandenberghab}.
\begin{enumerate}
\item Let $\AAA$ be a $k$-linear category. The abelian category $\Mod(\AAA)$ is flat if and only if the modules $\AAA(A,A')$ are flat for all $A, A' \in \AAA$.
\item Let $\ccc$ be an abelian category with enough injectives. Then $\ccc$ is flat if and only if every injective object in $\ccc$ is coflat.
\end{enumerate}

\end{proposition}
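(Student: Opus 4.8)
\noindent Both items are proved in \cite[\S 3]{lowenvandenberghab}; I outline the strategy. Throughout I would use two facts from \emph{loc.\ cit.}: flatness of a $k$-linear abelian category $\ccc$ is equivalent to $\ccc$ having enough flat objects (every object is a quotient of a flat one) and, by the self-duality noted above, equivalently to $\ccc$ having enough coflat objects (every object embeds into a coflat one); and flat (resp.\ coflat) objects are stable under coproducts and under direct summands.

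\smallskip
\noindent For (1) the plan is to route everything through the representable modules $\AAA(-,A)$, $A \in \AAA$, which form a generating family of projective objects of $\Mod(\AAA)$. For $M \in \mmod(k)$ the object $\AAA(-,A)\otimes_k M$ is computed pointwise, because the evaluation functors $\Mod(\AAA) \lra \Mod(k)$ are exact and jointly conservative; concretely $\bigl(\AAA(-,A)\otimes_k M\bigr)(A') \cong \AAA(A',A)\otimes_k M$. So $\AAA(-,A)$ is a flat object of $\Mod(\AAA)$ if and only if $\AAA(A',A)$ is a flat $k$-module for every $A'$. For ``$\Leftarrow$'' I would then argue: if every $\AAA(A,A')$ is flat, each $\AAA(-,A)$ is flat, every module is a quotient of some coproduct $\bigoplus_i \AAA(-,A_i)$, and that coproduct is flat; hence $\Mod(\AAA)$ has enough flat objects and is flat. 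For ``$\Rightarrow$'' I would pick, for the projective generator $\AAA(-,A)$, an epimorphism $F \twoheadrightarrow \AAA(-,A)$ with $F$ flat; it splits, so $\AAA(-,A)$ is a summand of $F$ and therefore flat, and reading this off pointwise gives that each $\AAA(A,A')$ is a flat $k$-module.

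\smallskip
\noindent For (2) the plan is to use the coflat reformulation of flatness. For ``$\Rightarrow$'': if $\ccc$ is flat, every object, in particular every injective object $I$, admits a monomorphism $I \hookrightarrow D$ with $D$ coflat; since $I$ is injective this splits, exhibiting $I$ as a direct summand of the coflat object $D$, whence $I$ is coflat. For ``$\Leftarrow$'': if $\ccc$ has enough injectives and every injective is coflat, then every object embeds into an injective and hence into a coflat object, so $\ccc$ has enough coflat objects and is therefore flat.

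\smallskip
\noindent The only real content sits in the imported results of \cite[\S 3]{lowenvandenberghab} --- the ``enough flats'' characterization of flatness, its self-duality, and the stability of flat and coflat objects under coproducts and summands. Granting these, the two statements reduce to the short arguments above, and I do not expect any genuine obstacle beyond carefully matching conventions.
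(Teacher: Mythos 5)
The paper does not actually prove this proposition: it is imported verbatim from \cite[\S 3]{lowenvandenberghab}, and the surrounding text only recalls the definitions of flat and coflat \emph{objects}, deferring the definition of flatness of the \emph{category} to \cite[Def.\ 3.2]{lowenvandenberghab}. Your argument therefore has to be measured against that definition, and this is where the gap lies. Everything is routed through the unconditional claim that ``$\ccc$ is flat $\iff$ $\ccc$ has enough flat objects $\iff$ $\ccc$ has enough coflat objects''. That is not the definition, it is not recalled anywhere in this paper, and in the one place where you need it in full strength --- part (2) --- it is, in the presence of enough injectives, exactly the statement to be proved: granting closure of coflat objects under direct summands, ``enough injectives with every injective coflat'' is trivially equivalent to ``enough coflat objects'', so your proof of (2) amounts to restating (2) and then invoking it. Nothing in the write-up connects ``enough (co)flat objects'' to whatever homological condition Definition 3.2 actually imposes (vanishing of the relevant derived functors of $-\otimes_k C$ and $\Hom_k(-,C)$), and the fact that the characterization in (2) carries the hypothesis ``enough injectives'' is a strong hint that the unconditional equivalence you assume is not available for a general abelian category.

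Part (1) is in much better shape. The pointwise computation $\bigl(\AAA(-,A)\otimes_k M\bigr)(A')\cong \AAA(A',A)\otimes_k M$ is correct (finite colimits in $\Mod(\AAA)$ are computed pointwise), so a representable is a flat object if and only if all of its values are flat $k$-modules; the reduction of arbitrary modules to representables via coproducts and retracts, and the splitting trick for the converse, are the right moves and essentially what one finds in \cite{lowenvandenberghab}. But this half still rests on the same unproved equivalence. To close it you should instead use the enough-projectives characterization the paper alludes to (``$\ccc$ with enough projectives is flat iff every projective object is flat''), derive \emph{that} from Definition 3.2 of \cite{lowenvandenberghab}, and then run your representable argument; for (2) you must likewise argue directly from the definition (for instance via the standard adjunction identifying $\Ext^i_k(X,E)$ with a dual of a $\Tor$ when $E$ is injective), rather than from an ``enough coflats'' slogan that already contains the conclusion.
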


In order that the descent category inherits flatness, we restrict our  setting.

\begin{definition}\label{defaf}\label{defafps}
Let $\ccc$ be a prestack on a small category $\uuu$ with pullbacks and binary products. For $U, V, W \in \uuu$, $v\colon V \lra U$, $w\colon W \lra U$ in $\uuu$, consider the pullback
\begin{equation}\label{eqpull}
\xymatrix{ {V} \ar[r]^v & U \\ {V \cap W} \ar[u]^{w_1} \ar[r]_-{v_1} & W \ar[u]_w }
\end{equation}
with $u = vw_1 = wv_1$.
We call $\ccc$ a \emph{prestack of affine localizations} if the following conditions are fulfilled for all objects and morphisms involved:
\begin{enumerate}
\item The category $\ccc(U)$ is a Grothendieck abelian category.
\item The functor $v^{\ast}\colon \ccc(U) \lra \ccc(V)$ is exact.
\item The functor $v^{\ast}$ has a right adjoint $v_{\ast}\colon \ccc(V) \lra \ccc(U)$ such that:
\begin{enumerate}
\item $v_{\ast}\colon \ccc(V) \lra \ccc(U)$ is fully faithful;
\item $v_{\ast}\colon \ccc(V) \lra \ccc(U)$ is exact.
\end{enumerate}
\item We have natural isomorphisms $$(v_{\ast}v^{\ast})(w_{\ast}w^{\ast}) \cong u_{\ast} u^{\ast} \cong (w_{\ast}w^{\ast})(v_{\ast}v^{\ast}).$$
\end{enumerate}
\end{definition}

\begin{remark}
In Definition \ref{defafps}, conditions (2) and (3a) together mean that $v_{\ast}$ is a localization, and (4) implies that any two such localizations $v_{\ast}$ and $w_{\ast}$ are compatible.
\end{remark}

Let $\uuu$ be a finite poset with binary meets and let $\uuu^{\star} = \uuu \cup \{ \star\}$ be the poset $\uuu$ with top $\star$ adjoined. Note that $\uuu^{\star}$ is a lattice. Conversely, let $\uuu^{\star}$ be a finite lattice with top $\star$ and $\uuu = \uuu^{\star} \setminus \{\star\}$. Then $\uuu$ is a finite poset with binary meets. Posets are considered as categories in the usual way.
For $U, V \in \uuu$, the meet of $U$ and $V$ is denoted by $U \cap V$.

\begin{proposition}\label{propdesflat1}
Let $\uuu^{\star}$ be a finite lattice with top $\star$ and $\uuu = \uuu^{\star} \setminus \{\star\}$. Let $\ccc$ be a prestack of affine localizations on $\uuu$.
\begin{enumerate}
\item There is a prestack of affine localizations $\ccc^{\star}$ on $\uuu^{\star}$ with $\ccc^{\star}|_{\uuu} = \ccc$ and $\ccc^{\star}(\star) = \Des(\ccc)$. For the unique map $u_U\colon U \lra \star$, we have $$u^{\ast}_U = \pi_U\colon \Des(\ccc) \lra \ccc(U).$$
Consider the pullback in $\uuu^{\star}$:
$$\xymatrix{ U \ar[r]^{u_U} & {\star} \\ {U \cap V} \ar[u]^{u^U_{U \cap V}} \ar[r]_{u^V_{U \cap V}} & V \ar[u]_{u_V} }$$
The right adjoint $u_{U, \ast}$ of $u_U^{\ast}$ satisfies
\begin{equation} \label{eqkey}
u_V^{\ast} u_{U, \ast} = u_{U \cap V, \ast}^V u_{U \cap V}^{U, \ast}.
\end{equation}
\item If the abelian categories $\ccc(U)$ are flat over $k$ for $U \in \uuu$, then so is $\ccc^{\star}(\star) = \Des(\ccc)$.
\end{enumerate}
\end{proposition}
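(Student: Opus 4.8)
The plan is to first construct the extended prestack $\ccc^{\star}$ on $\uuu^{\star}$ and verify it is again a prestack of affine localizations, and then to deduce flatness of $\ccc^{\star}(\star) = \Des(\ccc)$ from the criterion that, in an abelian category with enough injectives, flatness over $k$ is equivalent to coflatness of every injective object (\cite[\S 3]{lowenvandenberghab}).

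\emph{Part (1).} Set $\ccc^{\star}|_{\uuu} = \ccc$ and $\ccc^{\star}(\star) = \Des(\ccc)$, and for the unique map $u_U\colon U \lra \star$ put $u_U^{\ast} = \pi_U\colon \Des(\ccc) \lra \ccc(U)$; the twist isomorphisms of $\ccc^{\star}$ involving $\star$ are read off from the structure isomorphisms $\varphi_u$ of descent data and the twists of $\ccc$. Each restriction functor $v^{\ast}$ of $\ccc$ is exact, hence preserves finite limits and colimits, and is a left adjoint of $v_{\ast}$, hence preserves all colimits; so by Proposition \ref{lemqchabelian} finite limits and all colimits in $\Des(\ccc)$ are computed pointwise and preserved by each $\pi_V$. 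In particular $\Des(\ccc)$ is abelian with exact filtered colimits, each $\pi_V$ is exact, and $\Des(\ccc)$ is Grothendieck (this is Theorem \ref{thmgroth} when $\ccc = \Mod_{\aaa}$, and follows by the same kind of argument in general), which gives conditions (1) and (2) of Definition \ref{defafps} for $u_U$. For condition (3) I would construct the right adjoint $u_{U,\ast}\colon \ccc(U) \lra \Des(\ccc)$ of $\pi_U$ pointwise, using the pullback of $U$ and $V$ over $\star$, by
\[
(u_{U,\ast}X)_V = (u^V_{U \cap V})_{\ast}\,(u^U_{U \cap V})^{\ast}X .
\]
The first key point is that this defines a genuine descent datum: for $w\colon W \lra V$ one needs $w^{\ast}(u_{U,\ast}X)_V \cong (u_{U,\ast}X)_W$, which reduces to the base-change isomorphism $w^{\ast}(u^V_{U\cap V})_{\ast} \cong (u^W_{U\cap W})_{\ast}(u^{U\cap V}_{U\cap W})^{\ast}$ for the pullback square whose corner is $U\cap V\cap W = U\cap W$. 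This base change follows from condition (4) of Definition \ref{defafps} for $\ccc$: that condition together with pseudofunctoriality gives $v^{\ast}w_{\ast}w^{\ast} \cong w_{1,\ast}w_1^{\ast}v^{\ast}$, and composing with $w_{\ast}$ on the right (using $w^{\ast}w_{\ast}\cong \mathrm{id}$ since $w_{\ast}$ is fully faithful, and $w_1^{\ast}v^{\ast}\cong v_1^{\ast}w^{\ast}$) yields $v^{\ast}w_{\ast}\cong w_{1,\ast}v_1^{\ast}$, hence by symmetry $w^{\ast}v_{\ast}\cong v_{1,\ast}w_1^{\ast}$. The adjunction $\pi_U \dashv u_{U,\ast}$ is then obtained componentwise from the adjunctions $(u^V_{U\cap V})^{\ast} \dashv (u^V_{U\cap V})_{\ast}$ and the descent isomorphisms $(u^V_{U\cap V})^{\ast}B_V \cong B_{U\cap V}$; fully faithfulness of $u_{U,\ast}$ amounts to $\pi_U u_{U,\ast}\cong \mathrm{id}$, which holds because the $V = U$ component of the defining formula is $(1_U)_{\ast}(1_U)^{\ast}X \cong X$; and exactness of $u_{U,\ast}$ follows since it is built from the exact functors $(u^V_{U\cap V})^{\ast}$, $(u^V_{U\cap V})_{\ast}$ while finite limits and colimits in $\Des(\ccc)$ are pointwise. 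Finally, formula \eqref{eqkey} is precisely the defining formula for $u_{U,\ast}$ evaluated at $V$, and the remaining instances of condition (4) for $\ccc^{\star}$ (pullback squares with both legs landing at $\star$) follow from \eqref{eqkey} together with the fact that restriction and pushforward functors compose along $\uuu^{\star}$ up to the canonical isomorphisms.

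\emph{Part (2).} Since $\Des(\ccc)$ is Grothendieck it has enough injectives, so it suffices to show every injective is coflat. First, $\Des(\ccc)$ has enough injectives of a special shape: for $B = (B_U)_U$, the units $B \lra u_{U,\ast}B_U$ assemble to a morphism $B \lra \prod_U u_{U,\ast}B_U$ whose $V$-component, for each $V$, is split monic (its composite with the projection to the $U=V$ factor is the isomorphism $B_V \cong \pi_V u_{V,\ast}B_V$ from fully faithfulness of $u_{V,\ast}$); hence it is a monomorphism, and embedding each $B_U$ into an injective $J_U$ of $\ccc(U)$ and composing gives a monomorphism $B \hookrightarrow E := \prod_U u_{U,\ast}J_U$ with $E$ injective --- here $u_{U,\ast}$ preserves injectives because its left adjoint $\pi_U$ is exact, and a finite product (as $\uuu$ is finite) of injectives is injective. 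Any injective of $\Des(\ccc)$ thus embeds into, hence is a direct summand of, such an $E$; since direct summands of coflat objects are coflat, it remains to show $E$ is coflat. Now $E_V = \prod_U (u^V_{U\cap V})_{\ast}(u^U_{U\cap V})^{\ast}J_U$; each $J_U$ is coflat in $\ccc(U)$ by the criterion applied to the flat category $\ccc(U)$; the functors $(u^U_{U\cap V})^{\ast}$ and $(u^V_{U\cap V})_{\ast}$ are exact, hence commute with $\Hom_k(M,-)$ for $M$ finitely presented and therefore preserve coflatness; and a finite product of coflat objects is coflat. Finally $\Hom_k(M,E) = (\Hom_k(M,E_V))_V$ because $\Hom_k(M,-)$ is a finite limit functor and finite limits in $\Des(\ccc)$ are pointwise, so $\Hom_k(-,E)\colon \mmod(k) \lra \Des(\ccc)$ is exact, i.e.\ $E$ is coflat. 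This completes the proof.

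The step I expect to be the main obstacle is the verification in Part (1) that $u_{U,\ast}$ is well defined and right adjoint to $\pi_U$ --- in particular extracting the base-change isomorphism $w^{\ast}v_{\ast}\cong v_{1,\ast}w_1^{\ast}$ from the compatibility condition (4), and bookkeeping all the pseudofunctoriality and twist isomorphisms of $\ccc$ and $\ccc^{\star}$ --- together with pinning down the Grothendieck property of $\Des(\ccc)$ at the required level of generality.
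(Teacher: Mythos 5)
Your proposal is correct and follows essentially the same route as the paper: define $u_{U,\ast}$ pointwise by the base-change formula \eqref{eqkey}, extract the isomorphism $w^{\ast}v_{\ast}\cong v_{1,\ast}w_1^{\ast}$ from condition (4) of Definition \ref{defafps} to see that this yields a descent datum, and read off exactness and fully faithfulness from the components. For part (2) the paper simply cites \cite[Proposition 3.12]{lowenprestacks}; your argument (embed every object into a finite product of pushforwards of injectives, note these are injective and coflat, and apply the injectives-are-coflat criterion for flatness) is exactly the argument being cited, so you have merely spelled out what the paper delegates.
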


\begin{proof}
Consider a pullback \eqref{eqpull}. By condition (4), we obtain canonical natural isomorphisms $w^{\ast} v_{\ast} \cong v_{1, \ast} w_1^{\ast}$ compatible with the pseu\-do\-functor isomorphisms of $\ccc$. For $C \in \ccc(U)$, this allows the definition of a descent datum $u_{U, \ast}(C)$ based upon \eqref{eqkey}. The resulting functor $u_{U, \ast}$ is exact by conditions (2) and (3b) and fully faithful by formula \eqref{eqkey} applied to $V = U$ since $U \cap U = U$, $u^U_U = 1_U$ and $u^{U, \ast}_U \cong 1_{\ccc(U)} \cong u^U_{U, \ast}$.

Since $\uuu$ is finite, the fact that $\Des(\ccc)$ inherits flatness is proven like \cite[Proposition 3.12]{lowenprestacks}.
\end{proof}

\begin{definition}\label{defafpre}
Let $\uuu$ be a poset with binary meets and let $\aaa\colon \uuu^{\op} \lra \Cat(k)$ be a prestack on $\uuu$.

We call $\aaa$ a \emph{(right) semi-separated prestack} (resp. a \emph{left semi-separated prestack}) if the associated prestack $\Mod^r_\aaa$ (resp. $\Mod^l_\aaa$) is a prestack of affine localizations in the sense of Definition \ref{defaf}.

If $\aaa\colon \uuu^{\op} \lra \Alg(k)$ is a (twisted) presheaf of $k$-algebras, we call $\aaa$ a \emph{(right) semi-separated (twisted) presheaf} (resp. a \emph{left semi-separated (twisted) presheaf}) if the corresponding prestack is right (resp. left) semi-separated.

If the poset $\uuu$ is finite, we call $\aaa$ a \emph{quasi-compact} prestack (or (twisted) presheaf).
\end{definition}

Note that a semi-separated prestack is geometric in the sense of Definition \ref{defgeom}.
Clearly, in Definition \ref{defaf}, conditions (1) and (3b) are automatically fulfilled for $\ccc = \Mod^r_\aaa$, and the remaining conditions can be made explicit.
In particular, for twisted presheaves of $k$-algebras we obtain:

\begin{proposition}\label{propafftw}
Let $\uuu$ be a poset with binary meets and let $\aaa\colon \uuu^{\op} \lra \Alg(k)$ be a twisted presheaf of $k$-algebras on $\uuu$. Then $\aaa$ is a right semi-separated twisted presheaf if and only if the following conditions are fulfilled for all $U$, $V$, $W \in \uuu$ with $V \leq U$, $W \leq U$:
\begin{enumerate}
\item[(i)] The restriction map $\aaa(U) \lra \aaa(V)$ is a right flat epimorphism of rings.
\item[(ii)] We have isomorphisms of $\aaa(U)$-bimodules $$\aaa(V) \otimes_{\aaa(U)} \aaa(W) \cong \aaa(V \cap W) \cong \aaa(W) \otimes_{\aaa(U)} \aaa(V).$$
\end{enumerate}
\end{proposition}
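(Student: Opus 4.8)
The plan is to unwind Definition~\ref{defaf} for the prestack $\ccc = \Mod^r_\aaa$ and to translate each of its conditions into a statement about the restriction homomorphisms $\aaa(U) \lra \aaa(V)$, using the localization dictionary of \S\ref{parloc}. First I would observe that, for $v\colon V \lra U$ in $\uuu$, the restriction functor $v^* = -\otimes_u\aaa(V)$ has as right adjoint the restriction of scalars $v_* = (-)_{\aaa(U)}\colon \Mod^r(\aaa(V)) \lra \Mod^r(\aaa(U))$, which is automatically exact; together with the fact that module categories are Grothendieck, this makes conditions (1) and (3b) of Definition~\ref{defaf} automatic for $\ccc = \Mod^r_\aaa$, as already noted before the statement.

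Next I would treat conditions (2) and (3a). By definition, condition~(2) — exactness of $v^* = -\otimes_u\aaa(V)$ — holds if and only if $\aaa(U) \lra \aaa(V)$ is a right flat morphism of rings. By Lemma~\ref{lemloc}, condition~(3a) — full faithfulness of $v_* = (-)_{\aaa(U)}$ — holds if and only if $\aaa(U) \lra \aaa(V)$ is an epimorphism of rings. Hence (2) and (3a) together are equivalent to condition~(i); equivalently, again by Lemma~\ref{lemloc}, they say exactly that $v_*$ is a localization of $\Mod^r(\aaa(U))$.

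For condition~(4), I would first use that in the poset $\uuu$ the pullback~\eqref{eqpull} of $v\colon V \lra U$ and $w\colon W \lra U$ is the meet $V \cap W$, so that the composite localization functors appearing in~(4) are $v_*v^* = q_v$, $w_*w^* = q_w$ and $u_*u^* = q_{V\cap W}$, all built from restriction maps which are flat epimorphisms by condition~(i). Thus condition~(4) packages two assertions: that $q_v$ and $q_w$ are compatible localizations, and that their intersection is the localization attached to $\aaa(V\cap W)$. Applying Lemma~\ref{lemcomp} with $A = \aaa(U)$, $B_1 = \aaa(V)$, $B_2 = \aaa(W)$ and $B = \aaa(V\cap W)$ (the three structure maps being flat epimorphisms by the previous paragraph), this is equivalent to the existence of $\aaa(U)$-bimodule isomorphisms $\aaa(V)\otimes_{\aaa(U)}\aaa(W) \cong \aaa(V\cap W) \cong \aaa(W)\otimes_{\aaa(U)}\aaa(V)$, i.e.\ condition~(ii). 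Conversely, given (i) and (ii), I would run the same chain of equivalences backwards to recover all four conditions of Definition~\ref{defaf}.

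The step that demands the most care is condition~(4): one must correctly identify the pullback in $\uuu$ with the meet $V\cap W$, recognise that Definition~\ref{defaf}(4) simultaneously encodes compatibility of the two localizations \emph{and} the identification of their intersection, and check that the natural isomorphisms of endofunctors of $\Mod^r(\aaa(U))$ it demands are exactly what Lemma~\ref{lemcomp} yields from the bimodule isomorphisms in~(ii). Here the point is that, after composing with restriction of scalars, each of $q_v q_w$, $q_u$ and $q_w q_v$ is given by tensoring a right $\aaa(U)$-module over $\aaa(U)$ with the respective bimodule, so the twist data of $\aaa$ does not enter; everything else reduces to bookkeeping with Lemmas~\ref{lemloc} and~\ref{lemcomp}, and the left-module version is entirely symmetric.
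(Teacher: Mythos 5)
Your proof is correct and follows exactly the route the paper takes: conditions (1) and (3b) of Definition \ref{defaf} are automatic for $\Mod^r_\aaa$, Lemma \ref{lemloc} identifies (2)+(3a) with condition (i), and Lemma \ref{lemcomp} identifies (4) with condition (ii). The paper compresses this into a single sentence, so your write-up is simply a more detailed version of the same argument, with a welcome explicit check that Definition \ref{defaf}(4) encodes both compatibility of the localizations and the identification of their intersection with $\Mod(\aaa(V\cap W))$.
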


\begin{proof}
According to Lemmas \ref{lemloc} and \ref{lemcomp}, condition (i) (resp. (ii)) is equivalent to condition (2) + (3a) (resp. (4)) from Definition \ref{defaf}.
\end{proof}

\begin{remark}
Proposition \ref{propafftw} can easily be adapted to the case of a general prestack $\aaa\colon \uuu^{\op} \lra \Cat(k)$ based upon \cite{krauseepi}, where a linear functor $f: \AAA \lra \BBB$ inducing a fully faithful forgetful functor $\Mod(\BBB) \lra \Mod(\AAA)$ is characterized as a so called ``epimorphism up to direct factors''.
\end{remark}

We have the following corollary of Proposition \ref{propdesflat1} and Theorem \ref{thmgroth}:

\begin{proposition}\label{propdesflat}
Let $\uuu^{\star}$ be a finite lattice with top $\star$ and $\uuu = \uuu^{\star} \setminus \{\star\}$. Let $\aaa$ be quasi-compact semi-separated prestack on $\uuu$.
\begin{enumerate}
\item We obtain a prestack of affine localizations $\Mod_\aaa^{\star}$ on $\uuu^{\star}$ with $\Mod_\aaa^{\star}|_{\uuu} = \Mod_\aaa$. The category $\Mod_\aaa^{\star}(\star) = \Qch(\aaa)$ is a Grothendieck abelian category.
\item If the modules $\aaa(U)(A,A')$ are flat over $k$ for $A,A' \in \aaa(U)$ and  $U \in \uuu$, then the abelian category $\Mod_\aaa^{\star}(\star) = \Qch(\aaa)$ is flat over $k$.
\end{enumerate}
\end{proposition}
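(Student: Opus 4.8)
The plan is to deduce the statement by feeding the hypothesis into Proposition~\ref{propdesflat1} and Theorem~\ref{thmgroth}; no new argument is needed, only the observation that ``quasi-compact semi-separated prestack'' is exactly the packaging under which those two results apply. First I would record that, for $\ccc = \Mod_\aaa = \Mod^r_\aaa$, conditions (1) and (3b) of Definition~\ref{defaf} hold automatically (the module categories $\Mod(\aaa(U))$ are Grothendieck, and the functors $-\otimes_u\aaa(V)$ have exact right adjoints). Hence, by Definition~\ref{defafpre}, the assumption that $\aaa$ is a (right) semi-separated prestack is precisely the statement that $\Mod_\aaa$ is a prestack of affine localizations on $\uuu$; since $\aaa$ is moreover quasi-compact, $\uuu$ is a finite poset with binary meets, so $\uuu^{\star} = \uuu \cup \{\star\}$ is a finite lattice with top $\star$ and $\uuu = \uuu^{\star}\setminus\{\star\}$. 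Proposition~\ref{propdesflat1}(1), applied with $\ccc = \Mod_\aaa$, then produces a prestack of affine localizations which I would take to be $\Mod_\aaa^{\star}$, satisfying $\Mod_\aaa^{\star}|_{\uuu} = \Mod_\aaa$ and $\Mod_\aaa^{\star}(\star) = \Des(\Mod_\aaa) = \Qch(\aaa)$; this gives the first assertion of (1).

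For the Grothendieck property of $\Qch(\aaa)$ I would invoke Theorem~\ref{thmgroth}: a semi-separated prestack is geometric in the sense of Definition~\ref{defgeom} (every restriction functor $-\otimes_u\aaa(V)$ is exact), so $\Qch(\aaa) = \Des(\Mod_\aaa)$ is a Grothendieck abelian category. (This is in any case already implicit in Proposition~\ref{propdesflat1}(1), since by Definition~\ref{defaf}(1) all values of a prestack of affine localizations, including the one at $\star$, are Grothendieck.) This completes (1). For (2), the extra hypothesis is that each $\aaa(U)(A,A')$ is a flat $k$-module; by the characterization of flatness of module categories recalled in \S\ref{parflat} (from \cite[\S 3]{lowenvandenberghab}), this is equivalent to $\ccc(U) = \Mod(\aaa(U))$ being flat over $k$ for every $U \in \uuu$. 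Proposition~\ref{propdesflat1}(2) then yields that $\ccc^{\star}(\star) = \Des(\ccc) = \Qch(\aaa)$ is flat over $k$, which is (2).

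There is no genuine obstacle in this proof; the only point deserving a moment's care is the translation step, namely checking that the abstract conditions (2), (3a) and (4) of Definition~\ref{defaf} for $\ccc = \Mod_\aaa$ are exactly what is encoded in the notion of a quasi-compact semi-separated prestack (together with the automatic conditions (1) and (3b)), and that adjoining the top element $\star$ preserves the finiteness of $\uuu$ needed to cite Proposition~\ref{propdesflat1}. Both are immediate from the definitions, so the whole argument reduces to assembling the already-established results.
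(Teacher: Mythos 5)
Your argument is correct and is exactly how the paper obtains this result: the paper states Proposition \ref{propdesflat} as an immediate corollary of Proposition \ref{propdesflat1} (applied to $\ccc = \Mod_\aaa$, which is a prestack of affine localizations by Definition \ref{defafpre}) together with Theorem \ref{thmgroth} for the Grothendieck property, and the flatness criterion for $\Mod(\aaa(U))$ in terms of the hom-modules. No further comment is needed.
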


\begin{example}
Let $X$ be a scheme over $k$ with a finite semi-separating cover $\uuu$ and let $\uuu^{\star}$ be the same cover with $\star = X$ adjoined. The restricted structure sheaf $\ooo = \ooo_X|_{\uuu}$ constitutes a quasi-compact semi-separated presheaf of $k$-algebras. We obtain the prestack of affine localizations $\Mod_\ooo^{\star}$ on $\uuu^{\star}$ with $\Mod_\ooo^{\star} |_{\uuu} = \Mod_\ooo$ and $\Mod_\ooo^{\star}(X) = \Qch(\ooo) \cong \Qch(X)$, which is Grothendieck. The prestack $\Mod_\ooo^{\star}$ is equivalent to the prestack $V \longmapsto \Qch(V)$ on $X$, restricted to $\uuu^{\star}$. If the algebras $\ooo(U)$ are flat over $k$ for $U \in \uuu$, the abelian category $\Qch(X)$ is flat over $k$.
\end{example}

\section{Comparison of abelian deformations}\label{parparcompab}

The infinitesimal deformation theory of abelian categories was developed in \cite{lowenvandenberghab} as a natural extension of the classical Gerstenhaber deformation theory of algebras \cite{gerstenhaber}. The relation between the two theories is best understood by means of the following basic result from \cite{lowenvandenberghab}: if $A$ is an algebra with module category $\Mod(A)$, then there is an equivalence between algebra deformations of $A$ and abelian deformations of $\Mod(A)$. This equivalence is given by:
$$
\Def_{\mathrm{alg}}(A) \lra \Def_{\mathrm{ab}}(\Mod(A)),\quad B \longmapsto \Mod(B).
$$
Our first aim in this section is to prove a counterpart of this theorem for twisted presheaves. Precisely, in \S \ref{partwistab}, we prove that for $\aaa$ a quasi-compact semi-separated twisted presheaf, there is an equivalence between twisted deformations of $\aaa$ and abelian deformations of $\Qch(\aaa)$ (Theorem \ref{defeq}). This equivalence is given by:
\begin{equation} \label{eqtwabintro1}
\Def_{\mathrm{tw}}(\aaa) \lra \Def_{\mathrm{ab}}(\Qch(\aaa)),\quad \bbb \longmapsto \Qch(\bbb).
\end{equation}
In \S \ref{pardefab}, we briefly recall the necessary background from \cite{lowenvandenberghab} in order to make our presentation of the proof of Theorem \ref{defeq} self contained.

If we compose \eqref{eqtwabintro1} with the isomorphism \eqref{eqclass}, we thus obtain a canonical isomorphism
$$
\Psi^{\aaa}\colon HH^2(\aaa) \lra \Def_{\mathrm{tw}}(\aaa) \lra \Def_{\mathrm{ab}}(\Qch(\aaa)).
$$
For our purpose, we instead look at the isomorphism
$$
\Psi^{\aaa^{\op}}(-)^{\op}\colon HH^2(\aaa) \lra HH^2(\aaa^{\op}) \lra \Def_{\mathrm{tw}}(\aaa^{\op}) \lra \Def_{\mathrm{ab}}(\Qch^l(\aaa)).
$$
For $X$ a scheme with a finite semi-separating cover $\uuu$, taking $\aaa = \ooo_X|_{\uuu}$, the morphism $\Psi^{\aaa^{\op}}(-)^{\op}$ translates into a canonical isomorphism
\begin{equation}\label{intbas20}
\Psi^X\colon HH^2(\ooo_X|_{\uuu}) \lra \Def_{\mathrm{ab}}(\Qch(X)).
\end{equation}
On the other hand, if $X$ is furthermore smooth, in \cite{toda}, Toda associates to an element
$$u \in \check{H}^2(X, \ooo_X) \oplus \check{H}^1(X, \ttt_X) \oplus \check{H}^0(X, \wedge^2 \ttt_X)$$
a certain ``first order deformation'' of the abelian category $\Qch(X)$. This is an abelian $k[\epsilon]$-linear category $\Qch^l(X, u)$, which is not a priori a deformation in the sense of  \S \ref{pardefab}.

We know from \S \ref{secHodge-HKR} that
\[
HH^2(X) \cong HH^2(\ooo_X|_{\uuu}) \cong \check{H}^2(X, \ooo_X) \oplus \check{H}^1(X, \ttt_X) \oplus \check{H}^0(X, \wedge^2 \ttt_X),
\]
so both \eqref{intbas20} and Toda's construction give concrete ways to interpret a Hochschild $2$-class of a smooth scheme in terms of a certain $k[\epsilon]$-linear abelian category.
In \S \ref{partoda}, we use the isomorphisms
\begin{equation}\label{transfos0}
H^2\CC_\mathrm{GS}(\ooo_X|_{\uuu}) \cong \oplus_{p + q = 2} \check{H}^p(\uuu, \Lambda^q\ttt_X)
\end{equation}
from \S \ref{parHodgeHKR} and \S \ref{parcechsimp} in order to show that both constructions are equivalent, which in fact holds true for an arbitrary smooth scheme $X$ with semi-separating cover $\uuu$. Precisely, in Theorem \ref{thmequiv},
we show that if $\phi \in H^2\CC_\mathrm{GS}(\ooo_X)$ corresponds to $u \in \oplus_{p + q = 2} \check{H}^p(X, \Lambda^q\ttt_X)$ under \eqref{transfos0}, then for the image $\Psi^X(\phi) = \Qch^l(\bar{\ooo})$ of $\phi$ under \eqref{intbas20}, there is an equivalence of abelian categories
$$\Qch^l(\bar{\ooo}) \cong \Qch^l(X, u).$$
The proof of the theorem makes use of the intermediate category $\QPr^l(\bar{\ooo})$, which was shown to be equivalent to $\Qch^l(\bar{\ooo})$ in Theorem \ref{thmqmodqpr}.

In case $X$ is quasi-compact semi-separated, it further follows that $\Qch^l(X,u)$ is an abelian deformation of $\Qch(X)$ in the sense of \cite{lowenvandenberghab} and the general theory, including the obstruction theory for lifting objects \cite{lowen2}, applies. This is used for instance by Macr\`i and Stellari in the context of an infinitesimal derived Torelli theorem for K3 surfaces \cite[\S 3]{macristellari}.

\subsection{Deformations of abelian categories}\label{pardefab}

The infinitesimal deformation theory of abelian categories as developed in \cite{lowenvandenberghab} constitutes a natural extension of Gerstenhaber's deformation theory of algebras \cite{gerstenhaber}. In the current paper, we are concerned with first order deformations, i.e.\ we deform in the direction of the dual numbers $k[\epsilon]$. For abelian categories, we use the notion of flatness from \cite[Definition 3.2]{lowenvandenberghab} (see also \S \ref{pardescent}).

 For a $k[\epsilon]$-linear category $\ddd$, we define the full subcategory of $k$-linear objects
$$\ddd_k = \{ D \in \ddd \,\, |\,\, \epsilon 1_D = 0\} \subseteq \ddd.$$
Taking $k$-linear objects defines a functor from $k[\epsilon]$-linear categories to $k$-linear categories, which is right adjoint to the forgetful functor. The functor is best behaved when applied to abelian categories.

Let $\ccc$ be a flat $k$-linear abelian category. A \emph{first order deformation} of $\ccc$ is a flat $k[\epsilon]$-linear abelian category $\ddd$ with an equivalence $\ccc \cong \ddd_k$. An equivalence of deformations $\ddd_1$ and $\ddd_2$ is an equivalence of categories $\ddd_1 \cong \ddd_2$ such that the induced $(\ddd_1)_k \cong (\ddd_2)_k$ is compatible with the equivalences with $\ccc$. Let $\Def_{\mathrm{ab}}(\ccc)$ denote the set of first order abelian deformations of $\ccc$ up to equivalence.

In order to understand the structure of abelian deformations, a fundamental tool is the lifting of special objects to a deformation.

For a deformation $\ddd$ of $\ccc$, the natural inclusion functor $\ccc \lra \ddd$ has both a left adjoint $k \otimes_{k[\epsilon]} -\colon \ddd \lra \ccc$ and a right adjoint $\Hom_{k[\epsilon]}(k, -)\colon \ddd \lra \ccc$. There is an obstruction theory controlling the lifting of flat objects along $k \otimes_{k[\epsilon]} -$ (and, dually, for lifting coflat objects along $\Hom_{k[\epsilon]}(k, -)$) which was developed in \cite{lowen2}. In both cases, the obstruction against lifting is in $\Ext^2_{\ccc}(C,C)$, and the freedom is given by $\Ext^1_{\ccc}(C,C)$.
Important properties of objects are sometimes preserved under deformation, as the following proposition shows:

\begin{proposition}\label{liftproj}
Let $\ccc \lra \ddd$ be an abelian deformation. Let $\aaa \subseteq \ccc$ be a collection of finitely generated projective generators of $\ccc$. Let $\bar{\aaa}$ be any collection of flat lifts of objects in $\aaa$ along $k \otimes_{k[\epsilon]} -\colon \ddd \lra \ccc$, such that $\bar{\aaa}$ contains at least one flat lift of every object in $\aaa$. Then $\bar{\aaa}$ is a collection of finitely generated projective generators of $\ddd$.
\end{proposition}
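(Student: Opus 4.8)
The plan is to deduce the statement from the lifting theory of \cite{lowen2} together with two categorical facts: that finitely generated projectivity can be detected by a $\mathrm{Hom}$-functor condition, and that the adjunction $k \otimes_{k[\epsilon]} - \dashv (\iota\colon \ccc \lra \ddd)$ interacts well with $\Ext$-groups. First I would recall the general mechanism: for a flat lift $\bar{P}$ of $P \in \aaa$ along $k \otimes_{k[\epsilon]} -$, the obstruction theory of \cite{lowen2} gives that $\Ext^i_{\ddd}(\bar{P}, \bar{Q})$ is built from $\Ext^i_{\ccc}(P,Q)$ by a short exact sequence
\[
0 \lra \Ext^i_{\ccc}(P,Q)\epsilon \lra \Ext^i_{\ddd}(\bar{P},\bar{Q}) \lra \Ext^i_{\ccc}(P,Q) \lra 0
\]
(valid since $\bar{P}$ is flat, so $k \otimes_{k[\epsilon]} \bar{P} \cong P$ and the relevant $\Tor$-terms vanish). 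Since $P$ is projective in $\ccc$, the outer terms vanish for $i \geq 1$, hence $\Ext^i_{\ddd}(\bar{P}, M) = 0$ for all $i \geq 1$ and all $M$ in the image of $\iota$; a standard dévissage along the filtration $0 \subseteq \epsilon M \subseteq M$ of an arbitrary $M \in \ddd$ (with $\epsilon M$ and $M/\epsilon M$ both $k$-linear, i.e.\ in $\ccc$) then upgrades this to $\Ext^i_{\ddd}(\bar{P}, M) = 0$ for all $M \in \ddd$ and $i \geq 1$. In particular each $\bar{P} \in \bar{\aaa}$ is projective in $\ddd$.

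Next I would establish that each $\bar{P}$ is finitely generated. Here the cleanest route is to use that $\ddd$ is a Grothendieck category (equivalently, to work with the characterization of finite generation via preservation of directed colimits of monomorphisms, or via the functor $\mathrm{Hom}_{\ddd}(\bar{P}, -)$ commuting with filtered colimits of subobjects). Since $\bar{P}$ is flat over $k[\epsilon]$, it fits in a short exact sequence $0 \to P\epsilon \to \bar{P} \to P \to 0$ in $\ddd$ with $P$ finitely generated in $\ccc \subseteq \ddd$ and $P\epsilon \cong P$ likewise finitely generated; finite generation is closed under extensions, so $\bar{P}$ is finitely generated in $\ddd$. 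Finally, for the generating property: given $0 \neq f\colon M \to N$ in $\ddd$, I want some $\bar{P} \in \bar{\aaa}$ and a morphism $\bar{P} \to M$ not annihilated by $f$. If $f$ does not vanish on $\epsilon M$, then restricting to the $k$-linear subobject $\epsilon M \subseteq M$ (which lies in $\ccc$) and using that $\aaa$ generates $\ccc$ produces $P \to \epsilon M \hookrightarrow M$ with the required nonvanishing, and $P = k \otimes_{k[\epsilon]} \bar{P}$ for some $\bar{P} \in \bar{\aaa}$; one then lifts $P \to M$ through $k \otimes_{k[\epsilon]} -$ using projectivity of $\bar{P}$ in $\ddd$ (already proved) applied to the epimorphism part, or more directly uses the adjunction isomorphism $\mathrm{Hom}_{\ddd}(\bar P, M) \twoheadrightarrow \mathrm{Hom}_{\ccc}(P, k\otimes_{k[\epsilon]}M)$. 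If $f$ does vanish on $\epsilon M$, it factors through $M/\epsilon M \in \ccc$ and the same argument applies after noting $f \neq 0$ forces nonvanishing on some $P \to M/\epsilon M$, which lifts to $\bar P \to M$.

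The step I expect to be the main obstacle is making the lifting of morphisms $P \to M$ (for $M$ an arbitrary object of $\ddd$, not just a $k$-linear one) fully rigorous and compatible: the adjunction $k \otimes_{k[\epsilon]} - \dashv \iota$ only directly controls maps into $k$-linear objects, so to test the generating property against a general target one must either pass through the filtration $\epsilon M \subseteq M$ as above or invoke that a projective object $\bar P$ in $\ddd$ with $k \otimes_{k[\epsilon]} \bar P$ generating $\ccc$ automatically has enough maps to every object — the latter is essentially the content one is proving, so care is needed to avoid circularity. The safe order is: (i) projectivity of $\bar P$ in $\ddd$ via the $\Ext$-vanishing dévissage; (ii) finite generation via closure under extensions; (iii) only then the generating property, using (i) to lift morphisms. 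I would also double-check that flatness of $\ddd$ over $k[\epsilon]$ (part of the definition of a deformation) is what guarantees $\bar P$ flat $\Rightarrow$ $0 \to P\epsilon \to \bar P \to P \to 0$ is exact with the stated identifications, appealing to \cite[\S 3]{lowenvandenberghab} for the behaviour of flat objects and of $k \otimes_{k[\epsilon]} -$.
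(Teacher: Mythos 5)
The paper itself gives no proof of this proposition --- it is recalled verbatim from \cite{lowenvandenberghab} --- so there is nothing in-paper to compare against; your argument is the standard one from that reference (projectivity via $\Ext$-vanishing and d\'evissage along $0 \to \epsilon M \to M \to M/\epsilon M \to 0$, finite generation via closure under extensions applied to $0 \to \epsilon\bar P \to \bar P \to P \to 0$, generation via the epimorphisms $\bar P \twoheadrightarrow P$ and $M \twoheadrightarrow M/\epsilon M$), and it is correct in substance. One step should be tightened: the short exact sequence you display compares $\Ext^i_{\ddd}(\bar P,\bar Q)$ with $\Ext^i_{\ccc}(P,Q)$ for a \emph{flat lift} $\bar Q$, but the base case of your d\'evissage is the vanishing of $\Ext^i_{\ddd}(\bar P, C)$ for a $k$-linear object $C$ of $\ddd$, which is not an instance of that sequence (an object of $\ccc$ is never a flat lift unless zero). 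What you need is the base-change isomorphism $\Ext^i_{\ddd}(\bar P, C) \cong \Ext^i_{\ccc}(P, C)$ for $\bar P$ flat and $C \in \ccc$, which follows from $\RHom_{\ddd}(\bar P, C) \cong \RHom_{\ccc}(k \otimes^{L}_{k[\epsilon]} \bar P, C) \cong \RHom_{\ccc}(P,C)$ and is available in \cite{lowenvandenberghab}; with that substitution the projectivity step is complete. Two smaller remarks: in the first case of your generation argument no projectivity or lifting is needed, since precomposing $P \to \epsilon M \hookrightarrow M$ with the epimorphism $\bar P \twoheadrightarrow P$ already gives a map not killed by $f$; and the Grothendieck property of $\ddd$ that you invoke for the notion of finite generation is supplied independently by Proposition \ref{liftgroth}, so there is no circularity.
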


For a flat $k$-linear category $\AAA$, let $\Def_{\mathrm{lin}}(\AAA)$ be the set of first order linear deformations of $\AAA$ up to equivalence. Here, linear deformations are obtained by simply treating a linear category as an algebra with several objects in the sense of \cite{mitchell} (in particular, linear deformations keep the object set fixed, and reduce to algebra deformations when applied to an algebra).
The following fundamental result is proven based upon Proposition \ref{liftproj}:
\begin{proposition}\label{propbas}\cite{lowenvandenberghab}
Let $\AAA$ be a $k$-linear category. There is an isomorphism
$$\Def_{\mathrm{lin}}(\AAA) \lra \Def_{\mathrm{ab}}(\Mod(\AAA)),\quad \BBB \longmapsto \Mod(\BBB).$$
\end{proposition}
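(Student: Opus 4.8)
The plan is to produce the inverse map explicitly and to check that both composites are the identity, with the whole argument resting on Proposition~\ref{liftproj}. First I would verify that $\BBB \longmapsto \Mod(\BBB)$ actually lands in $\Def_{\mathrm{ab}}(\Mod(\AAA))$. If $\BBB$ is a $k[\epsilon]$-linear category whose hom-modules $\BBB(B,B')$ are flat over $k[\epsilon]$, then $\Mod(\BBB)$ is flat over $k[\epsilon]$ by the characterization of flatness for module categories recalled in \S\ref{parflat} (applied with base ring $k[\epsilon]$ instead of $k$); and if moreover $\BBB/\epsilon\BBB \cong \AAA$, then a $\BBB$-module $M$ satisfies $\epsilon 1_M = 0$ precisely when its $\BBB$-action factors through $\AAA$, so $\Mod(\BBB)_k \cong \Mod(\AAA)$. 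Thus $\Mod(\BBB)$ is a first order abelian deformation of $\Mod(\AAA)$, and an equivalence of linear deformations $\BBB \cong \BBB'$ induces, via $\Mod(-)$, an equivalence of abelian deformations, so the assignment is well defined on equivalence classes.

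For the inverse, fix an abelian deformation $\ddd$ of $\ccc := \Mod(\AAA)$; then $\ddd$ is again a Grothendieck category (cf.\ \cite{lowenvandenberghab}). The representable modules $P_A = \AAA(-,A)$, $A \in \Ob(\AAA)$, form a set of finitely generated projective generators of $\ccc$, and being projective they satisfy $\Ext^2_{\ccc}(P_A,P_A) = 0$, so by the obstruction theory of \cite{lowen2} each $P_A$ admits a flat lift $\bar P_A \in \ddd$ along $k \otimes_{k[\epsilon]} - \colon \ddd \lra \ccc$. By Proposition~\ref{liftproj} the family $\{\bar P_A\}_{A \in \Ob(\AAA)}$ is then a set of finitely generated projective generators of $\ddd$; let $\BBB$ be the full subcategory of $\ddd$ on these objects, with object set $\Ob(\AAA)$, viewed as a $k[\epsilon]$-linear category. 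The Gabriel--Mitchell (``several objects'') reconstruction theorem gives an equivalence $\ddd \cong \Mod(\BBB)$. Applying the exact functor $\Hom_{\ddd}(\bar P_A,-)$ to the short exact sequence $0 \to P_{A'} \to \bar P_{A'} \to P_{A'} \to 0$ that expresses flatness of the lift $\bar P_{A'}$, and using $\Hom_{\ddd}(\bar P_A, P_{A'}) = \Hom_{\ccc}(P_A, P_{A'}) = \AAA(A,A')$, one obtains a short exact sequence $0 \to \AAA(A,A') \to \BBB(A,A') \to \AAA(A,A') \to 0$ of $k[\epsilon]$-modules; hence each $\BBB(A,A')$ is flat over $k[\epsilon]$ and $\BBB/\epsilon\BBB \cong \AAA$. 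So $\BBB$ is a linear deformation of $\AAA$ with $\Mod(\BBB) \cong \ddd$ as deformations of $\Mod(\AAA)$.

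It remains to see that the class of $\BBB$ in $\Def_{\mathrm{lin}}(\AAA)$ does not depend on the chosen lifts and that $\BBB_{\Mod(\BBB)} \cong \BBB$; this is the step I expect to require the most care. Since $P_A$ is projective, $\Ext^1_{\ccc}(P_A,P_A) = 0$, so any two flat lifts $\bar P_A, \bar P'_A$ of $P_A$ in $\ddd$ are isomorphic, and because automorphisms of $P_A$ lift to automorphisms of $\bar P_A$ (again by projectivity) one may adjust any such isomorphism so that it reduces to $1_{P_A}$ modulo $\epsilon$. Assembling these isomorphisms over all $A$ yields an equivalence between the two reconstructed categories that is the identity on objects and reduces to $1_{\AAA}$, i.e.\ an equivalence of linear deformations; hence the inverse map is well defined. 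Running the reconstruction on $\ddd = \Mod(\BBB)$ with the tautological lifts $\bar P_A = \BBB(-,A)$ recovers $\BBB$, so $\BBB_{\Mod(\BBB)} \cong \BBB$, and combined with $\Mod(\BBB_{\ddd}) \cong \ddd$ from the previous paragraph this shows the two maps are mutually inverse. The genuine difficulty is purely bookkeeping of this kind — choosing lifts and isomorphisms between them coherently enough to produce equivalences \emph{of deformations} and not merely equivalences of abstract categories — and it is precisely the $\Ext^1$/$\Ext^2$ obstruction calculus of \cite{lowen2}, packaged into Proposition~\ref{liftproj}, that makes it work.
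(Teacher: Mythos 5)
Your proof is correct and follows exactly the route the paper indicates for this cited result: lifting the representable finitely generated projective generators (unobstructed since $\Ext^1$ and $\Ext^2$ of projectives vanish), invoking Proposition~\ref{liftproj} to see the lifts generate the deformed category, and reconstructing via Gabriel--Mitchell, with the flatness of $\BBB$ and the identification $\BBB/\epsilon\BBB \cong \AAA$ extracted from the defining short exact sequence of a flat lift. This is the argument of \cite{lowenvandenberghab}, so there is nothing to add.
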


In particular, the deformation of a module category is again a module category. More generally, the following was shown in \cite{lowenvandenberghab}:

\begin{proposition} \cite{lowenvandenberghab} \label{liftgroth}
Let $\ccc \lra \ddd$ be an abelian deformation. If $\ccc$ is a Grothendieck category, then so is $\ddd$.
\end{proposition}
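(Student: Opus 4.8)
The statement to be proven is Proposition \ref{liftgroth}: if $\ccc \lra \ddd$ is an abelian deformation and $\ccc$ is a Grothendieck category, then so is $\ddd$.

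The plan is to verify the three defining properties of a Grothendieck category for $\ddd$: cocompleteness, exactness of filtered colimits, and the existence of a generator. First I would handle cocompleteness and exactness of filtered colimits. Since $\ddd$ is a flat $k[\epsilon]$-linear abelian category with $\ddd_k \cong \ccc$, and $\ccc$ is cocomplete, I would argue that $\ddd$ is cocomplete as well. The key input here is flatness: the forgetful functor $\ddd_k \hookrightarrow \ddd$ and the adjunctions $k \otimes_{k[\epsilon]} - \dashv (\ccc \hookrightarrow \ddd) \dashv \Hom_{k[\epsilon]}(k,-)$ relate colimits in $\ddd$ to colimits in $\ccc$. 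Concretely, for an object $D \in \ddd$ there is a canonical short exact sequence $0 \to k\otimes_{k[\epsilon]} D \to D \to k \otimes_{k[\epsilon]} D \to 0$ (using flatness), which exhibits $D$ as a two-step extension of objects of $\ccc$; colimits of diagrams in $\ddd$ can then be constructed by taking the corresponding colimits of the associated diagrams of short exact sequences in $\ccc$ and reassembling, using that $\ccc$ has all colimits and that filtered colimits in $\ccc$ are exact. The same bookkeeping shows filtered colimits in $\ddd$ are exact: a filtered colimit of monomorphisms in $\ddd$ becomes, after applying $k \otimes_{k[\epsilon]} -$, a filtered colimit of (not necessarily mono) maps in $\ccc$, but the snake lemma applied to the defining short exact sequences together with exactness of filtered colimits in $\ccc$ forces the colimit map in $\ddd$ to be mono.

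Next I would produce a generator for $\ddd$. Here the main tool is Proposition \ref{liftproj} (and its dual / the obstruction theory of \cite{lowen2}). Take a generator $G$ of $\ccc$; one would like to lift it to $\ddd$. Since a single object lift may be obstructed, the cleaner route is: a Grothendieck category has a generator, hence (by Gabriel--Popescu, or directly) there is a small full subcategory $\aaa \subseteq \ccc$ such that $\ccc$ is a localization of $\Mod(\aaa)$; equivalently $\ccc$ has a \emph{set} of generators $\{G_i\}$, and $G = \bigoplus_i G_i$ is a single generator. For the deformation, I would instead use that $\ddd$, being a flat abelian deformation, is itself (by the Gabriel--Popescu theorem applied once we know $\ddd$ is cocomplete with exact filtered colimits) a Grothendieck category \emph{as soon as} it has a set of generators; and a set of generators is obtained by lifting. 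The obstruction to lifting an object $C$ lies in $\Ext^2_\ccc(C,C)$ and the family of lifts is a torsor under $\Ext^1_\ccc(C,C)$; crucially, even if a given $G$ is obstructed, one can lift enough objects: for \emph{every} object $C \in \ccc$ the natural map $\ddd(D, D') \to \ccc(k\otimes D, k\otimes D')$ is surjective with kernel computed by $\ccc$-Ext groups, so the objects of $\ddd$ of the form (extensions of $C$ by $C$) always exist for every $C$, in particular the objects $\Hom_{k[\epsilon]}(k[\epsilon], -)$-images or $k[\epsilon]\otimes_k C$ of objects of $\ccc$ give a generating set: concretely, if $\{G_i\}$ generates $\ccc$ then $\{ k[\epsilon] \otimes_k G_i \}$ (i.e.\ the objects obtained by base change, which always lift without obstruction because a free-type object is unobstructed) generate $\ddd$. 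This is exactly the mechanism of Proposition \ref{liftproj} in the module case; in the general abelian case the same argument via \cite{lowen2} gives unobstructed lifts of a generating set.

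The main obstacle I anticipate is the generator step rather than the colimit bookkeeping: one must be careful that lifting a generating \emph{set} (as opposed to one projective generator) genuinely suffices and that the chosen lifts really generate $\ddd$ — i.e.\ that an object $D \in \ddd$ with no maps from any lift $\bar G_i$ forces $D = 0$. This follows by applying $k \otimes_{k[\epsilon]} -$ and $\Hom_{k[\epsilon]}(k,-)$: if $\ddd(\bar G_i, D) = 0$ for all $i$, then both $\ccc(G_i, \Hom_{k[\epsilon]}(k,D)) = 0$ and, via the short exact sequence for $D$ and a dimension shift, $\ccc(G_i, k\otimes_{k[\epsilon]} D) = 0$, so $k\otimes_{k[\epsilon]} D = 0 = \Hom_{k[\epsilon]}(k,D)$, whence $D = 0$ by flatness (the two-step filtration of $D$ has zero graded pieces). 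Once cocompleteness, exact filtered colimits, and a generating set are in hand, $\ddd$ is Grothendieck by definition, completing the proof. I would also remark that in the situation of actual interest — $\ddd = \Qch(\bar\aaa)$ for $\bar\aaa$ a qcss twisted presheaf — this is subsumed by Theorem \ref{thmgroth} applied to $\bar\aaa$, but the general statement is the one recorded here.
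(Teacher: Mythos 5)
The paper offers no proof of this proposition; it is imported verbatim from \cite{lowenvandenberghab}, so your argument has to stand on its own, and its crucial step --- producing a generating set for $\ddd$ --- does not work as written. For an abstract abelian deformation there is no base-change functor from $\ccc$ to $\ddd$: the objects ``$k[\epsilon]\otimes_k G_i$'' you invoke are simply not defined (the only comparison functors available are the inclusion $\ccc\cong\ddd_k\hookrightarrow\ddd$ and its adjoints $k\otimes_{k[\epsilon]}-$ and $\Hom_{k[\epsilon]}(k,-)$, both going from $\ddd$ to $\ccc$). The intuition that ``free-type objects lift unobstructed'' is borrowed from the module case, where the representables are finitely generated \emph{projective} and Proposition \ref{liftproj} applies; a generator of a general Grothendieck category is not projective (e.g.\ $\Qch(\PP^1)$ has no nonzero projectives at all), so the obstruction class in $\Ext^2_{\ccc}(G_i,G_i)$ has no reason to vanish and a flat lift of $G_i$ need not exist. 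Taking the lift to be $G_i$ itself, which always exists since $\ccc\subseteq\ddd$, fails: $k$ is not a generator of $\Mod(k[\epsilon])$. Finally, even granting flat lifts $\bar G_i$, your closing verification only shows that $\Hom_{\ddd}(\bar G_i,D)=0$ for all $i$ forces $D=0$, i.e.\ that the $\bar G_i$ jointly reflect zero objects; this is strictly weaker than joint faithfulness of the $\Hom_{\ddd}(\bar G_i,-)$ (again, $k\in\Mod(k[\epsilon])$ reflects zero objects without being a generator), so generation is not established.

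The first half of the argument is also thinner than it looks. The sequence $0\to k\otimes_{k[\epsilon]}D\to D\to k\otimes_{k[\epsilon]}D\to 0$ is only correct for flat objects $D$; in general one has $0\to\epsilon D\to D\to D/\epsilon D\to 0$ with $\epsilon D$ a quotient, not a copy, of $D/\epsilon D=k\otimes_{k[\epsilon]}D$. More seriously, ``reassembling'' colimits from this filtration is not mere bookkeeping: a colimit of extensions is not determined by the colimits of the outer terms, so the existence of arbitrary coproducts in $\ddd$ and the exactness of filtered colimits each require a genuine construction (choice of extension class, verification of the universal property), not an application of the snake lemma. The safe route here is to invoke the machinery of \cite{lowenvandenberghab} directly (lifting of injectives and of localizations, together with the Gabriel--Popescu description of Grothendieck categories) rather than to lift a generator by hand. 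Your concluding remark is correct, though: in the situation actually used in this paper, $\ddd=\Qch(\bar{\aaa})$ for a qcss prestack, the Grothendieck property is available independently from Theorem \ref{thmgroth}.
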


Deformations of Grothendieck categories behave well with respect to localization in the following sense. Let $\iota\colon \ddd' \lra \ddd$ be a localization of $k[\epsilon]$-linear categories with left adjoint $a$. Then the induced functor $\iota_k\colon \ddd'_k \lra \ddd_k$ is a localization with left adjoint $a_k$. Suppose $\ddd$ is a first order deformation of a Grothendieck category $\ccc \cong \ddd_k$.  Let $\Lambda(\ddd)$ (resp. $\Lambda(\ddd_k)$) denote the set of localizations of $\ddd$ (resp. $\ddd_k$) up to equivalence. There is a natural map
\begin{equation}\label{eqres}
\Lambda(\ddd) \lra \Lambda(\ddd_k),\quad \iota \longmapsto \iota_k.
\end{equation}

The proof of the following theorem is based upon lifting localizing Serre subcategories to a deformation.
\begin{theorem}\cite{lowenvandenberghab} \label{thmloc11}
The map \eqref{eqres} is bijective.
\end{theorem}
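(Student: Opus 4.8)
The plan is to reinterpret the map \eqref{eqres} through the classical correspondence between localizations and localizing subcategories, and then to exhibit its inverse explicitly. First I would note that by Proposition \ref{liftgroth} the deformation $\ddd$ is again a Grothendieck category, as is $\ddd_k \cong \ccc$, so for $\ddd$ (resp.\ $\ddd_k$) the assignment sending a localization $\iota$ with exact left adjoint $a$ to the Serre subcategory $\Kern(a)$ sets up a bijection between $\Lambda(\ddd)$ (resp.\ $\Lambda(\ddd_k)$) and the set of \emph{localizing subcategories}, i.e.\ Serre subcategories closed under arbitrary coproducts; see \S\ref{parloc} and \cite{popescu}, using that every localization is equivalent to a unique strict one. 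Next I would observe that the left adjoint $a$ of a localization of $\ddd$ preserves $k$-linear objects, because $a$ is $k[\epsilon]$-linear and hence sends $\epsilon 1_D = 0$ to $\epsilon 1_{aD} = 0$; thus $a_k = a|_{\ddd_k}$ and $\Kern(a_k) = \Kern(a) \cap \ddd_k$. Under the two correspondences, the map \eqref{eqres} then becomes the restriction $\sss \mapsto \sss_k := \sss \cap \ddd_k$ from localizing subcategories of $\ddd$ to those of $\ddd_k$, and it suffices to prove this is bijective.

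The structural input I would use is that every object $D$ of $\ddd$ fits into a canonical short exact sequence $0 \to \epsilon D \to D \to D/\epsilon D \to 0$ whose outer terms lie in $\ddd_k$: here $\epsilon D$ is the image of $\epsilon 1_D\colon D \to D$, and $\epsilon^2 = 0$ forces $\epsilon 1_{\epsilon D} = 0 = \epsilon 1_{D/\epsilon D}$. For injectivity I would then argue that any Serre subcategory $\sss \subseteq \ddd$ contains $D$ if and only if it contains $\epsilon D$ and $D/\epsilon D$ (closure under subobjects, quotients, and extensions), hence if and only if $\epsilon D, D/\epsilon D \in \sss_k$; so $\sss$ is reconstructed from $\sss_k$.

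For surjectivity I would, given a localizing subcategory $\ttt \subseteq \ddd_k$, set $\sss = \{\, D \in \ddd \mid \epsilon D \in \ttt \text{ and } D/\epsilon D \in \ttt \,\}$, which automatically satisfies $\sss_k = \ttt$ (since $\epsilon D = 0$ for $D \in \ddd_k$) and is the unique candidate preimage. The remaining task is to check $\sss$ is a localizing subcategory: closure under quotients and coproducts is immediate from naturality of $D \mapsto \epsilon D$ and exactness of filtered colimits in a Grothendieck category; closure under subobjects uses in addition that $\ttt$ is extension-closed; and closure under extensions — the delicate point — I would obtain by feeding a short exact sequence $0 \to D' \to D \to D'' \to 0$ with $D', D'' \in \sss$ into the snake lemma for the endomorphism $\epsilon 1$, which presents $D/\epsilon D$ and $\epsilon D$ as successive extensions of subquotients of $\epsilon D'$, $\epsilon D''$, $D'/\epsilon D'$, $D''/\epsilon D''$, all of which lie in $\ttt$.

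The main obstacle I anticipate is precisely this snake-lemma bookkeeping for extension-closure, together with the routine-but-not-entirely-trivial verification that the Gabriel correspondence is compatible both with the passage $\iota \mapsto \iota_k$ and with the ``up to equivalence'' identifications defining $\Lambda(\ddd)$ and $\Lambda(\ddd_k)$; the rest is formal. An equivalent route, closer to the phrasing in the text, would be to prove directly that localizing Serre subcategories of $\ddd_k$ lift uniquely to $\ddd$, with $\ttt \mapsto \{\, D \mid \epsilon D, D/\epsilon D \in \ttt \,\}$ as the explicit lift.
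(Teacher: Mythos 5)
Your proof is correct and follows essentially the approach of the paper, which gives no argument beyond the citation and the remark that the proof ``is based upon lifting localizing Serre subcategories to a deformation'': your translation of localizations into localizing subcategories via the Gabriel correspondence, the reconstruction of $\sss$ from $\sss_k$ using $0 \to \epsilon D \to D \to D/\epsilon D \to 0$, and the explicit lift $\ttt \mapsto \{\, D \mid \epsilon D,\, D/\epsilon D \in \ttt \,\}$ is exactly that strategy, carried out in the first order case. The snake-lemma verification of extension-closure and the subobject case (which indeed needs extension-closure of $\ttt$) are handled correctly, so there is nothing to flag.
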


Further, compatibility of localizations lifts under deformation.

\begin{proposition}\cite[Prop. 3.8]{lowenprestacks}\label{propliftcomp}
Consider localizations $\iota_1\colon \ddd_1 \lra \ddd$ and $\iota_2\colon \ddd_2 \lra \ddd$ of flat $k[\epsilon]$-linear Grothendieck categories. If $\iota_{1,k}\colon \ddd_{1,k} \lra \ddd_k$ and $\iota_{2,k}\colon \ddd_{2,k} \lra \ddd_k$ are compatible, then so are $\iota_1$ and $\iota_2$. Further, for strict localizations, we have $(\ddd_1 \cap \ddd_2)_k \cong \ddd_{1,k} \cap \ddd_{2,k}$.
\end{proposition}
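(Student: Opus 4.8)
The plan is to phrase everything in terms of localizing (Serre) subcategories and to feed it into the lifting dictionary of Theorem~\ref{thmloc11}. Write $\sss_i\subseteq\ddd$ for the localizing subcategory $\ker a_i$ cut out by $\iota_i$, with reflector $q_i=\iota_i a_i$, and $\sss_{i,k}\subseteq\ddd_k$ for the one cut out by $\iota_{i,k}$. From the material preceding \eqref{eqres} one has: $\sss_{i,k}=\sss_i\cap\ddd_k$; the reflector $q_i$, being $k[\epsilon]$-linear, sends $\ddd_k$ into $\ddd_k$ and restricts there to $q_{i,k}$; and the reflective subcategory $\ddd_i$ satisfies $(\ddd_i)_k=\ddd_{i,k}$. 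Theorem~\ref{thmloc11} says that $\iota_i$ is the unique localization of $\ddd$ lying over $\iota_{i,k}$, i.e.\ $\sss_i$ is the unique localizing subcategory of $\ddd$ with $\sss_i\cap\ddd_k=\sss_{i,k}$; I will also use that the lifting construction $\sss\rightsquigarrow\tilde\sss$ from $\ddd_k$ to $\ddd$ is monotone in $\sss$.

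Next I would record the lattice-theoretic description of compatibility. For any Grothendieck category the localization at the join $\sss_1\vee\sss_2$ exists, and its reflective subcategory is exactly $\ddd_1\cap\ddd_2$ (an object is $(\sss_1\vee\sss_2)$-closed iff it is both $\sss_1$- and $\sss_2$-closed, by transfinite dévissage of $\sss_1\vee\sss_2$); write $q_{12}$ for the corresponding reflector. Then $\iota_1,\iota_2$ are compatible iff $q_1q_2\cong q_{12}$ and $q_2q_1\cong q_{12}$, and iff $q_1(\ddd_2)\subseteq\ddd_2$ and $q_2(\ddd_1)\subseteq\ddd_1$. (``Only if'' of the first: by \S\ref{parloc} a compatible pair makes $\ddd_1\cap\ddd_2$ a strict localization whose reflector is $q_1q_2\cong q_2q_1$, which is then $q_{12}$. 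For the second: if $E\in\ddd_2$ then $q_2E\cong E$, so $q_1E\cong q_1q_2E\cong q_{12}E\in\ddd_1\cap\ddd_2$; conversely the inclusions force $q_1q_2D$ and $q_2q_1D$ into $\ddd_1\cap\ddd_2$, and the unit $D\to q_1q_2D$, whose kernel and cokernel are built by extension from $\sss_2$- and $\sss_1$-objects and hence lie in $\sss_1\vee\sss_2$, identifies $q_1q_2D$ with $q_{12}D$.) The same equivalences hold inside $\ddd_k$, so the hypothesis that $\iota_{1,k},\iota_{2,k}$ are compatible is precisely the isomorphism $q_{1,k}q_{2,k}\cong q_{12,k}\cong q_{2,k}q_{1,k}$ of endofunctors of $\ddd_k$.

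What remains is to promote this to an isomorphism $q_1q_2\cong q_{12}\cong q_2q_1$ of endofunctors of $\ddd$. The bridge between the two levels is entirely in hand: $q_i$ restricts to $q_{i,k}$ on $\ddd_k$; $q_{12}$ restricts to $q_{12,k}$ (both are the reflectors onto $(\ddd_1\cap\ddd_2)_k=(\ddd_1)_k\cap(\ddd_2)_k=\ddd_{1,k}\cap\ddd_{2,k}$, and a localization is pinned down up to equivalence by its reflective subcategory, \S\ref{parloc}); and $(\sss_1\vee\sss_2)\cap\ddd_k=\sss_{1,k}\vee\sss_{2,k}$, which follows by applying the monotonicity of $\sss\rightsquigarrow\tilde\sss$ to the inclusions $\sss_{i,k}\subseteq\sss_{1,k}\vee\sss_{2,k}$ and $\sss_{i,k}\subseteq(\sss_1\vee\sss_2)\cap\ddd_k$ and comparing the two resulting lifts. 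Granting the promotion, $\iota_1,\iota_2$ are compatible by the previous paragraph. For the final assertion, once compatibility holds $\ddd_1\cap\ddd_2$ is a strict localization of $\ddd$ (\S\ref{parloc}) and, by hypothesis, $\ddd_{1,k}\cap\ddd_{2,k}$ is a strict localization of $\ddd_k$; and $(\ddd_1\cap\ddd_2)_k=\{D\in\ddd_1\cap\ddd_2:\epsilon 1_D=0\}=(\ddd_1)_k\cap(\ddd_2)_k=\ddd_{1,k}\cap\ddd_{2,k}$ as full subcategories of $\ddd_k$, which is the stated equivalence, automatically compatible with the localization structures for the reason just noted.

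I expect the genuine obstacle to be the promotion step: lifting a natural isomorphism of left exact endofunctors of $\ddd_k$ to one of $\ddd$. Since reflectors are in general only left exact and the reflective subcategories $\ddd_i$ are not closed under quotients, the naive argument — running the five lemma on the images under $q_1q_2$ and under $q_{12}$ of the natural short exact sequence $0\to\epsilon D\to D\to D/\epsilon D\to 0$ whose outer terms lie in $\ddd_k$ — does not close, because one would have to match by hand the $\Ext^1_\ddd$-class of that extension. Controlling these classes is exactly the technical core of \cite[Prop.~3.8]{lowenprestacks}, which combines the obstruction theory for lifting objects along a deformation with the lifting of localizing Serre subcategories (Theorem~\ref{thmloc11}); note that in all the applications in the present paper the section functors $\iota_i$ are exact (condition (3b) of Definition~\ref{defaf}), so the reflectors become exact and the five lemma argument closes directly.
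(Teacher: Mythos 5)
Your reduction is sound as far as it goes: the translation of compatibility into the condition $q_1q_2\cong q_{12}\cong q_2q_1$ for the reflector $q_{12}$ onto $\ddd_1\cap\ddd_2$, the identifications $q_i|_{\ddd_k}\cong q_{i,k}$ and $q_{12}|_{\ddd_k}\cong q_{12,k}$ (via $(\sss_1\vee\sss_2)\cap\ddd_k=\sss_{1,k}\vee\sss_{2,k}$), and the identity $(\ddd_1\cap\ddd_2)_k=\ddd_{1,k}\cap\ddd_{2,k}$ are all correct. But, as you acknowledge yourself, the proof is not finished: the step that carries the entire content of the proposition --- lifting the isomorphism $q_{1,k}q_{2,k}\cong q_{12,k}\cong q_{2,k}q_{1,k}$ from $\ddd_k$ to $\ddd$ --- is left open and deferred back to the very reference \cite[Prop.~3.8]{lowenprestacks} that the statement cites, which is circular as a standalone argument. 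The difficulty you flag is real: for $D\in\ddd$ the terms $\epsilon D$ and $D/\epsilon D$ of the canonical sequence need not lie in the reflective subcategories (these are not closed under quotients), and the reflectors are only left exact, so one cannot conclude by a five lemma without controlling the relevant $\Ext^1$-classes; this is precisely what the obstruction theory of \cite{lowen2} and the description of closed objects for a lifted localizing subcategory in \cite{lowenvandenberghab} are used for in loc.\ cit. Since the present paper imports the result without proof, there is no in-paper argument to compare against, but as a proof of the proposition as stated --- with no exactness hypothesis on the section functors $\iota_i$ --- yours is incomplete.

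Your closing observation is, however, correct and worth making precise: when the $\iota_i$ are exact (as they are in every application in this paper, where they arise as restriction of scalars along flat epimorphisms, cf.\ Definition \ref{defaf}(3b) and Lemma \ref{lemloc}), the canonical transformation $\theta\colon q_1q_2\lra q_{12}$ obtained from $q_{12}\cong q_{12}q_1q_2$ restricts on $\ddd_k$ to the corresponding map for $q_{1,k}q_{2,k}$, which is invertible by hypothesis; since $q_1q_2$ is then exact and $q_{12}$ is left exact, a diagram chase on $0\to\epsilon D\to D\to D/\epsilon D\to 0$ shows $\theta_D$ is invertible for every $D$, and symmetrically for $q_2q_1$. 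So your argument does establish the special case needed for Theorem \ref{defeq}, but not the proposition in the generality in which it is stated.
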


According to \cite{lowenvandenberghab}, the property of a functor being a localization can itself be lifted under deformation under some circumstances. We have the following particular case:

\begin{proposition}\cite[Thm. 7.3]{lowenvandenberghab} \label{proploclift}
Let $\iota\colon \ddd' \lra \ddd$ be a right adjoint functor between flat $k[\epsilon]$-linear categories such that $\iota$ maps injective objects to coflat objects. If $\iota_k\colon \ddd'_k \lra \ddd_k$ is a localization between Grothendieck categories, then so is $\iota$.

In particular, the result applies if $\iota$ is a functor between flat $k[\epsilon]$-linear categories with an exact left adjoint.
\end{proposition}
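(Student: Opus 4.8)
The first assertion is \cite[Thm.~7.3]{lowenvandenberghab}, and the plan is to invoke it; let me indicate the strategy of that proof. Writing $a_k$ for the exact left adjoint of $\iota_k$, the kernel $\sss_k = \Kern(a_k)$ is a localizing Serre subcategory of $\ddd_k$ with $\ddd'_k \cong \ddd_k/\sss_k$. By Theorem \ref{thmloc11}, $\sss_k$ lifts uniquely (up to equivalence) to a localizing Serre subcategory $\sss \subseteq \ddd$, yielding a localization $j\colon \ddd/\sss \lra \ddd$ whose restriction $j_k$ is equivalent to $\iota_k$. It then remains to identify $\iota$ with $j$: the hypothesis that $\iota$ carries injective objects to coflat objects is precisely what forces a right adjoint functor between these first order deformations to be reconstructible from its restriction to the $k$-linear parts, so that $\iota \cong j$ and $\iota$ is a localization.

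For the final assertion, suppose $\iota$ admits an exact left adjoint $a\colon \ddd \lra \ddd'$. Then $\iota$ preserves injective objects: for injective $E \in \ddd'$, the functor $\Hom_{\ddd}(-, \iota E) \cong \Hom_{\ddd'}(a(-), E)$ is exact, being the composite of the exact functor $a$ with the exact functor $\Hom_{\ddd'}(-, E)$. Since $\ddd$ is a flat $k[\epsilon]$-linear category with enough injectives, the proposition recalled in \S\ref{parflat} (following \cite[\S 3]{lowenvandenberghab}) shows that every injective object of $\ddd$ is coflat. Hence $\iota$ sends injective objects of $\ddd'$ to objects of $\ddd$ that are injective, and in particular coflat, and the first assertion applies.

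The substance of the statement is the first assertion, whose proof rests on the deformation theory of localizing Serre subcategories developed in \cite{lowenvandenberghab} --- in particular Theorem \ref{thmloc11} and the descent of compatible localizations (cf.\ Proposition \ref{propliftcomp}) --- so I would cite it rather than reproduce it. The main obstacle, were one to reprove it from scratch, is exactly this identification of $\iota$ with the lifted quotient functor $j$: showing that the coflatness condition on images of injectives rigidifies $\iota$ enough to be recognised as a localization is the technical heart of \cite[\S 7]{lowenvandenberghab}, whereas the reduction in the second paragraph above --- the clean passage from the ``in particular'' clause to the first assertion --- is the only genuinely new ingredient.
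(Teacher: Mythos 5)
Your proposal is correct and matches the paper's treatment: the main assertion is simply cited from \cite[Thm.\ 7.3]{lowenvandenberghab}, and your reduction of the ``in particular'' clause — an exact left adjoint makes $\iota$ preserve injectives, while flatness of $\ddd$ (which is Grothendieck, hence has enough injectives, by Proposition \ref{liftgroth}) makes its injectives coflat — is exactly the intended, and correct, argument that the paper leaves implicit.
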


Consider a commutative square
\begin{equation}\label{eqsquare}
\xymatrix{ {\BBB} \ar[r] \ar[d] & {\BBB'} \ar[d] \\ {\AAA} \ar[r] & {\AAA'} }
\end{equation}
in which $\BBB$ (resp. $\BBB'$) is a linear deformation of $\AAA$ (resp. $\AAA'$). We have the following corollary of Proposition \ref{proploclift}:

\begin{proposition}\label{propliftloc}
If in \eqref{eqsquare} $\Mod(\AAA') \lra \Mod(\AAA)$ is a localization, then so is $\Mod(\BBB') \lra \Mod(\BBB)$.
\end{proposition}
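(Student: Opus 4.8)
The plan is to reduce the statement directly to Proposition \ref{proploclift}. The square \eqref{eqsquare} gives a commutative diagram of restriction-of-scalars functors, and passing to module categories we obtain a commutative square of right adjoints
\[
\xymatrix{ {\Mod(\BBB')} \ar[r] \ar[d] & {\Mod(\AAA')} \ar[d] \\ {\Mod(\BBB)} \ar[r] & {\Mod(\AAA)} }
\]
(here the vertical functors are the restrictions along $\AAA \lra \AAA'$, $\BBB \lra \BBB'$, and the horizontal ones are the restrictions of scalars along the deformation inclusions), which on $k$-linear objects recovers the square of the corresponding functors between the $\ccc$-level categories. First I would record that since $\BBB$ (resp. $\BBB'$) is a linear deformation of $\AAA$ (resp. $\AAA'$), the category $\Mod(\BBB)$ (resp. $\Mod(\BBB')$) is a flat $k[\epsilon]$-linear Grothendieck category by Proposition \ref{propbas} together with Proposition \ref{liftgroth} (and the well-known fact that module categories are Grothendieck), and moreover $(\Mod(\BBB))_k \cong \Mod(\AAA)$, $(\Mod(\BBB'))_k \cong \Mod(\AAA')$ compatibly with the square.

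Next I would verify the hypotheses of Proposition \ref{proploclift} for the functor $\iota\colon \Mod(\BBB') \lra \Mod(\BBB)$. By assumption, $\iota_k\colon \Mod(\AAA') \lra \Mod(\AAA)$ is a localization between Grothendieck categories, so it has an exact left adjoint $a_k$. The left adjoint of $\iota$ is the tensor functor $- \otimes_{\BBB} \BBB'$; the key point is that it is exact. This should follow because exactness can be checked after the faithfully exact base change $k[\epsilon] \lra k$ (or, equivalently, because $\BBB'$ is flat as a $\BBB$-bimodule since $\BBB'$ is flat over $k[\epsilon]$ and reduces to the $\AAA$-bimodule $\AAA'$ which is flat since $a_k$ is exact): in more detail, a short exact sequence in $\Mod(\BBB)$ stays exact after $- \otimes_\BBB \BBB'$ iff the corresponding statement holds modulo $\epsilon$, which is the exactness of $a_k$. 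Once $\iota$ has an exact left adjoint, Proposition \ref{proploclift} (the "in particular" clause) applies and tells us $\iota$ is a localization.

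Alternatively, and perhaps more cleanly, I would invoke Theorem \ref{thmloc11}: the localization $\iota_k$ of $\Mod(\AAA') = (\Mod(\BBB'))_k$ corresponds under the bijection \eqref{eqres} to a unique localization of $\Mod(\BBB')$, and one checks this lifted localization has essential image matching that of $\iota$, so $\iota$ itself is a localization. The main obstacle I anticipate is precisely the exactness of the left adjoint $- \otimes_\BBB \BBB'$: one must argue that the flatness of $\AAA'$ as an $\AAA$-bimodule (equivalent to exactness of $a_k$) lifts to flatness of $\BBB'$ as a $\BBB$-bimodule. This is a standard deformation-theoretic argument — a complex of flat $k[\epsilon]$-modules is acyclic iff its reduction mod $\epsilon$ is acyclic — but it needs to be spelled out, or else one invokes the flatness of the linear deformation $\BBB'$ over $k[\epsilon]$ directly to conclude that $\BBB'(B,B')$ is $k[\epsilon]$-flat and hence the relevant tensor functor is exact. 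Everything else is formal bookkeeping with adjoints and the already-established compatibility of the square with reduction mod $\epsilon$.
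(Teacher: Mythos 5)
Your proposal is correct and follows the paper's route: the paper states this proposition precisely as a corollary of Proposition \ref{proploclift}, applied (via its ``in particular'' clause) to the restriction functor $\Mod(\BBB') \lra \Mod(\BBB)$, whose left adjoint $-\otimes_{\BBB}\BBB'$ is exact because flatness of $\AAA'(A',f(-))$ over $\AAA$ lifts to flatness of $\BBB'(B',f(-))$ over $\BBB$ by the local criterion for flatness over the square-zero extension $k[\epsilon]\lra k$. Your only loose phrase is ``a short exact sequence stays exact after $-\otimes_{\BBB}\BBB'$ iff the corresponding statement holds modulo $\epsilon$'' (this criterion needs flat terms), but you immediately give the correct formulation in terms of $k[\epsilon]$-flatness of $\BBB'$ plus exactness of $a_k$, so the argument stands.
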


\subsection{From twisted to abelian deformations}\label{partwistab}

In this section, we present a counterpart to Proposition \ref{propbas} for twisted presheaves of algebras $\aaa\colon \uuu^{\op} \lra \Alg(k)$, or, more generally, for prestacks $\aaa\colon \uuu^{\op} \lra \Cat(k)$. In Theorem \ref{defeq}, we will adopt the setting of Proposition \ref{propdesflat} in order to do so.

Let $\uuu$ be a small category. For a prestack $\aaa\colon \uuu^{\op} \lra \Cat(k)$ of small $k$-linear categories, first order deformations and equivalences of deformations can be defined in complete analogy with the case of twisted presheaves in Definition \ref{deftwistedpresheaf} (see also Def. 3.24 in \cite{lowenmap}). We will refer to these deformations and equivalences as \emph{strict deformations} and \emph{strict equivalences}. Let $\Def^s_{\mathrm{tw}}(\aaa)$ be the set of strict deformations of $\aaa$ up to strict equivalence.

It will be convenient to consider a more relaxed notion of twisted deformations as well, based upon equivalences rather than isomorphisms of prestacks (see Proposition \ref{propdeseq}).

A prestack $\aaa\colon \uuu^{\op} \lra \Cat(R)$ of $R$-linear categories is called \emph{flat} if for every $U \in \uuu$ and $A, A' \in \aaa(U)$, the $R$-module $\aaa(U)(A,A')$ is flat.

\begin{definition}
Let $\aaa\colon \uuu^{\op} \lra \Cat(k)$ be a flat prestack of small $k$-linear categories.
\begin{enumerate}
\item A \emph{first order deformation} of $\aaa$ is a flat prestack $\bbb$ of $k[\epsilon]$-linear categories with an equivalence of prestacks $k \otimes_{k[\epsilon]} \bbb \cong \aaa$.
\item An \emph{equivalence} of deformations $\bbb_1$ and $\bbb_2$ is an equivalence of prestacks $\bbb_1 \cong \bbb_2$ such that the induced equivalence $k \otimes_{k[\epsilon]} \bbb_1 \cong k \otimes_{k[\epsilon]} \bbb_2$ is compatible with the equivalences with $\aaa$.
\end{enumerate}
\end{definition}

Let $\Def_{\mathrm{tw}}(\aaa)$ be the set of first order deformations of $\aaa$ up to equivalence. Obviously, there is a natural map
\begin{equation}\label{eqstrict}
\Def^s_{\mathrm{tw}}(\aaa) \lra \Def_{\mathrm{tw}}(\aaa).
\end{equation}

The following proposition can be proven along the lines of the parallel statement for linear categories \cite[Theorem B.3]{lowenvandenberghab}.

\begin{proposition}\label{propstrict}
The map \eqref{eqstrict} is an isomorphism.
\end{proposition}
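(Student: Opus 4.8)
The plan is to mimic the proof of \cite[Theorem B.3]{lowenvandenberghab} in the prestack setting. The statement asserts that the forgetful map $\Def^s_{\mathrm{tw}}(\aaa) \lra \Def_{\mathrm{tw}}(\aaa)$, from strict deformations (where the structure is genuinely a twisted presheaf/prestack with $\bbb(U) = \aaa(U)[\epsilon]$ on the nose, and equivalences are given by $1 + (-)\epsilon$) to the relaxed notion (flat prestacks of $k[\epsilon]$-linear categories equipped with an equivalence $k\otimes_{k[\epsilon]}\bbb \cong \aaa$, with equivalences of deformations being equivalences of prestacks), is a bijection. The two things to prove are surjectivity (every relaxed deformation is equivalent to a strict one) and injectivity (two strict deformations that become equivalent as relaxed deformations were already strictly equivalent).

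For surjectivity, I would start from a flat prestack $\bbb$ of $k[\epsilon]$-linear categories with a chosen equivalence $e\colon k\otimes_{k[\epsilon]}\bbb \xrightarrow{\ \sim\ } \aaa$. First, upgrade $e$ to an honest isomorphism on objects: replace $\bbb$ by an equivalent prestack whose objects at each $U$ are exactly the objects of $\aaa(U)$, using that one can transport the $k[\epsilon]$-linear category structure along the object bijection coming from $e^U$ (a linear category is flat over $k[\epsilon]$ iff its Hom-modules are flat, and flat $k[\epsilon]$-modules are exactly those of the form $P[\epsilon]$ with $P$ flat over $k$, so each $\bbb(U)(A,B)$ is free of the form $\aaa(U)(A,B)[\epsilon]$ after the identification). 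This gives, at each $U$, a strict first order deformation of the linear category $\aaa(U)$ in the sense of \cite{lowenvandenberghab}; by the linear-category version of B.3 (already available) we may further normalize each $\bbb(U)$ to have underlying $k[\epsilon]$-module $\aaa(U)[\epsilon]$ with composition $m^U + m_1^U\epsilon$. Next, the restriction functors $u^{\ast}_{\bbb}$ and the pseudofunctoriality isomorphisms $c^{u,v}_{\bbb}$, $z^U_{\bbb}$ of $\bbb$, once transported to this rigidified model, are $k[\epsilon]$-linear data reducing mod $\epsilon$ to those of $\aaa$; rigidifying the functors on objects (possible since $u^\ast_{\bbb}$ lifts $u^\ast_{\aaa}$ up to natural isomorphism, and natural isomorphisms can be absorbed) yields $\bar f^u = f^u + f_1^u\epsilon$ and $\bar c^{u,v} = c^{u,v} + c_1^{u,v}\epsilon$, i.e. a strict twisted deformation in the sense of Definition \ref{deftwistedpresheaf}. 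Throughout, the equivalences used to rigidify assemble into an equivalence of prestacks, so the strict deformation produced is equivalent as a relaxed deformation to $\bbb$; hence \eqref{eqstrict} is surjective.

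For injectivity, suppose $\bbb_1$ and $\bbb_2$ are strict deformations and $\Phi\colon\bbb_1 \xrightarrow{\ \sim\ }\bbb_2$ is an equivalence of prestacks compatible with the chosen reductions to $\aaa$. Because both $\bbb_i$ have object sets equal to those of $\aaa$ and $\Phi$ reduces mod $\epsilon$ to (something naturally isomorphic to) the identity prestack morphism $\aaa \to \aaa$, one can correct $\Phi$ by a modification so that its underlying functors $\Phi^U$ are the identity on objects and reduce to $1_{\aaa(U)}$; then $\Phi^U = 1 + g_1^U\epsilon$ on Hom-modules and the pseudonaturality isomorphisms $\tau^u$ of $\Phi$ take the form $1 + \tau_1^u\epsilon$. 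This is exactly a strict equivalence in the sense of Definition \ref{deftwistedpresheaf}(2)(a), so $\bbb_1$ and $\bbb_2$ are strictly equivalent, proving injectivity of \eqref{eqstrict}.

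The main obstacle is the rigidification bookkeeping: making precise that every equivalence of (pre)stacks between objects with the same underlying reductions can be replaced, up to modification, by one that is strictly of the form $1 + (-)\epsilon$ on morphisms and identity on objects — in other words, that the relaxed 2-categorical equivalence relation collapses to the strict $1 + (-)\epsilon$ relation. For linear categories this is precisely the content of \cite[Theorem B.3]{lowenvandenberghab}; here one must carry it out one level up, tracking compatibility with the twist 2-cells $c^{u,v}$ and $z^U$ and their deformation cocycle conditions (which, via Theorem \ref{proptwist}, is exactly the statement that all the choices live in the reduced normalized Gerstenhaber--Schack complex). Since flatness over $k[\epsilon]$ guarantees that every Hom-module is induced from its reduction, there are no higher obstructions, and the argument runs in parallel with the linear case; I would simply cite \cite[Theorem B.3]{lowenvandenberghab} for the per-object and per-functor normalizations and add the twist-compatibility as the genuinely new (but routine) ingredient.
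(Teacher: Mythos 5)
Your proposal follows exactly the route the paper intends: the paper gives no written proof of Proposition \ref{propstrict} beyond the remark that it ``can be proven along the lines of the parallel statement for linear categories \cite[Theorem B.3]{lowenvandenberghab}'', and your rigidification argument (using flatness over $k[\epsilon]$ to normalize Hom-modules, then strictifying the restriction functors, twists, and equivalences to the $1+(-)\epsilon$ form) is a correct elaboration of precisely that strategy. No discrepancy to report.
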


The following theorem generalizes part of \cite[Theorem 3.26]{lowenprestacks}.

\begin{theorem}\label{defeq}
Let $\uuu$ be a finite poset with binary meets. Let $\aaa\colon \uuu^{\op} \lra \Cat(k)$ be a quasi-compact semi-separated prestack on $\uuu$. Every first order deformation of $\aaa$ is a quasi-compact semi-separated prestack on $\uuu$. There is an isomorphism
\begin{equation} \label{eqtwab}
\Def_{\mathrm{tw}}(\aaa) \lra \Def_{\mathrm{ab}}(\Qch(\aaa)),\quad \bbb \longmapsto \Qch(\bbb).
\end{equation}
\end{theorem}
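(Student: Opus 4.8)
The plan is to reduce the statement to a combination of the descent machinery of \S\ref{parflat} and the deformation theory of Grothendieck categories recalled in \S\ref{pardefab}, exhibiting explicitly a two-sided inverse to the map $\bbb \longmapsto \Qch(\bbb)$. First I would establish that every first order deformation $\bbb$ of the qcss prestack $\aaa$ is again qcss. By Proposition \ref{propstrict} we may assume $\bbb = (\aaa[\epsilon], m + m_1\epsilon, f + f_1\epsilon, c + c_1\epsilon)$ is a strict deformation, so that pointwise each $\bbb(U)$ is a flat $k[\epsilon]$-linear deformation of $\aaa(U)$; hence $\Mod_{\bbb}(U) = \Mod(\bbb(U))$ is a flat $k[\epsilon]$-linear abelian deformation of $\Mod(\aaa(U))$ (Proposition \ref{propbas} applied object-wise to the linear categories $\bbb(U)$). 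The restriction functors $-\otimes_u \bbb(V)$ have exact left adjoints lifting the ones for $\aaa$; by Proposition \ref{propliftloc} (the lifted version of Proposition \ref{proploclift}) the adjunctions making $\Mod_{\aaa}$ a prestack of affine localizations lift to $\Mod_{\bbb}$, and by Proposition \ref{propliftcomp} the compatibility (condition (4) of Definition \ref{defaf}) lifts as well. Condition (ii) of Proposition \ref{propafftw} for $\bbb$, i.e.\ $\bbb(V)\otimes_{\bbb(U)}\bbb(W) \cong \bbb(V\cap W)$, follows from the last clause of Proposition \ref{propliftcomp} (lifting of intersections) together with Theorem \ref{thmloc11}. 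Thus $\bbb$ is qcss, and by Proposition \ref{propdesflat}, $\Qch(\bbb)$ is a flat $k[\epsilon]$-linear Grothendieck category; since $k\otimes_{k[\epsilon]}\Mod_{\bbb}(U) \cong \Mod_{\aaa}(U)$ compatibly with restrictions, one checks $k\otimes_{k[\epsilon]}\Qch(\bbb) \cong \Qch(\aaa)$ (descent commutes with the exact functor $k\otimes_{k[\epsilon]}-$, which is computed pointwise by Proposition \ref{lemqchabelian}(3)), so $\Qch(\bbb)$ is a genuine abelian deformation of $\Qch(\aaa)$ and the map \eqref{eqtwab} is well defined.

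Next I would construct the inverse. Given an abelian deformation $\ddd$ of $\Qch(\aaa)$, the prestack of affine localizations $\Mod_{\aaa}^{\star}$ on $\uuu^{\star}$ of Proposition \ref{propdesflat} has $\Mod_{\aaa}^{\star}(\star) = \Qch(\aaa)$, and for each $U$ the functor $\pi_U = u_U^{\ast}\colon \Qch(\aaa) \lra \Mod(\aaa(U))$ is a localization (its left adjoint is the fully faithful $u_{U,\ast}$, up to swapping; more precisely $u_{U,\ast}$ is the localization and $\pi_U$ its exact left adjoint). By Theorem \ref{thmloc11} each such localization lifts uniquely to a localization of $\ddd$, giving Grothendieck categories $\ddd(U)$ with localization functors $\ddd \lra \ddd(U)$; by Proposition \ref{liftgroth} these $\ddd(U)$ are deformations of $\Mod(\aaa(U))$, and by Proposition \ref{propbas} they are of the form $\Mod(\bbb(U))$ for a uniquely determined linear deformation $\bbb(U)$ of $\aaa(U)$ — and since $\aaa(U)$ has a single object, $\bbb(U)$ is a $k[\epsilon]$-algebra deformation. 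The restriction functors and compatibilities assemble, using Proposition \ref{propliftcomp} and Theorem \ref{thmloc11}, into a qcss twisted presheaf (prestack) structure on $\bbb = (\bbb(U))_U$: the twist elements $\bar c^{u,v}$ arise precisely from comparing the two lifted compatibility isomorphisms, and flatness of $\bbb$ is automatic since each $\bbb(U)$ is a flat algebra deformation. This produces a well-defined map $\ddd \longmapsto \bbb$; by construction $\Mod_{\bbb}^{\star} \cong$ the lifted prestack on $\uuu^{\star}$, so $\Qch(\bbb) = \Mod_{\bbb}^{\star}(\star) \cong \ddd$, and conversely starting from $\bbb$ the reconstructed prestack is equivalent to $\bbb$ because the lifts in Proposition \ref{propbas} and Theorem \ref{thmloc11} are unique up to the appropriate equivalence. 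One then checks the two assignments respect the equivalence relations on both sides (strict equivalences of deformations of $\aaa$ correspond, via Proposition \ref{propdeseq} and Proposition \ref{propdeseq}'s consequence for $\Des$, to equivalences of the deformed descent categories compatible with the identification of the special fibre), which gives the claimed isomorphism of sets.

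The main obstacle I expect is verifying that the locally reconstructed data $(\bbb(U))_U$ — produced one object of $\uuu$ at a time by lifting localizations of $\ddd$ — genuinely glue into a single prestack (twisted presheaf) on all of $\uuu$ with coherent twist elements, rather than merely a compatible family of pairwise data. This requires tracking, through Theorem \ref{thmloc11} and Proposition \ref{propliftcomp}, the uniqueness of the lifted localizing subcategories and of the comparison isomorphisms between $\pi_V \pi_U$-type composites, so that the cocycle condition \eqref{eq:twistcocycle} for $\bar c$ holds on the nose; this is exactly where the finiteness of the poset $\uuu$ (ensuring the inductions in Propositions \ref{propdesflat1} and \ref{propdesflat} terminate) and the semi-separatedness (ensuring condition (4) of Definition \ref{defaf}, hence compatibility of all the relevant localizations) are used. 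A secondary technical point is the bookkeeping of $z$ versus the normalization $z=1$: as in \S\ref{partwistedpresheaf} one arranges all reconstructed twisted presheaves to have trivial $z$, which is harmless up to isomorphism. Once this gluing is in place, the bijectivity is formal from the uniqueness clauses in the cited lifting theorems.
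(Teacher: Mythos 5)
Your overall strategy coincides with the paper's: lift the localizations $u_{U,\ast}\colon \Mod(\aaa(U)) \lra \Qch(\aaa)$ to the deformation $\ddd$ via Theorem \ref{thmloc11}, identify the local pieces as deformations of the module categories $\Mod(\aaa(U))$, and reassemble into a prestack deformation of $\aaa$. The well-definedness direction (that $\bbb$ is again qcss and $\Qch(\bbb)$ is a flat abelian deformation) is argued as in the paper, via Propositions \ref{propliftloc}, \ref{propliftcomp} and \ref{propdesflat}. However, there are two genuine gaps in the inverse construction.

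First, the gluing problem you flag as ``the main obstacle'' is real and your proposal does not resolve it. If you produce, for each $U$, a strict linear deformation $\bbb(U)$ of the one-object category $\aaa(U)$ via Proposition \ref{propbas} and then define twist elements $\bar{c}^{u,v}$ by comparing lifted localizations pairwise, you still must verify the cocycle condition \eqref{eq:twistcocycle} on triples, and neither Theorem \ref{thmloc11} nor Proposition \ref{propliftcomp} gives coherence of the comparison isomorphisms beyond pairs. The paper circumvents this entirely: it takes $\bar{\aaa}(U) \subseteq \ddd(U)$ to be the full subcategory of \emph{all} flat lifts of objects of $\aaa(U)$ (Proposition \ref{liftproj}), so that the restriction functors and twist isomorphisms of the prestack $\ddd$ on $\uuu$ (built from arbitrary choices of left adjoints to the strict inclusions $\ddd(V) \subseteq \ddd(U)$) restrict to $\bar{\aaa}$ and coherence is inherited for free; the price is that $k\otimes_{k[\epsilon]}\bar{\aaa}$ is only \emph{equivalent}, not isomorphic, to $\aaa$, which is exactly why the relaxed notion of deformation up to equivalence of prestacks and Proposition \ref{propstrict} are introduced. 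Second, your claim that ``$\Qch(\bbb) = \Mod_{\bbb}^{\star}(\star) \cong \ddd$ by construction'' is not automatic: Proposition \ref{propdesflat1} produces \emph{a} prestack on $\uuu^{\star}$ whose value at $\star$ is the descent category, but it does not say that the given deformation $\ddd$ sitting over the lifted local pieces is equivalent to $\Des(\ddd|_{\uuu})$. Establishing this is the content of the last part of the paper's proof: one forms $\varphi\colon \ddd \lra \Des(\ddd|_{\uuu})$ with right adjoint $\lambda = \lim_U u_{U,\ast}(-)_U$, shows $\lambda$ is a localization by Proposition \ref{proploclift} (using that $\lambda_k$ and $\varphi_k$ are an equivalence), and then invokes the uniqueness in Theorem \ref{thmloc11} to conclude that $\lambda$ and $\varphi$ are mutually inverse. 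Without this step the surjectivity and injectivity of \eqref{eqtwab} are not established.
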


\begin{proof}
Put $\uuu^{\star} = \uuu \cup \{ \star \}$ with top $\star$ as before.

Consider a first order deformation $\bbb$ of $\aaa$. By Propositions \ref{propliftloc} and \ref{propliftcomp}, $\bbb\colon \uuu^{\op} \lra \Cat(k[\epsilon])$ is a quasi-compact semi-separated prestack on $\uuu$. By Proposition \ref{propdesflat} (2), $\Qch(\bbb)$ is a flat abelian category over $k[\epsilon]$. It is easily seen that $\Qch(\bbb)$ is an abelian deformation of $\Qch(\aaa)$ so the map \eqref{eqtwab} is well defined.

An inverse to \eqref{eqtwab} is constructed as follows. Let $\ddd(\star)$ be an abelian deformation of $\Qch(\aaa)$. By Theorem \ref{thmgroth}, $\Qch(\aaa)$ is Grothendieck, whence by Proposition \ref{liftgroth}, $\ddd(\star)$ is Grothendieck as well.
By Proposition \ref{propdesflat} (1), we have localizations $\Mod(\aaa(U)) \lra \Qch(\aaa)$ for every $U \in \uuu$. Hence by Theorem \ref{thmloc11}, we obtain a unique corresponding localization $u_{U, \ast}\colon \ddd(U) \lra \ddd(\star)$ for every $U \in \uuu$, which is such that $\ddd(U)$ is a deformation of $\Mod(\aaa(U))$. For $u\colon V \lra U$ in $\uuu$, there is a unique localization $\ddd_u(V) \lra \ddd(U)$ corresponding to the localization $\Mod(\aaa(V)) \lra \Mod(\aaa(U))$. Consequently, the localizations $\ddd_u(V) \lra \ddd(U) \lra \ddd(\star)$ and $\ddd(V) \lra \ddd(\star)$ are equivalent. We may suppose all localizations are strict, whence we have $\ddd_u(V) = \ddd(V) \subseteq \ddd(U) \subseteq \ddd(\star)$. An arbitrary choice of left adjoints $u_U^{\ast}\colon \ddd(\star) \lra \ddd(U)$ and $u^{\ast}\colon \ddd(U) \lra \ddd(V)$ to these localizations can be organized into a prestack $\ddd^{\star}$ on $\uuu^{\star}$ with restriction $\ddd$ to $\uuu$.

Let $\bar{\aaa}(U) \subseteq \ddd(U)$ be the full subcategory of all flat lifts of objects in $\aaa(U)$ along $k \otimes_{k[\epsilon]} -\colon \ddd(U) \lra \Mod(\aaa(U))$. By Proposition \ref{liftproj}, $\bar{\aaa}(U)$ consists of a collection of finitely generated projective generators of $\ddd(U)$. We obtain a prestack $\bar{\aaa}$ and an equivalence of prestacks $k \otimes_{k[\epsilon]} \bar{\aaa} \cong \aaa$. Thus, $\bar{\aaa}$ is a deformation of $\aaa$ and there is an equivalence of prestacks $\ddd \cong \Mod_{\bar{\aaa}}$ and the category $\Des(\ddd) \cong \Qch(\bar{\aaa})$ is Grothendieck.
The canonical functor $\varphi\colon \ddd(\star) \lra \Des(\ddd)$ is exact and has a right adjoint $\lambda\colon \Des(\ddd) \lra \ddd(\star)$, $(M_U)_U \longmapsto \lim_U u_{U, \ast} M_U$. By assumption, $\lambda_k$ and $\varphi_k$ constitute an equivalence of categories. By Proposition \ref{proploclift}, $\lambda$ is a localization. Further, by Theorem \ref{thmloc11}, $\lambda$ and $\varphi$ necessarily constitute an equivalence.

Conversely, when $\bbb$ is a first order deformation of $\aaa$, the inverse construction applied to a $\Qch(\bbb)$ yields back a deformation $\bar{\aaa}$ of $\aaa$ which is equivalent to $\bbb$.
\end{proof}

\begin{remark}
Theorem \ref{defeq} holds true for arbitrary infinitesimal deformations in the deformation setup of \cite{lowenvandenberghab}, as can be shown inductively.
\end{remark}

\subsection{Comparison with Toda's construction}\label{partoda}

Let $\uuu$ be a finite poset with binary meets.
Let $\aaa\colon \uuu^{\op} \lra \Alg(k)$ be a quasi-compact semi-separated presheaf of algebras in the sense of Definition \ref{defafpre}.
Combining \eqref{gsop}, Propositions \ref{proptwist}, \ref{propstrict} and Theorem \ref{defeq}, we obtain a canonical isomorphism
\begin{equation}\label{intbas}
HH^2(\aaa) \lra HH^2(\aaa^{\op}) \lra \Def_{\mathrm{tw}}(\aaa^{\op}) \lra \Def_{\mathrm{ab}}(\Qch^l(\aaa)).
\end{equation}
Let $X$ be a scheme with a finite semi-separating cover $\uuu$. From \eqref{intbas} we thus obtain a canonical isomorphism
\begin{equation}\label{intbas2}
HH^2(\ooo_X|_{\uuu}) \lra \Def_{\mathrm{ab}}(\Qch(X)).
\end{equation}\

For a scheme $X$ with semi-separating cover $\uuu$, put
$$HH_\mathrm{HKR}^2(X, \uuu) = \check{H}^2(\uuu, \ooo_X) \oplus \check{H}^1(\uuu, \ttt_X) \oplus \check{H}^0(\uuu, \wedge^2 \ttt_X).$$
In \cite{toda}, for a quasi-compact, separated smooth scheme $X$ with semi-separating cover $\uuu$, Toda associates to an element
$u \in HH_\mathrm{HKR}^2(X)$
a certain ``first order deformation'' of the abelian category $\Qch(X)$. This is an abelian $k[\epsilon]$-linear category, which is not a priori a deformation in the sense of  \S \ref{pardefab}.
Let us give a brief review of Toda's construction. Suppose that $u$ is represented by a triple of cocycles $(\alpha, \beta, \gamma)$. He first constructs a sheaf $\ooo_X^{(\beta,\gamma)}$ of non-commutative $k[\epsilon]$-algebras on $X$ which depends upon $\beta$ and $\gamma$. Next, the element $\alpha$ gives rise to an element $\tilde{\alpha} = 1- \alpha \epsilon \in \check{H}^2(X, (\ooo_X^{(\beta,\gamma)})^{\times})$, and he defines
$$\Qch^l(X,u) = \Qch^l(\ooo_X^{(\beta,\gamma)}, \tilde{\alpha})$$ as the category of quasi-coherent $\tilde{\alpha}$-twisted left $\ooo_X^{(\beta,\gamma)}$-modules (defined in analogy with the case of quasi-coherent twisted modules over a scheme, see for instance \cite{caldararu}).
The category is independent of the choice of the cover $\uuu$ and the triple $(\alpha, \beta, \gamma)$, up to equivalence.

We know from \S \ref{secHodge-HKR} that
\[
HH^2(X) \cong HH^2(\ooo_X|_{\uuu}) \cong HH_\mathrm{HKR}^2(X, \uuu),
\]
so both \eqref{intbas2} and Toda's construction give concrete ways to interpret a Hochschild $2$-class of a smooth quasi-compact separated scheme in terms of a certain $k[\epsilon]$-linear abelian category.
In fact, the conditions on $X$ are necessary for the interpretation in terms of $HH^2(X)$, but not for the construction of the category $\Qch(X, u)$.

In this section, we use the isomorphisms
\begin{equation}\label{transfos}
H^2\CC_\mathrm{GS}(\ooo_X|_{\uuu}) \cong \oplus_{p + q = 2} H^p(\uuu, \wedge^q\ttt_X|_{\uuu}) \cong \oplus_{p + q = 2} \check{H}^p(\uuu, \wedge^q\ttt_X)
\end{equation}
from \S \ref{parHodgeHKR} and \S \ref{parcechsimp} in order to show that both constructions are equivalent for a smooth scheme $X$ with semi-separating cover $\uuu$.

We first return to the more general setup of a presheaf of commutative algebras, and we associate to a GS cocycle a Toda-type deformation.
Let $\aaa$ be a presheaf of commutative $k$-algebras on $\uuu$, and $\CCC_{\mathrm{GS}}\in H^2\CC_{\mathrm{GS}}(\aaa)$. Suppose that $\CCC_\mathrm{GS}$ is represented by a cocycle $$(m_1,f_1,c_1) \in  \CC^{0,2}(\aaa) \oplus \CC^{1,1}(\aaa) \oplus \CC^{2,0}(\aaa).$$ As before, we assume the cocycle is normalized and reduced. Since $\aaa(U)$ is not necessarily a smooth algebra, we cannot assume that $(m_1,f_1,c_1)$ is decomposable. Instead, by Proposition \ref{propsplit} we have a weaker decomposition of normalized reduced cocycles,
$$
(m_1,f_1,c_1) = (m_1,f_1,0) + (0,0,c_1).
$$

Let $\bar{\aaa}$ be the first order twisted deformation of $\aaa$ determined by the cocycle $(m_1,f_1,c_1)$. By Proposition \ref{centralkey}, $\bar{\aaa}$ has central twists, and the underlying presheaf $\underline{\bar{\aaa}}$ is the first order presheaf deformation of $\aaa$ determined by the cocycle $(m_1, f_1, 0)$.

Following Toda's idea, let us give a characterization of $\underline{\bar{\aaa}}$. Recall the complex of presheaves $(\aaa^\bullet,\varphi^\bullet)$ defined in \S \ref{paracechlikecomp}. We define a map $F\colon \aaa\oplus\aaa^0\lra\aaa^1$ of presheaves by $(f_1,\varphi^0)$, namely, for all $U\in\uuu$,
\begin{align*}
F^U\colon \aaa(U)\oplus\prod_{u\colon V\lra U}\aaa(V)&\lra\prod_{\substack{u\colon V\lra U\\v\colon W\lra V}}\aaa(W)\\
(a,(b^u)_u)&\longmapsto (f_1^vu^*(a)+v^*(b^u)-b^{uv})_{u,v}.
\end{align*}
Equip $\aaa(U)\oplus\aaa^0(U)$ with the multiplication by
\[
(a,(b^u)_u)\cdot(a',(b'^u)_u)=(aa',(u^*(a)b'^u+b^uu^*(a')+m_1(u^*(a), u^*(a')))_u).
\]
Then $\aaa(U)\oplus\aaa^0(U)$ becomes a $k[\epsilon]$-algebra by setting
\[
(\lambda+\kappa\epsilon)(a,(b^u)_u)=(\lambda a, (\kappa u^*(a)+\lambda b^u)_u),\quad \lambda,\kappa\in k.
\]
Let $G^U\colon \underline{\bar{\aaa}}(U)\lra \aaa(U)\oplus\aaa^0(U)$ be the map $a+b\epsilon\longmapsto (a, (f_1^u(a)+u^*(b))_u)$. It is easy to verify that $G^U$ is a homomorphism of $k[\epsilon]$-algebras, and the sequence
\[
0\lra \underline{\bar{\aaa}}\xrightarrow[\quad]{G} \aaa\oplus\aaa^0 \xrightarrow[\quad]{F} \aaa^1
\]
is exact.

We obtain categories $\QPr(\bar{\aaa})$, $\QPr^l(\bar{\aaa})$ as defined in \S \ref{parqchmodule}, which are independent of the choice of cocycle, up to equivalence.

Now consider the case $\aaa= \ooo = \ooo_X|_{\uuu}$ for $X$ a smooth scheme with a semi-separating cover $\uuu$. Assume that $(m_1,f_1,c_1)$ is a normalized reduced decomposable cocycle. Let $\bar{\ooo}$ be the corresponding first order twisted deformation.

By the correspondence $\CCC_{\mathrm{GS}}\longmapsto\CCC_{\mathrm{simp}}$ given in \S \ref{parHodgeHKR}, we obtain three reduced simplicial cocycles $\Theta_{0,2},\Theta_{1,1},\Theta_{2,0}$. Let $(\alpha,\beta,\gamma)=(\iota(\Theta_{2,0}),\iota(\Theta_{1,1}),\iota(\Theta_{0,2}))$ consist of the corresponding \v{C}ech cocycles, representing a class $u$.

\begin{theorem}\label{thmequiv}
There are equivalences $$\Qch^l(\bar{\ooo}) \cong \QPr^l(\bar{\ooo})\cong\Qch^l(X, u)$$ of Grothendieck abelian categories.

If $\uuu$ is finite, then the abelian categories are flat over $k[\epsilon]$ and constitute (equivalent) first order abelian deformations of $\Qch(X)$.
\end{theorem}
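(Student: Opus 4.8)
The plan is to establish the two equivalences separately, the first being essentially formal and the second being the substantive one. For the first equivalence $\Qch^l(\bar{\ooo}) \cong \QPr^l(\bar{\ooo})$, I would simply invoke Theorem \ref{thmqmodqpr} applied to $\aaa^{\op}$: by Proposition \ref{centralkey}, $\bar{\ooo}$ is a twisted presheaf with central twists (hence so is $\bar{\ooo}^{\op}$), so Theorem \ref{thmqmodqpr} yields $\Qch^l(\bar{\ooo}) = \Qch^r(\bar{\ooo}^{\op}) \cong \QPr^r(\bar{\ooo}^{\op}) = \QPr^l(\bar{\ooo})$. This also immediately gives that $\QPr^l(\bar{\ooo})$ is a Grothendieck abelian category (using Theorem \ref{thmgroth}, noting that the semi-separated structure of $\ooo$ is inherited by $\bar{\ooo}$ by Proposition \ref{propafftw} and the description of first order deformations, so $\bar{\ooo}$ is geometric).

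For the main equivalence $\QPr^l(\bar{\ooo}) \cong \Qch^l(X,u)$, the strategy is to match the two descriptions term by term. On the $\QPr^l$ side, a (left) quasi-coherent presheaf is by construction a descent datum of presheaves of modules over the underlying presheaf $\underline{\bar{\ooo}}$, twisted by $\Pre(c)$ built from $c_1 \leftrightarrow \Theta_{0,2}$ and hence from $\gamma = \iota(\Theta_{0,2})$. On Toda's side, $\Qch^l(X,u) = \Qch^l(\ooo_X^{(\beta,\gamma)}, \tilde\alpha)$ consists of $\tilde\alpha$-twisted quasi-coherent left modules over the non-commutative sheaf $\ooo_X^{(\beta,\gamma)}$. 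The key step is to identify the restricted sheaf $\ooo_X^{(\beta,\gamma)}|_{\uuu}$ (as a presheaf on $\uuu$) with $\underline{\bar{\ooo}}$: this is precisely where the explicit exact sequence
\[
0 \lra \underline{\bar{\ooo}} \xrightarrow[\quad]{G} \ooo \oplus \ooo^0 \xrightarrow[\quad]{F} \ooo^1
\]
constructed just before the theorem enters — it characterizes $\underline{\bar{\ooo}}$ intrinsically via $(m_1, f_1)$, and one checks (using the dictionary $\CCC_\mathrm{GS} \leftrightarrow \CCC_\mathrm{simp}$ of \S\ref{parHodgeHKR}, the lifting of $\theta_{1,1}$ to the multiderivation $\Theta_{1,1}$, and the simplicial-to-\v Cech comparison $\iota$ of \S\ref{parcechsimp}) that Toda's $\ooo_X^{(\beta,\gamma)}$ satisfies the same universal property locally on $\uuu$. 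Once the algebras match, the twist $\tilde\alpha = 1 - \alpha\epsilon$ with $\alpha = \iota(\Theta_{2,0})$ corresponds under $\pi$ to the simplicial $2$-cocycle $c_1$ governing $\Pre(c)$ via Remark \ref{remark:mod}, so $\tilde\alpha$-twisted modules correspond to descent data for $\QPr_{\bar{\ooo}^{\op}}$. Then I would invoke the equivalence $\Qch(\ooo_Y|_{\uuu}) \cong \Qch(Y)$ for a scheme $Y$ (here with non-commutative structure sheaf $\ooo_X^{(\beta,\gamma)}$, semi-separated over $X$) together with its twisted analogue — the same descent argument that identifies quasi-coherent sheaves with their restriction to a semi-separating cover carries the twist along verbatim, since the cover is affine and closed under intersection.

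The main obstacle I anticipate is the bookkeeping in the previous paragraph: verifying that the three simplicial components $(\Theta_{0,2}, \Theta_{1,1}, \Theta_{2,0})$, after passing through $\iota$ to \v Cech cochains $(\gamma, \beta, \alpha)$, reproduce \emph{exactly} (not merely up to a coboundary, since we want an equivalence of categories and not just of deformation classes) Toda's data $\ooo_X^{(\beta,\gamma)}$ and $\tilde\alpha$. This requires carefully unwinding Toda's construction of $\ooo_X^{(\beta,\gamma)}$ as a sheaf of algebras — its multiplication is deformed by the contraction with the bivector $\gamma$ and its gluing is twisted by $\beta$ — and comparing with the multiplication on $\ooo \oplus \ooo^0$ and the map $F$ written above. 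Compatibility of the Hodge-theoretic choice of decomposable representative with Toda's choice of polyvector-field representative is what makes this delicate; the hypothesis $\Q \subseteq k$ and smoothness of $X$ (hence (FS) and (FE), so Theorem \ref{thmHKR} applies) is exactly what guarantees a decomposable cocycle exists, which is the starting point of the comparison. Finally, the flatness and deformation claims for finite $\uuu$ follow by combining Proposition \ref{propdesflat} (flatness of $\Qch(\bar{\ooo})$ over $k[\epsilon]$), Theorem \ref{defeq} (which identifies $\Qch^l(\bar{\ooo}) = \Qch(\bar{\ooo}^{\op})$ as the abelian deformation of $\Qch(\aaa^{\op}) = \Qch^l(\ooo) \cong \Qch(X)$ associated to the twisted deformation $\bar{\ooo}^{\op}$ via \eqref{intbas}), and transporting these properties across the two equivalences just established.
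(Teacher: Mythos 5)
Your proposal follows the paper's proof essentially verbatim: the first equivalence via Theorem \ref{thmqmodqpr} applied to $\bar{\ooo}^{\op}$, the identification $\underline{\bar{\ooo}}=\ooo_X^{(\beta,\gamma)}|_{\uuu}$ via the exact sequence and the Hodge-to-\v{C}ech dictionary, the local equivalences $\Qch^l(\bar{\ooo}|_U)\cong\Mod^l(\bar{\ooo}(U))\cong\Qch^l(\ooo_X^{(\beta,\gamma)}|_U)$, and the already-established isomorphism \eqref{intbas} for the flatness statement. Be aware, though, that the ``bookkeeping'' you defer as an anticipated obstacle --- checking that the transition maps $\psi_{ij}=(\phi_{U_{ij}\subseteq U_j})^{-1}\phi_{U_{ij}\subseteq U_i}$ satisfy Toda's twisted cocycle condition $\psi_{jk}\psi_{ij}=(1-\alpha^{U_i,U_j,U_k}\epsilon)\psi_{ik}$, where $\alpha=\iota(c_1)$ appears precisely as the alternating sum of $c_1$ over the six orderings of the triple --- is the entire substance of the paper's argument for the second equivalence, so a complete write-up must actually carry out that computation rather than cite it as delicate.
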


\begin{proof}
The first equivalence $\Qch^l(\bar{\ooo}) \cong \QPr^l(\bar{\ooo})$ was shown in Theorem \ref{thmqmodqpr} (applied to $\bar{\ooo}^{\op}$), and the category $\Qch^l(\bar{\ooo})$ (and hence also $\QPr^l(\bar{\ooo})$) is abelian by Proposition \ref{lemqchabelian}.
According to the quasi-isomorphism given in \S \ref{parcechsimp}, $m_1^U=\Theta_{2,0}^U=\gamma^U$, so the multiplications on $\ooo(U)$ deformed by $m_1$ and $\gamma$ are the same. Furthermore, if $V\subseteq U$, then $\beta^{V,U}=\Theta_{1,1}^{V\subseteq U}-\Theta_{1,1}^{U\subseteq U}=\Theta_{1,1}^{V\subseteq U}$ lifts $f_1^{V\subseteq U}$ through the restriction map $\ooo(U)\lra\ooo(V)$. This yields that as a presheaf on $\uuu$, $$\underline{\bar{\ooo}} = \ooo_X^{(\beta,\gamma)}|_\uuu.$$

Consider $(\fff_U,\phi_u)\in\QPr^l(\bar{\ooo})$. For any pair $(U_i,U_j)\in\uuu\times\uuu$, define
\[
\psi_{ij}=(\phi_{U_{ij}\subseteq U_j})^{-1}\phi_{U_{ij}\subseteq U_i}\colon \fff_{U_i}|_{U_{ij}}\lra \fff_{U_j}|_{U_{ij}}
\]
where $U_{ij}=U_i\cap U_j$. We know $\Qch^l({\bar{\ooo}}|_U)\cong\Mod^l(\bar{\ooo}(U))$ for all $U$. On the other hand, Toda has proved $\Qch^l(\ooo_X^{(\beta,\gamma)}|_U)\cong \Mod^l(\bar{\ooo}(U))$. Thus we obtain an equivalence $\Xi\colon \Qch^l(\bar{\ooo}|_U)\lra \Qch^l(\ooo_X^{(\beta,\gamma)}|_U)$ with $\Xi(\widetilde{M})$ equal to the sheaf associated to $\widetilde{M}$.
In order to show that $(\fff_U,\phi_u)\longmapsto(\Xi(\fff_U), \Xi(\psi_{ij}))$ gives an equivalence, we need to show that the collection $(\Xi(\fff_U), \Xi(\psi_{ij}))$ forms a twisted quasi-coherent module in Toda's sense (see \cite[\S 4]{toda}). To this end, it suffices to check that the twisted cocycle condition
\[
\psi_{jk}\psi_{ij}=(1-\alpha^{U_i,U_j,U_k}\epsilon)\psi_{ik}
\]
holds for any $U_i$, $U_j$, $U_k\in \uuu$.

It follows from the chains $U_{ijk}\subseteq U_{ij}\subseteq U_j$ and $U_{ijk}\subseteq U_{jk}\subseteq U_j$ that
\begin{align*}
\phi_{U_{ijk}\subseteq U_{ij}}\phi_{U_{ij}\subseteq U_j}=(1+c_1^{U_{ijk}\subseteq U_{ij}\subseteq U_j}\epsilon)\phi_{U_{ijk}\subseteq U_j},\\
\phi_{U_{ijk}\subseteq U_{jk}}\phi_{U_{jk}\subseteq U_j}=(1+c_1^{U_{ijk}\subseteq U_{jk}\subseteq U_j}\epsilon)\phi_{U_{ijk}\subseteq U_j}.
\end{align*}
After canceling $\phi_{U_{ijk}\subseteq U_j}$, we obtain
\[
\phi_{U_{jk}\subseteq U_j}(\phi_{U_{ij}\subseteq U_j})^{-1}=\bigl(1-(c_1^{U_{ijk}\subseteq U_{ij}\subseteq U_j}-c_1^{U_{ijk}\subseteq U_{jk}\subseteq U_j})\epsilon\bigr)(\phi_{U_{ijk}\subseteq U_{jk}})^{-1}\phi_{U_{ijk}\subseteq U_{ij}}.
\]
So
\begin{align*}
&\phantom{\;=\;}\psi_{jk}\psi_{ij}=(\phi_{U_{jk}\subseteq U_k})^{-1}\phi_{U_{jk}\subseteq U_j}(\phi_{U_{ij}\subseteq U_j})^{-1}\phi_{U_{ij}\subseteq U_i}\\
&=\bigl(1-(c_1^{U_{ijk}\subseteq U_{ij}\subseteq U_j}-c_1^{U_{ijk}\subseteq U_{jk}\subseteq U_j})\epsilon\bigr)(\phi_{U_{jk}\subseteq U_k})^{-1}(\phi_{U_{ijk}\subseteq U_{jk}})^{-1}\phi_{U_{ijk}\subseteq U_{ij}}\phi_{U_{ij}\subseteq U_i}\\
&=\bigl(1-(c_1^{U_{ijk}\subseteq U_{ij}\subseteq U_j}-c_1^{U_{ijk}\subseteq U_{jk}\subseteq U_j})\epsilon\bigr)(\phi_{U_{ijk}\subseteq U_{jk}}\phi_{U_{jk}\subseteq U_k})^{-1}\phi_{U_{ijk}\subseteq U_{ij}}\phi_{U_{ij}\subseteq U_i}\\
&=\bigl(1-(c_1^{U_{ijk}\subseteq U_{ij}\subseteq U_j}-c_1^{U_{ijk}\subseteq U_{jk}\subseteq U_j})\epsilon\bigr)(1-c_1^{U_{ijk}\subseteq U_{jk}\subseteq U_k}\epsilon)(\phi_{U_{ijk}\subseteq U_{k}})^{-1}\\
&\phantom{\;=\;}(1+c_1^{U_{ijk}\subseteq U_{ij}\subseteq U_i}\epsilon)\phi_{U_{ijk}\subseteq U_{i}}\\
&=\bigl(1-(c_1^{U_{ijk}\subseteq U_{ij}\subseteq U_j}-c_1^{U_{ijk}\subseteq U_{jk}\subseteq U_j}+c_1^{U_{ijk}\subseteq U_{jk}\subseteq U_k}-c_1^{U_{ijk}\subseteq U_{ij}\subseteq U_i})\epsilon\bigr)\\
&\phantom{\;=\;}(\phi_{U_{ijk}\subseteq U_{k}})^{-1}\phi_{U_{ijk}\subseteq U_{i}}\\
&=\bigl(1-(c_1^{U_{ijk}\subseteq U_{ij}\subseteq U_j}-c_1^{U_{ijk}\subseteq U_{jk}\subseteq U_j}+c_1^{U_{ijk}\subseteq U_{jk}\subseteq U_k}-c_1^{U_{ijk}\subseteq U_{ij}\subseteq U_i})\epsilon\bigr)\\
&\phantom{\;=\;}(1+c_1^{U_{ijk}\subseteq U_{ik}\subseteq U_k}\epsilon)(\phi_{U_{ijk}\subseteq U_{ik}}\phi_{U_{ik}\subseteq U_k})^{-1}(1-c_1^{U_{ijk}\subseteq U_{ik}\subseteq U_i}\epsilon)\phi_{U_{ijk}\subseteq U_{ik}}\phi_{U_{ik}\subseteq U_i}\\
&=\bigl(1-(c_1^{U_{ijk}\subseteq U_{ij}\subseteq U_j}-c_1^{U_{ijk}\subseteq U_{jk}\subseteq U_j}+c_1^{U_{ijk}\subseteq U_{jk}\subseteq U_k}-c_1^{U_{ijk}\subseteq U_{ij}\subseteq U_i}\\
&\phantom{\;=\;}{}-c_1^{U_{ijk}\subseteq U_{ik}\subseteq U_k}+c_1^{U_{ijk}\subseteq U_{ik}\subseteq U_i})\epsilon\bigr)(\phi_{U_{ik}\subseteq U_k})^{-1}\phi_{U_{ik}\subseteq U_i}\\
&=(1-\iota(\Theta_{2,0})^{U_i,U_j,U_k}\epsilon)\psi_{ik}\\
&=(1-\alpha^{U_i,U_j,U_k}\epsilon)\psi_{ik}.
\end{align*}

Therefore, the corresponding $(\fff_U,\phi_u)\longmapsto (\Xi(\fff_U),\Xi(\psi_{ij}))$ gives an equivalence $\QPr^l(\bar{\ooo})\cong\Qch^l(\ooo_X^{(\beta,\gamma)},\tilde{\alpha})\cong\Qch^l(X,u)$ of abelian categories.

If $\uuu$ is finite, the additional statement is contained in the existence of \eqref{intbas} which was shown earlier on.
\end{proof}

\def\cprime{$'$} \def\cprime{$'$}
\providecommand{\bysame}{\leavevmode\hbox to3em{\hrulefill}\thinspace}
\providecommand{\MR}{\relax\ifhmode\unskip\space\fi MR }
\providecommand{\MRhref}[2]{%
  \href{http://www.ams.org/mathscinet-getitem?mr=#1}{#2}
}
\providecommand{\href}[2]{#2}

\end{document}